\documentclass[11pt]{amsart}
\usepackage{}
\usepackage{amssymb}
\usepackage{amsfonts}
\usepackage{mathrsfs}
\usepackage{latexsym}
\usepackage{graphicx}
\usepackage{amscd,amssymb,amsmath,amsbsy,amsthm,amsfonts}
\usepackage[all]{xy}
\usepackage[colorlinks,plainpages,backref,urlcolor=blue]{hyperref}
\usepackage{verbatim}
\usepackage{enumerate}

\usepackage [english]{babel}
\usepackage [autostyle, english = american]{csquotes}
\MakeOuterQuote{"}

\topmargin=0.0in
\textwidth=6.5in
\textheight=8.5in
\oddsidemargin=0in
\evensidemargin=0in

\newcommand {\Omit}[1]{}

\usepackage{tikz-cd}

\usepackage{tikz}
\usetikzlibrary{automata,positioning}

\usetikzlibrary{arrows,calc}
\tikzset{
>=stealth',
help lines/.style={dashed, thick},
axis/.style={<->},
important line/.style={thick},
connection/.style={thick, dotted},
}
\usetikzlibrary{patterns}
\newlength{\hatchspread}
\newlength{\hatchthickness}
\tikzset{hatchspread/.code={\setlength{\hatchspread}{#1}},
         hatchthickness/.code={\setlength{\hatchthickness}{#1}}}
\tikzset{hatchspread=3pt,
         hatchthickness=0.4pt}
\pgfdeclarepatternformonly[\hatchspread,\hatchthickness]
   {custom north west lines}
   {\pgfqpoint{-2\hatchthickness}{-2\hatchthickness}}
   {\pgfqpoint{\dimexpr\hatchspread+2\hatchthickness}{\dimexpr\hatchspread+2\hatchthickness}}
   {\pgfqpoint{\hatchspread}{\hatchspread}}
   {
    \pgfsetlinewidth{\hatchthickness}
    \pgfpathmoveto{\pgfqpoint{0pt}{\hatchspread}}
    \pgfpathlineto{\pgfqpoint{\dimexpr\hatchspread+0.15pt}{-0.15pt}}
    \pgfusepath{stroke}
   }

\newcommand{\nc}{\newcommand}
\nc{\rnc}{\renewcommand}
\nc{\bb}[1]{{\mathbb #1}}
\nc{\bbA}{\bb{A}}\nc{\bbB}{\bb{B}}\nc{\bbC}{\bb{C}}\nc{\bbD}{\bb{D}}
\nc{\bbE}{\bb{E}}\nc{\bbF}{\bb{F}}\nc{\bbG}{\bb{G}}\nc{\bbH}{\bb{H}}
\nc{\bbI}{\bb{I}}\nc{\bbJ}{\bb{J}}\nc{\bbK}{\bb{K}}\nc{\bbL}{\bb{L}}
\nc{\bbM}{\bb{M}}\nc{\bbN}{\bb{N}}\nc{\bbO}{\bb{O}}\nc{\bbP}{\bb{P}}
\nc{\bbQ}{\bb{Q}}\nc{\bbR}{\bb{R}}\nc{\bbS}{\bb{S}}\nc{\bbT}{\bb{T}}
\nc{\bbU}{\bb{U}}\nc{\bbV}{\bb{V}}\nc{\bbW}{\bb{W}}\nc{\bbX}{\bb{X}}
\nc{\bbY}{\bb{Y}}\nc{\bbZ}{\bb{Z}}
\nc{\mbf}[1]{{\mathbf #1}}
\nc{\bfA}{\mbf{A}}\nc{\bfB}{\mbf{B}}\nc{\bfC}{\mbf{C}}\nc{\bfD}{\mbf{D}}
\nc{\bfE}{\mbf{E}}\nc{\bfF}{\mbf{F}}\nc{\bfG}{\mbf{G}}\nc{\bfH}{\mbf{H}}
\nc{\bfI}{\mbf{I}}\nc{\bfJ}{\mbf{J}}\nc{\bfK}{\mbf{K}}\nc{\bfL}{\mbf{L}}
\nc{\bfM}{\mbf{M}}\nc{\bfN}{\mbf{N}}\nc{\bfO}{\mbf{O}}\nc{\bfP}{\mbf{P}}
\nc{\bfQ}{\mbf{Q}}\nc{\bfR}{\mbf{R}}\nc{\bfS}{\mbf{S}}\nc{\bfT}{\mbf{T}}
\nc{\bfU}{\mbf{U}}\nc{\bfV}{\mbf{V}}\nc{\bfW}{\mbf{W}}\nc{\bfX}{\mbf{X}}
\nc{\bfY}{\mbf{Y}}\nc{\bfZ}{\mbf{Z}}
\nc{\bfa}{\mbf{a}}\nc{\bfb}{\mbf{b}}\nc{\bfc}{\mbf{c}}\nc{\bfd}{\mbf{d}}
\nc{\bfe}{\mbf{e}}\nc{\bff}{\mbf{f}}\nc{\bfg}{\mbf{g}}\nc{\bfh}{\mbf{h}}
\nc{\bfi}{\mbf{i}}\nc{\bfj}{\mbf{j}}\nc{\bfk}{\mbf{k}}\nc{\bfl}{\mbf{l}}
\nc{\bfm}{\mbf{m}}\nc{\bfn}{\mbf{n}}\nc{\bfo}{\mbf{o}}\nc{\bfp}{\mbf{p}}
\nc{\bfq}{\mbf{q}}\nc{\bfr}{\mbf{r}}\nc{\bfs}{\mbf{s}}\nc{\bft}{\mbf{t}}
\nc{\bfu}{\mbf{u}}\nc{\bfv}{\mbf{v}}\nc{\bfw}{\mbf{w}}\nc{\bfx}{\mbf{x}}
\nc{\bfy}{\mbf{y}}\nc{\bfz}{\mbf{z}}

\newcommand{\G}{\mathbb{G}}
\newcommand{\op}{\text{op}}

\nc{\mcal}[1]{{\mathcal #1}}
\nc{\calA}{\mcal{A}}\nc{\calB}{\mcal{B}}\nc{\calC}{\mcal{C}}\nc{\calD}{\mcal{D}}
\nc{\calE}{\mcal{E}} \nc{\calF}{\mcal{F}}\nc{\calG}{\mcal{G}}\nc{\calH}{\mcal{H}}
\nc{\calI}{\mcal{I}}\nc{\calJ}{\mcal{J}}\nc{\calK}{\mcal{K}}\nc{\calL}{\mcal{L}}
\nc{\calM}{\mcal{M}}\nc{\calN}{\mcal{N}}\nc{\calO}{\mcal{O}}\nc{\calP}{\mcal{P}}
\nc{\calQ}{\mcal{Q}}\nc{\calR}{\mcal{R}}\nc{\calS}{\mcal{S}}\nc{\calT}{\mcal{T}}
\nc{\calU}{\mcal{U}}\nc{\calV}{\mcal{V}}\nc{\calW}{\mcal{W}}\nc{\calX}{\mcal{X}}
\nc{\calY}{\mcal{Y}}\nc{\calZ}{\mcal{Z}}
\nc{\fA}{\frak{A}}\nc{\fB}{\frak{B}}\nc{\fC}{\frak{C}} \nc{\fD}{\frak{D}}
\nc{\fE}{\frak{E}}\nc{\fF}{\frak{F}}\nc{\fG}{\frak{G}}\nc{\fH}{\frak{H}}
\nc{\fI}{\frak{I}}\nc{\fJ}{\frak{J}}\nc{\fK}{\frak{K}}\nc{\fL}{\frak{L}}
\nc{\fM}{\frak{M}}\nc{\fN}{\frak{N}}\nc{\fO}{\frak{O}}\nc{\fP}{\frak{P}}
\nc{\fQ}{\frak{Q}}\nc{\fR}{\frak{R}}\nc{\fS}{\frak{S}}\nc{\fT}{\frak{T}}
\nc{\fU}{\frak{U}}\nc{\fV}{\frak{V}}\nc{\fW}{\frak{W}}\nc{\fX}{\frak{X}}
\nc{\fY}{\frak{Y}}\nc{\fZ}{\frak{Z}}
\nc{\fa}{\frak{a}}\nc{\fb}{\frak{b}}\nc{\fc}{\frak{c}} \nc{\fd}{\frak{d}}
\nc{\fe}{\frak{e}}\nc{\fFf}{\frak{f}}\nc{\fg}{\frak{g}}\nc{\fh}{\frak{h}}
\nc{\fri}{\frak{i}}\nc{\fj}{\frak{j}}\nc{\fk}{\frak{k}}\nc{\fl}{\frak{l}}
\nc{\fm}{\frak{m}}\nc{\fn}{\frak{n}}\nc{\fo}{\frak{o}}\nc{\fp}{\frak{p}}
\nc{\fq}{\frak{q}}\nc{\fr}{\frak{r}}\nc{\fs}{\frak{s}}\nc{\ft}{\frak{t}}
\nc{\fu}{\frak{u}}\nc{\fv}{\frak{v}}\nc{\fw}{\frak{w}}\nc{\fx}{\frak{x}}
\nc{\fy}{\frak{y}}\nc{\fz}{\frak{z}}

\newtheorem{theorem}{Theorem}[section]
\newtheorem{lemma}[theorem]{Lemma}
\newtheorem{corollary}[theorem]{Corollary}
\newtheorem{prop}[theorem]{Proposition}

\newtheorem{assumption}[theorem]{Assumption}

\theoremstyle{definition}
\newtheorem{definition}[theorem]{Definition}
\newtheorem{example}[theorem]{Example}
\newtheorem{remark}[theorem]{Remark}

\newtheorem{thm}{Theorem}

\DeclareMathOperator{\rank}{rank} \DeclareMathOperator{\gr}{gr}
\DeclareMathOperator{\im}{im} 
 \DeclareMathOperator{\id}{id}
\DeclareMathOperator{\Image}{Im} \DeclareMathOperator{\Sym}{Sym}

 \DeclareMathOperator{\GL}{GL}
\DeclareMathOperator{\Hom}{{Hom}} \DeclareMathOperator{\Tor}{{Tor}}

\DeclareMathOperator{\sHom}{{\mathscr{H}om}}

\DeclareMathOperator{\Hilb}{{Hilb}}

 \DeclareMathOperator{\Lie}{Lie}
\DeclareMathOperator{\Spec}{{Spec}} \DeclareMathOperator{\tr}{tr}

\DeclareMathOperator{\Grass}{Grass} \DeclareMathOperator{\End}{End}

\DeclareMathOperator{\Gm}{\bbG_m}
 
  \DeclareMathOperator{\Fl}{Fl}

\newcommand{\sph}{\fs}

  \DeclareMathOperator{\prepr}{prepr}

\DeclareMathOperator{\Sch}{\mathbf{Sch}}

\DeclareMathOperator{\diag}{diag}

\DeclareMathOperator{\Laz}{\bbL az}

\DeclareMathOperator{\Gr}{Gr}

\DeclareMathOperator{\CH}{CH}

\DeclareMathOperator{\MO}{MO}

\DeclareMathOperator{\inc}{in}
\DeclareMathOperator{\out}{out}

\DeclareMathOperator{\Rep}{Rep}
\DeclareMathOperator{\Res}{Res}

\DeclareMathOperator{\Sh}{Sh}
\DeclareMathOperator{\For}{f}

\newcommand{\surj}{\twoheadrightarrow}
\newcommand{\inj}{\hookrightarrow}

\newcommand{\pt}{\text{pt}}
\newcommand{\Aff}{\bbA}

\newcommand{\QQ}{\bbQ}
\newcommand{\Z}{\bbZ}
\newcommand{\C}{\bbC}

\newcommand{\Q}{\bbQ}
\newcommand{\N}{\bbN}

\DeclareMathOperator{\fac}{fac}
\DeclareMathOperator{\pr}{pr}

 \newcommand{\ext}{\fe}

\DeclareMathOperator{\fin}{fin}

\newcount\cols
{\catcode`,=\active\catcode`|=\active
 \gdef\Young(#1){\hbox{$\vcenter
 {\mathcode`,="8000\mathcode`|="8000
  \def,{\global\advance\cols by 1 &}%
  \def|{\cr
        \multispan{\the\cols}\hrulefill\cr
        &\global\cols=2 }%
  \offinterlineskip\everycr{}\tabskip=0pt
  \dimen0=\ht\strutbox \advance\dimen0 by \dp\strutbox
  \halign
   {\vrule height \ht\strutbox depth \dp\strutbox##
    &&\hbox to \dimen0{\hss$##$\hss}\vrule\cr
    \noalign{\hrule}&\global\cols=2 #1\crcr
    \multispan{\the\cols}\hrulefill\cr%
   }
 }$}}
}

\setcounter{tocdepth}{1}
\setcounter{section}{-1}

\begin{document}
\title[Preprojective Cohomological Hall algebra]
{The cohomological Hall algebra of a preprojective
algebra}
\date{\today}

\author[Y.~Yang]{Yaping~Yang}
\address{School of Mathematics and Statistics, The University of Melbourne, 813 Swanston Street, Parkville VIC 3010, Australia}
\email{yaping.yang1@unimelb.edu.au}

\author[G.~Zhao]{Gufang~Zhao}
\address{Max-Planck-Institut f\"ur Mathematik,
Vivatsgasse 7,
53111 Bonn,
Germany}
\curraddr{Institute of Science and Technology Austria,
Am Campus, 1,
Klosterneuburg 3400,
Austria}
\email{gufang.zhao@ist.ac.at}

\subjclass[2010]{
Primary 17B37;  	
Secondary 
14F43,   
55N22.
}
\keywords{Oriented cohomology theory, quiver variety, Hall algebra, Yangian, shuffle algebra.}

\dedicatory{Dedicated to Professor Jerzy Weyman on the occasion of his 60th birthday.}

\begin{abstract}
We introduce for each quiver $Q$ and each algebraic oriented cohomology theory $A$,
the cohomological Hall algebra (CoHA) of  $Q$, as 
the $A$-homology of the moduli of representations of the preprojective algebra of $Q$. This generalizes the $K$-theoretic Hall algebra of commuting varieties defined by Schiffmann-Vasserot \cite{SV2}. 
When $A$ is the Morava $K$-theory, we show evidence that this algebra is a candidate for Lusztig's reformulated conjecture on modular representations of algebraic groups \cite{Lusz}.

We construct an action of the preprojective CoHA on the $A$-homology of Nakajima quiver varieties. 
We compare this with the action of the Borel subalgebra of Yangian when $A$ is the intersection theory. 
We also give a shuffle algebra description of this CoHA in terms of the underlying formal group law of $A$.  
As applications, we obtain a shuffle description of the Yangian.
\end{abstract}
\maketitle
\tableofcontents
\section{Introduction}\label{sec:intro}
Let $Q$ be a quiver, and $\fg_Q$ the corresponding symmetric Kac-Moody Lie algebra. The Nakajima quiver varieties of $Q$ are fine moduli spaces parametrizing stable framed representations of the preprojective algebra of $Q$  \cite{Nak94}. 
They play an important role in constructing representations of various quantum groups associated to $\fg_{Q}$.
 
When $Q$ is a quiver without edge loops, 
 Nakajima constructed an action of the quantum loop algebra  $U_{q}(L\fg_Q)$ on the equivariant $K$-theory of quiver varieties \cite{Nak99}. 
He realized the Drinfeld generators of $U_{q}(L\fg_Q)$ as explicit convolution operators, and showed that they satisfy  the required commutation relations.
Following a similar method,  Varagnolo constructed an action of the Yangian $Y_{\hbar}(\fg_{Q})$ on the equivariant Borel-Moore homology of quiver varieties \cite{Va00}. Based on the pattern, the elliptic quantum group should act on the equivariant elliptic cohomology of quiver varieties, which will be carried out in \cite{Z15} based on the techniques developed in the present paper. 
 
The main goal of the present paper is to show that for any quiver $Q$ and  an arbitrary algebraic oriented cohomology theory  $A$ in the sense of Levine-Morel \cite{LM}, there is an affine quantum group associated to $\fg_Q$ and $A$, which acts on the $A$-homology of quiver varieties. 

There had been quite a few evidence that such an affine quantum group should exist, notably:
\begin{enumerate}
\item As observed by Drinfeld, the dual of any quantum group in the sense of \cite[\S~7]{Dr} is a quantized formal series bialgebra. Hence associate to each quantum group there is a Lie algebra and a formal group law. 
However, there had been no construction the other direction, although for the formal group laws coming from 1-dimensional algebraic groups the corresponding affine quantum groups were known. 
\item As observed in \cite{GKV95}, there is a parallelism among three different mathematical objects: affine quantum groups, oriented cohomology theories, and formal group laws. The correspondence between the last two is well-known, and Nakajima's construction is an evidence of a direct link between the first two. 
\end{enumerate}
Yangian and quantum loop algebra have known Drinfeld-type presentations. In \cite{Nak99,Va00}, Nakajima and Varagnolo constructed operators by which the generators of Yangian and quantum loop algebra act on quiver varieties, and verified their relations.  Consequently, the representations of these algebras are constructed geometrically.
However,  the affine quantum groups associated to other oriented cohomology theories do not have known presentations, hence counterparts of the operators used in {\it loc. cit.} have no prescribed commutation relations, i.e., the method in {\it loc. cit.} does not provide a construction of these algebras themselves.

The present paper gives the first construction, geometrically or algebraically, of quantum groups in the affine setting beyond the Yangians, quantum loop algebras, and elliptic quantum groups. These new affine quantum groups have equality interesting properties, some of which are not available to the previously known cases:
\begin{enumerate}
\item Examples when $A$ is the connective $K$-theory, the Eilenberg-MacLane spectrum $H\bbZ/p$,  or the algebraic cobordism can be found in \S~\ref{subsec:examples}. 
\item In particular, Lusztig proposed  the existence  of a family of quantum groups depending on a prime number $p$ and an integer $n$, with conjectural properties \cite{Lusz}. When $A$ is the Morava $K$-theory, we get such a family with evidence of satisfying the desired properties, including a stability property prescribed by Lusztig (Proposition~\ref{prop:stable}). A complete proof in the case when $n=0$ follows from the present paper. 
\end{enumerate}

We construct an algebra, called the preprojective cohomological Hall algebra (CoHA), as well as an action of it on the $A$-homology of Nakajima quiver varieties associated to $Q$, following an approach similar to  \cite{SV2}. The Nakajima-type raising operators are lifted as elements in this CoHA.
We also construct a suitable extension of  the preprojective CoHA,  which roughly speaking is a quantization of a central extension of $U(\mathfrak{b}_Q[\![u]\!])$, where the quantization depends on the underlying formal group law of $A$. 
We study in detail the test case when the formal group law is additive, and we show in this case the quantization is the Borel subalgebra of the Yangian $Y_\hbar(\fg_Q)$.

To describe the multiplication of the preprojective CoHA algebraically, we also give a shuffle formula. Historically, a shuffle description was not only used as an intrinsic way of defining quantum groups \cite{Ros}, but also provided interesting combinatorial information. For example, PBW property of quantum groups becomes transparent through the shuffle algebra (see, e.g., \cite{Ros2}), and the dual canonical basis has a purely algebraic description in terms of it \cite{Lec}. 
In this paper, we give a geometric description of the shuffle algebra. The commutation relations among Nakajima-type raising operators can be calculated using the shuffle formula. 

The construction in the present paper follows similar idea as \cite{SV2}. 
When $Q$ is the Jordan quiver, the Nakajima quiver variety is isomorphic to $\Hilb^n(\Aff^2)$, the Hilbert scheme of $n$ points on $\Aff^2$. Schiffmann-Vasserot constructed an action of the elliptic Hall algebra on the 
equivariant $K$-theory of $\coprod_{n\in\bbN}\Hilb^n(\Aff^2)$ \cite{SV2}. Feigin-Tsymbaliuk independently constructed an action of the Feigin-Odesskii shuffle algebra on the 
equivariant $K$-theory of $\coprod_{n\in\bbN}\Hilb^n(\Aff^2)$ \cite{FT}. A homomorphism from the elliptic Hall algebra to the shuffle algebra is further given in \cite{SV2}, which is compatible with their actions.

The machinery in the present paper, applied to the special case when $A$ is the Chow group, and $Q$ a quiver without edge-loops,  gives a new construction of the Yangian, in terms of cohomological Hall algebra. This is an affine analogue of the classical theorem that the (extended) Ringel-Hall algebra of $Q$ is isomorphic to $U_q(\mathfrak{b}_Q)$ \cite{Ring}. 
Although this special case might be expected by analogy with \cite{SV2}, no precise statement or proof had been given (see the remarks after Theorem~\ref{ThmIntr:Yangian} for the technicality in the proof).
Note that as a corollary, our construction applied to Morava $K$-theory does satisfy Lusztig's character formula for $n=0$. Another application is a shuffle description of the Yangian, which was not known previously, although a similar description for the quantum loop algebra $U_q^+(L\fg_Q)$ has been known in various special cases (see \cite{En1} for the case when $\fg$ is of finite type which is not $G_2$, \cite{SV2} when $\fg=\widehat{\fg\fl_1}$, and \cite{Neg} when $\fg$ is $\widehat{\fs\fl_n}$).

Last but not least, through the present paper we set up the framework of studying affine quantum groups using cohomological Hall algebras, further applications of which are given in future publications, summarized in \S~\ref{subsec:intro_Kell} and \S~\ref{subsec:intro_crit}.

\subsection{The preprojective CoHA}
Let $Q=(I, H)$  be a quiver, with  set of vertices $I$ and arrows $H$.
Let $\Rep(Q,v)$ be the affine variety parametrizing representations of $Q$ with dimension vector $v=(v^i)_{i\in I}\in \N^{I}$. The vector space $\Rep(Q, v)$ carries a natural action of $G_v=\prod_{i\in I} \GL_{v^i}$. 
Let $\mu_v:T^*\Rep(Q,v)\to \fg \fl_v^*$ be the moment map of the cotangent bundle of $\Rep(Q,v)$.
The torus $T=\Gm^2$ acts on $T^*\Rep(Q,v)$ with the first $\Gm$-factor scaling $\Rep(Q,v)$ and the second one scaling the fibers of $T^*\Rep(Q,v)$ (see Assumption~\ref{Assu:WeghtsGeneral} for the conditions on this action). We set
\[
^A\calP(Q):=\bigoplus_{v\in \bbN^I} \null^A\calP_v(Q)=\bigoplus_{v\in \bbN^I} A_{T\times G_{v}}(\mu^{-1}_{v}(0)).\]
We will use the abbreviations $\calP$ and $\calP_v$ if both $A$ and $Q$ are understood from the context. In \S~\ref{preproj CoHA}, we define maps
\[m_{v_1,v_2}^P:\calP_{v_1}\otimes\calP_{v_2}\to \calP_{v_1+v_2}.\]

\begin{thm}[Theorem \ref{prop:assoc_hall}]\label{thmInt_A}
The $\bbN^I$-graded abelian group $\calP$, endowed with $m_{v_1,v_2}^P$, is an associative, $\bbN^I$-graded algebra over $A_T(\pt)$.
\end{thm}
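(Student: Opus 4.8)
The plan is to realise each $m^{P}_{v_1,v_2}$ as a composite of a pullback and a proper pushforward through an explicit flag-type correspondence, to read off the $A_T(\pt)$-bilinearity from $T$-equivariance, and to prove associativity by the standard device (following \cite{SV2}) of identifying both iterated triple products with a single operation attached to a two-step flag.

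\textbf{Step 1: the correspondence, and $A_T(\pt)$-linearity.}
Fix $I$-graded vector spaces $V_1,V_2$ of dimension vectors $v_1,v_2$, put $V=V_1\oplus V_2$, $v=v_1+v_2$, and let $P\subset G_v$ be the parabolic stabilising $V_1$, with Levi $L\cong G_{v_1}\times G_{v_2}$. Restricting a representation of the doubled quiver $\overline Q$ that preserves $V_1$ to $V_1$ and to $V/V_1$ produces a $T\times P$-equivariant diagram
\[
\mu^{-1}_{v_1}(0)\times\mu^{-1}_{v_2}(0)\;\xleftarrow{\;\iota\;}\;\mathcal{Z}_{v_1,v_2}\;\xrightarrow{\;\pi\;}\;\mu^{-1}_v(0),
\]
where $\mathcal{Z}_{v_1,v_2}$ parametrises the representations of $\overline Q$ on $V$ that preserve $V_1$ and lie over $0$; here $\pi$ is induced by the flag-forgetting morphism $G_v\times_P\mathcal{Z}_{v_1,v_2}\to\mu^{-1}_v(0)$ and is proper since $G_v/P$ is projective, whereas $\iota$ is, on the smooth ambient spaces $\Rep(\overline Q,-)$, an affine-bundle projection. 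Then $m^{P}_{v_1,v_2}=\pi_{*}\circ(\text{change of groups})\circ\iota^{*}$, the middle map being the isomorphism relating the $T\times L$- and $T\times P$-equivariant theories (affine-bundle invariance for $P/L$), up to a twist by an equivariant Euler class. Every object and arrow here is $T$-equivariant, so $m^{P}_{v_1,v_2}$ is a morphism of $A_T(\pt)$-modules; since $A_T(\pt)\cong A(\pt)[\![t_1,t_2]\!]$ (the $A$-theory of $B\Gm^{2}$, via the projective-bundle formula), the $\bbN^I$-grading is built into the source and target of $m^P$, and $\calP_0=A_T(\pt)$ serves as the unit, the only thing left to check is associativity.

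\textbf{Step 2: associativity via the three-step flag.}
Given $v_1,v_2,v_3$, let $P_{123}\subset G_{v_1+v_2+v_3}$ stabilise the full flag $0\subset V_1\subset V_1\oplus V_2\subset V_1\oplus V_2\oplus V_3$ and form the analogous roof
\[
\mu^{-1}_{v_1}(0)\times\mu^{-1}_{v_2}(0)\times\mu^{-1}_{v_3}(0)\;\xleftarrow{\;\iota_{123}\;}\;\mathcal{Z}_{123}\;\xrightarrow{\;\pi_{123}\;}\;\mu^{-1}_{v_1+v_2+v_3}(0).
\]
I would show that both $m^{P}_{v_1+v_2,\,v_3}\circ(m^{P}_{v_1,v_2}\otimes\id)$ and $m^{P}_{v_1,\,v_2+v_3}\circ(\id\otimes m^{P}_{v_2,v_3})$ are equal to $(\pi_{123})_{*}\circ(\text{change of groups})\circ\iota_{123}^{*}$. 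For the first, reading the full flag as $0\subset V_1\oplus V_2\subset V$ refined by the insertion of $V_1$ exhibits $\mathcal{Z}_{123}$ as an iterated correspondence and the triple product as the sequence pullback, pushforward, pullback, pushforward. Functoriality of pullbacks and of proper pushforwards, the equivariance isomorphisms for the associated-bundle constructions, and the projection formula then collapse this sequence, once one commutes the single pushforward past the single pullback across the Cartesian square comparing the two intermediate spaces — the square encoding the two orders in which the refinement may be carried out — at which point base change in $A$ closes the argument. Running the same computation from the other side gives $m^{P}_{v_1,\,v_2+v_3}\circ(\id\otimes m^{P}_{v_2,v_3})$, so associativity follows.

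\textbf{Step 3: the main obstacle.}
The delicate point, which I expect to absorb most of the work, is that $\iota$ is not flat: the fibre of $\mathcal{Z}_{v_1,v_2}\to\mu^{-1}_{v_1}(0)\times\mu^{-1}_{v_2}(0)$ is a linear cone of jumping dimension over the singular, non-lci base $\mu^{-1}(0)$. Consequently $\iota^{*}$ has to be constructed with care, and the Cartesian square of Step 2 must be shown to be $\Tor$-independent (with trivial excess bundle) before Levine--Morel base change applies. My plan is to do all of this upstairs, on the smooth ambient spaces $\Rep(\overline Q,-)$, where $\iota$ genuinely is an affine-bundle projection and the inclusions of parabolic-fixed loci sit in general position, and then to transport the outcome to the moment-map fibres through the description of $A_{T\times G_v}(\mu^{-1}_v(0))$ supplied by the Koszul complex of the moment map (a statement of dimensional-reduction type). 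Once $\iota^{*}$ is on firm footing and the base-change square is transverse, associativity becomes a formal diagram chase, and compatibility with the $\bbN^I$-grading is immediate from $m^{P}_{v_1,v_2}\colon\calP_{v_1}\otimes\calP_{v_2}\to\calP_{v_1+v_2}$.
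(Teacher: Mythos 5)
The geometry in your Steps~1--2 does match the paper's construction, and in Step~3 you have correctly located the hard point: the restriction of the correspondence to the moment-map fibres is not flat, and $\mu_v^{-1}(0)$ is singular and not lci. The remedy you propose, however, does not work. You want to construct $\iota^*$ on the smooth ambient spaces $T^*\Rep(\overline Q,-)$ and then ``transport the outcome to the moment-map fibres through the description of $A_{T\times G_v}(\mu_v^{-1}(0))$ supplied by the Koszul complex of the moment map.'' No such transport exists for a general oriented cohomology theory: the moment map is not a regular sequence, so the Koszul complex does not even resolve $\mathcal{O}_{\mu_v^{-1}(0)}$; and a dimensional-reduction theorem of the kind you invoke is only available in topological Borel--Moore homology with vanishing cycles (it is precisely what the authors use in the companion paper \cite{YZ16} to compare $\calP$ with the critical CoHA). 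Vanishing cycles do not exist for $K$-theory or algebraic cobordism, so your plan cannot establish the theorem for arbitrary $A$.

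The paper sidesteps the singularity of $\mu_v^{-1}(0)$ entirely via the refined Gysin pullback of Levine--Morel. The projection $\phi\colon Z=T^*_W(X\times X')\to T^*X$ is a morphism of \emph{smooth} varieties, hence lci, and for the closed embedding $g\colon T^*_GX\hookrightarrow T^*X$ the formalism produces a refined Gysin pullback $\phi^\sharp_g\colon A(T^*_GX)\to A(Z_G)$ with no flatness or constant-dimension hypotheses on the restricted map $\overline{\phi}\colon Z_G\to T^*_GX$; that map, not a flat pullback, is what you are calling $\iota^*$. (A side point: there is no ``twist by an equivariant Euler class'' in the definition of $m^P_{v_1,v_2}$ --- that factor appears only after pushing forward to the shuffle algebra.) Associativity is then exactly the two facts you gesture at but do not supply: a base-change lemma for refined Gysin pullbacks (Lemma~\ref{Lem:base_change}, quoting \cite[6.6.6(2), 6.6.2]{LM}) and a transversality criterion obtained from a dimension count (Lemmas~\ref{lem:dim_trans}--\ref{lem:dim_lag}, after \cite[Prop.~C.1]{SV2}) giving the needed $\Tor$-vanishing for the Cartesian square $Z_1\times_{T^*X_2}Z_3\cong Z_2$. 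It is the dimension count, not a reduction to the smooth ambient case, that makes the base change legitimate.
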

This algebra will be called the {\it preprojective cohomological Hall algebra} (preprojective CoHA). The name is motivated by the fact that the subvariety $\mu^{-1}_{v}(0)\subset T^*\Rep(Q, v)$  parameterizes the  representations of the preprojective algebra of $Q$. Recall that the latter is the quotient of the path algebra of the doubled quiver $\overline{Q}$ by the relations $\sum_{x\in H}[x,x^*]=0$, where for any arrow $x$ in $Q$ the opposite arrow in $\overline{Q}$ is denoted by  $x^*$.

Our construction of the algebra $\calP$ has two sources. One of them is the construction of a $K$-theoretic Hall algebra of commuting varieties  defined by Schiffmann-Vasserot in \cite{SV2}, the idea of which can be traced back to Grojnowski \cite{Gr2}. Our construction for an arbitrary quiver and  OCT essentially follows from the same techniques. Another source is the construction of  Kontsevich-Soibelman of a Hall multiplication on the critical cohomology (cohomology valued in a vanishing cycle) of representation  spaces of a quiver with potential, which arises from the study of Donaldson-Thomas invariants  (DT for short) \cite{KS}. Our preprojective CoHA can be considered as  CoHA for the 2-Calabi-Yau category of representations of the preprojective algebra.

\subsection{Action on quiver varieties}
Let $\fM(v,w)$ be the Nakajima quiver variety  with dimension vectors $v,w\in\bbN^I$ and  stability condition $\theta^+$ (see \S~\ref{subsec:moment_quiver}). 
We construct a representation of the algebra $^A\calP(Q)$ on the $A$-homology of the Nakajima quiver varieties \[^A\calM(w):=\bigoplus_{v\in\bbN^I}A_{G_w\times T}(\fM(v,w)).\]
\begin{thm}[Theorem \ref{thm:prep action}]\label{ThmIntr:Action}
For any $w\in \bbN^I$, there is a homomorphism of $\bbN^I$-graded $A_T(\pt)$-algebras \[a^{\prepr}: \null^A\calP\to \End\left(\null^A\calM(w)\right).\] 
\end{thm}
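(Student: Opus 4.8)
The plan is to construct $a^{\prepr}$ from a family of framed Hecke correspondences, in the same spirit as the definition of the multiplication $m^{P}$ in Theorem~\ref{prop:assoc_hall} and following \cite{SV2,Va00}. Fix $w\in\bbN^{I}$ and write $\widetilde{\fM}(v,w)$ for the variety of $\theta^{+}$-stable framed $\overline{Q}$-representations on $\bbC^{v}\oplus\bbC^{w}$ satisfying the framed preprojective relation, so that $\fM(v,w)=\widetilde{\fM}(v,w)/G_{v}$ with $G_{v}$ acting freely; thus the $v$-graded piece $\calM_{v}(w):=A_{G_{w}\times T}(\fM(v,w))$ is canonically $A_{G_{v}\times G_{w}\times T}(\widetilde{\fM}(v,w))$. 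For $v_{1},v_{2}\in\bbN^{I}$ with $v=v_{1}+v_{2}$, let $\fZ_{v_{1},v_{2}}(w)$ parametrize pairs $(\phi,\iota)$ with $\phi\in\widetilde{\fM}(v,w)$ and $\iota\colon\bbC^{v_{2}}\hookrightarrow\bbC^{v}$ the coordinate inclusion, such that $\iota(\bbC^{v_{2}})$ is a $\phi$-subrepresentation, the restriction of $\phi$ to $\bbC^{v_{2}}\oplus\bbC^{w}$ lies in $\widetilde{\fM}(v_{2},w)$, and the induced quotient on $\bbC^{v_{1}}$ lies in $\mu^{-1}_{v_{1}}(0)$; that is, a point records a short exact sequence of framed $\overline{Q}$-representations
\[
0\longrightarrow(\bbC^{v_{2}}\oplus\bbC^{w},\phi|)\longrightarrow(\bbC^{v}\oplus\bbC^{w},\phi)\longrightarrow(\bbC^{v_{1}},\overline{\phi})\longrightarrow 0 .
\]
Recording the sub- and quotient representations, respectively forgetting $\iota$, then gives $T\times G_{v_{1}}\times G_{v_{2}}\times G_{w}$-equivariant maps
\[
\mu^{-1}_{v_{1}}(0)\times\widetilde{\fM}(v_{2},w)\ \xleftarrow{\ p\ }\ \fZ_{v_{1},v_{2}}(w)\ \xrightarrow{\ q\ }\ \widetilde{\fM}(v,w) .
\]

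Next I would define
\[
a^{\prepr}_{v_{1},v_{2}}\colon\ \calP_{v_{1}}\otimes_{A_{T}(\pt)}\calM_{v_{2}}(w)\ \longrightarrow\ \calM_{v_{1}+v_{2}}(w),\qquad x\otimes y\ \longmapsto\ q_{*}\bigl(e\cdot p^{*}(x\boxtimes y)\bigr),
\]
where $p^{*}$ is the refined Gysin pullback in the oriented cohomology theory $A$ (defined, as for $m^{P}$, by factoring $p$ through a vector-bundle projection followed by a regular embedding), $q_{*}$ is the proper pushforward, and $e$ is the $A$-theoretic Euler class of the resulting obstruction bundle — the twist that already appears in $m^{P}$, dictated by the Lagrangian structure of the moment-map fibers and the block form of the preprojective relation. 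One checks: (i) $q$ is proper, because its fiber over $\phi$ is a closed subvariety of a product of Grassmannians of $\phi$-stable coordinate subspaces; (ii) $p$ factors as an affine-bundle projection composed with a regular embedding, so the refined Gysin map $p^{*}$ is available for an arbitrary OCT in the sense of Levine--Morel; (iii) the whole construction is $T$-equivariant and the operators are $A_{T}(\pt)$-linear over the completed ring $A(\pt)[\![t_{1},t_{2}]\!]$, as in \S\ref{preproj CoHA}. Summing over $v_{1},v_{2}$ yields the $A_{T}(\pt)$-linear, $\bbN^{I}$-graded map $a^{\prepr}\colon{}^{A}\calP\to\End({}^{A}\calM(w))$.

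To prove $a^{\prepr}$ is a ring homomorphism, i.e.\ $a^{\prepr}(m^{P}(x_{1}\otimes x_{2}))=a^{\prepr}(x_{1})\circ a^{\prepr}(x_{2})$, I would compare the two composites using a three-step flag correspondence $\fZ_{v_{1},v_{2},v_{3}}(w)$ parametrizing a framed stable representation together with a filtration $0\subset\bbC^{v_{3}}\subset\bbC^{v_{2}+v_{3}}\subset\bbC^{v_{1}+v_{2}+v_{3}}$ by $\phi$-stable coordinate subspaces whose three successive subquotients realize, in the prescribed order, the data underlying $y\in\calM_{v_{3}}(w)$, $x_{2}\in\calP_{v_{2}}$ and $x_{1}\in\calP_{v_{1}}$. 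Both sides are then computed as a single pull--push out of $\fZ_{v_{1},v_{2},v_{3}}(w)$ once one verifies that the two fiber squares relating it to $\fZ_{v_{1},v_{2}}(w)$ and $\fZ_{v_{2},v_{3}}(w)$ are Tor-independent (a clean-intersection statement for the flag varieties involved), so that base change and the projection formula apply and the Euler-class twists multiply correctly. This is the framed counterpart of the associativity argument for $m^{P}$ in Theorem~\ref{prop:assoc_hall} and goes through with the same bookkeeping; combined with associativity of $m^{P}$ it gives that $a^{\prepr}$ is a homomorphism of $\bbN^{I}$-graded $A_{T}(\pt)$-algebras.

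The hard part is the same feature that makes the CoHA itself subtle: $\mu^{-1}_{v}(0)$ is singular and not a complete intersection, so the pull--push must be carried out with refined Gysin maps and virtual normal bundles, and one has to control these normal and obstruction bundles precisely enough both to define $p^{*}$ and $q_{*}$ for a general OCT $A$ — which has only l.c.i.\ pullback and proper pushforward, not arbitrary pullback — and to match the Euler-class corrections across the three-step flag diagram. The resolution, exactly as in \S\ref{preproj CoHA}, is to factor every correspondence map into regular embeddings and smooth (or affine-bundle) projections and to reduce everything to functoriality of Gysin maps and deformation to the normal cone; once this is set up, the remaining points — properness of $q$, $T$-equivariance, and the grading — are routine.
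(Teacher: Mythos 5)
Your overall structure — a pull–push through a flag correspondence of framed preprojective representations, with homomorphicity deduced from a three-step flag variety and base change — does capture the paper's strategy, and the idea of reducing to functoriality of Gysin maps is in the right spirit. But two specific steps in your definition would not go through as written.

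First, you insert an extra Euler class $e$ of an ``obstruction bundle'' into the definition of $a^{\prepr}_{v_1,v_2}$, and you justify this by saying the same twist appears in $m^{P}$. It does not. The paper defines $m^P$ and $a_{v_1,v_2}$ purely as $\overline\psi_*\circ\phi^\sharp$ composed with the K\"unneth/induction isomorphisms; no additional class is multiplied in. The Euler classes you have in mind are what you \emph{get} when you compute the analogous map for the \emph{ambient} shuffle algebra $m^S$ (Proposition~\ref{thm: shuffle formula}), where $\psi_*$ and $\iota_*$ are unwound as multiplication by equivariant Euler classes of explicit normal bundles; they are an output of the calculation, not an ingredient of the definition. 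If you keep both the refined Gysin pullback $p^*$ \emph{and} a separate Euler class $e$, you double-count the normal bundle contributions and the resulting operators will be off by exactly that Euler class.

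Second, your claim (ii), that $p$ ``factors as an affine-bundle projection composed with a regular embedding'' so that a Gysin map exists, cannot be implemented literally: the target $\mu^{-1}_{v_1}(0)\times\widetilde{\fM}(v_2,w)$ contains the singular moment-map fiber, so $p$ cannot be a regular embedding into it, and an l.c.i.\ map into a singular scheme does not by itself give you a Gysin pullback in an arbitrary OCT. The paper's actual mechanism is different and worth stating explicitly: one works with the \emph{ambient} l.c.i.\ morphism $\phi:Z\to T^*X$ between smooth varieties, and the key geometric input is Lemma~\ref{lem:fiber_lag} (from \cite{SV2}), which says $\phi^{-1}(T^*_GX)=Z_G$ and $\psi^{-1}(T^*_GX')=Z_G$. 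This identifies the singular correspondence as a genuine fiber product, so the \emph{refined} Gysin pullback $\phi^\sharp$ of the smooth $\phi$ restricts to a well-defined map $A_G(T^*_GX)\to A_G(Z_G)$, and Lemma~\ref{lem:gysin+pullback_comm} guarantees compatibility with pushforward to the ambient spaces. That, together with the transversality criterion Lemma~\ref{lem:dim_trans}/\ref{lem:dim_lag} for the three-step flag (your Tor-independence), is what makes the argument close for a general OCT. You should also record the compatibility with the stability condition, namely Lemma~\ref{lem:stable_phi} ($\phi(Z^s)\subset T^*X^s$), which is what makes the correspondence preserve the semistable loci and is sensitive to the choice $\theta^+$; without it the pullback does not land where you need it. Finally, a bookkeeping point: with the framed part as the sub, what you obtain is a right action of $\calP$ (equivalently a left $\calP^{\op}$-action), so you should write the action map as $\calM(v_1,w)\otimes\calP_{v_2}\to\calM(v_1+v_2,w)$ and fix the convention on $\End(\calM(w))$ accordingly, as the paper does.
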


For each $k\in I$, let $e_k$ be the dimension vector valued 1 at vertex $k$ and zero otherwise. 
We define the {\it spherical preprojective CoHA} to be the  subalgebra $^A\calP^{\sph}(Q)\subseteq \null^A\calP(Q)$ generated by $\calP_{e_k}=A_{T\times G_{e_k}}(\mu^{-1}_{e_k}(0))$ as  $k$ varies in $ I$.

The map $a^{\prepr}$, when restricted to the spherical subalgebra $\calP^{\sph}$, factors through the action of the convolution algebra of the Steinberg variety $Z=\fM(w)\times_{\fM_0(w)}\fM(w)$ on $\null^A\calM(w)$. In $A_{G_w\times\Gm}(Z)$ there are Nakajima-type operators, which are  cohomology classes on  Hecke correspondences.  In Theorem~\ref{thm: hecke corr},  we write down explicit elements in $\calP^{\sph}$ which map to the Nakajima-type operators under $a^{\prepr}$. 
Hence, for a general quiver, $\calP$ is a bigger symmetry on  $\calM(w)$ than that 
 of \cite{Nak99,Va00}. 
We expect the representations in Theorem~\ref{ThmIntr:Action} to produce highest weight integrable representations of the Drinfeld double of $\calP^{\sph}$. 

\subsection{The shuffle description}
Assume the coefficient ring of $A$ is a $\bbQ$-algebra. Let $^A\calS\calH$ be the shuffle algebra associated to $Q$ and the formal group law of $A$ (see \S~\ref{sec:formalShuf}). 
It is a modified version of the Feigin-Odesskii shuffle algebra \cite{FO, FT}. In \S~\ref{sec:formalShuf}, we give the definition and a geometric interpretation of $^A\calS\calH$, and hence prove the following.
\begin{thm}[Theorem \ref{thm:prepro to shuffle}]\label{ThmIntr:shuffle}
Assume the coefficient ring of $A$ is a $\bbQ$-algebra. There is an algebra homomorphism $\Theta:\null^A\calP\to\null^A\calS\calH$, which becomes an isomorphism after a suitable localization spelled out in Remark~\ref{rmk:torsion}.
\end{thm}
 
The homomorphism $\Theta$ gives an explicit description of the Hall multiplication of $\calP$ using the shuffle formulas. 
In \S~\ref{subsec:Cartan}, we construct a commutative algebra $^A\calP^0$, which acts on  $^A\calP$.  We define the extended spherical CoHA  to be $\calP^{\sph,\ext}:=\calP^{\sph}\rtimes \calP^0$, which, when $Q$ has no edge-loops, is a quantization of central-extended $U(\fb_Q[\![u]\!])$ associated to the formal group law of  $A$.

In  \cite{YZ2}, we define a comultiplication on $\calP^{\sph,\ext}$, making it a bialgebra. We also construct a bialgebra pairing, under which the Drinfeld double of $\calP^{\sph,\ext}$ is a quantization of central-extended $U(\fg_Q[\![u]\!])$ again when $Q$ has no edge-loops.

\subsection{The Yangians $Y_\hbar(\fg_Q)$}
We compare the preprojective CoHA with known quantum groups. In order to do so, we need to twist the multiplication of $\calP$ by a sign. More precisely,  let $\widetilde \calP$ be the twisted preprojective CoHA, whose underlying abelian group is the same as  $\calP$, and the multiplication  $m_{v_1, v_2}^{\widetilde \calP}$ on $\widetilde \calP$ differs from $m_{v_1, v_2}^{\calP}$ by a sign coming from the Euler-Ringel form, spelled out in \S~\ref{subsec:twisting CoHA}.
Similar to the untwisted case, we have the spherical subalgebra $\widetilde{\calP}^{\sph}\subseteq \widetilde\calP$.
Define $\underline{\widetilde{\calP}^{\sph}}$ to be the quotient of $\widetilde{\calP}^{\sph}|_{t_1=t_2=\hbar/2}$ by the torsion part, the precise sense of which is spelled out in Remark~\ref{rmk:torsion}. Let $\underline{\widetilde{\calP}}^{\sph,\ext}$ be $\underline{\widetilde{\calP}^{\sph}}\rtimes \calP^0$.

Now let $A$ be the intersection theory $\CH$ (that is, the Chow group, see \cite{Ful}). 
For any $w\in\bbN^I$, let $\calM(w)=\bigoplus_{v\in\bbN^I}\CH_{G_w\times T}(\mathfrak{M}(v,w))$. 
Following Nakajima \cite{Nak99},  Varagnolo constructed an action of the Yangian $Y_\hbar(\fg_Q)$ on $\calM(w)$ \cite{Va00}.  \footnote{In \cite{Va00} the Borel-Moore homology was used instead of the intersection theory. However, note that in the verification of Yangian action, one only uses the fact that the formal group law of this cohomology theory is the additive group law. As we would like to stay in the algebraic setting in the present paper, we use the Chow group instead of Borel-Morel homology. This modification is not essential. }

\begin{thm}[Theorems \ref{thm:Yangian to sh}, \ref{thm:Yangian}]\label{ThmIntr:Yangian}
Let $Q$ be a quiver without edge loops. Let $Y_{\hbar}^{\geq 0}(\mathfrak{g}_Q)$ be the Borel subalgebra of $Y_{\hbar}(\mathfrak{g}_Q)$ (see \S~\ref{sec:Yangian_action} for a precise definition).
\begin{enumerate}
\item There is a surjective algebra homomorphism  
$\Gamma: Y_{\hbar}^{\geq 0}(\mathfrak{g}_Q) \to \null^{\CH}\underline{\widetilde{\calP}}^{\sph,\ext}$.
\item 
When  $Q$ is of type $ADE$, we have an isomorphism $\Gamma: Y_{\hbar}^{\geq 0}(\mathfrak{g}_Q) \cong \null^{\CH}\underline{\widetilde{\calP}}^{\sph,\ext}$, and the following diagram is commutative
\[
\xymatrix@R=1em {
\null^{\CH}\widetilde{\calP}^{\sph,\ext}
\ar@{->>}[r] \ar[dr]
& Y_{\hbar}^{\geq 0}(\fg_{Q}) \cong \null^{\CH}\underline{\widetilde{\calP}}^{\sph,\ext}  
\ar[d]
\\
&\End(\calM(w)),}
\] 
where the map $Y_\hbar(\fg_Q)\to \End(\calM(w))$ is defined in \cite{Va00}. 
\end{enumerate}
\end{thm}
\begin{remark}
\begin{enumerate}
\item  For each $k\in I$, we have
$\null^{\CH}\calP_{e_k}=\C[z^{(k)}]$. 
By Theorems \ref{thm: hecke corr}, the action of 
$(z^{(k)})^l$ on $\calM(w)$ is given by $(c_1(\calL_k))^l$, where $\calL_k$ is certain tautological line bundle on the Hecke correspondence in quiver varieties. 
This is the same as the action of the standard generators $x^+_{k, l}$ of $Y_{\hbar}^{+}(\fg_{Q})$ from \cite{Va00} when  $Q$ has no edge loops.
\item When $Q$ has no edge loops, we expect that it can be deduced from the recent work of \cite{SV17} that 
the action of $\null^{\CH}\widetilde{\calP}$ on $\calM(w)$ is faithful after passing to $\bbQ(\hbar)$. Therefore, one has an isomorphism 
$Y_{\hbar}^{\geq 0}(\mathfrak{g}_Q) \cong \null^{\CH}{\widetilde{\calP}}^{\sph,\ext}$  after localization.
\end{enumerate}
\end{remark}

A Hall algebra description of the positive part of the affine quantum group is a special case of Lusztig's construction of the composition subalgebra  \cite{L91}. However, a Hall algebra description of the Yangian was not known.

Combining Theorem~\ref{ThmIntr:Yangian} with Theorem~\ref{ThmIntr:shuffle} shows that  the positive part of the Yangian $Y_{\hbar}^+(\mathfrak{g}_Q)$ embeds into the shuffle algebra $\calS\calH$. This can be considered as an affine analogue of \cite{Ros}, that  $U_q^+(\fg_Q)$ is a subalgebra of the quantum shuffle algebra.  
Although the shuffle description for the quantum loop algebra $U_q^+(L\fg)$ has been known in various special cases (see \cite{En1} for the case when $\fg$ is of finite type which is not $G_2$, and \cite{Neg} for the case when $\fg=\widehat{\fs\fl_n}$), the proof of Theorem~\ref{ThmIntr:Yangian} uses an entirely different method.

The proof here pins down the quadratic relation of Nakajima operators as that in the Yangian, which was previously unclear outside of simply-laced case (see  \cite[\S~4, Remark]{Va00}); this is also the first proof of the Serre relation in the Yangian outside of simply-laced case, which we establish using a symmetric polynomial identity proved in Appendix~\ref{app:sym_poly}, which might be interesting on its own right.

The shuffle description of $U_q^+(\widehat{\fg})$ for an affine Lie algebra $\widehat{\fg}$,  was used to provide a combinatorial characterization of the dual canonical basis, and hence the shuffle formula became a basic calculation tool in $q$-characters of the KLR algebra \cite{Lec,KR}. By analogy, we expect the shuffle description of the Yangian to lead to an analogue of the dual canonical basis in the Yangian. Furthermore, according to the recent progresses \cite{BHLW,SVV}, it is expected that the trace of the graded representation category of KLR algebra is isomorphic to the Yangian. Thus, the shuffle description of Yangian should provide a calculation tool in the study of trace decategorification of KLR algebra.

\subsection{$K$-theory and elliptic cohomology}\label{subsec:intro_Kell}
The study of various quantum groups and their representations in the affine setting has been a fruitful subject. The present paper contributes more members to the family of quantum groups in the affine setting.

When $A$ is the $K$-theory, it is expected in \cite{Gr2} that, there is an algebra isomorphism $U_q^+(L\fg_Q)\to \null^K\widetilde{\calP}^\sph(Q)$.
However, the relation between the $^K\widetilde{\calP}^\sph(Q)$ action on equivariant $K$-theory of quiver varieties and the action of $U_q(L\fg_Q)$ studied in \cite{Nak99} is not clear. As the study of this relation would involve a different set of generators  of $U_q^+(L\fg_Q)$ than the one used in  \cite{Nak99}, we do not achieve this in the current paper.

Assuming $A(\pt)$ is a $\bbQ$-algebra, the formal group law associated to $A$ is isomorphic to the additive one, in the sense of \cite[Ch. IV]{Fr68}. However, an isomorphism between two formal group laws does not give an isomorphism between the corresponding CoHA's. Instead, it yields that the multiplications of the two CoHA's are related by a factor, a precise statement of which can be found in   \cite[Remark~1.4]{YZ2}. In other words, the algebra $^A\calP(Q)$   depends on the formal group law itself, instead of the isomorphism class of the formal group law. 
Nevertheless, motivated by \cite{GTL}, it is possible that the extended spherical subalgebra  $^A\calP(Q)^{\sph, \ext}$  is isomorphic to a completion of $^{\CH}\calP(Q)^{\sph,\ext}$.

In \cite{Z15}, we study the preprojective CoHA when $A$ is the equivariant elliptic cohomology of \cite{Gr,GKV95}. We construct a sheafified elliptic quantum group, whose rational sections form the operators studied in \cite{GTL15}, which in turn is the elliptic quantum group defined by Felder and his collaborators (see e.g., \cite{Fed}).

Parallel to this paper,  it is proved  in  \cite{ZZ14} that 
 the formal affine Hecke algebra studied in \cite{HMSZ} acts on the $A$--homology of the Springer fibers. 
We expect  that in type-$A$  the formal affine Hecke algebra  in \cite{ZZ14} and  $\calP$ studied in the current paper are related by a Schur-Weyl duality.  In \cite{ZZ15}, it is proved that the elliptic affine Hecke algebra acts on the equivariant elliptic cohomology of the Springer fibers.

\subsection{Preprojective CoHA and critical CoHA}\label{subsec:intro_crit}
Associated to the category of representations of a quiver with potential, there is a critical CoHA defined by Kontsevich-Soibelman,   in the framework of  Donaldson-Thomas theory. 
For the class of quivers with potential studied by Ginzburg in \cite{Gin06},
we construct an isomorphism of the corresponding preprojective CoHA and the critical CoHA in \cite{YZ16}. In {\it loc. cit. } we use
Theorem~\ref{ThmIntr:Yangian} to show the existence of an algebra homomorphism $Y_\hbar^+(\fg_Q)\to \widetilde{\mathscr{H}}|_{t_1=t_2=\hbar/2}$, where $\widetilde{\mathscr{H}}$ is a version of the critical CoHA of \cite{KS},   twist by a sign coming from the Euler-Ringel form.

After an earlier version of this paper appeared on arXiv, in the course of studying the semicanonical bases and an analogue of the Kac polynomial, a similar notion of  CoHA for preprojective algebra was introduced in \cite{RS}, although the results in the present paper were not included in {\it loc. cit.} except for Theorem~\ref{thmInt_A} in the case when $A$ is the Borel-Moore homology.

Shortly after an earlier version of the current paper, which contains Theorems~\ref{ThmIntr:Yangian} for the Dynkin case and \cite[Theorem~A]{YZ16},  appeared on arXiv,  a conjecture about a relation between Yangian and critical CoHA for more general cases was independently proposed by Davison \cite{D}.
\subsection*{Acknowledgments} Many ideas in this paper are rooted in the work of Schiffmann and Vasserot \cite{SV1,SV2}. The authors are grateful to Marc Levine, Zongzhu Lin, Yan Soibelman, Valerio Toledano Laredo, and Eric Vasserot for helpful discussions and correspondence. We thank Hiraku Nakajima, Olivier Schiffmann, and an anonymous referee for pointing out an error in an earlier version of \S~\ref{sec:Yangian_action}.

Part of this paper was conceived when G.Z. was waiting for the security clearance of his US-Visa in Beijing in 2013. He would therefore like to thank the Morningside Center of Mathematics at the Chinese Academy of Sciences for accommodation, and the U.S. overseas diplomats for the otherwise unavailable opportunity of this stimulating visit. 
During the preparation of this paper, G.Z. was hosted by Max-Planck-Institut f\"ur Mathematik in Bonn. During the revision of this paper, Y.Y. was hosted by MPIM,  G.Z. was supported by Fondation Sciences Math\'ematiques de Paris and CNRS. Part of the work was done when both authors were temporary faculty members at the University of Massachusetts, Amherst.
\subsection*{Summary}
For the convenience of the readers, we summarize the various algebras as follows.
\[
\xymatrix@R=1em @C=1em{
    & ^{\CH}\calP^{\sph, \ext}  \ar[rrdd] |(.465)\hole   \ar@{^{(}->}[d]&& &\\
^{\CH}\calP \ar[d]\, \ar@{}[r]|-*[@]{\subset} &
 ^{\CH}\calP^{\ext} \ar[d]   \ar[rr]  &&\End\calM(w)\\
 \calS\calH \, \ar@{}[r]|-*[@]{\subset} & \calS\calH^{\ext} \ar[r]&
\calS\calH^{\ext}|_{t_1=t_2=\frac{\hbar}{2}}
& Y^{\geq 0}_{\hbar} \ar@{^{(}->}[l] \, \ar[u] 
}
\]
Here the multiplications of the Hall algebras and shuffle algebras are understood as twisted  by a sign specified in \S~\ref{subsec:twisting CoHA}.

\section{Algebraic oriented cohomology theory}\label{sec:prelim}

In this section we collect basic notions about equivariant oriented Borel-Moore homology theory.

\subsection{Equivariant oriented cohomology theories}\label{subsec:inf grass}
\label{sec:ordinary BM}
\label{subsec:pushforward}

Fix a base field $k$. For any reductive algebraic group $G$, let $\Sch_k^G$ be the category of schemes over $k$ of finite type with a $G$-action.
We will consider equivariant  Borel-Moore homology theory in the sense of \cite[\S~2]{CZZ3} and \cite[\S~5.1]{ZZ14}.
In particular, it is the following data: 
 \begin{enumerate}
\item For any object  $X$ in $\Sch^G_k$,  $A_G(X)$ is a module over the commutative ring $A_G(\pt)$.
\item (Proper pushforward) For $f:Y\to X$ a proper morphism in $\Sch^G_k$, there is a homomorphism $f_*:A_G(Y)\to A_G(X)$.
\item (Smooth pullback) For a smooth morphism $f:Y\to X$ in $\Sch_k^G$, there is a homomorphism $f^*:A_G(X)\to A_G(Y)$.
\item (Refined Gysin pullback) For any local complete intersection morphism $f: Y\to X$, and an arbitrary morphism $Z\to X$, let $g:Z\times_{X} Y \to Z $ be the base change. Then,  there is a refined pullback map $f^\sharp_{g}: A_G( Z) \to A_G( Z\times_{X} Y).$
We will also write $f^\sharp$ if $g$ is understood from the context. It specializes to the smooth pullback $f^*$ when $f$ is smooth and $Z\to X$ is the identity morphism on $X$.
\item (1st Chern class operators) For each line bundle $L\to X$, $X\in \Sch^G_k$, there is a graded homomorphism $\tilde{c}_1(L):A_G(X)\to A_G(X)$.
\end{enumerate}
These all satisfy a number of compatibilities, detailed in \cite[\S~2.1, \S~2.2]{LM}.
When restricting $A$ to the category of smooth $G$-varieties, it factors through the category of commutative rings with unit.
As we will need the compatibility of push-forward and the refined Gysin pull-backs, we collect some basic facts in \S~\ref{subsec: Lag corr}.

When $G$ is trivial, $A_{G}$ is an oriented Borel-Moore homology theory in the sense of \cite{LM}. Hence, there is a formal group law $(R,F)$ associated to it, where $R=A(\pt)$ and $F(u,v)\in R[\![u,v]\!]$.
We use the short-hand notations $u+_Fv:=F(u,v)$. Denote $-_Fv$ to be the inverse of $v$ of the formal group law, in other words, $F(v,-_Fv)=0$.

Equivariant OCT's of primary interests to us are in the following example.
\begin{example}\label{exam:EOCT}
\begin{enumerate}
\item  In \cite{HM} it has been explained how any non-equivariant Borel-Moore homology theory $A$ extends to an equivariant theory, following an idea due to Totaro in \cite{To}.
More precisely, for any reductive group $G$, the classifying space of $G$ is a system $EG:=\{EG_N\}_{N\in\bbN}$, where each $EG_N$ is a Zariski open subset in a representation of $G$ on which $G$-acts freely, and satisfies the condition of a {\it good system} spelled out in \cite[Definition~10]{HM}.
For simplicity, we call $BG:=\{EG_N/G\}_{N\in\bbN}$ {\it the classifying space} of $G$, and we define $A_G(X)$ to be $\lim_NA(X\times_G EG_N)$.

\item Note that in the construction of \cite{HM}, there is no stabilization in each homological degree in the system $\{A(X\times_G EG_N)\}_N$ for a general $A$. However, it is proved in \cite{To}, that when $A$ is $\CH$, for each fixed homological degree $i$ the system $\{\CH^i(X\times_G EG_N)\}_N$ does stabilize. See also \cite{EG}. Therefore, in this case, we will take $\CH_G(X)$ to be the direct sum $\bigoplus_i\lim_N\CH^i(X\times_G EG_N)$. In particular,  for us $\CH_{\Gm}(\pt)=\bbQ[z]$ while $\lim_N\CH(\pt\times_{\Gm} (E\Gm)_N)=\bbQ[\![z]\!]$.

\item More generally, for any 1-dimensional algebraic group  $\bbG$ over a commutative $\bbQ$-algebra $R$, together with a local uniformizer $\fl$, the expansion of the group structure of $\bbG$ under $\fl$ gives rise to a formal group law.  There is an OCT $A$ associated to this data constructed in \cite[\S~4.1]{LM}.
For $A$ of this type, and for any compact Lie group $G$,  Lurie \cite[Theorem~3.2]{Lur} constructed an equivariant OCT $A_G$ for $G$-finite CW-complexes. In particular, $A_G(\pt)$ is the coordinate ring of $\bbG^{\rank G}/W$, where $W$ is the Weyl group of $G$. 
We expect similar equivariant OCT exists in the algebraic setting, i.e., for a reductive group $G$ acting on an algebraic variety $X$. This is known to be true when  $\bbG$ is the additive group (see (2) above) and the multiplicative group (see (4) below).
\item \label{rmk:K_Chow}
When $A$ is the $K$-theory with $\bbQ$-coefficients, we will use the equivariant $K$-theory of \cite{Th}. Hence, $K_{\Gm}(\pt)=\bbQ[z^\pm]$.
\end{enumerate}
\end{example}

Let $V\to X$ be a vector bundle of $X$ with Chern roots $\lambda_1, \dots, \lambda_n$. It is known that the cohomology of Grassmannian is generated by the Chern classes of the tautological vector bundle. In other words, let $v_1,\cdots,v_r$ be the Chern roots of the rank-$r$ tautological bundle $\calR(r)$ on $\Grass(r, V)$.
For any pair $(p,q)$ of positive integers, let $\hbox{Sh}(p,q)$ be the subset of $\fS_{p+q}$ consisting of $(p,q)$-shuffles (permutations of $\{1,\cdots,n\}$ that preserve the relative order of $\{1,\cdots,p\}$ and $\{p+1,\cdots,p+q\}$). 
\begin{prop}\label{prop:grass_push}
Let $V\to X$ be a rank $n$ vector bundle and let $p: \Grass(r, V)\to X$ be the associated Grassmannian bundle.
For any  the oriented cohomology theory $A$ with formal group law $(R, F)$ where $\bbQ\subseteq R$, let $f(v_1, \dots, v_r) \in A(\Grass(r, V))$.
Then, 
$$p_*^{A}(f(v_1,\dots,v_r))=\sum_{\{\sigma\in \hbox{Sh}(r, n-r)\}}
\sigma\frac{ f(\lambda_1, \dots, \lambda_r)}{\prod_{1\leq j\leq r, r+1 \leq i\leq n}(\lambda_i-_{F}\lambda_j)},$$
where $\lambda_1 \dots \lambda_n$ are Chern roots of $V$ in $A$.
\end{prop}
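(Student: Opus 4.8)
The plan is to deduce the formula for a general oriented cohomology theory $A$ from the intersection-theory case (Proposition~\ref{prop:push in CH}) by the standard "formal group law substitution" principle. Concretely, the pushforward along a Grassmannian bundle $p:\Grass(r,V)\to X$ is governed by its behavior on universal objects, so it suffices to establish the formula when $X$ is (an approximation of) the classifying space $B(\GL_{r_1}\times\GL_{r_2})\to B\GL_n$, with $r_1=r$, $r_2=n-r$, where $V=\calR(n)$ is the tautological bundle. In that universal situation the Chern roots $\lambda_1,\dots,\lambda_n$ are algebraically independent (by the Lemma computing $A(\Grass(n,\infty))=A(\pt)[\![\lambda_1,\dots,\lambda_n]\!]^{\fS_n}$), so there is a well-defined graded ring homomorphism from $\Omega$ to $A$ which, combined with the fact that both $\Omega$ and $A$ satisfy the same projective-bundle/Grassmannian-bundle axioms, reduces the identity to an identity of power series in the $\lambda_i$.

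The key steps, in order, are as follows. First, recall that $p$ factors as an iterated projective bundle (a "Grassmannian-to-flag" tower): $\Grass(r,V)$ is dominated by the full flag bundle, and more directly one can realize $p_*$ via the Schubert cell stratification, but cleanest is to use Quillen's computation — for a projective bundle $\PP(W)\to Y$ with $W$ of rank $m$, the pushforward of a polynomial in the relative hyperplane class is given by the residue-type formula $p_*(g(\xi))=\sum_i g(\lambda_i)/\prod_{j\ne i}(\lambda_i-_F\lambda_j)$, where the $\lambda_i$ are the Chern roots of $W$. This is the rank-$1$ case of our statement and is exactly the formal-group-law deformation of the classical projective-bundle pushforward; it follows from the projective bundle formula in $A$ together with the relation $\prod_i(\xi-_F\lambda_i)=0$ in $A(\PP(W))$ (equivalently, $A(\PP(W))=A(Y)[\xi]/(\text{that relation})$ once one expands via $F$). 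Second, iterate: compose these projective-bundle pushforwards up a flag bundle $\Fl(1,2,\dots,r;V)\to\Grass(r,V)\to X$ and track the symmetrizations; the shuffle sum over $\Sh(r,n-r)$ emerges exactly as in the Chow case, the only change being that each linear factor $\lambda_i-\lambda_j$ is replaced by $\lambda_i-_F\lambda_j$. Third, reduce to the universal case as in the previous paragraph and invoke the Chow-theoretic Proposition~\ref{prop:push in CH} together with the Lazard-ring universality to confirm the combinatorics of the sum are unchanged — only the denominators deform.

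The main obstacle I expect is making the iterated-projective-bundle bookkeeping honest: one must verify that the composite of the residue formulas up the flag bundle, after the necessary symmetrization to descend from $\Fl$ back to $\Grass(r,V)$, collapses precisely to the sum over $(r,n-r)$-shuffles and not to a sum over all of $\fS_n$ with overcounting. In the Chow case this is classical (it is, e.g., the content of the cited \cite{PPR2}, \cite{PS} arguments, or localization as in the given proof of Proposition~\ref{prop:push in CH}); for general $A$ one can either re-run the $T=(\Gm)^n$-localization argument — noting that the Atiyah–Bott localization formula holds in any such $A$ with $e(T_{W_J}\Grass(r,V))=\prod_{j\in J,\,i\notin J}(\lambda_i-_F\lambda_j)$, since the Euler class of a line bundle with Chern root $\lambda$ is $\lambda$ and that of $L_j^*\otimes L_i$ is $\lambda_i-_F\lambda_j$ by the formal group law axiom — or deduce it formally from the Chow case by the substitution principle. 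I would present the localization argument, which is the most self-contained: once the fixed loci $W_J$ and their equivariant Euler classes are identified, the formula is immediate and the shuffle indexing is exactly the combinatorics of $r$-element subsets $J\subset\{1,\dots,n\}$, which is in bijection with $\Sh(r,n-r)$.
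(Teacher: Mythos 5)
Your plan is a genuinely different route from the paper's, and your preferred version of it (direct localization in $A$) has an unsupported step. The paper's proof is a one-line Riemann--Roch transfer: having proved the Chow case by localization (Proposition~\ref{prop:push in CH}), it invokes the comparison formula $p_*^A(f)=p_*^{\CH}\bigl(\Td(T_p)\,f(\lambda_\tau(t))\bigr)$ from \cite[Proposition~1.13(3)]{LYZ}, where $\lambda_\tau$ is the exponential of the formal group law (which exists because $R$ is a $\QQ$-algebra). The Todd factor of the relative tangent bundle is $\prod_{j\in J,\,i\notin J}(t_i-t_j)/\lambda_\tau(t_i-t_j)$, and substituting into the Chow formula simply replaces each $\lambda_i-\lambda_j$ in the denominator by $\lambda_i-_F\lambda_j$. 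No localization in $A_T$ is ever needed; it is used only in $\CH_T$, where it is cited to \cite{EG}.

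By contrast, you propose to re-run Atiyah--Bott localization directly in $A_T$, asserting that ``the Atiyah--Bott localization formula holds in any such $A$.'' That is the crux, and it is not obviously available: the paper cites localization only for intersection theory, and establishing it for a general oriented theory $A$ extended via Totaro's construction is a nontrivial point that would need a reference or a proof. In practice the cleanest way to \emph{get} such a localization statement in $A$ is precisely the Riemann--Roch comparison to $\CH$ that the paper uses — so your localization route, pushed to completion, would reinvent the paper's argument with extra overhead. Your other two suggestions are also weaker than they look: the ``substitution principle/Lazard universality'' argument as stated is circular (it assumes the shuffle combinatorics are unchanged, which is what is to be proved), and the iterated-projective-bundle route, while correct in principle (it is essentially Quillen's/Vishik's derivation, which the paper acknowledges in Proposition~\ref{prop:Quillen}), requires the symmetrization bookkeeping you yourself flag as the main obstacle. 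Your identification of the fixed loci $W_J$ and the Euler classes $\prod_{j\in J,\,i\notin J}(\lambda_i-_F\lambda_j)$ is correct; the gap is purely in the unjustified appeal to localization in $A_T$.
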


Let $I$ be a finite set and $v=(v^i)_{i\in I}\in \bbN^I$ be a vector with entries non-negative integers. Let $G_{v}$ be $\prod_{i\in I} \GL_{v^i}$, and let $T$ be the maximal torus of $G_v$.
The Weyl group $\fS_v:=\prod_{i\in I}\fS_{v^i}$ acts on $A_T(\pt)$ by permutation. 
When $A_G$ is as in Example \ref{exam:EOCT}(2), $A_G(\pt)$ is the coordinate ring of $(\prod_{i\in I}\bbG^{v^i})/\fS_v$. When $A_G$ is as in Example \ref{exam:EOCT}(1), $A_G(\pt)\cong A(\pt)[\![\lambda_j^i]\!]^{\fS_v}_{i\in I,j=1,\dots,v^i}$. 
For any dimension vector $v\in \bbN^I$, with $v=v_1+v_2$, we denote $\Sh(v_1,v_2)\subset \fS_v$ to be the product $\prod_{i\in I}\Sh(v_1^i,v_2^i)$.

\subsection{Lagrangian correspondence formalism}
\label{subsec: Lag corr}
Now we recall the Lagrangian correspondence formalism following the exposition in \cite{SV2}.

Let $X$ be a smooth quasi-projective variety endowed with an action of a reductive algebraic group $G$. The cotangent bundle $T^*X$ is a symplectic variety. The induced action of $G$ on $T^*X$ is Hamiltonian. Let $\mu:T^*X\to (\Lie G)^*$ be the moment map. Following \cite{SV2}, we denote $\mu^{-1}(0)\subseteq T^*X$ by $T^*_GX$.

Let $P\subset G$ be a parabolic subgroup and $L\subset P$ be a Levi subgroup. Let $Y$ be a smooth quasi-projective variety equipped an action of $L$, and $X'$ smooth quasi-projective with a $G$-action. Let $\calV\subseteq Y\times X'$ be a smooth subvariety. Let $\pr_1, \pr_2$ be the two projections restricted on $\calV$
\[\xymatrix{
Y& \calV \ar[l]_{\pr_1}\ar[r]^{\pr_2} &X'}.\]
Assume the first projection $\pr_1$ is a vector bundle, and the second projection $\pr_2$ is a closed embedding. 

Let $X:=G\times_PY$ be the twisted product. Set $W:=G\times_P\calV$ and consider the following maps
\[\xymatrix{
X & W \ar[l]_{f}\ar[r]^{g} &X'}\]
\[f: [(g, v)] \mapsto [(g, \pr_1(v))], \,\ g: [(g, v)] \mapsto g\pr_2(v), \] 
where $[(g, v)]$ is the pair $(g, v)\mod P$. Note that the natural map $T^*X\to G\times_PT^*Y$ is a vector bundle. 
\begin{lemma}[\cite{SV2}, Lemma~7.1]
\label{lem:iso of G}
There is an isomorphism $G\times_PT^*_LY\cong T^*_GX$ such that the following diagram commutes
\[\xymatrix@R=1.5em{
G\times_PT^*_LY\ar[r]^(0.6){\cong}\ar@{^{(}->}[d]&T^*_GX\ar@{^{(}->}[d]\\
G\times_PT^*Y\ar@{^{(}->}[r]&T^*X
}\]where $G\times_PT^*Y\inj T^*X$ is the zero-section of the vector bundle $T^*X\to G\times_PT^*Y$.
\end{lemma}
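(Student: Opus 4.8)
The plan is to compute the moment map $\mu:=\mu_{X,G}\colon T^*X\to(\Lie G)^*$ directly and identify its zero fibre. Since $\mu$ is $G$-equivariant, $\mu(g\cdot\alpha)=\operatorname{Ad}_g^*\mu(\alpha)$, and since $X=G\times_PY$ is swept out by $G$ from the $P$-stable slice $Y\hookrightarrow X$, $y\mapsto[(e,y)]$, it is enough to describe $\mu$ over this slice and then spread out by the $G$-action. For the bookkeeping over the slice I will use the standard description of the tangent space of a twisted product,
\[
T_{[(e,y)]}X=\bigl(\Lie G\oplus T_yY\bigr)\big/\bigl\{(\zeta,-\zeta_Y(y)) : \zeta\in\Lie P\bigr\},
\]
where $\zeta_Y$ denotes the vector field of the $L$-action and $\Lie P$ acts through $\Lie P\twoheadrightarrow\Lie L$; dualizing,
\[
T^*_{[(e,y)]}X=\bigl\{(\xi,\beta)\in(\Lie G)^*\oplus T^*_yY : \xi|_{\Lie P}=\mu_{Y,P}(\beta)\bigr\},
\]
where $\mu_{Y,P}\colon T^*Y\to(\Lie P)^*$ is the moment map of the $P$-action on $Y$ through $L$.

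Granting this, the computation over the slice is immediate: the fundamental vector field of $\zeta\in\Lie G$ at $[(e,y)]$ is the class of $(\zeta,0)$, so pairing with $(\xi,\beta)$ gives $\langle\xi,\zeta\rangle$, i.e.\ $\mu\bigl([(e,y)],(\xi,\beta)\bigr)=\xi$. Hence over the slice $\mu^{-1}(0)$ is cut out by $\xi=0$, whereupon the constraint $\xi|_{\Lie P}=\mu_{Y,P}(\beta)$ forces $\mu_{Y,P}(\beta)=0$. Now $\mu_{Y,P}=\pi^*\circ\mu_{Y,L}$, where $\pi\colon\Lie P\twoheadrightarrow\Lie L$ is the projection and $\pi^*\colon(\Lie L)^*\hookrightarrow(\Lie P)^*$ its (injective) transpose, because the $P$-action on $Y$ factors through $L$; so $\mu_{Y,P}(\beta)=0$ iff $\mu_{Y,L}(\beta)=0$, i.e.\ $\beta\in T^*_LY$. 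Spreading out by $G$, I obtain
\[
T^*_GX=\mu^{-1}(0)=G\times_P\bigl\{(0,\beta) : \beta\in T^*_LY\bigr\}\ \cong\ G\times_PT^*_LY.
\]

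It remains to identify this copy of $G\times_PT^*_LY$ with the one inside the zero-section $G\times_PT^*Y\hookrightarrow T^*X$ of the vector bundle $T^*X\to G\times_PT^*Y$. In the slice coordinates this projection is $(\xi,\beta)\mapsto\beta$, and its fibre over $\beta$ is the affine space $\{\xi : \xi|_{\Lie P}=\mu_{Y,P}(\beta)\}$, a torsor under $(\Lie G/\Lie P)^*=(\Lie P)^\perp$. Over the sublocus $G\times_PT^*_LY\subseteq G\times_PT^*Y$, where $\mu_{Y,P}(\beta)=0$, this affine space has the canonical origin $\xi=0$, which is exactly the zero-section restricted to that sublocus; thus the zero-section carries $G\times_PT^*_LY$ isomorphically onto $G\times_P\{(0,\beta):\beta\in T^*_LY\}\subseteq T^*X$, which by the previous paragraph is $T^*_GX$. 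Taking the displayed isomorphism of the statement to be this restriction of the zero-section makes the square commute tautologically, since both composites from $G\times_PT^*_LY$ to $T^*X$ equal $\beta\mapsto(0,\beta)$.

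The one delicate point is the slice analysis in the first paragraph: I must check that the description of $T^*_{[(e,y)]}X$ and of the projection $T^*X\to G\times_PT^*Y$ is compatible, over the slice, with the ambient $G$-equivariant structures, so that ``compute over the slice, then spread by $G$'' is legitimate, and that the affine fibre over the $\mu_{Y,P}=0$ locus really carries the zero-section's origin. This is where I would concentrate the care. An alternative that sidesteps the slice bookkeeping is to realize $T^*(G\times_PY)$ as the symplectic reduction $(T^*G\times T^*Y)/\!\!/_0 P$, with $P$ acting on $T^*G$ by right translations and on $T^*Y$ through $L$, and to run the identical computation there, using that the residual $G$-moment map is induced by $(g,\xi)\mapsto\operatorname{Ad}_g^*\xi$ in the left trivialization $T^*G\cong G\times(\Lie G)^*$. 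Everything else is formal.
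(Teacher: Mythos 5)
Your proof is correct. The paper itself does not reprove this statement (it cites \cite{SV2} directly), so there is no in-paper argument to compare against; but the slice computation you carry out and the alternative you sketch via $T^*(G\times_PY)\cong(T^*G\times T^*Y)/\!/_0\,P$ with $T^*G$ left-trivialized are the two standard presentations of the same moment-map calculation, and the reduction form is the one the paper quietly uses later (the proof of Lemma~\ref{lem:descrip of Z_G} opens with ``$T^*X=T^*_P(G\times Y)/P$''). Your derivation of $T^*_{[(e,y)]}X=\{(\xi,\beta):\xi|_{\Lie P}=\mu_{Y,P}(\beta)\}$, of $\mu([(e,y)],(\xi,\beta))=\xi$, and of the reduction $\mu_{Y,P}=\pi^*\mu_{Y,L}$ with $\pi^*$ injective are all right, and spreading out by $G$-equivariance is legitimate since the $G\times Y$ slice is $P$-stable and the identifications are $P$-equivariant.

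The one place that is not fully pinned down is precisely the step you flag: why the zero-section over $\{\mu_{Y,P}(\beta)=0\}$ is $\beta\mapsto(0,\beta)$. As you observe, the fibres of $T^*X\to G\times_PT^*Y$ are torsors under $(\Lie P)^\perp\subset(\Lie G)^*$, so declaring it a vector bundle amounts to choosing a global section $\beta\mapsto(\xi_0(\beta),\beta)$ with $\xi_0(\beta)|_{\Lie P}=\mu_{Y,P}(\beta)$; an \emph{arbitrary} such section could have $\xi_0(\beta)\ne 0$ on the locus $\mu_{Y,P}(\beta)=0$, because $(\Lie P)^\perp$ is nonzero. What makes the diagram commute is that the section in question is $\xi_0=s\circ\mu_{Y,P}$ for an $L$-equivariant linear splitting $s:(\Lie P)^*\to(\Lie G)^*$ of the restriction map, which exists because $\Lie G$ decomposes $L$-equivariantly as $\Lie P\oplus\fu^-$ with $\fu^-$ the nilradical of the opposite parabolic (in the quiver setting this is just the trace form on a product of $\fg\fl$'s). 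Linearity of $s$ then gives $\xi_0(\beta)=0$ whenever $\mu_{Y,P}(\beta)=0$, and $L$-equivariance makes the section glue over $G\times_P T^*Y$ rather than just over the slice. Adding one sentence specifying this splitting would close the gap you noticed; everything else in your argument is sound.
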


Let  $Z:=T^*_W(X\times X')$ be the conormal bundle of $W$ in $X\times X'$. Let $Z_G\subseteq T^*_GX\times T^*_GX'$ be the intersection $Z\cap (T^*_GX\times T^*_GX')$. 
Then we have the following diagram.
\[\xymatrix@R=1.5em{
G\times_PT^*_LY\ar[r]^(0.6){\cong}\ar@{^{(}->}[d]&T^*_GX\ar@{^{(}->}[d]&Z_G\ar[l]_{\overline\phi}\ar[r]^{\overline\psi}\ar@{^{(}->}[d]&T^*_GX'\ar@{^{(}->}[d]\\
G\times_PT^*Y\ar@{^{(}->}[r]&T^*X&Z\ar[l]_{\phi}\ar[r]^{\psi}&T^*X'
}\]where $\phi:Z\to T^*X$ and $\psi:Z\to T^*X'$ are respectively the first and second projections of $T^*X\times T^*X'$ restricted to $Z$.
\begin{lemma} \cite[Lemma~7.3]{SV2}
\label{lem:fiber_lag}
The morphism $\psi:Z\to T^*X'$ is proper. We have $\psi^{-1}(T^*_GX')=Z_G$ and $\phi^{-1}(T^*_GX)=Z_G$.
\end{lemma}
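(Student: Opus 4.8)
The plan is to handle the three assertions in turn, deducing the two identities about preimages of moment-map zero loci from a single moment-map computation, and isolating properness as the one place where the parabolicity of $P$ is really used. First I would record the standing facts about $W$. Since $\pr_2\colon\calV\to X'$ is a closed immersion and $Y\times X'\to X'$ is separated, $(\pr_1,\pr_2)\colon\calV\to Y\times X'$ is a $P$-equivariant closed immersion; applying $G\times_P(-)$, which preserves closed immersions because $G\to G/P$ is a Zariski-locally trivial $P$-torsor, identifies $W=G\times_P\calV$ with a closed subvariety of $G\times_P(Y\times X')\cong X\times X'$, and this subvariety is $G$-stable by the explicit formulas for $f$ and $g$. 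Hence $Z=T^{*}_W(X\times X')$ is a closed subvariety of $T^{*}X\times T^{*}X'$ on which $\phi,\psi$ are the two projections. I would also note that $g\colon W\to X'$ is proper: it is the composite of the closed immersion $G\times_P\pr_2\colon W\hookrightarrow G\times_P X'$ with the projection $G\times_P X'\cong(G/P)\times X'\to X'$, which is proper because $G/P$ is projective ($P$ being parabolic).

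For the properness of $\psi$ itself I would factor it as
\[
Z\ \xrightarrow{\ a\ }\ g^{*}T^{*}X'\ \xrightarrow{\ b\ }\ T^{*}X',\qquad a(w;\xi,\xi')=(w,\xi'),\quad b(w,\xi')=(g(w),\xi').
\]
Here $b$ is the base change of the proper morphism $g$ along $T^{*}X'\to X'$, hence proper. And $a$ is a morphism of vector bundles over $W$ whose kernel over a point $w$ consists of the covectors $\xi\in T^{*}_{f(w)}X$ annihilating $df_w(T_wW)=\im(df_w)$; since $f\colon W\to X$ is a vector bundle, $df_w$ is surjective and this kernel vanishes, so $a$ is a fibrewise injective morphism of vector bundles, hence a closed immersion. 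Thus $\psi=b\circ a$ is proper.

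For the identities, let $\mu=\mu_{X\times X'}\colon T^{*}(X\times X')\to(\Lie G)^{*}$ be the moment map of the diagonal action; under $T^{*}(X\times X')\cong T^{*}X\times T^{*}X'$ it is the sum $\mu(\alpha,\alpha')=\mu_X(\alpha)+\mu_{X'}(\alpha')$. Because $W$ is $G$-stable, the fundamental vector field of any $\zeta\in\Lie G$ is tangent to $W$ along $W$, so every covector in the conormal fibre $Z_w$ kills it; hence $Z\subseteq\mu^{-1}(0)$, i.e. $\mu_X\circ\phi=-\,\mu_{X'}\circ\psi$ on $Z$. Therefore a point $z\in Z$ maps into $T^{*}_GX'=\mu_{X'}^{-1}(0)$ under $\psi$ if and only if it maps into $T^{*}_GX=\mu_X^{-1}(0)$ under $\phi$, which gives at once
\[
\psi^{-1}(T^{*}_GX')=\{z\in Z:\mu_{X'}(\psi(z))=0\}=Z\cap(T^{*}_GX\times T^{*}_GX')=Z_G=\phi^{-1}(T^{*}_GX).
\]

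The only genuinely non-formal step is the properness of $g\colon W\to X'$, and this is precisely where completeness of $G/P$ — equivalently the parabolicity of $P$ — is indispensable; in its absence the correspondence $T^{*}_GX\leftarrow Z_G\to T^{*}_GX'$ would fail to be proper on the $X'$-side and the pushforward along $\psi$ used later to build the Hall multiplication would not exist. Everything else is unwinding: the conormal-bundle description of $Z$, the submersivity of $f$, and additivity of the moment map under products. I would also take care to spell out the $G$-stability and closedness of $W$ in $X\times X'$ and the formulas for $f,g$, since (as with the construction of the isomorphism in Lemma~\ref{lem:iso of G}) these are used silently throughout.
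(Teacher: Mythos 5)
Your proof is correct, and the core argument — that $Z \subseteq \mu^{-1}(0)$ for the diagonal moment map because $W$ is $G$-stable, combined with the additivity $\mu=\mu_X+\mu_{X'}$, so that $\mu_X\circ\phi=-\mu_{X'}\circ\psi$ on $Z$ — is the same one underlying \cite[Lemma~7.3]{SV2}, which the paper cites without reproducing the proof; the characterization $T^*_GX=\mu_X^{-1}(0)$ as covectors killing orbit tangent spaces is exactly what makes the two preimage identities collapse to one computation. Your factorization $\psi=b\circ a$ through $g^*T^*X'$, with $a$ a fibrewise-injective bundle map (using that $f$ is a submersion since $\pr_1|_{\calV}$ is a vector bundle) and $b$ a base change of the proper $g$ (using $G/P$ projective and $\pr_2|_{\calV}$ a closed immersion), is a clean and correct writeup of the standard properness argument as well.
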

\Omit{
\begin{proof}
We prove the second statement that $\psi^{-1}(T^*_GX')=Z_G$. The proof of $\phi^{-1}(T^*_GX)=Z_G$ is similar.

The set $W\subset X\times X'$ is preserved by the diagonal action of group $G$. Thus, we have, for any $w=(x, x')\in W$,
\begin{equation}\label{eq: inc}
T_{x'} (Gx') \subset T_w W+T_x(Gx).
\end{equation}
Indeed, for any $v'\in  T_{x'} Gx'$, let $\gamma(t)\in G$ be the integral curve of $G$, such that
\[\frac{d}{dt} (\gamma(t)\cdot x')|_{t=0} =v'\in T_{x'} Gx'.\] 
Then,
\[
\frac{d}{dt} (\gamma(t)\cdot w)|_{t=0}=(\frac{d}{dt} (\gamma(t)\cdot x)|_{t=0}, \frac{d}{dt} (\gamma(t)\cdot x')|_{t=0})=(v, v').
\] Clearly $v\in T_x(Gx)$, and $\frac{d}{dt} (\gamma(t)\cdot w)|_{t=0}\in T_w W$. The inclusion \eqref{eq: inc} follows. We claim that the element $a\in T_x^*X$ lies in $T^*_{G}(X)$ if and only if $a$ kills $T_{x} (Gx)$.
The inclusion  \eqref{eq: inc} gives rise the following inclusion
\[
Z \cap (T^*_{G}X \times T^*X')\subset T^*_{G}X \times T^*_{G} X'
\] by the claim. Indeed, for any $a\in Z$, we know that $a$ kills the space $T_w W$. If furthermore, $a\in 
T^*_{G}X \times T^*X'$, we will show that $a$ kills the space $T_x(Gx)$. As a consequence, for $a \in Z\cap (T^*_{G}X \times T^*X')$, $a$ kills $T_{x'} (Gx')$. By the claim, we conclude that $a\in  T^*_{G}X \times T^*_{G} X'$. The second inclusion is a restatement of $\psi^{-1}(T^*_GX')=Z_G$.

It remains to prove the claim. For $q\in X$, the moment map $\mu_q: T^* X \to \mathfrak{g} ^*$ is dual to the map $\fg \to T_q X$, $\xi \mapsto \xi^{X}=\frac{d}{d t}|_{t=0} (\exp(\xi t) \cdot q)$. This duality follows from the local calculation. 
Locally, the symplectic form of $T^* X$ is $\omega=\sum_{i} dq_i\otimes d p_i$. For $\xi\in \fg$, we have $\xi^{X}=\sum_{i} \xi_i(q) \frac{\partial}{\partial q_i}$.  For $p\in T^*_{q}X$, the moment map $\mu_q (p)(\xi)=\sum_{i} \xi_i(q)p_i $. The duality can be seen from the equality 
\[
\langle \mu_q(p), \xi \rangle_{\fg}=\langle p, \xi^X \rangle_{T_{q} X}. 
\]
Thus, $a\in \mu^{-1}(0) \Leftrightarrow a(\xi^{X})=\mu(a)(\xi)=0$. This finishes the proof. \end{proof}
}

Let $A$ be an oriented Borel-Moore homology theory. Existence of refined Gysin pull-back and Lemma~\ref{lem:fiber_lag} ensure the existence of the map $\overline\psi_*\circ\phi^\sharp:A_G( T^*_GX)\to A_G( T^*_GX')$. 

\begin{lemma}\label{lem:gysin+pullback_comm}
The following diagram commutes
\[\xymatrix@R=1.5em{
A_G( T^*_GX)\ar[r]^{\overline\psi_*\circ\phi^\sharp}\ar[d] & A_G( T^*_GX')\ar[d]\\
A_G(T^*X)\ar[r]_{\psi_*\circ\phi^*}&A_G(T^*X')
}\]
where the vertical maps are push-forwards induced by natural embeddings.
\end{lemma}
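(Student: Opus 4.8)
The plan is to reduce the commutativity of the square to the general compatibility between proper pushforward and refined Gysin pullback, together with the observation that both $\phi$ and $\psi$ are morphisms already defined at the level of the ambient cotangent bundles $T^*X$, $T^*X'$, and their restrictions to $Z_G$ are obtained by base change. Recall from Lemma~\ref{lem:fiber_lag} that $\psi$ is proper, $\psi^{-1}(T^*_GX')=Z_G$, and $\phi^{-1}(T^*_GX)=Z_G$; in particular the square
\[
\xymatrix@R=1.5em{
Z_G\ar[r]^{\overline\phi}\ar@{^{(}->}[d]&T^*_GX\ar@{^{(}->}[d]\\
Z\ar[r]^{\phi}&T^*X
}
\]
is Cartesian, and likewise for $\psi$. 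So $\overline\phi$ and $\overline\psi$ are literally the base changes of $\phi$ and $\psi$ along the closed embeddings $T^*_GX\inj T^*X$, $T^*_GX'\inj T^*X'$.

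First I would treat the pushforward half of the square. Since $\psi:Z\to T^*X'$ is proper and $\overline\psi:Z_G\to T^*_GX'$ is its base change along the closed embedding $\iota':T^*_GX'\inj T^*X'$, the compatibility of proper pushforward with arbitrary pullback in an oriented Borel--Moore homology theory (the refined projection formula, part of the axioms collected in \S\ref{subsec: Lag corr} from \cite{LM}) gives $\iota'_*\circ\overline\psi_*=\psi_*\circ (j_Z)_*$, where $j_Z:Z_G\inj Z$ is the closed embedding; that is, pushing forward from $Z_G$ to $T^*_GX'$ and then including, equals including $Z_G\inj Z$ and pushing forward along $\psi$. Concretely this is just functoriality of proper pushforward applied to $\iota'\circ\overline\psi = \psi\circ j_Z$.

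Next I would treat the Gysin-pullback half. The point is that $\phi:Z\to T^*X$ need not be a lci morphism, so $\phi^\sharp$ and $\phi^*$ are \emph{refined} Gysin pullbacks attached to a fixed lci morphism — here the inclusion of the conormal bundle, or rather the diagram realizing $Z=T^*_W(X\times X')$ as built from the lci data $\mathcal V\subseteq Y\times X'$ via Lemma~\ref{lem:iso of G}. The refined Gysin pullback is, by construction in \cite{LM}, compatible with proper pushforward in the base: for an lci morphism with a chosen refinement and a proper morphism of the "$Z$-side" spaces over the common base, one has $(\text{refined pullback})\circ(\text{pushforward}) = (\text{pushforward})\circ(\text{refined pullback})$. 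Applying this with the closed embedding $j_Z:Z_G\inj Z$ (which is proper) as the pushforward and with the lci structure on $\phi$ pulled back to $Z_G$ via the Cartesian square above as the refinement on the $Z_G$-side, I get $\iota_*\circ\phi^\sharp = \phi^{\sharp}\circ (j_Z)_*$ — wait, more precisely $j_{Z,*}\circ\phi^\sharp_{Z_G} = \phi^*\circ\iota_*$ where $\iota:T^*_GX\inj T^*X$. Chaining this with the pushforward identity from the previous paragraph yields $\iota'_*\circ(\overline\psi_*\circ\phi^\sharp) = \psi_*\circ j_{Z,*}\circ\phi^\sharp = \psi_*\circ\phi^*\circ\iota_*$, which is exactly the claimed commutativity.

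The main obstacle is bookkeeping: one must be careful about \emph{which} lci morphism and \emph{which} refining datum define $\phi^\sharp$ on $T^*_GX$ versus on $Z_G$, and check that these two choices are genuinely compatible under the base-change square, so that the abstract "refined Gysin pullback commutes with proper pushforward" lemma from \cite{LM} (or rather its equivariant version obtained via Totaro's approximation, as in \S\ref{subsec:inf grass}) applies verbatim. Once the correct lci/refinement data are identified — coming from Lemma~\ref{lem:iso of G} and the structure of $\mathcal V\subseteq Y\times X'$ — the verification is a formal consequence of the axioms, with no genuine computation; the argument also descends through the Totaro finite-dimensional approximations since all the morphisms involved are $G$-equivariant and the relevant good systems are fixed.
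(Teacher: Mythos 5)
Your proof matches the paper's approach exactly: the paper's one‑line proof cites Lemma~\ref{Lem:Gysin}(\ref{Lem:gysin_base}), the compatibility of refined Gysin pullback with proper pushforward under base change, which is precisely the identity $j_{Z,*}\circ\phi^\sharp=\phi^*\circ\iota_*$ you establish in the Gysin half, and your pushforward half is the evident functoriality of proper pushforward applied to $\iota'\circ\overline\psi=\psi\circ j_Z$. One inaccuracy worth flagging: contrary to what you say, $\phi:Z\to T^*X$ \emph{is} a locally complete intersection morphism. Indeed $W=G\times_P\calV$ is smooth, so the conormal bundle $Z=T^*_W(X\times X')$ is smooth, and $T^*X$ is smooth, so $\phi$ is a morphism between smooth quasi-projective varieties and hence lci; the $\phi^\sharp$ of the paper is the refined Gysin of $\phi$ itself taken along $\iota:T^*_GX\hookrightarrow T^*X$ (giving $\phi^\sharp:A_G(T^*_GX)\to A_G(Z_G)$), while the bottom map $\phi^*$ is the same refined Gysin taken along the identity. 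There is no need to invoke an auxiliary lci structure coming from $\calV\subseteq Y\times X'$; once $\phi$ is recognized as lci, Lemma~\ref{Lem:Gysin}(\ref{Lem:gysin_base}) applies verbatim with $f=\phi$, $q=\mathrm{id}_{T^*X}$, $g=\iota$.
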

\begin{proof}
It follows directly from Lemma~\ref{Lem:Gysin}(\ref{Lem:gysin_base}) below.
\end{proof}

\subsection{Base-change for Lagrangian correspondences}
We collect some basic facts about compatibility of push-forward and Gysin pull-back in oriented Borel-Moore homology. We will apply these facts to the setting of Lagrangian correspondences.

Recall that two morphisms $f:Y\to X$ and $q:X'\to X$ are said to be {\it transversal} if
\[
\Tor_k^{\calO_X}(\calO_{X'}, \calO_Y)=0,\,\  \hbox{for any $k>0$.}
\] 

\begin{lemma}[\cite{LM}, Theorem~6.6.6(2), and Lemma~6.6.2]\label{Lem:Gysin}
Consider the following diagram in $\Sch_k$
\Omit{\[
\xymatrix{
H\ar[d]^{g'}\ar[r]^{f''}&W\ar[d]^{g}\\
Y'\ar[r]^{f'} \ar[d]&X'\ar[d]^q\\
Y\ar[r]^{f} &X}
\]}
\[
\xymatrix@R=1.5em{
H \ar[r]^{g'} \ar[d]^{f''}&Y'\ar[r] \ar[d]^{f'}&Y\ar[d]^{f}\\
W \ar[r]^{g} &X'\ar[r]^{q}&X}
\]with all squares Cartesian. 
Assume $f$ is a locally complete intersection morphism.
\begin{enumerate}
\item\label{Lem:gysin_base} If $g$ is proper, then $f^\sharp_{f'}g_*=g'_*f^\sharp_{f''}$.
\item\label{Lem:gysin_two} If $f$ and $q$ are transversal,
then $f^\sharp_{f''}=f'^\sharp_{f''}$.
\end{enumerate}
\end{lemma}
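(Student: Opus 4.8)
The two assertions are, respectively, \cite[Theorem~6.6.6(2)]{LM} and \cite[Lemma~6.6.2]{LM}, so the plan is to reconcile the present formulation---a tower of two Cartesian squares---with the single-square statements of \emph{loc.\ cit.}, recalling just enough of the construction of the refined Gysin pullback to make the reduction transparent. Since $f\colon Y\to X$ is a local complete intersection morphism and $X$ is quasi-projective, one factors $f$ as $Y\xrightarrow{i}P\xrightarrow{\pi}X$ with $i$ a regular closed immersion and $\pi$ smooth, and then $f^\sharp=i^{!}\circ\pi^{*}$ on the relevant fibre products, independently of the factorization (\cite[\S\S~6.4--6.6]{LM}); here $i^{!}$ is the Gysin map produced by deformation to the normal cone and $\pi^{*}$ is the smooth pullback. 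Pasting the two given squares shows the outer rectangle ($H\to W$ lying over $Y\to X$) is Cartesian, so $f^\sharp_{f''}$ is exactly the refined pullback of $f$ with respect to $W\to X$; base-changing the factorization $Y\xrightarrow{i}P\xrightarrow{\pi}X$ along $q$ and along $q\circ g$ then furnishes the factorizations of $f'$ and $f''$ needed below.

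For part~(1) I would verify the identity on the two building blocks separately. Compatibility of the smooth pullback $\pi^{*}$ with proper pushforward across a Cartesian square is among the structural compatibilities of an oriented Borel--Moore homology theory (\cite[\S~2.1, \S~2.2]{LM}); compatibility of the Gysin map $i^{!}$ of a regular closed immersion with proper pushforward across a Cartesian square is \cite[Theorem~6.6.6]{LM}. Inserting the intermediate fibre products $P\times_X X'$ and $P\times_X W$ and composing these two commutations, one obtains $f^\sharp_{f'}\circ g_{*}=g'_{*}\circ f^\sharp_{f''}$.

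For part~(2), transversality of $f$ and $q$---that is, $\Tor_k^{\calO_X}(\calO_{X'},\calO_Y)=0$ for all $k>0$---guarantees first that the base change $f'$ of the l.c.i.\ morphism $f$ is again l.c.i.\ (so that $f'^\sharp$ is defined), and second, using the flatness of $\pi$, that the base change of the regular immersion $i$ along $P\times_X X'\to P$ is again a regular immersion of the same codimension, with normal cone the pullback of $C_{Y/P}$ and no excess. Hence the deformation-to-the-normal-cone construction commutes with this base change, so $(i')^{!}$ agrees with $i^{!}$ on classes supported over $X'$, and therefore $f^\sharp_{f''}=f'^\sharp_{f''}$; this is \cite[Lemma~6.6.2]{LM}. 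I expect no conceptual difficulty here: the one point needing care---and the main thing to get right---is the bookkeeping in part~(2), namely that "transversal" in the sense used here is precisely the Tor-independence hypothesis under which \cite[Lemma~6.6.2]{LM} and the base-change stability of l.c.i.\ morphisms apply, and that this hypothesis propagates from the pair $(f,q)$ to the pair $\bigl(i,\, P\times_X X'\to P\bigr)$.
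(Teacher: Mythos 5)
Your proposal is correct, and it takes the same approach the paper intends: the paper gives no argument beyond the bracketed citation to \cite[Theorem~6.6.6(2) and Lemma~6.6.2]{LM}, treating the two-square form as an immediate reformulation of those single-square statements. Your sketch accurately reconstructs what that citation implicitly relies on—pasting the Cartesian squares, factoring the lci morphism $f$ through a smooth morphism and a regular closed immersion, verifying compatibility with proper pushforward on each factor, and checking that the Tor-independence hypothesis on $(f,q)$ propagates through the factorization so that Lemma~6.6.2 applies—so it supplies detail the paper omits rather than taking a different route.
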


As a consequence, we have the following.
\begin{lemma}\label{Lem:base_change}
Consider the following commutative diagram in which every square is Cartesian.
\[\xymatrix@R=1em {
W' \ar[rr]^{f''}\ar[dd]^{g''} 
\ar[rd]
&&Y'\ar@{-->}[dd]^(0.3){g'} \ar[rd]&\\
&W \ar[rr]\ar[dd]_(0.3){l} &&Y \ar[dd]^{g} \\
Z' \ar[rr]^(0.3){f'} \ar[rd]& &
 X' \ar[rd]^{i}& \\
&Z \ar[rr]_{f} & & X 
}\]
Assume $g$ and $i$ are proper, $f$ and $g$ are transversal, and $f$ is a locally complete intersection morphism. 
Then we have the equality
\[
 f''_* \circ l_{g''}^\sharp=g^\sharp_{g'}\circ f'_*
\] as homomorphisms from $A(Z')$ to $A(Y')$.
\end{lemma}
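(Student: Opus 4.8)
The plan is to deduce the identity from the two parts of Lemma~\ref{Lem:Gysin} after decomposing the cube into Cartesian rectangles. The first step commutes the refined Gysin pull-back along $g$ past the proper push-forward $f'_{*}$; the second step replaces that refined Gysin by the one along $l$, using transversality.

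For the first step I would glue the back face $(W',Y',Z',X')$ and the right face $(Y',Y,X',X)$ along their common edge $g'\colon Y'\to X'$. Since the composite of two Cartesian squares is Cartesian, this produces a commutative diagram
\[
\xymatrix@R=1.2em{
W' \ar[r]^{f''}\ar[d]_{g''} & Y' \ar[r]\ar[d]_{g'} & Y\ar[d]^{g}\\
Z' \ar[r]_{f'} & X' \ar[r]_{i} & X
}
\]
of exactly the shape considered in Lemma~\ref{Lem:Gysin}: here $g\colon Y\to X$ and its base changes $g'$, $g''$ play the role of the locally complete intersection morphism, $f'\colon Z'\to X'$, which is the base change of $f$ along $i$ and hence proper, plays the role of the proper morphism, and $i$ is the remaining base-change map. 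Using the Cartesian identifications $W'=Z'\times_{X'}Y'=Z'\times_X Y$, so that the base change of $g'$ along $f'$ is indeed $g''$, Lemma~\ref{Lem:Gysin}(\ref{Lem:gysin_base}) gives
\[
g^\sharp_{g'}\circ f'_{*}\;=\;f''_{*}\circ g^\sharp_{g''}
\]
as homomorphisms $A(Z')\to A(Y')$.

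For the second step, the front face being Cartesian exhibits $l\colon W\to Z$ as the base change of $g\colon Y\to X$ along $f\colon Z\to X$, and the left face being Cartesian exhibits $g''\colon W'\to Z'$ as the further base change of $l$ along $Z'\to Z$. Since $f$ and $g$ are transversal, Lemma~\ref{Lem:Gysin}(\ref{Lem:gysin_two}), applied with $g$ in the role of the l.c.i.\ morphism and $f$ in the role of the transversal morphism, gives $g^\sharp_{g''}=l^\sharp_{g''}$ as homomorphisms $A(Z')\to A(W')$. Substituting this into the previous display yields $g^\sharp_{g'}\circ f'_{*}=f''_{*}\circ l^\sharp_{g''}$, which is the assertion.

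I expect the only genuine work to be the bookkeeping in the first step: one must check that the outer rectangle $(W',Y,Z',X)$ is Cartesian, and that under the Cartesian identifications each of the four maps named in Lemma~\ref{Lem:Gysin} (the two refined Gysins and the two proper push-forwards) is identified with the intended map of the cube. Granting that, together with the stability of the l.c.i.\ property, properness and transversality under base change, the argument is purely formal.
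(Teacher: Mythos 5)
Your proof is correct and takes essentially the same route as the paper's: both first apply Lemma~\ref{Lem:Gysin}(\ref{Lem:gysin_base}) to the glued back–right rectangle to get $f''_{*}\circ g^\sharp_{g''}=g^\sharp_{g'}\circ f'_{*}$, and then invoke Lemma~\ref{Lem:Gysin}(\ref{Lem:gysin_two}) with the transversality of $f$ and $g$ to replace $g^\sharp_{g''}$ by $l^\sharp_{g''}$. You have merely spelled out the bookkeeping (gluing Cartesian faces, identifying base changes) that the paper leaves implicit, and you correctly observe, as the paper's formulas also tacitly do, that it is $g$ rather than $f$ that must play the role of the l.c.i.\ morphism in Lemma~\ref{Lem:Gysin}.
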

\begin{proof}
By Lemma~\ref{Lem:Gysin}(\ref{Lem:gysin_base}), we have $f''_*\circ g_{g''}^\sharp =g^\sharp_{g'}\circ f'_*$.
By Lemma~\ref{Lem:Gysin}(\ref{Lem:gysin_two}), $g^\sharp_{g''}=l^\sharp_{g''}$. We are done.
\end{proof}

The following is a sufficient condition for two morphisms to be transversal.
\begin{lemma}[\cite{SV2}, Proposition C.1]\label{lem:dim_trans}
Consider the following Cartesian diagram
\[
\xymatrix@R=1.5em{
Y'\ar[r]^{f'} \ar[d]^{g'}&X'\ar[d]^{g}\\
Y\ar[r]^{f} &X.}
\]
Assume $g$ is proper, the map $f'\times g': Y' \to X'\times Y$ is a closed embedding, and assume $\dim(X)+\dim(Y')=\dim(X')+\dim(Y)$. 
Then, $g'$ is proper, and  $f$ and $g$ are transversal. 
\end{lemma}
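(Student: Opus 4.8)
The plan is to treat the two conclusions separately: properness of $g'$ is formal, while transversality is a local commutative-algebra statement obtained by reduction to the diagonal.

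\emph{Properness of $g'$.} Since the square is Cartesian, $g'$ is the base change of the proper morphism $g$ along $f$, hence proper. (The hypothesis on $f'\times g'$ is in fact automatic: $Y'=(X'\times Y)\times_{X\times X}X$ is the pullback of the diagonal $\Delta_X\colon X\hookrightarrow X\times X$, which is a closed immersion since $X$ is separated, and that pullback is exactly $f'\times g'\colon Y'\to X'\times Y$.)

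\emph{Transversality of $f$ and $g$.} Set $d=\dim X$. I use two geometric inputs which hold in all the settings where this lemma is applied: $X$ is smooth (it is a cotangent bundle), and $X'\times Y$ is Cohen--Macaulay (for instance $X'$ and $Y$ are smooth). Smoothness of $X$ makes $\Delta_X\colon X\hookrightarrow X\times X$ a regular closed immersion of codimension $d$, locally on $X\times X$ cut out by a regular sequence $t_1,\dots,t_d$ with Koszul resolution $K_\bullet(t_1,\dots,t_d)\xrightarrow{\ \sim\ }\calO_{\Delta_X}$. Write $h=g\times f\colon X'\times Y\to X\times X$ and $s_j=h^{*}t_j\in\calO_{X'\times Y}$, so that $Y'=h^{-1}(\Delta_X)=V(s_1,\dots,s_d)$. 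Reduction to the diagonal over the smooth base $X$ (a classical computation with the modules at hand) identifies, compatibly with the $\calO_{Y'}$-module structures and with $\Tor_0=\calO_{Y'}$,
\[
\Tor^{\calO_X}_i(\calO_{X'},\calO_Y)\ \cong\ H_i\!\left(K_\bullet(s_1,\dots,s_d;\calO_{X'\times Y})\right)\qquad(i\ge 0),
\]
so that $f$ and $g$ are transversal if and only if $(s_1,\dots,s_d)$ is a regular sequence at every point of $Y'$.

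The final step is to feed in the dimension hypothesis. Rewritten it says $\dim Y'=\dim X'+\dim Y-d=\dim(X'\times Y)-d$; since Krull's principal ideal theorem forces every component of $V(s_1,\dots,s_d)$ to have codimension at most $d$ in $X'\times Y$, the hypothesis says precisely that the codimension is everywhere exactly $d$. Because $X'\times Y$ is Cohen--Macaulay, each local ring $\calO_{X'\times Y,y}$ with $y\in Y'$ is Cohen--Macaulay, the images of $s_1,\dots,s_d$ lie in its maximal ideal, and the quotient has dimension exactly $\dim\calO_{X'\times Y,y}-d$; by the standard criterion for regular sequences in a Cohen--Macaulay local ring, $(s_1,\dots,s_d)$ is then a regular sequence in $\calO_{X'\times Y,y}$. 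Hence $K_\bullet(s_1,\dots,s_d;\calO_{X'\times Y})$ is exact in positive degrees along $Y'$; as the higher $\Tor$ sheaves are supported on $Y'$, they vanish, which is the desired transversality. The only genuine work is the bookkeeping in the reduction-to-the-diagonal identification and in applying the Cohen--Macaulay criterion with enough care about the module structures; the remaining points are formal.
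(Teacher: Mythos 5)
Your argument is correct, and since the paper offers no proof of this lemma at all --- it simply cites \cite[Proposition~C.1]{SV2} --- what you have written is essentially the standard argument that the cited appendix carries out: properness of $g'$ by base change, and Tor-independence via reduction to the diagonal, the Koszul resolution of the regularly embedded $\Delta_X$, and the criterion that in a Cohen--Macaulay local ring a $d$-element sequence in the maximal ideal cutting the dimension down by exactly $d$ is a regular sequence. Two remarks. First, you are right, and it is worth saying explicitly, that the hypotheses you import (smoothness of $X$, Cohen--Macaulayness of $X'\times Y$) are not optional: as transcribed here the lemma is false without them. For example, take $X=\Spec k[x,y]/(xy)$ (two axes crossing at a node) and $X'=Y=Y'=V(x)$ one of the axes; then $g$ is a closed immersion, $(f',g')$ is a closed embedding, and $\dim X+\dim Y'=2=\dim X'+\dim Y$, yet $\Tor_1^{\calO_X}(\calO_{X'},\calO_Y)\cong (x)/(x^2)\neq 0$. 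These hypotheses are present in Schiffmann--Vasserot's original statement and hold in every application in this paper, where the squares are built from conormal bundles of smooth correspondences inside cotangent bundles (see Lemma~\ref{lem:dim_lag} and the proofs of Theorems~\ref{prop:assoc_hall}, \ref{thm:prep action} and \ref{thm: hecke corr}). Second, a small bookkeeping caveat: the displayed dimension equality is a statement about total dimensions, whereas the Cohen--Macaulay criterion must be applied at every point of $Y'$; Krull's height theorem only bounds the codimension of each component of $V(s_1,\dots,s_d)$ from above, so the global equality pins down only the components of maximal dimension. One should therefore either read the dimension hypothesis componentwise or assume $X'\times Y$ and $Y'$ equidimensional --- which they are in all the applications, these spaces being irreducible. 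Your parenthetical observation that the closed-embedding hypothesis on $f'\times g'$ is automatic (it is the base change of the diagonal of the separated scheme $X$) is also correct. With these understandings the proof is complete.
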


One example is given as below.
Let $W_1\subset X_3\times X_2$, $W_2\subset X_3\times X_1$, and $W_3\subset X_2\times X_1$
be subvarieties.
We assume $W_2=W_1\times_{X_2} W_3$. 
We consider the Lagrangian subvarieties 
\[
Z_1=T^*_{W_1}(X_3\times X_2), \,\
Z_2=T^*_{W_2}(X_3\times X_1), \,\
Z_3=T^*_{W_3}(X_2\times X_1).
\]
Assume the intersection $(W_1\times X_1)\cap (X_3\times W_2)$ is transversal in $X_3\times X_2\times X_1$. Thus, by \cite[Theorem 2.7.26]{CG} we have an isomorphism $Z_1\times_{T^*X_2} Z_{3} \cong Z_2$. 
In particular, the following commutative diagram is Cartesian
\[\xymatrix@R=1.5em{
Z_{1}\ar[r]^{\phi_1}&T^*X_2\\
Z_{2}\ar[u]\ar[r]&Z_{3}\ar[u]_{\psi_3}.
}\]

\begin{lemma}\label{lem:dim_lag}
With notations as above,  we have $\dim(Z_1)+\dim(Z_3)=\dim(Z_2)+\dim(T^* X_2)$. In particular, $\phi_1$ and $\psi_3$ are transversal. The map $Z_2\to Z_1$ is proper.
\end{lemma}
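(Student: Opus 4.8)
The plan is to derive all three assertions from Lemma~\ref{lem:dim_trans}, applied to the Cartesian square recalled just before the statement, written in the notation of that lemma as
\[
\xymatrix@R=1.5em{
Z_2 \ar[r]^{f'} \ar[d]_{g'} & Z_3 \ar[d]^{\psi_3} \\
Z_1 \ar[r]^{\phi_1} & T^*X_2 ,
}
\]
so that $Y'=Z_2$, $X'=Z_3$, $Y=Z_1$, $X=T^*X_2$, $f=\phi_1$ and $g=\psi_3$. The only computation is the dimension identity, and it is immediate: the conormal variety of a subvariety of a smooth variety has dimension equal to that of the ambient space, so $\dim Z_1=\dim X_3+\dim X_2$, $\dim Z_2=\dim X_3+\dim X_1$, $\dim Z_3=\dim X_2+\dim X_1$, while $\dim T^*X_2=2\dim X_2$. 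Summing, $\dim Z_1+\dim Z_3=\dim X_3+2\dim X_2+\dim X_1=\dim Z_2+\dim T^*X_2$, which is the first claim and is exactly the numerical hypothesis $\dim X+\dim Y'=\dim X'+\dim Y$ of Lemma~\ref{lem:dim_trans}.

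Next I would check the two structural hypotheses of Lemma~\ref{lem:dim_trans}. First, $\psi_3\colon Z_3=T^*_{W_3}(X_2\times X_1)\to T^*X_2$ is proper; in the Lagrangian-correspondence situations where this lemma is used this is an instance of Lemma~\ref{lem:fiber_lag} (properness of the second projection of a conormal variety onto the cotangent bundle of the closed-embedding factor). Second, $f'\times g'\colon Z_2\to Z_3\times Z_1$ must be a closed embedding: using the isomorphism $Z_1\times_{T^*X_2}Z_3\cong Z_2$ from \cite[Theorem~2.7.26]{CG}, this map is identified with the canonical inclusion $Z_1\times_{T^*X_2}Z_3\hookrightarrow Z_1\times Z_3$ of a fibre product into the product, which is a closed immersion because $T^*X_2$ is separated.

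Granting these hypotheses, Lemma~\ref{lem:dim_trans} immediately gives that $g'$, i.e.\ the projection $Z_2\to Z_1$, is proper and that $f=\phi_1$ and $g=\psi_3$ are transversal, which is everything asserted. I expect no genuine obstacle: the proof is a dimension count together with an appeal to \cite[Theorem~2.7.26]{CG}. The point deserving attention is only that the hypothesis of that theorem holds, namely the transversality of $(W_1\times X_1)\cap(X_3\times W_2)$ in $X_3\times X_2\times X_1$ assumed in the setup; a secondary point is locating the source of properness of $\psi_3$ in the concrete situation at hand, which is built into the Lagrangian-correspondence formalism of \S\ref{subsec: Lag corr}.
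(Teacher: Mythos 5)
Your proof is correct and is essentially the paper's: the authors simply declare the lemma a "direct consequence of Lemma~\ref{lem:dim_trans}" without spelling out the dimension count, the role assignment in the Cartesian square, or the sources of properness of $\psi_3$ and the closed-embedding hypothesis—all of which you supply accurately (in particular, $\dim Z_i$ equals the ambient dimension since each $Z_i$ is the conormal variety of a subvariety of a smooth variety).
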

\begin{proof}
The dimension counting follows from the fact that $Z_1$, $Z_2$, and $Z_3$ are all Lagrangian subvarieties. 

In Lemma~\ref{lem:dim_trans}, taking $Y$ to be $Z_1$, $Y'$ to be $Z_2$, $X$ to be $T^*X_2$, and $Z_3$ to be $X'$, the properness of $Z_2\to Z_1$ follows.
\end{proof}

\section{The formal cohomological Hall algebras}\label{sec:formalCoHA}
In this section, we review in the algebraic setting the cohomological Hall algebra defined by Kontsevich and Soibelman in \cite{KS}.
The idea of studying CoHA from arbitrary oriented Borel-Moore homology theory goes back to \cite[\S~3.7]{KS}.
We spell out in an explicit fashion the shuffle formula for this CoHA, with  emphasis  on the dependence  on the formal group law.
\subsection{The formal cohomological Hall algebras}\label{subsec:2.1}
Let $k$ be an arbitrary field.
Let $Q$ be a quiver with vertex set $I$ and arrow set $H$. 
We assume in this paper that $I$ and $H$ are finite sets. 
For $h\in H$, we denote by $\inc(h)$ (resp. $\out(h)$) the incoming (resp. outgoing) vertex of $h$.
For any dimension vector $v=(v^i)_{i\in I}\in \bbN^I$, the representation space of $Q$ with dimension vector $v$ is denoted by $\Rep(Q,v)$. That is, let $V=\{V^i\}_{i\in I}$ be an $I$-tuple of $k$-vector spaces with dimension vector $\dim(V^i)=v^i$.  Then, 
\[
\Rep(Q, v):=\bigoplus_{h\in H}\Hom_{k}(V^{\out(h)},V^{\inc(h)}).
\]
\Omit{
Let  $a_{ij}\in \Z_{\ge 0}$ be the number of arrows from vertex $i$ to vertex $j$. Choosing a basis for each $V^i$, for each $i\in I$, 
then the representation space of $Q$ is isomorphic to
\[
\Rep(Q, v)\cong \prod_{i, j\in I}k^{a_{ij}v^iv^j}.
\]
}
The algebraic group $G_{v}:=\prod_{i\in I}\GL(v^i)$ acts on $\Rep(Q,v)$ by conjugation.
For any $v_1, v_2\in \bbN^I$ such that $v:=v_1+v_2$, let $V_1\subset V$ be an $I$-tuple of vector subspaces of $V$ with dimension vector $v_1$. The parabolic subgroup of $G_v$ that preserves $V_1$ will be denoted by 
$P\subset G_v$. Let $L=G_{v_1}\times G_{v_2}$ be the standard Levi-subgroup in $P$.

Let $A$ be an oriented Borel-Moore homology theory as in \S~\ref{sec:prelim} and $(R,F)$ be the formal group law associated to $A$. 
We consider the $\bbN^I$-graded $R$-module $\calH^{\For}_A(Q):=\bigoplus_{v\in \bbN^I}A_{G_v}(\Rep(Q,v))$ (or simply $\calH^{\For}$ when $A$ and $Q$ are understood from the context).
For each pair of dimension vectors $v_1,v_2\in\bbN^I$, we define the Hall multiplication 
\begin{equation}\label{equ: mulp}
m_{v_1, v_2}: A_{G_{v_1}}^*(\Rep(Q,v_1))\otimes A_{G_{v_2}}^*(\Rep(Q,v_2))
\to A_{G_{v}}^*(\Rep(Q,v))
\end{equation}
 as in \cite{KS}.
Start with the K{\"u}nneth morphism (refereed to as the external product in \cite[Ch 2]{LM}, and \cite[Proposition 5.4]{ZZ14} in the equivariant setting)
\[
\otimes: A_{G_{v_1}}^*(\Rep(Q,v_1))\otimes A_{G_{v_2}}^*(\Rep(Q,v_2))
\to A_{G_{v_1}\times G_{v_2}}^* (\Rep(Q,v_1)\times \Rep(Q,v_2)). \]
Define
\[\Rep(Q)_{v_1, v_2}:=\{x\in \Rep(Q,v)\mid x(V_1)\subset V_1\}\subset \Rep(Q,v).\]
We have the following correspondence of $G_v$-equivariant varieties: 
\[
\begin{xymatrix}{
G_{v}\times_{P}\big(\Rep(Q,v_1)\times \Rep(Q,v_2)\big) &
G_{v}\times_{P}(\Rep(Q)_{v_1, v_2})\ar[r]^(0.55){\eta} \ar[l]_(0.37){p} &
\Rep(Q, v_1+v_2)},
\end{xymatrix}\]
where 
$p$ is the projection, and $\eta: (g, x)\mapsto gxg^{-1}$ is the action by conjugation.
Consider the following 3 morphisms: 
\begin{enumerate}
\item The isomorphism 
\[
A_{G_{v_1}\times G_{v_2}}^* (\Rep(Q,v_1)\times \Rep(Q,v_2)) \cong
A_{G_v}\left(G_{v}\times_{P}\big(\Rep(Q,v_1)\times \Rep(Q,v_2)\big)\right).
\]
\item The pullback $p^*$:
\[
p^*: A_{G_v}\left(G_{v}\times_{P}\big(\Rep(Q,v_1)\times \Rep(Q,v_2)\big)\right)
\to A_{G_v}(G_{v}\times_{P}(\Rep(Q)_{v_1, v_2})).
\]
\item The pushforward $\eta_*$:
\[
\eta_*: A_{G_v}(G_{v}\times_{P}(\Rep(Q)_{v_1, v_2}))
\to A_{G_v}(\Rep(Q,v_1+v_2)).
\]\end{enumerate}
The map $m_{v_1, v_2}$  \eqref{equ: mulp} is defined as the composition of the K{\"u}nneth morphism with the above 3 morphisms.
\begin{prop}
The maps $m_{v_1, v_2}$ for  $v_1,v_2\in\bbN^I$ fit together, defining an $\bbN^I$-graded associative $R$-algebra structure on $\calH^{\For}_A(Q)$.
\end{prop}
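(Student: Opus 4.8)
The plan is to establish the two required properties — that the $m_{v_1,v_2}$ assemble into a single bilinear operation on $\calH^{\For}$, and that this operation is associative — by purely diagrammatic arguments using the base-change and compatibility statements collected in \S\ref{subsec: Lag corr} and \S\ref{subsec:pushforward}, mirroring \cite{KS} but keeping track of the formal group law only insofar as it enters through the refined Gysin maps and pushforwards. Grading is automatic: by construction $m_{v_1,v_2}$ has image in $A_{G_v}(\Rep(Q,v))$ with $v=v_1+v_2$, so the total operation is $\bbN^I$-graded, and the unit is the fundamental class $1\in A_{G_0}(\Rep(Q,0))=A(\pt)$, for which the unit axioms follow because the relevant correspondences degenerate to identities (the parabolic $P$ becomes all of $G_v$ when $v_1$ or $v_2$ is $0$). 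The substance is associativity.

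First I would set up, for a triple $v_1,v_2,v_3$ with $v=v_1+v_2+v_3$, the two "iterated" correspondences computing $m_{v_1+v_2,v_3}\circ(m_{v_1,v_2}\otimes\id)$ and $m_{v_1,v_2+v_3}\circ(\id\otimes m_{v_2,v_3})$. Each is a composition of a K\"unneth isomorphism, an induction isomorphism $A_L\cong A_{G_v}(G_v\times_P(-))$, a flat pullback $p^*$ along a vector-bundle projection, and a proper pushforward $\eta_*$ along a conjugation map; doing this twice produces, in each case, a space fibered over $\Rep(Q,v)$ built from the variety $\Rep(Q)_{v_1,v_2,v_3}$ of representations preserving a fixed flag $V_1\subset V_1\oplus V_2\subset V$, twisted up from the appropriate parabolic. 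The key geometric observation — already the crux in \cite{KS} — is that both iterations are canonically identified with one and the same correspondence attached to the full flag and the parabolic $P_{123}$ stabilizing it: the two partial-flag parabolics $P_{12\,|\,3}$ and $P_{1\,|\,23}$ both contain $P_{123}$, and $G_v\times_{P_{123}}(-)$ factors through either one. So the plan is to reduce the identity of the two composites to the commutativity of a single diagram of spaces over $\Rep(Q,v)$, in which the four vertical/horizontal faces are: a vector-bundle projection, a proper conjugation map, and base changes of these.

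Then I would verify that this diagram of spaces has the features needed to invoke Lemma~\ref{Lem:Gysin} and Lemma~\ref{Lem:base_change}: the relevant squares are Cartesian, the pullbacks in play are either flat (smooth, even vector-bundle) pullbacks or refined Gysin pullbacks along regular embeddings, and the pushforwards are proper, so that "pushforward commutes with flat pullback" and "pushforward commutes with refined Gysin pullback" (the $f^\sharp g_*=g'_*f^\sharp$ statement of Lemma~\ref{Lem:Gysin}(\ref{Lem:gysin_base})) apply and let me slide the $\eta_*$'s past the $p^*$'s into a common normal form. I expect the main obstacle to be bookkeeping rather than conceptual: one must choose the twisted products and the partial-flag varieties so that \emph{all} intermediate squares are genuinely Cartesian (this is where the flag compatibility $\Rep(Q)_{v_1,v_2,v_3}$ sitting inside both $G_v\times_{P_{12|3}}\Rep(Q)_{v_1+v_2,v_3}$ and $G_v\times_{P_{1|23}}\Rep(Q)_{v_1,v_2+v_3}$ must be checked carefully), and so that transversality holds where Lemma~\ref{Lem:Gysin}(\ref{Lem:gysin_two}) is used to identify the two refined Gysin maps; a dimension count as in Lemma~\ref{lem:dim_trans} should supply this. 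Finally, the presence of the formal group law is harmless: $F$ enters $m_{v_1,v_2}$ only through the Chern-class/Gysin structure of $A$, which already satisfies the stated compatibilities, so no separate FGL identity is needed here (it will matter, and be addressed, in the shuffle description of \S\ref{sec:formalShuf}). Associativity then follows by comparing the two normal forms, which agree on the nose.
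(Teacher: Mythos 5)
Your plan is correct and follows the same route as the paper, which itself simply defers to Theorem~1 of \cite{KS} for this statement and spells out the argument only later for the harder cases. One remark worth making: for the formal CoHA all the pullbacks $p^*$ in the iterated correspondences are smooth (vector-bundle) pullbacks and the pushforwards $\eta_*$ are proper, so the basic axiom that proper pushforward commutes with smooth pullback across a Cartesian square already suffices, and the refined Gysin and transversality lemmas (Lemma~\ref{Lem:Gysin}(\ref{Lem:gysin_two}), Lemma~\ref{lem:dim_trans}) that you invoke are only genuinely needed for the preprojective CoHA (Theorem~\ref{prop:assoc_hall}) and the geometric shuffle multiplication (Proposition~\ref{prop:assoc_shuffle}), where $\mu_v^{-1}(0)$ is singular and the correspondences are honest Lagrangian intersections rather than vector bundles.
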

This is essentially Theorem~1 of \cite{KS}, replacing the usual cohomology by oriented Borel-Moore homology theory $A$.

\begin{definition}
The $\bbN^I$-graded $A(\pt)$-module $\calH^{\For}_A(Q)$ endowed with multiplication $m_{v_1,v_2}$ is the formal cohomological Hall algebra (formal CoHA) associated to $A$ and $Q$.
\end{definition}

\subsection{Formula of the formal Hall multiplication}
Now let $A$ be an oriented Borel-Moore homology theory, with  $\bbQ\subseteq A(\pt)$, extended to an equivariant theory by Totaro's construction. 
In this subsection, we use the pushforward formula in  \S~\ref{subsec:pushforward} to give an explicit formula of the multiplication $m_{v_1, v_2}$. The space $\Rep(Q, v)$ is contractible. 
Thus, we have the isomorphism 
\[ 
A_{G_v}(\Rep(Q,v))\cong A_{G_v}(\pt)\cong R
[\![\lambda^i_j]\!]_{i\in I, j=1,\dots, v^i}^{\fS_{v}},\] where 
$R:=A(\pt)$, and $\{\lambda_{j}^i\}_{j=1, \dots, v^i}$ are the Chern roots of the tautological bundle $\calR(v^i)$ on $\Gr(v^i ,\infty)$.
We now describe the  multiplication map
\[m_{v_1,v_2}:\calH^{\For}_{v_1}\otimes\calH^{\For}_{v_2}\to\calH^{\For}_{v_1+v_2}.\]
It is convenient to write $\calH^{\For}_{v_1}$ as $R
[\![\lambda'^i_j]\!]_{i\in I, j=1,\dots, v_1^i}^{\fS_{v_1}}$, and  $\calH^{\For}_{v_2}$ as $R[\![\lambda''^i_s]\!]_{i\in I, s=1,\dots, v_2^i}^{\fS_{v_2}}$. We view $\calH^{\For}_{v_1}\otimes \calH^{\For}_{v_2}$ as a subalgebra of $R
[\![\lambda^i_j]\!]_{i\in I, j=1,\dots, (v_1+v_2)^i}$, by sending $\lambda'^i_s$ to $\lambda^i_s$, and $\lambda''^i_t$ to $\lambda^i_{t+v_1^i}$.
Then, the multiplication map $m_{v_1, v_2}$ can be considered as a map 
\[
m_{v_1, v_2}: 
R[\![\lambda'^i_j]\!]_{i\in I, j=1,\dots, v_1^i}^{\fS_{v_1}}\otimes R
[\![\lambda''^i_t]\!]_{i\in I, t=1,\dots, v_2^i}^{\fS_{v_2}} \to R
[\![\lambda^i_j]\!]_{i\in I, j=1,\dots, v^i}^{\fS_{v}}.
\]
The following formula of $m_{v_1, v_2}$ is essentially Theorem 2 in \cite{KS}. 
\begin{prop}
For $f_i\in \mathcal{H}_{v_i}^{\For}$, $i=1, 2$, the product $m_{v_1, v_2}(f_1, f_2)$, as a symmetric function in $R
[\![\lambda^i_j]\!]_{i\in I, j=1,\dots, (v_1+v_2)^i}^{\fS_{v_1+v_2}}$, is given by the following formula:
\[
\sum_{\sigma\in \Sh(v_1,v_2)}\sigma \Big( f_1(\lambda\rq{}^i_ s)f_2(\lambda\rq{}\rq{}^j_t )
\frac{ \prod_{i, j\in I}\prod_{s=1}^{v_1^i}
\prod_{t=1}^{v_2^j}(\lambda\rq{}\rq{}^j_t-_{F}\lambda\rq{}^i_s)^{a_{ij}} }{\prod_{i\in I}\prod_{s=1}^{v_1^i}
\prod_{t=1}^{v_2^i}(\lambda\rq{}\rq{}^i_t-_{F}\lambda\rq{}^i_s)}\Big),
\] where $a_{ij}$ is the number of arrows from vertex $i$ to vertex $j$.
\end{prop}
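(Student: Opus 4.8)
The plan is to unwind the definition of $m_{v_1,v_2}$ one arrow at a time, using homotopy invariance to replace every equivariant homology group in the correspondence by a ring of symmetric power series, and then to recognize the pushforward along $\eta$ as a Grassmannian-bundle pushforward (Corollary~\ref{pushforward grass}, i.e.\ the formal-group-law Gysin formula of Proposition~\ref{prop:grass_push}) twisted by an Euler class. This is the same reduction as in \cite[Theorem~2]{KS}, only with ordinary cohomology replaced by $A$.

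First I would fix good systems $EG_{v,N}$ and work at finite level $N$, checking at the end that the answer is independent of $N$ since all the isomorphisms below are compatible with the structure maps of the good systems. Because $\Rep(Q,v)$ is a linear $G_v$-representation, the Borel construction $\Rep(Q,v)\times_{G_v}EG_{v,N}$ is a vector bundle over $BG_{v,N}$, so homotopy invariance gives $A_{G_v}(\Rep(Q,v))\cong A(BG_v)\cong R[\![\lambda^i_j]\!]^{\fS_v}$. The spaces $\Rep(Q,v_1)\times\Rep(Q,v_2)$ and $\Rep(Q)_{v_1,v_2}$ carry $P$-actions through which $P$ acts linearly; since the unipotent radical $P/L$ is an affine space, $BP\simeq BL$, and using the Chern roots $\lambda'^i_\bullet$ of $\calV_1^i$ and $\lambda''^i_\bullet$ of $\calV_2^i$ on $BL=\prod_{i\in I}\Grass(v_1^i,\infty)\times\Grass(v_2^i,\infty)$, all three groups entering the correspondence get identified with $R[\![\lambda'^i_s,\lambda''^i_t]\!]^{\fS_{v_1}\times\fS_{v_2}}$. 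I expect that under these identifications the K\"unneth map followed by the induction isomorphism $A_L(-)\cong A_P(-)\cong A_{G_v}(G_v\times_P-)$ sends $f_1\otimes f_2$ to $f_1(\lambda')f_2(\lambda'')$; and that $p$, which after the Borel construction is the pullback of the vector-bundle projection $\Rep(Q)_{v_1,v_2}\to\Rep(Q,v_1)\times\Rep(Q,v_2)$ (with fibre $\bigoplus_{h\in H}\Hom(V_2^{\out h},V_1^{\inc h})$), induces the identity on $A$.

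The heart of the argument is the computation of $\eta_*$. The map $\eta$ is proper: at level $N$ it factors as the closed embedding $G_v\times_P\Rep(Q)_{v_1,v_2}\hookrightarrow G_v\times_P\Rep(Q,v)\cong(G_v/P)\times\Rep(Q,v)$ followed by the projection to $\Rep(Q,v)$, with $G_v/P=\prod_{i\in I}\Grass(v_1^i,v^i)$ projective. After the Borel construction this becomes a factorization $\calE_{\le}\hookrightarrow\pi^*\calE\to\calE$, where $\pi\colon BP\to BG_v$ is the product over $i\in I$ of the Grassmannian bundles $\Grass(v_1^i,\calR(v^i))$ of \eqref{equ:BL iso Grass}, $\calE$ is the $\Rep(Q,v)$-bundle on $BG_v$, and $\calE_{\le}\subset\pi^*\calE$ is the sub-bundle of representations preserving $V_1$, with quotient $\calN:=\pi^*\calE/\calE_{\le}\cong\bigoplus_{h\in H}\sHom(\calV_1^{\out h},\calV_2^{\inc h})$. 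The embedding $\calE_{\le}\hookrightarrow\pi^*\calE$ is the zero locus of the tautological section of $\calN$, so by the self-intersection and projection formulas its pushforward is, under the identifications $A(\calE_{\le})\cong A(BP)\cong A(\pi^*\calE)$, multiplication by the Euler class $e(\calN)$; computing the Chern roots $\{\lambda''^{\inc h}_t-_F\lambda'^{\out h}_s\}$ of the summands of $\calN$ with the formal group law gives $e(\calN)=\prod_{i,j\in I}\prod_{s=1}^{v_1^i}\prod_{t=1}^{v_2^j}(\lambda''^j_t-_F\lambda'^i_s)^{a_{ij}}$, the numerator in the statement. The remaining map $\pi^*\calE\to\calE$ is the base change of $\pi$ along the bundle projection $\calE\to BG_v$, hence under the vector-bundle identifications it is computed by $\pi_*$, which by Corollary~\ref{pushforward grass} applied factor-wise (using $\Sh(v_1,v_2)=\prod_{i\in I}\Sh(v_1^i,v_2^i)$) sends $g$ to $\sum_{\sigma\in\Sh(v_1,v_2)}\sigma\bigl(g\big/\prod_{i\in I}\prod_{s,t}(\lambda''^i_t-_F\lambda'^i_s)\bigr)$. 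Composing the three steps gives $m_{v_1,v_2}(f_1,f_2)=\pi_*\bigl(f_1(\lambda')f_2(\lambda'')\,e(\calN)\bigr)$, which is exactly the asserted formula.

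The hard part will be the bookkeeping around the factorization of $\eta_*$ through $\pi^*\calE$ and the verification that the homotopy-invariance and induction isomorphisms really do intertwine $p$ and $\eta$ with the maps described on the polynomial side; the Chern-root manipulations with $F$ are then routine. As an alternative to carrying this out directly, one can transport the cycle-level identity of \cite[Theorem~2]{KS} through Proposition~\ref{prop:grass_push}, which already encodes the passage from intersection theory to a general $A$.
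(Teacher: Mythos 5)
Your proposal is correct and matches the paper's own proof: you decompose $\eta_*$ into the pushforward along the closed embedding $\Rep(Q)_{v_1,v_2}\hookrightarrow\Rep(Q,v)$ (which, being cut out as the zero locus of a tautological section, contributes the Euler class $\prod_{i,j\in I}\prod_{s,t}(\lambda''^j_t-_F\lambda'^i_s)^{a_{ij}}$ of the normal bundle $\bigoplus_{h\in H}\sHom(\calR(v_1^{\out h}),\calR(v_2^{\inc h}))$) followed by the Grassmannian-bundle projection $\pi\colon BP\to BG_v$ (computed by Corollary~\ref{pushforward grass}), which yields the shuffle sum and the denominator. The paper's proof uses exactly this two-step factorization of $\eta_*$; you have simply been more explicit about the Borel-construction bookkeeping (working at finite level $N$, identifying the fibre of $p$, and factoring through $\pi^*\calE$), which is welcome detail but not a different route.
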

\begin{proof}
Let $i: \Rep(Q)_{v_1,v_2}\inj \Rep(Q,v)$ be the embedding. 
The pushforward $\eta_*$ is the composition of the following two morphisms:
\begin{align*}
&i_*: A_{P}( \Rep(Q, v)_{v_1, v_2}) \to A_{P}( \Rep(Q, v))\\
&\pi_*: A_{P}( \Rep(Q, v)) \cong A_{G_{v}}( G_v\times_{P}\Rep(Q, v))
\to A_{G_{v}}( \Rep(Q, v)).
\end{align*}

The pushforward $i_*$ in the equivariant oriented Borel-Moore theory is given by $i_*(f)=f\cdot e_{v_1, v_2}$, where $e_{v_1, v_2}$ is the equivariant Euler class of the normal bundle of $i$. The embedding $i$ induces the following embedding of vector bundles on $BL:=\Grass(v_1, \infty) \times \Grass(v_2, \infty)$:
\[\Rep(Q)_{v_1, v_2}\times_{(G_{v_1}\times G_{v_2})}(EG_{v_1}\times EG_{v_2})\hookrightarrow \Rep(Q,v)\times_{(G_{v_1}\times G_{v_2})}(EG_{v_1}\times EG_{v_2}).\]
By definition, $e_{v_1, v_2}$ is the Euler class of the quotient bundle. 
We identify the quotient bundle with  \[\bigoplus_{h\in H}\sHom_{\calO}(\calR(v_1^{\out(h)}),\calR(v_2^{\inc(h)}))\cong\bigoplus_{i,j\in I}(\calR(v_1^i)^*\otimes \calR(v_2^j))^{a_{ij}}, \] where $\calR(r)$ is the tautological bundle of $\Grass(r, \infty)$. Thus, the equivariant Euler class is \[
e_{v_1, v_2}=\prod_{i,j\in I}\prod_{s=1}^{v_1^{i}}\prod_{t=1}^{v_2^{j}}(\lambda^j_t-_{F}\lambda^i_s)^{a_{ij}}.
\]

As a consequence, the multiplication map $m_{v_1, v_2}$ sends $f_1\in\calH^{\For}_{v_1}$ and $f_2\in\calH^{\For}_{v_2}$ to $\pr_*(f_1\cdot f_2\cdot e_{{v_1, v_2}})$, where $\pr$ is the projection $\Grass(v_1, \calR(v)) \to\Gr(v,\infty)$. Applying Proposition~\ref{prop:grass_push} to  $\pr$, we get the  multiplication formula. 
\end{proof}

\begin{example}
Let $Q$ be a quiver with one single vertex and $a$ loops. The formal cohomological Hall algebra is $\calH^{\For}=\bigoplus_n \calH^{\For}_n$ with $ \calH^{\For}_n= \QQ[\![\lambda_1,\dots,\lambda_n]\!]^{\fS_n}$. 
The Hall multiplication of $f_1\in\calH^{\For}_r$ and $f_2\in\calH^{\For}_{n-r}$ becomes \begin{align*}
m(f_1, f_2)=&\sum_{\{J\subset [1\dots, n], |J|=r\}}
\frac{ f_1(\lambda_j)f_2(\lambda_i)_{j\in J, i\notin J}
}{\prod_{j\in J, i\notin J}(\lambda_i-_{F}\lambda_j)) }\cdot \prod_{j\in J, i\notin J}(\lambda_i-_{F}\lambda_j)^{a} \\
=&\sum_{\{\sigma\in \hbox{Sh}(r, n-r)\}}
\sigma\cdot \Big(\frac{ f_1(\lambda_1, \dots, \lambda_r)f_2(\lambda_{r+1}, \dots, \lambda_n)}{\prod_{1\leq j\leq r, (r+1) \leq i\leq n}(\lambda_i-_{F}\lambda_j)}\prod_{1\leq j \leq r,  (r+1) \leq i\leq n}(\lambda_i-_{F}\lambda_j)^{a} \Big).
\end{align*}
\end{example}
\Omit{
\begin{remark}
Recall we have the isomorphism
\[A_{G_{v_1}\times G_{v_2}} (\Rep(Q,v_1)\times \Rep(Q,v_2))\cong
A_{G_{v_1,v_2}}(\Rep(Q)_{v_1, v_2}).\]
By definition, the multiplication map $m_{v_1, v_2}$ is the pushforward of the following composition:
\[
p\circ i: EG_{v_1,v_2}\times_{G_{v_1,v_2}}(\Rep(Q)_{v_1, v_2}) \to E{G_v}\times_{G_v}\Rep(Q, v). 
\]
In Section \S\ref{subsec:inf grass}, we identify the space $EG_v$ with 
$EL\cong E G_{v_1,v_2}$. Thus, 
\[
EG_{v_1,v_2}\times_{G_{v_1,v_2}}(\Rep(Q)_{v_1, v_2}) \cong EG_v\times_{G_v} ({G_v}\times_{G_{v_1,v_2}}\Rep(Q)_{v_1, v_2}). 
\]
The multiplication map is now the pushforward of the map
\[
\psi: EG_v \times_{G_v} ({G_v}\times_{G_{v_1,v_2}}\Rep(Q)_{v_1, v_2})\to 
E{G_v}\times_{G_v}\Rep(Q, v). 
\]
We will use this description $m_{v_1, v_2}=\psi_*$  later. 
\end{remark}
\textcolor{blue}{The setup is different now. Maybe delete $EG$!!!}
 }

\section{The generalized shuffle algebras}\label{sec:formalShuf}
Let  $(R, F)$ be any formal group law. 
In this section, we define the generalized formal shuffle algebra $\calS\calH$ associated to the formal group law $(R,F)$ and the quiver $Q$. 
In the shuffle algebra considered in this section,
there are two quantization parameters $t_1, t_2$. Geometrically these two quantization parameters come from the two dimensional torus $T=\G_m^2$ action on the cotangent bundle of representation space of the quiver $Q$.

\subsection{The formal shuffle algebra}\label{subsec:formal shuffle}
The formal shuffle algebra $\calS\calH$ is an $\bbN^I$-graded $R[\![t_1,t_2]\!]$-algebra. As an $R[\![t_1,t_2]\!]$-module, we have $\calS\calH=\bigoplus_{v\in\bbN^I}\calS\calH_v$. The degree $v$ piece is
\[\calS\calH_v:=R[\![t_1,t_2]\!]
[\![\lambda^i_s]\!]_{i\in I, s=1,\dots, v^i}^{\fS_v}.\] 
For any $v_1$ and $v_2\in \bbN^I$, we consider $\calS\calH_{v_1}\otimes \calS\calH_{v_2}$ as a subalgebra of \[R[\![t_1,t_2]\!][\![\lambda^i_j]\!]_{i\in I, j=1,\dots, (v_1+v_2)^i}\] by sending $\lambda'^i_s$ to $\lambda^i_s$, and $\lambda''^i_t$ to $\lambda^i_{t+v_1^i}$.
Set:
\begin{equation}\label{equ:fac1}
\fac_1:=\prod_{i\in I}\prod_{s=1}^{v_1^i}
\prod_{t=1}^{v_2^i}\frac{\lambda\rq{}^i_s-_{F}\lambda\rq{}\rq{}^i_t+_Ft_1+_Ft_2}{\lambda\rq{}\rq{}^i_t-_{F}\lambda\rq{}^i_s}. 
\end{equation}
To each arrow $h\in H$, we associate two integers, $m_{h}$ and $m_{h^*}$. Define
\begin{small}\begin{equation}
\label{equ:fac2}
\fac_2:=\prod_{h\in H}\Big(
\prod_{s=1}^{v_1^{\out(h)}}
\prod_{t=1}^{v_2^{\inc(h)}}
(\lambda_t^{'' \inc(h)}-_{F}\lambda_s^{'\out(h)}+_{F} m_h \cdot t_1)
\prod_{s=1}^{v_1^{\inc(h)}}
\prod_{t=1}^{v_2^{\out(h)}}
(\lambda_t^{''\out(h)}-_{F}\lambda_s^{'\inc(h)}+_{F}m_{h^*}\cdot t_2)
\Big),
\end{equation}\end{small}
where $m \cdot t=t+_{F} t +_{F} \cdots +_{F} t$ is the summation of $m$ terms, for $m\in \N$.
When $m_h=m_{h^*}=1$, the formula \eqref{equ:fac2} can be simplified as
\begin{equation*}
\fac_2=\prod_{i, j\in I}
\prod_{s=1}^{v_1^i}
\prod_{t=1}^{v_2^j}(\lambda\rq{}\rq{}^j_t-_{F}\lambda\rq{}^i_s+_F t_1)^{a_{ij}}
(\lambda\rq{}\rq{}^j_t-_{F}\lambda\rq{}^i_s+_F t_2)^{a_{ji}}, 
\end{equation*} where $a_{ij}$ is the number of arrows from $i$ to $j$ of quiver $Q$.
\Omit{
An equivalent formula for $\fac_2$ is: 
\begin{scriptsize}
\begin{align*}
\fac_2=\prod_{i, j\in I}
&\Bigg(\prod_{\begin{smallmatrix}h\in H, \\
\inc{h}=j, \out(h)=i\end{smallmatrix}}
\prod_{s=1}^{v_1^i}
\prod_{t=1}^{v_2^j}(\lambda\rq{}\rq{}^j_t-_{F}\lambda\rq{}^i_s+_Fm_h \cdot t_1)\Bigg)
\cdot\Bigg(
\prod_{\begin{smallmatrix}h\in H, \\
\inc{h}=i, \out(h)=j\end{smallmatrix}}
\prod_{s=1}^{v_1^i}
\prod_{t=1}^{v_2^j}
(\lambda\rq{}\rq{}^j_t-_{F}\lambda\rq{}^i_s+_Fm_{h^*}\cdot t_2)\Bigg).
\end{align*}
\end{scriptsize}}

The multiplication of $f_1(\lambda')\in \calS\calH_{v_1}$ and $f_2(\lambda'')\in \calS\calH_{v_2}$ is defined to be
\begin{equation}\label{shuffle formula}
\sum_{\sigma\in\Sh(v_1,v_2)}\sigma(f_1\cdot f_2\cdot \fac_1\cdot \fac_2)\in R[\![t_1,t_2]\!]
[\![\lambda^i_j]\!]_{i\in I, j=1,\dots, (v_1+v_2)^i}^{\fS_{v_1+v_2}}.
\end{equation}

\subsection{The geometric construction of the generalized shuffle algebra}
\label{subsec: shuffle geometric}

With notations as before, let $Q=(I, H)$ be a quiver. Let $\overline{Q}=Q\sqcup Q^{\op}$ be \textit{the double} of $Q$. That is, $\overline{Q}$ has the same vertex set as $Q$ and whose set of arrows is a disjoint union of the sets of arrows of $Q$ and $Q^{\op}$, the opposite quiver. To be more precise, the set of arrows of $\overline{Q}$ is $H\sqcup H^{\op}$. There is a bijection $H\to H^{\op}$, such that, for each $h\in H$, there is a reverse arrow $h^*\in H^{\op}$, with 
$\out(h^*)=\inc(h)$ and $\inc(h^*)=\out(h)$. We have the following isomorphisms
\[
\Rep(\overline{Q},v)\cong \Rep(Q, v)\times \Rep(Q^{\op}, v)\cong T^*\Rep(Q,v).
\]
The algebraic group $G_v$ acts on $T^*\Rep(Q,v)$ by conjugation. 
The torus $T=\Gm^2$ acts on $T^*\Rep(Q,v)$ the same way as in \cite[(2.7.1) and (2.7.2)]{Nak99}. Let $a$ be the number of arrows in $Q$ from vertex $i$ to $j$. We fix a numbering $h_1, \dots, h_a$ of the arrows in $Q$. The corresponding  reversed arrows in $Q^{\op}$ are labelled by $h_1^*, \cdots, h_a^*$. 
For $h_p \in H$, and $B=(B_p) \in \Hom(V^i, V^j)$, $B^*=(B^*_p) \in \Hom(V^j, V^i)$, define the $T=\G_m^2$--action by
 \[
 t_1\cdot B_p:=t_1^{m_{h_p}} B_p, \,\  t_2\cdot B_p^*:=t_2^{m_{h_p^*}} B_p^*.
 \] 
We assume the $T$-action on $T^*\Rep(Q,v)$ satisfies the following.
\begin{assumption} \label{Assu:WeghtsGeneral}
For any $h\in H$, $t_1^{m_h}t_2^{m_{h^*}}$ is a constant, i.e., does not depend on $h\in H$.
\end{assumption}

\begin{remark}\label{rmk:weights}
One example of  $T$ action satisfying Assumption~\ref{Assu:WeghtsGeneral} is the following. 
For any pair of vertices $i$ and $j$ with arrows  $h_1, \dots, h_a$ from $i$ to $j$, let the pairs of integers be $m_{h_p}=a+2-2p$ and $m_{h_p^*}=-a+2p$. 
Let the $T$ action on $T^*\Rep(Q,v)$ to factor through $\Gm$, i.e.,  via $t_1=t_2=\hbar/2$ where $\hbar$ is the weight of $\Gm$.
This choice is essential in \S~\ref{sec:Yangian} and \ref{sec:Yangian_action}.
 \end{remark}
For any pair of dimension vectors $v_1,v_2\in\bbN^I$, we consider a map \[m^S_{v_1, v_2}:A_{G_{v_1}\times T}(\Rep(\overline{Q}, v_1))\otimes_{R[\![t_1,t_2]\!]}A_{G_{v_2}\times T}(\Rep(\overline{Q}, v_2))\to A_{G_{v_1+v_2}\times T}(\Rep(\overline{Q}, v_1+v_2)),\] 
defined as follows. Let $v=v_1+v_2$.
In the Lagrangian correspondence formalism in \S~\ref{subsec: Lag corr}, we take $Y$ to be $\Rep(Q,v_1)\times \Rep(Q,v_2)$, $X'$ to be $\Rep(Q,v_1+v_2)$, and $\calV$ to be $\Rep(Q)_{v_1,v_2}$. Recall that
\[\Rep({Q})_{v_1, v_2}:=\{x\in \Rep({Q},v)\mid x(V_1)\subset V_1\}\subset\Rep(Q,v).\]
We write $G:=G_{v}$, and $P \subset G_v$, the parabolic subgroup preserving the subspace $V_1$.
Let $L:=G_{v_1}\times G_{v_2}$ be the Levi subgroup of $P$. 

As in  \S~\ref{subsec: Lag corr}, we have the following Lagrangian correspondence of $G\times T$-varieties:
\begin{equation*}
\xymatrix{
G\times_PT^*Y\ar@{^{(}->}[r]^{\iota}&T^*(G\times_PY)&Z \ar[l]_(0.3){\phi} \ar[r]^(0.3){\psi} & \Rep(\overline{Q}, v_1+v_2).
}
\end{equation*}
We now define the multiplication map $m_{v_1, v_2}^{S}$. We first have the K\"unneth isomorphism.
\[
\otimes: A_{G_{v_1}\times T}(\Rep(\overline{Q}, v_1))\otimes_{R[\![t_1,t_2]\!]}A_{G_{v_2}\times T}(\Rep(\overline{Q}, v_2))\cong A_{G_{v_1}\times G_{v_2}\times T}(\Rep(\overline{Q}, v_1)\times \Rep(\overline{Q}, v_2)). 
\] 
Consider the following sequence of morphisms:
\begin{enumerate}
\item The isomorphism:
$
A_{G_{v_1}\times G_{v_2}\times T}(\Rep(\overline{Q}, v_1)\times \Rep(\overline{Q}, v_2))
\cong A_{G\times T}(G\times_{P} T^*Y).
$
\item The pushforward map:
$
\iota_*: A_{G\times T}(G\times_P T^*Y) \to A_{G\times T}(T^*(G\times_PY)).
$
\item Following the notations in the Lagrangian correspondence diagram, we have
\[
\xymatrix{
A_{G\times T}(T^*(G\times_PY)) \ar[r]^(0.6){\phi^*} &A_{G\times T}(Z) \ar[r]^(0.3){\psi_*} &A_{G\times T}(\Rep(\overline{Q}, v)).
}\]
Note that $\psi$ is a proper morphism, hence the push-forward $\psi_*$ is well-defined.
\end{enumerate}
We define map $m^S_{v_1, v_2}$ to be the composition of the 
K\"unneth isomorphism with the above sequence of 3 morphisms.

\begin{prop}
\label{prop:assoc_shuffle}
The multiplication maps $m^S_{v_1, v_2}$ are associative.
\end{prop}
\begin{proof}
The proof follows the same idea as in \cite[Proposition 7.5]{SV2}. For the convenience of the readers, we include a proof here. 
Fix a flag $V_{1}\subset V_{2} \subset V$, where $V_i$ is an $I$-tuple subspaces of $V$ of dimension vector $v_1+\cdots+v_i$.  Let $P_1$, $P_{12}$ be the parabolic subgroups $P_1:=\{g \in G_v | g(V_1)\subset V_1\}$, and 
$
P_{12}:=\{g\in G_v \mid g(V_1)\subset V_1, g(V_2)\subset V_2\}.$

We first define the following varieties.
\begin{itemize}
\item Let $X_1$ be the set of quadruples $(F_1, F_2, a)$, where $F_1\subset F_2\subset V$ is a flag such that $F_1\cong V_1, F_2\cong V_2$, and $a\in \Rep(Q, v)$ is an endomorphism of the vector space 
$F_1\oplus (F_2/F_1)\oplus (V/F_2)$.
\Omit{$a$ is of the form 
$\left( \begin{array}{ccc}
* & 0 & 0\\
0 & * & 0\\
0&0& *
\end{array}\right)$.
In other words, \[X_1:= G_{v}\times_{P_{12}} (\Rep(Q, v_1)\times \Rep(Q, v_2-v_1))\times \Rep(Q, v-v_2).\]}
\item Let $X_2$ be the set of pairs $(F_1, a)$, where $F_1\subset V$, such that $F_1\cong V_1$ and $a \in \Rep(Q, v)$ is an endomorphism of the vector space $F_1\oplus (V/F_1)$. \Omit{$a$ is of the form 
$\left( \begin{array}{ccc}
* & 0 & 0\\
0 & * & *\\
0&*& *
\end{array}\right)$
In other words, $X_2:= G_{v}\times_{P_{1}} (\Rep(Q, v_1)\times \Rep(Q, v-v_1))$.}
\item $X_3=\Rep(Q, v)$.
\end{itemize}
We then define the following correspondences. For $i=1, 2, 3$, let $W_i$ be the following
\begin{align*}
W_1=\{(F_1&, a)\mid  F_1\subset V, \hbox{such that $F_1\cong V_1$, and } a(F_1) \subset F_1, \hbox{for $a\in \Rep(Q, v)$}\}.\\
\Omit{
&\hbox{a is of the form $\left( \begin{array}{ccc}
* & * & *\\
0 & * & *\\
0&*& *
\end{array}\right)$}\\}
W_2=\{(F_1&, F_2, a)\mid  F_1\subset F_2 \subset V, a \in \Rep(Q, v), \hbox{such that $F_j\cong V_j$, and }
a(F_j) \subset F_j,  \hbox{for $j=1, 2$}\}.
\\
\Omit{
&\hbox{a is of the form $\left( \begin{array}{ccc}
* & * & *\\
0 & * & *\\
0&0& *
\end{array}\right)$}\\}
W_3=\{(F_1&, F_2, a)\mid  F_1\subset F_2 \subset V, a\in \Rep(Q, v), \hbox{such that $F_j\cong V_j$, for $j=1, 2$, and }\\
& a\in \End(F_1\oplus V/F_1), \hbox{$a$ preserves the subspace $\{0\}\oplus F_2/F_1$}\}.
\Omit{
&\hbox{a is of the form $\left( \begin{array}{ccc}
* & 0 & 0\\
0 & * & *\\
0&0& *
\end{array}\right)$}}
\end{align*}

Let $Z_i$ be the conormal bundle of $W_i$. Consider the following commutative diagram with the square being Cartesian.
\[\xymatrix@R=1.5em{
T^*X_3&Z_{1}\ar[l]_{\psi_1}\ar[r]^{\phi_1}&T^*X_2\\
&Z_{2}\ar[u]\ar[lu]^{\psi_2}\ar[dr]_{\phi_2}\ar[r]&Z_{3}\ar[d]^{\phi_3}\ar[u]_{\psi_3}\\
&&T^*X_1.
}\]
By Lemma~\ref{Lem:base_change} and Lemma~\ref{lem:dim_lag}, we have $I_2=I_1\circ I_3$, where
\begin{align*}
I_1&= \psi_{1*}\circ\phi_1^*: A_{G\times T}(T^* X_2) \to A_{G\times T}(T^* X_3).\\
I_2&= \psi_{2*}\circ\phi_2^*: A_{G\times T}(T^* X_1) \to A_{G\times T}(T^* X_3).\\
I_3&= \psi_{3*}\circ\phi_3^*: A_{G\times T}(T^* X_1) \to A_{G\times T}(T^* X_2).
\end{align*}

An argument similar to \cite[Lemma 3.4]{L91} implies the associativity.
\end{proof}

Now assume $A$ is an arbitrary oriented Borel-Moore homology theory, with $\bbQ\subseteq A(\pt)$, extended to an equivariant Borel-Moore homology by Totaro's construction (see Example \ref{exam:EOCT}). For any $v\in\bbN^I$, we identify \[\calS \calH_v:=R[\![t_1,t_2]\!]
[\![\lambda^i_s]\!]_{i\in I, s=1,\dots, v^i}^{\fS_v}\] with the $A_T(\pt)$-module $A_{G_v\times T}(\Rep(\overline{Q}, v))$. Such identification comes from the extended homotopy equivalence property of $A$, i.e.,  
\[
A_{G_v\times T}(\Rep(\overline{Q}, v))\cong A_T(\pt)\otimes A(BG_v)\cong R[\![t_1,t_2]\!]
[\![\lambda^i_s]\!]_{i\in I, s=1,\dots, v^i}^{\fS_v}.
\]

We  identify the algebraically defined formal shuffle algebra multiplication in \S~\ref{subsec:formal shuffle} with the geometrically defined $m^S_{v_1,v_2}$ as follows. 
\begin{prop}\label{thm: shuffle formula}
Assume $A$ is an arbitrary oriented Borel-Moore homology theory, with $\bbQ\subseteq A(\pt)$, extended to an equivariant Borel-Moore homology by Totaro's construction. Under the identification 
\[\calS \calH_v:=R[\![t_1,t_2]\!]
[\![\lambda^i_s]\!]_{i\in I, s=1,\dots, v^i}^{\fS_v} \cong A_{G_v\times T}(\Rep(\overline{Q}, v)),\] 
the map $m^S_{v_1,v_2}$ is equal to the multiplication map  \eqref{shuffle formula} of the shuffle algebra.
\end{prop}
\begin{proof}
Let $e_{v_1,v_2}^{\iota}$ be the equivariant Euler class of the normal bundle of the embedding
\[\iota: G_{v}\times_{G_{v_1,v_2}}T^*Y\inj T^*(G_{v}\times_{G_{v_1,v_2}}Y). \] 
\Omit{We have the isomorphism
 \begin{align*}
&A_{G\times T}(G\times_PT^*Y) \cong A_{G\times T}(G/P)\\
&A_{G\times T}(T^*(G \times_{P}Y))\cong A_{G\times T}(T^*G/P).
\end{align*}}
The normal bundle of $\iota$ is isomorphic to $T^*G/P$ as a bundle over the Grassmannian $G/P$. Also $T^*G/P$ is in turn isomorphic to $\bigoplus_{i\in I}(\calR(v_1^i)\otimes \calR(v_2^i)^*)$. 
Thus we have
 \[e_{v_1,v_2}^{\iota}=\prod_{i\in I}\prod_{s=1}^{v_1^i}
\prod_{t=1}^{v_2^i}(\lambda\rq{}^i_s-_{F}\lambda\rq{}\rq{}^i_t+_Ft_1+_Ft_2).\]
Therefore, $\iota_*$ is multiplication by $e_{v_1,v_2}^{\iota}$. 


The composition $Z:=T^*_W(X\times X')\to W\to G/P$ is a vector bundle, where the second map is the natural projection.
It induces a morphism $EG\times_GZ\to EG\times_G(G/P)\cong BL$.
Recall that \[BL\cong\Gr(v_1,\infty)\times\Gr(v_2,\infty)\cong EG/P\xrightarrow{p}BG.\]
Note that $EG\times_GT^*\Rep(Q,v)$ is a vector bundle over $BG$. Let $p^*T^*\Rep(Q,v)$ be the pull-back vector bundle on $BL$ via the projection $p:BL\to BG$. 
The natural map $\psi:EG\times_GZ\to EG\times_GT^*\Rep(Q,v)$ factors through $\psi_1:EG\times_GZ\to p^*T^*\Rep(Q,v)$ by the universality of the pullback. 
We summarize these notations in the following diagram
\begin{equation*}
\xymatrix@C=1em @R=1.5em{
EG\times_G Z \ar@/^1.5pc/[drr]^{\psi}\ar@{_{(}-->}[dr]^{\psi_1}\ar@/_1pc/[ddr]\\
&p^*T^*\Rep({Q}, v)\ar[d]^{\pi} \ar[r]^{p'}& EG\times_GT^*\Rep({Q}, v)\ar[d]^{\pi'}\\
&BL
\ar[r]^(0.3)p & BG=\Grass(v, \infty).
}
\end{equation*}The pushforward $ \psi_*$ is the composition $p'_* \circ  \psi_{1*} $.  

The  map $\psi_1: EG\times_GZ\hookrightarrow p^* T^*\Rep(Q, v)$ is an embedding of vector bundles on $BG$.
The pushforward $\psi_{1*}$ is the multiplication by the equivariant Euler class $e_{v_1, v_2}^{\psi_1}$ of the normal bundle of the embedding $\psi_1$. 
The normal bundle to $\psi_1$ can be identified with 

\begin{align*}
&\sHom_{\overline{Q}}(\calR(v_1), \calR(v_2))\\
=&\bigoplus_{h\in H}\sHom_{\calO}\big(\calR(v_1^{\out(h)}),\calR(v_2^{\inc(h)})\big)\bigoplus\bigoplus_{h\in H^{op}}\sHom_{\calO}\big(\calR(v_1^{\out(h)}),\calR(v_2^{\inc(h)})\big)
\end{align*}
over
$BL=\Grass(v_1, \infty)\times \Grass(v_2, \infty)$. 
Here, the first copy $\bigoplus_{h\in H}\sHom_{\calO}(\calR(v_1^{\out(h)}),\calR(v_2^{\inc(h)}))$ is considered as a subspace of $\Rep(Q,v)$, and the second copy $\bigoplus_{h\in H^{op}}\sHom_{\calO}(\calR(v_1^{\out(h)}),\calR(v_2^{\inc(h)}))$ as a subspace of $\Rep(Q^{op},v)$.
Thus, the equivariant Euler class of the normal bundle to $\psi_1$ is
\[
e_{v_1, v_2}^{\psi_1}=
\prod_{h\in H}\Big(
\prod_{s=1}^{v_1^{\out(h)}}
\prod_{t=1}^{v_2^{\inc(h)}}
(\lambda_t^{'' \inc(h)}-_{F}\lambda_s^{'\out(h)}+_{F} m_h\cdot t_1)
\prod_{s=1}^{v_1^{\inc(h)}}
\prod_{t=1}^{v_2^{\out(h)}}
(\lambda_t^{''\out(h)}-_{F}\lambda_s^{'\inc(h)}+_{F} m_{h^*} \cdot t_2)
\Big).\]

So far, we obtained that $\iota_*$ is multiplication by $e_{v_1, v_2}^{\iota}$, and $\psi_{1*}$ is multiplication by $e_{v_1, v_2}^{\psi_1}$. 
The map $p'$ is the pull-back of $p$ via the projection $\pi'$. Therefore, $p'$ is also a Grassmannian bundle,
and consequently $p'_*$ is given by Proposition~\ref{prop:grass_push}.
Putting all the above together, the map $m_{v_1, v_2}^S$ is given by exactly the same formula as  \eqref{shuffle formula}.
\end{proof}

\begin{remark}
When $A$ is the equivariant Chow group with $\bbQ$-coefficients, under the identification, 
\[\calS \calH_v:=\bbQ[t_1,t_2]
[\lambda^i_s]_{i\in I, s=1,\dots, v^i}^{\fS_v} \cong A_{G_v\times T}(\Rep(\overline{Q}, v)),\] 
Proposition~\ref{thm: shuffle formula} still holds after replacing the formal power series by power series.
Similar for the equivariant $K$-theory, which will be spelled out in detail below. 
\end{remark}

\subsection{An example: $K$-theoretical shuffle algebra}
As an example, we take $A$ to be the $K$-theory with rational coefficients.  
We relate the shuffle algebra $\calS\calH$ with the Feigin-Odesskii shuffle algebra ( see \cite{FO}) in the Jordan quiver case.

For any line bundle $p: L\to X$ on a smooth quasi-projective variety $X$. Let $s: X\to L$ be the zero-section. The resolution of $s_* \calO_X$:
\[
1\to p^*L^{\vee} \to \calO_L \to s_* \calO_X \to 0
\] implies that the first Chern class of $L$ in the K-theory is 
\begin{equation}\label{eqn_K_Chern}
c_1(L):=s^*s_*(\calO_X)=1-L^\vee.
\end{equation}
As a consequence, the formal group law $(K(\pt), F_m)$  is  
\begin{equation}\label{eqn:gm1}
F_m(u, v)=u+_{F_m}v=u+v-uv.
\end{equation}
In particular, \begin{equation}\label{eqn:gm2}
u-_{F_m}v=\frac{u-v}{1-v}.
\end{equation}

For $r\in\N$, let $\calR(r)$ be the tautological vector bundle of $\Grass(r,\infty)$ and let $\Fl(\calR(r))$ be the associate full flag bundle. We identify $K(\Grass(r,\infty))$ with $\QQ[z_1^\pm,\dots,z_r^\pm]^{\fS_r}$ with $z_i$ being the $i$-th tautological line bundle (rather than its 1st Chern class) of $\Fl(\calR(r))$. 
Let $s_i$ be the one-dimensional natural representation of the $i$-th copy of $\Gm$ in $T$. We have the following.
\begin{corollary} Let $Q$ be any quiver, and $A$ be the $K$-theory with rational coefficients.
Assume $m_h=m_{h^*}=1$ for any $h\in H$. 
For any $v\in\bbN^I$, identify $\calS\calH_v$ with $\QQ[(z_1^{i})^{\pm},\dots,(z_{v^i}^{i})^{\pm}]_{i\in I}^{\fS_v}$ as above. 
For any pair of dimension vectors $v_1,v_2\in\bbN^I$, and $f_i\in \calS \calH_{v_i}$ for $i=1, 2$,
$m_{v_1,v_2}(f_1\otimes f_2)$ is equal to
\[
\sum_{\sigma\in\Sh(v^1,v-v^2)}\sigma\left(
f_1(z\rq{}^i_s)f_2(z\rq{}\rq{}^i_t)
\prod_{i\in I}\prod_{s=1}^{v_1^i}\prod_{t=1}^{v_2^i}
\frac{1- \frac{z\rq{}\rq{}^i_t}{z\rq{}^i_s s_1s_2}}
{1-\frac{z\rq{}^i_s}{z\rq{}\rq{}^i_t}}
\cdot\prod_{i, j\in I}\prod_{s=1}^{v_1^i}\prod_{t=1}^{v_2^j}
(1-\frac{z\rq{}^i_s}{z\rq{}\rq{}^j_t s_1})^{a_{ij}}
(1-\frac{z\rq{}^i_s}{z\rq{}\rq{}^j_t s_2})^{a_{ji}}\right).\]
\end{corollary}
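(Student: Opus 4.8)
The plan is to deduce the formula directly from Theorem~\ref{thm: shuffle formula} by specializing the formal group law to the multiplicative one and performing a change of variables. By that theorem, for any oriented Borel--Moore homology $A$ extended via Totaro's construction the geometric multiplication $m^S_{v_1,v_2}$ coincides with the formal shuffle multiplication \eqref{shuffle formula}, which is assembled from $\fac_1$ of \eqref{equ:fac1} and $\fac_2$ of \eqref{equ:fac2}. Since $m_h=m_{h^*}=1$ for all $h\in H$, the factor $\fac_2$ is the simplified product $\prod_{i,j\in I}\prod_{s,t}({\lambda''}-_{F_m}{\lambda'}+_{F_m}t_1)^{a_{ij}}({\lambda''}-_{F_m}{\lambda'}+_{F_m}t_2)^{a_{ji}}$ recorded right after \eqref{equ:fac2}. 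So it remains only to rewrite $\fac_1\cdot\fac_2$ in the multiplicative coordinates.

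First I would record the formal group data. For $A$ equal to $K$-theory with $\QQ$-coefficients the formal group law is the multiplicative one $F_m(u,v)=u+v-uv$, with formal inverse $-_{F_m}v=-v/(1-v)$ and hence $u-_{F_m}v=(u-v)/(1-v)$, as noted just before the statement. The point is that under the substitution sending a Chern root $\lambda$ to $1-L^{\vee}$ for the corresponding tautological line bundle $L$ --- so that $\lambda^i_s=1-(z^i_s)^{-1}$ and $t_k=1-s_k^{-1}$ --- the operation $+_{F_m}$ corresponds to tensor product of line bundles and $-_{F_m}$ to dualization: if Chern roots $\mu,\nu$ correspond to line bundles $M,N$, then $\mu-_{F_m}\nu$ corresponds to $M\otimes N^{\vee}$, i.e.\ equals $1-[M\otimes N^{\vee}]^{-1}$ in the $z$-variables.

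Next I would substitute. A factor of $\fac_1$ of the form $\lambda'-_{F_m}\lambda''+_{F_m}t_1+_{F_m}t_2$ corresponds to the line bundle with class $z'\,s_1 s_2\,(z'')^{-1}$, hence equals $1-z''/(z' s_1 s_2)$; its denominator $\lambda''-_{F_m}\lambda'$ equals $1-z'/z''$. Likewise a factor $\lambda''-_{F_m}\lambda'+_{F_m}t_1$ of $\fac_2$ equals $1-z'/(z'' s_1)$ and $\lambda''-_{F_m}\lambda'+_{F_m}t_2$ equals $1-z'/(z'' s_2)$. Inserting these products, indexed over $i,j\in I$ and the appropriate ranges of $s,t$, into \eqref{shuffle formula} (and leaving $f_1,f_2$ untouched) yields exactly the stated formula; the passage from the power-series ring $R[\![t_1,t_2]\!][\![\lambda]\!]$ to the Laurent ring $\QQ[z^{\pm}]$ is the content of Remark~\ref{rmk:K_Chow}, and the splitting of $\calR(r)$ into the line bundles $z_1,\dots,z_r$ on the associated flag bundle is justified by the splitting principle behind Lemma~\ref{lem: sym_grass}.

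I do not expect any serious obstacle: the geometric content is entirely absorbed into Theorem~\ref{thm: shuffle formula}, and what remains is bookkeeping. The one point that genuinely requires care is the dictionary between a tautological line bundle viewed as a $K$-theory class (the variable $z$) and its first Chern class $1-z^{-1}$ (the variable $\lambda$) --- in particular keeping track of the inverses and of which argument is subtracted in $-_{F_m}$ --- so that each elementary factor lands on the correct side of the fractions in the final formula.
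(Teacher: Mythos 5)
Your proposal is correct and follows the same route as the paper, which simply invokes Theorem~\ref{thm: shuffle formula} and leaves the substitution implicit; you have merely made the change of variables $\lambda = 1 - z^{-1}$, $t_k = 1 - s_k^{-1}$ and the translation of $+_{F_m}$/$-_{F_m}$ into tensor/dual of line bundles explicit, and the resulting factors match $\fac_1$ and $\fac_2$ of the stated formula.
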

\begin{proof}
Recall that $t_i$ is the first Chern class of $s_i$, hence by \eqref{eqn_K_Chern}, we have $t_i=1-\frac{1}{s_i}$, for $i=1,2$. 
Similarly, $\lambda_s^i=1-\frac{1}{z_s^i}$.
Plugging-in to Proposition~\ref{thm: shuffle formula} while using \eqref{eqn:gm1} and \eqref{eqn:gm2}, we get this corollary.
\end{proof}

\begin{example}
Let $Q$ be the Jordan quiver and $A$ be the $K$-theory with rational coefficients. Then as a vector space, $\calS\calH\cong \bigoplus_n \QQ[s_1^\pm,s_2^\pm][z_1^\pm,\dots,z_n^\pm]^{\fS_n}$. The multiplication $\calS\calH_r\otimes\calS\calH_{n-r}\to \calS\calH_n$ sends $f_1(z_1,\dots,z_r)\otimes f_2(z_{r+1},\dots,z_n)$ to 
\[
\sum_{\{\sigma\in \hbox{Sh}(r, n-r)\}}
\sigma\cdot \Big(f_1 \cdot f_2 \cdot \prod_{1\leq j \leq r,  (r+1) \leq i\leq n}\frac{(1-\frac{z_j}{z_is_1s_2})(1-\frac{z_i}{z_js_1})(1-\frac{z_i}{z_js_2})}{1-\frac{z_i}{z_j}} \Big).
\]

Let $\mathcal{SH}'$ be the shuffle algebra defined by Feigin-Odesskii in \cite{FO} (see also \cite{FT}). By definition, $\mathcal{SH}'\cong \bigoplus\QQ[q_1^\pm,q_2^\pm][z_1^\pm,\dots,z_n^\pm]^{\fS_n}$, with multiplication $f_1(z_1,\dots,z_r)\otimes f_2(z_{r+1},\dots,z_n)$ defined by
\begin{align*}
\sum_{\{\sigma\in \hbox{Sh}(r, n-r)\}}
\sigma\cdot \Big(f_1 \cdot f_2 \cdot \prod_{1\leq j \leq r,  (r+1) \leq i\leq n}\frac{(1-q_1z_i/z_j)(1-q_2z_i/z_j)}{(1-z_i/z_j)(1-q_1q_2z_i/z_j)} \Big).
\end{align*}
Define an algebra homomorphism $\mathcal{SH}'\to \mathcal{SH}$ by
\begin{align*}
& q_i\mapsto s_i^{-1}\hbox{, for }i=1, 2;\\
& f(z_1, \dots, z_n) \mapsto f(z_1, \dots, z_n) Y_n\hbox{, for }n\in\bbN, 
\end{align*}
where
\[
Y_n=\prod_{1\leq j<i\leq n}\big((1-q_{1}q_2\frac{z_j}{z_i})(1-q_{1}q_2\frac{z_i}{z_j})\big).
\] 
This map is well-defined, since the factor $Y_n$ is invariant under the action of $\hbox{Sh}(r, n-r)$.
A straight-forward calculation shows that this is an algebra homomorphism.
\Omit{
The action of $\mathcal{SH}'$ on the $K$-theory of the Hilbert scheme of points on $\Aff^2$ has been constructed by Feigin and Tsymbaliuk in \cite{FT} and independently studied by Schiffmann and Vasserot in \cite{SV2}.}
\end{example}

When $Q$ is the cyclic quiver, and $A$ is the $K$-theory, it is shown in \cite{Neg} that the shuffle algebra is isomorphic to the positive half of the quantum toroidal algebra of type $A$.

\section{The preprojective cohomological Hall algebras}
\label{preproj CoHA}
In this section, we introduce the main object to be studied in this paper, the preprojective CoHA.  The representations of this algebra  are realized as $A$-homology of Nakajima quiver varieties. Now we describe the multiplication of  this algebra.

\subsection{Hall multiplication}\label{subsec:HallMulti}
Notations are as before. Let $Q=(I, H)$ be a quiver, (not necessarily edge-loop free) and let  $v\in\bbN^I$ be a dimension vector. The group $G_{v}:=\prod_{i\in I}\GL_{v^i}$ acts on the cotangent space $T^*\Rep(Q, v)$ via conjugation. Let $\fg_{v}$ be the Lie algebra of $G_v$. Let 
\[\mu_{v}: T^*\Rep(Q,v)\to \fg^*_{v}, \,\ (x, x^{*})\mapsto [x, x^{*}]\] be the moment map. 
Note that the closed subvariety $\mu_v^{-1}(0)\subset T^*\Rep(Q,v)$ could be singular in general.

We consider the $\bbN^I$-graded $R[\![t_1, t_2]\!]$-module $^A\calP(Q):=\bigoplus_{v\in \N^I}\null^A\calP_v(Q)$ with $^A\calP_v(Q)= A_{G_v\times T}(\mu_v^{-1}(0))$.
When both $A$ and $Q$ are understood from the context, we will use the abbreviations $\calP$ and $\calP_v$.
For each pair $v_1,v_2\in\bbN^I$, we define the multiplication map $m_{v_1, v_2}^{P}:\calP_{v_1}\otimes_{R[\![t_1t_2]\!]}\calP_{v_2}\to \calP_{v_1+v_2}$. 

We write $v=v_1+v_2$.
We consider the Lagrangian correspondence formalism in \S~\ref{subsec: Lag corr}, with the following specializations: Take $Y$ to be $\Rep(Q,v_1)\times \Rep(Q,v_2)$, $X'$ to be $\Rep(Q,v)$ and  $\calV$ to be $\Rep(Q)_{v_1,v_2}$. As in \S~\ref{subsec: shuffle geometric}, we write $G:=G_{v}$ for short. Let $P\subset G_v$ be the parabolic subgroup and $L:=G_{v_1}\times G_{v_2}$ be the Levi subgroup of  $P$. 
Recall in \S~\ref{subsec: Lag corr}, we have the following correspondence of $G\times T$-varieties:
\begin{equation*}
\xymatrix@R=1.5em{
G\times_P\left(\mu_{v_1}^{-1}(0)\times\mu_{v_2}^{-1}(0)\right)\ar@{=}[r]\ar@{^{(}->}[d]&T^*_GX\ar@{^{(}->}[d]&Z_G\ar[l]_{\overline\phi}\ar[r]^{\overline\psi}\ar@{^{(}->}[d]&\mu_{v}^{-1}(0)\ar@{^{(}->}[d]\\
G\times_PT^*Y\ar@{^{(}->}[r]^{\iota}&T^*(G\times_PY)&Z \ar[l]_(0.3){\phi} \ar[r]^(0.3){\psi} & \Rep(\overline{Q}, v).
}
\end{equation*}

We have the K\"unneth morphism (which may or may not be an isomorphism).
\[
\otimes: \calP_{v_1}\otimes_{R[\![t_1,t_2]\!]}\calP_{v_2}\to 
A_{G_{v_1}\times G_{v_2}\times T}(\mu_{v_1}^{-1}(0)\times \mu_{v_2}^{-1}(0)). 
\] 
Consider the following sequence of morphisms:
\begin{enumerate}
\item The natural projection $G_{v_1}\times G_{v_2} \twoheadleftarrow P$ is a homotopy equivalence.
It induces the following isomorphism
$
A_{G_{v_1}\times G_{v_2}\times T}(\mu_{v_1}^{-1}(0)\times \mu_{v_2}^{-1}(0))
\cong A_{P \times T}(\mu_{v_1}^{-1}(0)\times \mu_{v_2}^{-1}(0))$.
We have the following isomorphism
\[
A_{P \times T}(\mu_{v_1}^{-1}(0)\times \mu_{v_2}^{-1}(0))\cong A_{G \times T}\big(G\times_P(\mu_{v_1}^{-1}(0)\times \mu_{v_2}^{-1}(0))\big).\]
\item Following the Lagrangian correspondence diagram, we have
\[
\xymatrix{
A_{G\times T}(T^*_GX) \ar[r]^{\phi^\sharp} &A_{G\times T}(Z_G) \ar[r]^(0.4){\overline\psi_*} &A_{G\times T}(\mu_v^{-1}(0)) \cong \calP_{v},
} \]
where $\phi^\sharp$ is the Gysin pullback of $\phi$. 
\end{enumerate}
The  map $m_{v_1, v_2}^P$ is defined to be the composition of the above morphisms.

\begin{theorem}\label{prop:assoc_hall}
The maps $m^{\calP}_{v_1,v_2}$ fit together to define an associative algebra structure on $\calP$.
\end{theorem}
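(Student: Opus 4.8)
The plan is to run the same strategy as the proof of Proposition~\ref{prop:assoc_shuffle}, but with the flat pullback $\phi^*$ replaced throughout by the refined Gysin pullback $\phi^\sharp$ and with the ambient cotangent bundles replaced by the zero moment-map loci; the additional inputs are the base-change package for refined Gysin pullbacks (Lemma~\ref{Lem:Gysin} and Lemma~\ref{Lem:base_change}) together with Lemma~\ref{lem:fiber_lag} and Lemma~\ref{lem:gysin+pullback_comm}, which control the preimages of the zero fibres and relate the restricted operators to their ambient counterparts. Concretely, I would fix a two-step flag $0\subset V_1\subset V_1\oplus V_2\subset V$ of $I$-tuples of vector spaces with $\dim V_1=v_1$, $\dim(V_1\oplus V_2)=v_1+v_2$ and $\dim V=v:=v_1+v_2+v_3$, take $G:=G_v$ with its parabolics $P_{12}\subset P_1$ as in the proof of Proposition~\ref{prop:assoc_shuffle}, and form the twisted products $X_1,X_2,X_3$ and the correspondences $W_1\subset X_3\times X_2$, $W_2\subset X_3\times X_1$, $W_3\subset X_2\times X_1$ with $W_2=W_1\times_{X_2}W_3$ introduced there, along with the conormal bundles $Z_i$ and their $G$-restricted versions $Z_{i,G}=Z_i\cap(T^*_GX_?\times T^*_GX_?)$ in the sense of \S~\ref{subsec: Lag corr}. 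The $T=\G_m^2$-equivariance is harmless and just carried along; the unit is $1\in\calP_0=A_T(\pt)$, and the unit axioms for $m^P$ are immediate from the degenerate form of the correspondence when $v_1=0$ or $v_2=0$.

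The first step is to observe that, after the initial K\"unneth and homotopy-equivalence identifications and the isomorphism of Lemma~\ref{lem:iso of G}, the definition of $m^P$ in \S~\ref{subsec:HallMulti} exhibits each of $(f_1\cdot f_2)\cdot f_3$ and $f_1\cdot(f_2\cdot f_3)$ as a composition of operators of the shape
\[
I_{i,G}:=\overline\psi_{i\,*}\circ\phi_i^{\sharp}\colon A_{G\times T}(T^*_GX_?)\longrightarrow A_{G\times T}(T^*_GX_?),
\]
built from $Z_{1,G}$ and $Z_{3,G}$ on one side, and from $Z_{2,G}$ on the other, in the notation of the diagram in the proof of Proposition~\ref{prop:assoc_shuffle}. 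Hence it suffices to establish the correspondence-level identity $I_{2,G}=I_{1,G}\circ I_{3,G}$: the remaining bookkeeping, reconciling the various $G\times_P$-induction identifications and K\"unneth maps attached to the two bracketings, is formal and identical to the end of the proof of Proposition~\ref{prop:assoc_shuffle} and to \cite[Lemma~3.4]{L91} (compare \cite[Proposition~7.5]{SV2}). Since the K\"unneth map $\calP_{v_1}\otimes_{R[\![t_1,t_2]\!]}\calP_{v_2}\to A_{G_{v_1}\times G_{v_2}\times T}(\mu_{v_1}^{-1}(0)\times\mu_{v_2}^{-1}(0))$ need not be an isomorphism, all identities must be phrased on the modules $\calP_{v_i}$ themselves rather than on generators, which causes no difficulty as every map in sight is $A_T(\pt)$-linear.

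For the identity $I_{2,G}=I_{1,G}\circ I_{3,G}$, the key geometric fact is that transversality of $(W_1\times X_1)\cap(X_3\times W_2)$ in $X_3\times X_2\times X_1$ gives, via \cite[Theorem~2.7.26]{CG}, an isomorphism $Z_1\times_{T^*X_2}Z_3\cong Z_2$, hence a Cartesian square relating $Z_1,Z_2,Z_3$ and $T^*X_2$; by Lemma~\ref{lem:fiber_lag} the preimages of the zero moment-map fibres under the correspondence maps are exactly the $Z_{i,G}$, so this square restricts to a Cartesian square relating $Z_{1,G},Z_{2,G},Z_{3,G}$ and $T^*_GX_2$. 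The dimension count of Lemma~\ref{lem:dim_lag} (through Lemma~\ref{lem:dim_trans}) then supplies the properness and transversality needed to invoke Lemma~\ref{Lem:base_change} — which provides the compatibility of the refined Gysin pullback with proper pushforward and with change of transversal l.c.i.\ base, via Lemma~\ref{Lem:Gysin} — and yields $I_{2,G}=I_{1,G}\circ I_{3,G}$. Lemma~\ref{lem:gysin+pullback_comm} then certifies that the $I_{i,G}$ are the correct restrictions of the ambient operators $\psi_{i*}\circ\phi_i^{*}$, so nothing is lost in working on the zero fibres.

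I expect the main obstacle to be bookkeeping rather than anything conceptual: because $\mu_v^{-1}(0)$ and the $Z_{i,G}$ are singular, the entire argument must be run with the refined Gysin formalism of \cite{LM}, and one has to check carefully, square by square in both bracketings, that each square is genuinely Cartesian and that the l.c.i.\ and transversality hypotheses of Lemmas~\ref{Lem:Gysin}, \ref{Lem:base_change} and \ref{lem:dim_trans} hold \emph{after} restriction to these singular loci, as well as that the isomorphisms of Lemma~\ref{lem:iso of G} are compatible with all the induction and restriction maps appearing. Once the diagrams are assembled correctly, each individual step is a direct application of one of the lemmas of \S~\ref{subsec: Lag corr} and \S~\ref{subsec: shuffle geometric}, exactly as in the shuffle case.
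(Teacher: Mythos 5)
Your proposal matches the paper's proof in all essentials: it reuses the two-step flag, parabolics $P_{12}\subset P_1$, the varieties $X_i$, $W_i$, $Z_i$ from the proof of Proposition~\ref{prop:assoc_shuffle}, identifies $T^*_GX_i$ via Lemma~\ref{lem:iso of G}, reduces to $\overline{I_2}=\overline{I_1}\circ\overline{I_3}$, and derives this from the Cartesian square together with Lemma~\ref{Lem:base_change} and Lemma~\ref{lem:dim_lag}, finishing with the Lusztig-style bookkeeping. The extra remarks about the K\"unneth map possibly failing to be an isomorphism and about Lemma~\ref{lem:gysin+pullback_comm} are sensible observations the paper leaves implicit, but the route is the same.
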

\begin{proof}
We keep the same notations as in the proof of Proposition~\ref{prop:assoc_shuffle}.
By definition, $T^{*}_{G} X_3=\mu_{v}^{-1}(0)$. By Lemma \ref{lem:iso of G}, $T^*_{G} X_2=G_{v}\times_{P_{1}}(\mu^{-1}_{v_1}(0)\times \mu_{v_2+v_3}^{-1}(0))$ and
\[
T^*_{G} X_1=G_{v}\times_{P_{12}}
(\mu^{-1}_{v_1}(0)\times \mu_{v_2}^{-1}(0) \times \mu_{v_3}^{-1}(0)).
\] 
By Lemma~\ref{Lem:base_change} and Lemma~\ref{lem:dim_lag}, we have $\overline{I_2}=\overline{I_1}\circ \overline{I_3}$, where
\begin{align*}
\overline{I_1}&= \overline{\psi_{1}}_*\circ\phi_1^\sharp: A_{G\times T}(T^*_G X_2) \to A_{G\times T}(T^*_G X_3).\\
\overline{I_2}&= \overline{\psi_{2}}_*\circ\phi_2^\sharp: A_{G\times T}(T^*_G X_1) \to A_{G\times T}(T^*_G X_3).\\
\overline{I_3}&= \overline{\psi_{3}}_*\circ\phi_3^\sharp: A_{G\times T}(T^*_G X_1) \to A_{G\times T}(T^*_G X_2).
\end{align*}
An argument similar to \cite[Lemma 3.4]{L91} implies the associativity of the multiplication $m^{\calP}$.
\end{proof}
\begin{definition}
For any $v\in \bbN^I$, consider the $\bbN^I$-graded $R[\![t_1,t_2]\!]$-module $^A\calP(Q):=\bigoplus_{v\in \N^I} \null^A\calP_v$, where $^A\calP_v:=A_{G_v\times T}(\mu_v^{-1}(0))$. 
The preprojective cohomological Hall algebra (CoHA) of the quiver $Q$ is the associative algebra 
$^A\calP(Q)$ endowed with the Hall multiplication $m^{\calP}_{v_1,v_2}$.
\end{definition}
The name preprojective CoHA is motivated by the fact that the subvariety $\mu_{v}^{-1}(0) \subset \Rep(\overline Q, v)$ parametrizes representations of the preprojective algebra.

\begin{theorem}\label{thm:prepro to shuffle}
Under conditions of Proposition~\ref{thm: shuffle formula}, there is a well-defined morphism of $R[\![t_1,t_2]\!]$-algebras \[^A\calP\to\calS\calH \] induced from the embedding
$i_v: \mu^{-1}_{v}(0) \inj \Rep(\overline{Q}, v)$.
\end{theorem}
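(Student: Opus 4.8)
The plan is to take for the map $^A\calP\to\calS\calH$ the direct sum $\Theta=\bigoplus_{v\in\bbN^I}\Theta_v$, where $\Theta_v:=(i_v)_*\colon \calP_v=A_{G_v\times T}(\mu_v^{-1}(0))\to A_{G_v\times T}(\Rep(\overline{Q},v))=\calS\calH_v$ is the proper pushforward along the closed embedding $i_v$. Since $\calP_0=\calS\calH_0=R[\![t_1,t_2]\!]$ with $i_0$ the identity, and since proper pushforward is $A_T(\pt)$-linear, $\Theta$ is automatically a unital morphism of $R[\![t_1,t_2]\!]$-modules; the only content is multiplicativity, i.e. that $\Theta_{v_1+v_2}\circ m^P_{v_1,v_2}=m^S_{v_1,v_2}\circ(\Theta_{v_1}\otimes\Theta_{v_2})$ for all $v_1,v_2$.

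To prove this identity I would compare the two Lagrangian correspondences defining $m^P$ and $m^S$ through the commutative diagram displayed in \S\ref{preproj CoHA}: its top row is the $\calP$-correspondence $T^*_GX\xleftarrow{\overline\phi}Z_G\xrightarrow{\overline\psi}\mu_v^{-1}(0)$, its bottom row is the $\calS\calH$-correspondence $T^*(G\times_PY)\xleftarrow{\phi}Z\xrightarrow{\psi}\Rep(\overline{Q},v)$, and its three vertical arrows are the closed embeddings $k\colon T^*_GX\hookrightarrow T^*X$, $Z_G\hookrightarrow Z$, and $i_v$, where $X=G\times_PY$, $Y=\Rep(Q,v_1)\times\Rep(Q,v_2)$, and $T^*_GX=G\times_P(\mu_{v_1}^{-1}(0)\times\mu_{v_2}^{-1}(0))$ by Lemma~\ref{lem:iso of G}. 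Writing $m^P_{v_1,v_2}=\overline\psi_*\circ\phi^\sharp\circ c_\calP$ and $m^S_{v_1,v_2}=\psi_*\circ\phi^*\circ\iota_*\circ c_{\calS\calH}$, where $c_\calP$ and $c_{\calS\calH}$ are the composites of the K\"unneth morphism with the natural homotopy and induction isomorphisms used in \S\ref{subsec:HallMulti} and \S\ref{subsec: shuffle geometric} respectively, the argument splits into two independent points.

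The first point is that, under $\Theta_{v_1}\otimes\Theta_{v_2}$, these two composites are compatible with $k_*$: namely $\iota_*\circ c_{\calS\calH}\circ(\Theta_{v_1}\otimes\Theta_{v_2})=k_*\circ c_\calP$. This is routine bookkeeping: the K\"unneth (external product) morphism commutes with proper pushforward; the homotopy-equivalence and induction isomorphisms are natural in the space; under them the pushforward along $\mu_{v_1}^{-1}(0)\times\mu_{v_2}^{-1}(0)\hookrightarrow T^*Y$ corresponds to the pushforward along $T^*_GX=G\times_PT^*_LY\hookrightarrow G\times_PT^*Y$; and composing the latter with $\iota_*$ gives $k_*$, since Lemma~\ref{lem:iso of G} identifies $G\times_PT^*_LY$ with $T^*_GX$ compatibly with the embeddings into $T^*X$. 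The second point is the geometric heart, and it is already available: Lemma~\ref{lem:gysin+pullback_comm}, applied $T$-equivariantly with $X=G\times_PY$ and $X'=\Rep(Q,v)$ (so that $T^*_GX'=\mu_v^{-1}(0)$ and the right-hand vertical embedding there is exactly $i_v$), says precisely that $(i_v)_*\circ\overline\psi_*\circ\phi^\sharp=\psi_*\circ\phi^*\circ k_*$. Chaining the two equalities gives
\[
\Theta_{v_1+v_2}\circ m^P_{v_1,v_2}=\psi_*\circ\phi^*\circ k_*\circ c_\calP=\psi_*\circ\phi^*\circ\iota_*\circ c_{\calS\calH}\circ(\Theta_{v_1}\otimes\Theta_{v_2})=m^S_{v_1,v_2}\circ(\Theta_{v_1}\otimes\Theta_{v_2}).
\]

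I do not expect a serious obstacle here: all the genuine content is packaged in Lemma~\ref{lem:gysin+pullback_comm} (itself a consequence of the base-change identity Lemma~\ref{Lem:Gysin}(\ref{Lem:gysin_base}) together with $\phi^{-1}(T^*_GX)=Z_G=\psi^{-1}(T^*_GX')$ from Lemma~\ref{lem:fiber_lag}), and the remaining work is careful tracking of the K\"unneth, homotopy, induction and Lemma~\ref{lem:iso of G} identifications together with their $T$-equivariant refinements. The one step demanding real care is verifying that the embedding $T^*_GX\hookrightarrow T^*X$ intervening in Lemma~\ref{lem:gysin+pullback_comm} is literally the composite of $T^*_GX\hookrightarrow G\times_PT^*Y$ with the zero section $\iota$, which is exactly the compatibility recorded in the square of Lemma~\ref{lem:iso of G}; once this is pinned down, the rest is formal.
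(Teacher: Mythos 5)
Your proposal is correct and takes essentially the same approach as the paper: define $\Theta=\bigoplus_v(i_v)_*$ and reduce multiplicativity to the commutativity recorded in Lemma~\ref{lem:gysin+pullback_comm}. The paper states this in two lines, and you have simply made explicit the bookkeeping (K\"unneth, induction, and Lemma~\ref{lem:iso of G} identifications, and the factorization of the embedding through $\iota$) that the paper leaves implicit.
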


\begin{proof}
The pushforward $i_{v*}$ induces a well-defined morphism 
\[
i_{v*}:  A_{G\times T}(\mu^{-1}_{v}(0)) \to A_{G\times T}(\Rep(\overline{Q}, v))\cong \calS\calH_v.\]
Recall that the Hall multiplication of $^A\calP$ (resp. $\calS\calH$) is defined by the composition of the K\"unneth morphism and the morphism in the first row (resp. second row) in the commutative diagram in Lemma~\ref{lem:gysin+pullback_comm}. Taking into account the compatibility of the K\"unneth morphism  with the proper pushforward (\cite[\S~2.1]{LM}), together with 
Lemma~\ref{lem:gysin+pullback_comm}, we get the conclusion.
\end{proof}
\begin{remark}\label{rmk:torsion}
The algebra homomorphism in Theorem \ref{thm:prepro to shuffle} becomes an isomorphism after  suitable localization. Indeed,  $\mu_v^{-1}(0)$ has only one $T$-fixed point. 
It follows from the Thomason localization theorem \cite[Theorem~6.2]{GKM}, which in the present setting can be found in \cite{Kr}, that $i_{v*}$ is an isomorphism when passing to a  localization. This localization of $R[\![t_1,t_2,\lambda^i_s]\!]^{\fS_v}$ is at the prime ideal  generated by all the symmetric functions in $\lambda^i_s$ without constant terms. For the power series ring, this is the same as passing to  $R(\!(t_1,t_2)\!)$. However, for OCT's described in Example\ref{exam:EOCT}(3), e.g., equivariant Chow groups and $K$-theory in the sense of Thomason, these two localizations are different.\footnote{We thank a referee for pointing this out to us.}

Let $\underline{\calP}$ be the quotient of $\calP$ by the torsion part, where torsion means elements which vanishes when passing to the localization at the above ideal. Then, $i_{v*}$ induces an isomorphism between $\underline{\calP}^{\sph}$ and $\calS\calH^{\sph}$.
\end{remark}

\subsection{Spherical subalgebras}
In general, the algebra $\calP$ defined above and the shuffle algebra $\calS\calH$ in \S~\ref{sec:formalShuf} are different, but closely related. Each one has a spherical subalgebra. Conjecturally, their spherical subalgebras are  isomorphic, but at present, this is not known.

For any vertex $k\in I$ of the quiver $Q$, let $e_k$ be the dimension vector such that $e_k^i=\delta_{ki}$. In other words, $e_k$ has value $1$ on vertex $k$, and $0$ otherwise. 
\begin{definition}
The spherical subalgebra $^A\calP^{\sph}(Q) \subset \null^A\calP(Q)$ is the subalgebra  generated by $\calP_{e_k}$ for $k$ varies in $I$. We will abbreviate as $^A\calP^{\sph}$ or $\calP^{\sph}$ if understood.
Similarly, define $\calS\calH^{\sph} \subset \calS\calH$ to be the subalgebra of $\calS\calH$ generated by the set $\{\calS\calH_{e_k}\mid  k\in I\}$.
\end{definition}

\begin{prop}
The morphism in Theorem~\ref{thm:prepro to shuffle} restricts to a surjective morphism on the spherical subalgebras \[\calP^{\sph}\surj \calS\calH^{\sph}.\]
\end{prop}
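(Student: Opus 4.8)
The plan is to reduce the statement to a computation in the degrees $e_k$, $k\in I$, where the morphism $\Theta\colon {}^A\calP\to\calS\calH$ of Theorem~\ref{thm:prepro to shuffle} is essentially trivial to analyze. Since $\Theta$ is a morphism of $\bbN^I$-graded $R[\![t_1,t_2]\!]$-algebras, assembled degreewise from the pushforwards $i_{v*}$, it carries $\calP_{e_k}$ into $\calS\calH_{e_k}$; as $\calP^{\sph}$ is by definition the subalgebra of ${}^A\calP$ generated by the $\calP_{e_k}$, and $\calS\calH^{\sph}$ the subalgebra of $\calS\calH$ generated by the $\calS\calH_{e_k}$, it follows formally that $\Theta(\calP^{\sph})\subseteq\calS\calH^{\sph}$, so the restriction $\calP^{\sph}\to\calS\calH^{\sph}$ is well defined. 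Moreover $\Theta(\calP^{\sph})$ is precisely the subalgebra of $\calS\calH$ generated by the subsets $\Theta(\calP_{e_k})$, so surjectivity onto $\calS\calH^{\sph}$ will follow as soon as one knows $\Theta(\calP_{e_k})=\calS\calH_{e_k}$ for every $k$.

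Next I would check that $\Theta|_{\calP_{e_k}}$ is in fact the identity. Here $G_{e_k}=\GL_1=\Gm$ acts on $\Rep(Q,e_k)$ — the space of loops of $Q$ at $k$ — by conjugation, hence trivially, since $\Gm$ is abelian. Therefore the cotangent lift of this action is trivial, and the moment map $\mu_{e_k}\colon T^*\Rep(Q,e_k)\to\fg\fl_1^*$ vanishes identically; explicitly, $[x_h,x_h^*]=0$ for scalars $x_h,x_h^*$. Consequently $\mu_{e_k}^{-1}(0)=T^*\Rep(Q,e_k)=\Rep(\overline Q,e_k)$, and the closed embedding $i_{e_k}\colon\mu_{e_k}^{-1}(0)\hookrightarrow\Rep(\overline Q,e_k)$ inducing $\Theta$ in degree $e_k$ is the identity map. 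Its pushforward is then the identity, so $\Theta|_{\calP_{e_k}}$ is the identity isomorphism $\calP_{e_k}=A_{\Gm\times T}(\Rep(\overline Q,e_k))=\calS\calH_{e_k}$. In particular $\Theta(\calP_{e_k})=\calS\calH_{e_k}$, which completes the argument.

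There is no genuine obstacle here; what makes the proposition true — in contrast with the full map $\Theta$, which is only an isomorphism after localization, as in the remark following Theorem~\ref{thm:prepro to shuffle} — is exactly that in the generating degrees $e_k$ the moment-map equations impose no condition, so the spherical parts already coincide on the nose. The remaining work is purely formal bookkeeping: confirming that $\calP^{\sph}$ and $\calS\calH^{\sph}$ are honest $\bbN^I$-graded subalgebras, that $\Theta$ respects the grading, and that the identification $\calP_{e_k}\cong\calS\calH_{e_k}$ above is an identification of $R[\![t_1,t_2]\!]$-modules compatible with $\Theta$.
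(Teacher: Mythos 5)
Your proof is correct and takes essentially the same approach as the paper: both reduce to showing that $\Theta$ restricts to an isomorphism $\calP_{e_k}\cong\calS\calH_{e_k}$ for each $k\in I$, and then appeal to the fact that the spherical subalgebras are generated in degrees $e_k$. You supply slightly more detail than the paper does on why $\Theta|_{\calP_{e_k}}$ is an isomorphism (namely that $\mu_{e_k}^{-1}(0)=\Rep(\overline Q,e_k)$ because conjugation by $\Gm$ is trivial, so $i_{e_k}$ is the identity and its pushforward is the identity), which is a useful clarification but not a different route.
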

\begin{proof}
The surjectivity of the restriction 
$\calP^{\sph}\to \calS\calH^{\sph}$ follows from  the isomorphism $\calP_{e_k}\cong \calS\calH_{e_k}$, for $k\in I$. Indeed, this is a consequence of the isomorphisms $\calP_{e_k}\cong A_{G_{e_k}\times T}(\pt)=A_{T}(\pt)[\![z_k]\!]$ and $\calS\calH_{e_k}\cong A_{T}(\pt)[\![\lambda^k]\!]$.
\end{proof}

\section{Representations of the preprojective CoHA}
We construct representations of  $^A\calP(Q)$ in this section. We show that $^A\calP(Q)$ acts on the equivariant $A$-homology of Nakajima quiver varieties.
\subsection{Preliminaries on Nakajima quiver variety}
\label{subsec:moment_quiver}
In this subsection, we recall the definition of Nakajima quiver varieties in \cite{Nak94}. 

For a quiver $Q$, (not necessarily edge-loop free), we introduce the \textit{ framed quiver} $Q^\heartsuit$, whose set of vertices is $I \sqcup I'$, where $I'$ is another copy of the set $I$, equipped with the bijection $I\to I'$, $i\mapsto i'$. 
The set of arrows of $Q^\heartsuit$ is, by definition, the disjoint union of $H$ and a set of additional edges $j_i : i \to i'$, one for each vertex $i\in I$.
We follow the tradition that $v\in \bbN^I$ is the notation for the dimension vector at $I$, and $w\in \bbN^I$ is the dimension vector at $I'$. We denote $\Rep(Q^\heartsuit,(v,w))$ simply by $\Rep(Q,v,w)$.

Let $\overline{Q^{\heartsuit}}=Q^{\heartsuit}\sqcup Q^{\heartsuit, \op}$ be the double of $Q^{\heartsuit}$. We have the isomorphism
\[
\Rep(\overline{Q^\heartsuit},(v,w))=T^*\Rep(Q^\heartsuit,v,w)=\Rep(Q, v)\times \Rep(Q^{op}, v)\times \Hom_{k I}(W, V)\times \Hom_{k I}(V, W).
\]
Let $\mu_{v, w}:T^*\Rep(Q^\heartsuit,v,w)\to \fg\fl_v^*\cong \fg\fl_v$ be the moment map
 \[
\mu_{v, w}: (x, x^*, i, j)\mapsto 
\sum[x, x^*]+i\circ j \in \fg\fl_v.
\]
For any $\theta=(\theta_i)_{i\in I} \in \Z^{I}$, 
let $\chi_\theta: G_v\to \Gm$ be the character $g=(g_i)_{i\in I} \mapsto \prod_{i\in I} \det (g_i)^{-\theta_i}.$ 
The set of $\chi_\theta$-semistable points in $T^*\Rep(Q^\heartsuit, v, w)$ is denoted by $\Rep(\overline{Q^\heartsuit},v,w)^{ss}$.
The Nakajima quiver variety is defined to be the Hamiltonian reduction 
\[\fM_{\theta}(v, w):=\mu_{v, w}^{-1}(0)/\!/_\theta G_v.\] 

The following description of stability condition can be found in \cite[Corollary 5.1.9]{G}.
When $\theta=\theta^+=(1,\cdots,1)$, the point $(x, x^*, i, j)\in \mu^{-1}(0)$ is $\theta^+$--semistable, if and only if, the following holds:
For any collection of vector subspaces $S = (S_i)_{i\in k}\subset
V = (V_i)_{i\in k}$,  which is stable under the maps
$x$ and $x^*$, if
$S_k\subset \ker(j_k)$ for every $k\in I$, then $S=0$.

In this paper, we use the stability condition $\theta^+$ unless otherwise specified.
\subsection{Representations from quiver varieties}
\label{subsec:PreproRepn}
Let $v_1,v_2\in\bbN^I$ be two dimension vectors. Let $v=v_1+v_2$. 
We fix an $I$-tuple of vector spaces $V$ of dimension vector $v$.
We fix $V_1\subset V$ an $I$-tuple of subspaces of $V$ with dimension vector $v_1$. 
Let $V_2:=V/V_1$, with the projection map $\pr_2: V\to V_2$. 
 As in \S~\ref{subsec: Lag corr}, we set $G=G_{v}$, and $P=\{g\in G\mid g(V_1)\subset V_1\}$. 
We consider the Lagrangian correspondence formalism in \S~\ref{subsec: Lag corr},
specialized as follows:
We take $X'$ to be $\Rep(Q,v, w)$ and $Y$ to be $\Rep(Q,v_1, w)\times\Rep(Q,v_2)$, and take $\calV$ to be
\[
\Rep(Q)_{v_1, v_2, w}:=\{(x, j)\in \Rep(Q^{\heartsuit}, v_1+v_2, w) \mid x(V_1) \subset V_1\} \subset X'. 
\]
As in \S~\ref{subsec: Lag corr}, set $X:=G\times_{P} Y$, $W:=G\times_P \Rep(Q)_{v_1, v_2, w}$, and $Z:=T^*_W(X\times X')$ the conormal bundle of $W$. 
We then have the correspondence 
\[\xymatrix{
T^*X & Z\ar[l]_{\phi} \ar[r]^{\psi} &T^*X'.}\]
\begin{lemma}\label{lem:descrip of Z_G}
Notations are as above.
\begin{enumerate}[(a)]
\item We have the following canonical isomorphisms of $G$-varieties.
\begin{align*}
&T^*X'=T^*\Rep(Q^{\heartsuit}, v_1+v_2, w)=\Rep(Q^{\heartsuit}, v_1+v_2, w)\times \Rep((Q^{\heartsuit})^{\op}, v_1+v_2, w)=\Rep(\overline{Q^{\heartsuit}}, v_1+v_2, w).\\
&T^*X=G\times_P
\{
(c, x, x^*, i, j)\mid c \in \fp_v , x \in \Rep( Q, v_1)\times \Rep( Q, v_2),
x^*\in \Rep( Q^{\op}, v_1)\times \Rep( Q^{\op}, v_2),
\\ & \phantom{1234567890} j\in \Hom(V_1, W), i\in \Hom(W, V_1), [x, x^*]+i\circ j=\pr(c)
\},\\
&\phantom{1234}\text{ where $\pr(c)$ is the projection of $c$ from $\fp_v$ in $\fg_{v_1}\oplus \fg_{v_2}$}.\\
&Z=G\times_P\{ (x, x^*, i, j )\in T^*\Rep(Q^{\heartsuit}, v_1+v_2, w)\mid (x, x^*)(V_1)\subset V_1,  \text{Im}(i)\subset V_1\}.
\end{align*}
\item
For $(g, x, x^*, i, j)\in Z$, the maps $\phi$, $\psi$ are given by
\begin{align*}
&\phi\big((g, x, x^*, i, j) \mod P\big)=\big(g, [x, x^*]+i\circ j, \pr(x), \pr(x^*), i^{V_1}, j_{V_1}\big) \mod P, 
\\
&\psi\big((g, x, x^*, i, j) \mod P\big)=\big( gxg^{-1}, gx^*g^{-1}, j g^{-1}, gi\big) . 
\end{align*}
\item We have the following canonical isomorphisms of $G$-varieties.
\begin{align*}
&T_{G}^* X'=\mu_{v, w}^{-1}(0).\\
& T_{G}^* X=G\times_P(\mu_{v_1, w}^{-1}(0)\times \mu_{v_2}^{-1}(0)).\\
& Z_{G}=G\times_P\{ (x, x^*, i, j )\in \mu^{-1}_{v, w}(0)\mid (x, x^*)(V_1)\subset V_1,   \text{Im}(i)\subset V_1\}.
\end{align*}
The maps $\overline{\phi}: Z_G \to T_G^*X$ and $\overline{\psi}: Z_G\to T^*_GX'$ are induced from $\phi, \psi$ in (b).
\end{enumerate}
\end{lemma}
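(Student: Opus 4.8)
The plan is to unwind the definitions of the cotangent and conormal bundles involved. The key point is that $X'$ and $Y$ are vector spaces and that $X$, $W$, $Z$ are built from them by the twisted‑product construction, so every computation is $G$-equivariant and may be carried out in the fibre over the base point $eP\in G/P$ and then transported by the $G$-action.

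For part (a): the identification $T^*X'=\Rep(\overline{Q},v,w)$ is the one already fixed in \S\ref{subsec:moment_quiver}, since $X'=\Rep(Q,v,w)$ is an affine space and the cotangent bundle of a product of $\Hom$-spaces is canonically the doubled representation space. For $T^*X$, with $X=G\times_PY$, I would invoke the standard description of the cotangent bundle of an associated fibre bundle: writing $G\times Y\to G\times_PY$ for the free principal $P$-bundle and forming the symplectic reduction, $T^*(G\times_PY)\cong(T^*G\times T^*Y)/\!\!/_0 P$. Trivializing $T^*G\cong G\times\fg^*$ by left translation, identifying $\fg\cong\fg^*$ by the trace form, and using that $P$ acts on $Y$ through its Levi quotient $L$ (so the $P$-moment map on $T^*Y$ takes values in $\fl^*$), the zero fibre of the $P$-moment map becomes $G\times\{(c,\eta) : c\in\fp_v,\ \eta\in T^*Y,\ \mu_Y(\eta)=\pr_{\fl}(c)\}$; quotienting by $P$ and substituting $T^*Y=\Rep(\overline{Q},v_1,w)\times\Rep(\overline{Q},v_2)$ with coordinates $(x,x^*,i,j)$ and Levi moment map $(x,x^*,i,j)\mapsto[x,x^*]+i\circ j$ yields the stated formula for $T^*X$. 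For $Z=T^*_W(X\times X')$: since $W=G\times_P\calV$ is a locally closed subvariety of $X\times X'$ via $f\times g$, I would compute $T_wW$ at a point over $eP$ as the image of $d(f\times g)$, spanned by the vertical directions $d(f\times g)(T_v\calV)$ and the horizontal $\fg/\fp$-directions, and then take its annihilator inside $T^*_{f(w)}X\oplus T^*_{g(w)}X'$; writing these linear conditions out in the coordinates $(x,x^*,i,j)$ gives exactly $(x,x^*)(V_1)\subset V_1$ and $\text{Im}(i)\subset V_1$, with no condition on $j$, which is the claimed description of $Z$.

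Part (b) then follows by tracing a point $(g,x,x^*,i,j)\in Z$ through the identifications of part (a): $\phi$ and $\psi$ are the two projections of $T^*X\times T^*X'$ restricted to $Z$. The map $\psi$ is just the $G_v$-action carrying the tuple at $eP$ to $g\cdot eP$, giving $(gxg^{-1},gx^*g^{-1},jg^{-1},gi)$. For $\phi$, the conormal condition forces the $\fp_v$-coordinate to be $c=[x,x^*]+i\circ j$ (which lies in $\fp_v$ since $x,x^*$ are block upper triangular and $i\circ j$ has image in $V_1$) and the $Y$-coordinate to be the associated-graded data $\big(\pr(x),\pr(x^*),i^{V_1},j_{V_1}\big)$; the constraint $\mu_Y=\pr_{\fl}(c)$ holds because $\pr_{\fl}[x,x^*]=[\pr(x),\pr(x^*)]$ and $\pr_{\fl}(i\circ j)=(i^{V_1}\circ j_{V_1},0)$. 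For part (c): $T^*_GX'$ is by definition the zero locus of the $G_v$-moment map on $T^*X'=\Rep(\overline{Q},v,w)$, which is $\mu_{v,w}$, so $T^*_GX'=\mu_{v,w}^{-1}(0)$; Lemma~\ref{lem:iso of G} gives $T^*_GX\cong G\times_PT^*_LY$ with $T^*_LY=\mu_{v_1,w}^{-1}(0)\times\mu_{v_2}^{-1}(0)$ (equivalently the locus $c=0$ in the part-(a) description); and $Z_G=Z\cap(T^*_GX\times T^*_GX')=\psi^{-1}(T^*_GX')$ by Lemma~\ref{lem:fiber_lag}, so imposing $[x,x^*]+i\circ j=0$ on the description of $Z$ produces the stated description of $Z_G$, the maps $\overline\phi,\overline\psi$ being the restrictions of $\phi,\psi$.

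The genuinely computational step is the conormal bundle in part (a): identifying $T_wW\subset T_{f(w)}X\oplus T_{g(w)}X'$ precisely enough that its annihilator is visibly cut out by $(x,x^*)(V_1)\subset V_1$ and $\text{Im}(i)\subset V_1$, and, relatedly, pinning down the $\fp_v$-coordinate $c=[x,x^*]+i\circ j$ appearing in $\phi$; everything else is bookkeeping. A secondary nuisance is keeping conventions consistent in the cotangent-of-$G\times_PY$ computation — the choice of complement to $\fl$ in $\fg$, and signs in the moment maps — but this is standard and affects at most the labelling $\fp\leftrightarrow\fp^{-}$.
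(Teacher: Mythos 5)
Your proposal is correct and follows essentially the same route as the paper: the paper's proof invokes \cite[Lemma~7.1]{SV2} to write $T^*X=T^*_P(G\times Y)/P$ (which is literally the Hamiltonian reduction $(T^*G\times T^*Y)/\!\!/_0 P$ you use) and then carries out the dualization through the maps $\delta,\delta'$, whereas you identify the zero level of the $P$-moment map directly using the factorization of the $P$-action on $Y$ through $L$ — the same computation, differently packaged. The remaining parts (the conormal bundle giving $Z$, tracing $\phi,\psi$, and deducing (c) from Lemmas~\ref{lem:iso of G} and \ref{lem:fiber_lag}) are exactly what the paper defers to ``the rest are similar,'' and your sketches of them are consistent with the intended argument.
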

\begin{proof}
The proof goes the same way as \cite[Lemma 7.4]{SV2}. 
We only explain how to get the formula of $T^*X$ in (a) here. The rest are similar.
By  \cite[Lemma 7.1]{SV2}, we have $T^*X=T^*_{P}(G\times Y)/P$.
Thus, 
\begin{align}
T^*X=G\times_{P}\{(f, a)\in &\big(\mathfrak{g}\times \Rep(Q^\heartsuit, v_1, w) \times \Rep(Q, v_2)\big)^*\times \big(\Rep(Q^\heartsuit, v_1, w) \times  \Rep(Q, v_2)\big) \notag\\
&\mid f(-b, [\pr(b), a])=0, \forall b\in \fp\}.\label{equ: T*X}
\end{align}
For $(f, a)$ as in \eqref{equ: T*X}, we write $f=f_1\times f_2$, where 
\[
f_1\in \mathfrak{g}^*, \,\ \text{and $f_2\in \big(\Rep(Q, v_1, w) \times \Rep(Q, v_2)\big)^*$}.
\]
Write $\fh:=\mathfrak{gl}_{v_1}\times \mathfrak{gl}_{v_2}$ for short. 
Starting with an element $(f, a)$ in \eqref{equ: T*X}, we define an element $\widetilde{f}\in (\mathfrak{g}\times \fh)^*$ by $\widetilde f(g, h):=f(g, [h, a])$. Let $\delta': \fp\to \fg\times \fh$ be the linear function $b\mapsto (b, -\pr(b))$. Therefore, we have
$\widetilde{f}(\delta'(\fp))=0$.

Identify  $(\mathfrak{g}\times \fh)^*$ with $\mathfrak{g}\times \fh$ via the non-degenerate pairing $(g_1, g_2):=\tr(g_1 g_2)$.
Let $\delta: \fp\to \fg\times \fh$ be the linear function $b\mapsto (b, \pr(b))$. Then, $\delta' \fp^{\perp}$ is identified with $\delta\fp \cong \fp$.
Therefore, $\widetilde{f}\in (\mathfrak{g}\times \fh)^*$ corresponds to $(c, \pr(c)) \in \delta\fp$ for some $c\in \fp$, and its first component $f_1$ corresponds to $c$ under the identification $\fg^*\cong \fg$. The second component $f_2$ of $f$ satisfies
\begin{equation}\label{eqn:f_2}f_{2}([h, a])=\tr (\pr(c)\cdot h), \,\ \text{for any $h\in \fh$}.\end{equation}

For vector spaces $E, F$, the bilinear function
\[
\Hom(E, F)\times \Hom(F, E)\to k, \,\ (f, f^*)\mapsto \tr(f\circ f^*). 
\]
gives an isomorphism $\Hom(E, F)^* \cong \Hom(F, E)$. We identify the
second component $f_2$ with 
some element $b \in \Rep(Q^{\heartsuit,op}, v_1, w) \times  \Rep(Q^{op}, v_2)$. 
The equality \eqref{eqn:f_2}
yields $[a, b]=\pr(c)$. This proves (a).
\end{proof}
We use the following abbreviations: 
$T^*Y^s=\Rep(\overline{Q^\heartsuit},v_1, w)^{ss}\times \Rep(\overline{Q},v_2)\subseteq T^*Y$, and 
$T^*X'^{s}=\Rep(\overline{Q^\heartsuit},v, w)^{ss}\subseteq T^*X'$.
There is a bundle projection $T^*X\to G\times_PT^*Y$. We define $T^*X^s$ to be the preimage of $G\times_PT^*Y^s$ under this bundle projection.
In particular, we have 
\[T^*_{G}X^{s}:=G\times_{P}(T^*_LY\cap T^*Y^s)=G\times_{P}(\mu_{v_1, w}^{-1}(0)^{ss}\times \mu^{-1}_{v_2}(0)),\] for $L=G_{v_1}\times G_{v_2}$.
We define $Z^s:=\psi^{-1}(T^*X'^{s})$ and $Z^s_G:= Z^{s}\cap Z_{G}$. 

\begin{lemma}\label{lem:stable_phi}
We have $\phi(Z^s)\subset T^*X^{s}$. 
\end{lemma}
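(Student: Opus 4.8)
The plan is to unwind the definitions of $Z^{s}$, $\phi$ and $T^{*}X^{s}$ with the help of Lemma~\ref{lem:descrip of Z_G}, thereby reducing the statement to an elementary assertion about semistability of subrepresentations. A point of $Z$ is a class $[(g,(x,x^{*},i,j))]$ with $(x,x^{*})(V_{1})\subset V_{1}$ and $\mathrm{Im}(i)\subset V_{1}$; by Lemma~\ref{lem:descrip of Z_G}(b), $\psi$ sends it to $(gxg^{-1},gx^{*}g^{-1},jg^{-1},gi)$, so it lies in $Z^{s}=\psi^{-1}(T^{*}X'^{s})$ exactly when $(gxg^{-1},gx^{*}g^{-1},jg^{-1},gi)$ is $\chi_{\theta^{+}}$-semistable on $V$. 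Conjugation by $g^{-1}$ is an isomorphism of representations, and $\chi_{\theta^{+}}$-semistability is invariant under such isomorphisms, so this is equivalent to $(x,x^{*},i,j)$ being $\chi_{\theta^{+}}$-semistable on $V$. On the other hand, again by Lemma~\ref{lem:descrip of Z_G}(b), the composite of $\phi$ with the bundle projection $T^{*}X\to G\times_{P}T^{*}Y$ sends $[(g,(x,x^{*},i,j))]$ to $[(g,(\pr(x),\pr(x^{*}),i^{V_{1}},j_{V_{1}}))]$, i.e. to the $P$-orbit of the representation $(x|_{V_{1}},x^{*}|_{V_{1}},i^{V_{1}},j|_{V_{1}})$ on $V_{1}$ together with the unframed representation $(x|_{V_{2}},x^{*}|_{V_{2}})$ induced on $V_{2}=V/V_{1}$. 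Since $T^{*}Y^{s}=\Rep(\overline{Q},v_{1},w)^{ss}\times\Rep(\overline{Q},v_{2})$ is invariant under $P$ acting through $P\twoheadrightarrow L=G_{v_{1}}\times G_{v_{2}}$, this orbit lies in $G\times_{P}T^{*}Y^{s}$ if and only if $(x|_{V_{1}},x^{*}|_{V_{1}},i^{V_{1}},j|_{V_{1}})$ is $\chi_{\theta^{+}}$-semistable on $V_{1}$.

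Hence the lemma reduces to the following: if $(x,x^{*},i,j)$ is $\chi_{\theta^{+}}$-semistable on $V$, if $V_{1}\subset V$ is stable under $x$ and $x^{*}$, and if $\mathrm{Im}(i)\subset V_{1}$, then $(x|_{V_{1}},x^{*}|_{V_{1}},i^{V_{1}},j|_{V_{1}})$ is $\chi_{\theta^{+}}$-semistable on $V_{1}$. To prove this I would invoke the combinatorial description of $\theta^{+}$-semistability recalled in \S\ref{subsec:moment_quiver} (Ginzburg's Corollary~5.1.9), which via the Hilbert--Mumford criterion holds on all of $\Rep(\overline{Q},v,w)$ and not only on $\mu_{v,w}^{-1}(0)$: a point $(y,y^{*},\widetilde{i},\widetilde{j})$ on an $I$-graded space $U$ is semistable precisely when the only $(y,y^{*})$-stable $I$-graded subspace $S\subset\ker(\widetilde{j})$ is $S=0$. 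Now let $S\subset V_{1}$ be stable under $x|_{V_{1}}$ and $x^{*}|_{V_{1}}$ with $S\subset\ker(j|_{V_{1}})$. Because $V_{1}$ is $(x,x^{*})$-stable, $S$ is a fortiori stable under $x$ and $x^{*}$ as a subspace of $V$; and $\ker(j|_{V_{1}})=\ker(j)\cap V_{1}\subset\ker(j)$, so $S$ is an $(x,x^{*})$-stable subspace of $\ker(j)$ in $V$. Semistability of $(x,x^{*},i,j)$ on $V$ then forces $S=0$, whence $(x|_{V_{1}},x^{*}|_{V_{1}},i^{V_{1}},j|_{V_{1}})$ is $\chi_{\theta^{+}}$-semistable. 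Note that the hypothesis $\mathrm{Im}(i)\subset V_{1}$ is used only to make $i^{V_{1}}$ a well-defined map $W\to V_{1}$; it plays no role in the semistability step itself.

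The genuinely delicate points are the bookkeeping ones in the first paragraph: reading off from Lemma~\ref{lem:descrip of Z_G}(b) the effect of $\phi$ followed by the projection $T^{*}X\to G\times_{P}T^{*}Y$, and checking that $T^{*}Y^{s}$ is $P$-stable so that the condition ``lies in $G\times_{P}T^{*}Y^{s}$'' translates into an honest semistability condition on $V_{1}$. The main, rather mild, obstacle I anticipate is making sure the semistability criterion is applied on all of $\Rep(\overline{Q},v,w)$ rather than only on the zero fibre of the moment map --- this is standard, since King's Hilbert--Mumford analysis is insensitive to the moment-map equation, but it must be stated because $Z^{s}$ certainly contains points lying outside $\mu_{v,w}^{-1}(0)$. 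Once these are settled, the content of the lemma is just the one-line observation that a subspace witnessing instability of the restriction to $V_{1}$ already witnesses instability of the whole representation on $V$.
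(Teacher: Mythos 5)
Your proof is correct and takes essentially the same approach as the paper's: the paper dispatches the lemma in one sentence by citing the combinatorial description of $\theta^+$-semistability from \S\ref{subsec:moment_quiver}, and you simply unwind what that reduction actually is. Your elaboration is careful and accurate — in particular, the observations that (i) the semistability criterion applies on all of $\Rep(\overline{Q},v,w)$ and not merely on $\mu^{-1}(0)$, and (ii) a destabilizing subspace $S\subset V_1\cap\ker(j)$ for the restricted data is automatically a destabilizing subspace for the ambient data, are exactly the content the paper leaves implicit.
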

\begin{proof}
This follows from the description of stability condition $\theta^+$ in \S~\ref{subsec:moment_quiver}.
Indeed, assuming there is an element $(g,x,x^*,i,j)\in Z$ so that $\phi((g,x,x^*,i,j))\notin T^*X^{s}$, then by definition of $T^*X^{s}$,  in $V_1$ there is a sub-vector space contained in $\ker(j)$ which is fixed by $(\pr(x), \pr(x^*))$. This sub-vector space is in turn a sub-vector space of $V$ via the inclusion $V_1\inj V$, contained in $\ker(j)$, which is still fixed by $x,x^*$. (This is because that $(g,x,x^*,i,j)\in Z$ implies that $V_1$ is a sub-representation and hence for any vector $a\in V_1$ the action of $(x,x^*)$ coincide with that of  $(\pr(x), \pr(x^*))$). This implies that $(g,x,x^*,i,j)$ is not in $Z^s$. Therefore $\phi(Z^s)\subset T^*X^{s}$.
\end{proof}

Thus, we have the following diagram of correspondences of $G_v\times T\times G_w$-varieties.
\begin{equation} \label{corr for action}
\xymatrix@R=1.5em{
T^*_G X^s\ar@{^{(}->}[d]
&&Z_G^s\ar@{^{(}->}[d]\ar[ll]_{\overline\phi}\ar[r]^(0.4){\overline\psi}&
T^*_GX'\cap T^*X'^s\ar@{^{(}->}[d]\\
G\times_PT^*Y^s\ar@{^{(}->}[r]^{\iota} & T^*X^s &
Z^s \ar[l]_(.4)\phi \ar[r]^(.4)\psi & T^*X'^s.
}\end{equation}
\begin{lemma}
The left square of diagram \eqref{corr for action} is a pullback diagram. 
\end{lemma}
\begin{proof}
By Lemma \ref{lem:descrip of Z_G}, we have:
\begin{align*}
&Z_{G}^s=G\times_P\{ (x, x^*, i, j )\in \mu^{-1}_{v, w}(0)^{ss} \mid (x, x^*)(V_1)\subset V_1,   \text{Im}(i)\subset V_1\}.\\
&T^*_{G}X^{s}=G\times_{P}(\mu_{v_1, w}^{-1}(0)^{ss}\times \mu^{-1}_{v_2}(0)).\end{align*}
To prove this lemma, we need to compute the fiber product $T^*_GX^s\times_{T^*X^s} Z^s$ and identify it with $Z_{G}^s$. Indeed,
\begin{align*}
&T^*_GX^s\times_{T^*X^s} Z^s\\
=&G\times_P
\{
(x_1, x_1^*, i_1, j_1, x_2, x_2^*), (x, x^*, i, j) \in T^*_GX^s\times Z^s \mid  \pr(x)=(x_1, x_2), \pr(x^*)=(x_1^*, x_2^*), \\ & \,\  i^{V_1}=i_1,  j_{V_1}=j_1,  [x, x^*]+i\circ j=([x_1, x_1^*]+i_1\circ j_1)\times [x_2, x_2^*]=0
\}\\
=&G\times_P\{ (x, x^*, i, j )\in \mu^{-1}_{v, w}(0)^{ss} \mid (x, x^*)(V_1)\subset V_1,   \text{Im}(i)\subset V_1\}\\
=& Z_{G}^s.
\end{align*}
This completes the proof.
\end{proof}

Let $v$, $w\in \bbN^I$ be the dimension vectors. 
As the action of $G_v$ on $\mu_{v, w}^{-1}(0)^{ss}:=\mu_{v, w}^{-1}(0)\cap \Rep(\overline{Q}, v, w)^{ss}$ is free, we have 
\[\calM(v,w):=A_{T\times G_w}(\fM(v, w))\cong A_{G_v\times T\times G_w}(\mu_{v, w}^{-1}(0)^{ss}).\]

For each  $w\in \bbN^I$, and each pair $v_1,v_2\in\bbN^I$, we define maps
\[
a_{v_1,v_2}:  \calM(v_1,w) \otimes \mathcal{P}_{v_2} \to
\calM(v_1+v_2,w)
\]
as follows.

We start with the K\"unneth morphism.
\begin{align}\label{equ: action iso}
\calM(v_1,w) \times \mathcal{P}_{v_2}
= &A_{G_{v_1}\times G_w \times T}(\mu_{v_1, w}^{-1}(0)^{ss}) 
\otimes A_{G_{v_2}\times T}(\mu_{v_2}^{-1}(0)) \notag \\
\to &A_{G_{v_1}\times G_{v_2}\times T\times G_w}(\mu_{v_1, w}^{-1}(0)^{ss}\times \mu^{-1}_{v_2}(0)) \notag\\
\cong&A_{G\times T\times G_w}\left(G\times_P \big(\mu_{v_1, w}^{-1}(0)^{ss}\times \mu^{-1}_{v_2}(0)\big)\right) .
\end{align}
We define the map $a_{v_1,v_2}$ to be the composition of the morphism in \eqref{equ: action iso} with the following morphism
\[
\overline\psi_*\circ\phi^\sharp:A_{G_{v}\times T\times G_w}\left(
T_G^*X^s\right)\to A_{G_{v}\times T\times G_w}(
T^*_GX'\cap T^*X'^s)=\calM(v, w),
\]
where the pullback $\phi^\sharp$ is the Gysin pullback of $\phi$ in diagram \eqref{corr for action}. Note that $\overline{\psi}$ is a proper map, since it is a restriction of a proper map. In particular, $\overline{\psi}_*$ is well-defined. Similarly, $\phi:Z^s\to T^*X^s$ is a local complete intersection morphism (the condition for being so is local, hence if $\phi: Z\to T^*X$ is a local complete intersection morphism, then so is $\phi:Z^s\to T^*X^s$). By \ref{subsec:pushforward}(4), $\phi^\sharp$, being the Gysin pull-back via  a complete intersection morphism is well-defined. 

\begin{theorem}\label{thm:prep action}
For each $w\in \bbN^I$, the maps 
\[
a_{v_1,v_2}:\calM(v_1,w)\otimes \calP_{v_2} \to\calM(v_1+v_2,w)\] fit together to define an action of $^A\calP$ on 
$\calM(w):=\bigoplus_{v}\calM(v,w)$.
In other words, $a_{v_1, v_2}$ induces an 
$R[\![t_1,t_2]\!]$-algebra homomorphism
\[\Phi: \null^A\calP\to \End\Big(\bigoplus_{v\in \N^I} A_{T\times G_w}(\fM(v,w))\Big).\]
\end{theorem}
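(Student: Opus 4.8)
The plan is to reduce the statement to an associativity-type identity of the same flavor as Theorem~\ref{prop:assoc_hall}, proved by the same mechanism. Concretely, I must show that for dimension vectors $v_1, v_2, v_3$ and $w$, the two ways of combining $\calM(v_1,w)\otimes\calP_{v_2}\otimes\calP_{v_3}\to\calM(v_1+v_2+v_3,w)$ agree: first acting by $\calP_{v_3}$ then by $\calP_{v_2}$ on $\calM(v_1,w)$, versus first multiplying $\calP_{v_2}\otimes\calP_{v_3}\to\calP_{v_2+v_3}$ inside $\calP$ and then acting. I would introduce the varieties $X_1, X_2, X_3$ and correspondences $W_1, W_2, W_3$ exactly as in the proof of Proposition~\ref{prop:assoc_shuffle}, but now with framing at the innermost subspace: $X_3=\Rep(Q,v,w)$ with $v=v_1+v_2+v_3$; $X_2=G\times_{P_1}(\Rep(Q,v_1,w)\times\Rep(Q,v_2+v_3))$; $X_1=G\times_{P_{12}}(\Rep(Q,v_1,w)\times\Rep(Q,v_2)\times\Rep(Q,v_3))$, with the framing maps $i,j$ landing in / emanating from the smallest subspace $V_1$ throughout. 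The corresponding conormal varieties $Z_1,Z_2,Z_3$ and their "stable" loci $Z_i^s$, $Z_{i,G}^s$ are defined by imposing the semistability condition $\theta^+$ on the $X_3$-coordinate and pulling back.

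First I would record the analogue of Lemma~\ref{lem:descrip of Z_G} for this three-step flag, identifying $T^*_GX_i$, $Z_{i,G}$, and the maps $\phi_i,\psi_i$ explicitly; this is a routine extension of that lemma and of \cite[Lemma~7.4]{SV2}. Next, the geometric input: I need a Cartesian square
\[
\xymatrix@R=1.5em{
Z_{1}^s\ar[r]^{\phi_1}&T^*X_2^s\\
Z_{2}^s\ar[u]\ar[r]&Z_{3}^s\ar[u]_{\psi_3}
}
\]
together with the dimension equality $\dim Z_1 + \dim Z_3 = \dim Z_2 + \dim T^*X_2$, which by Lemma~\ref{lem:dim_trans} (equivalently Lemma~\ref{lem:dim_lag}) gives transversality of $\phi_1$ and $\psi_3$ and properness of $Z_2\to Z_1$. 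The Cartesian property comes, as in \S\ref{subsec: Lag corr}, from the transversality of the intersection $(W_1\times X_1)\cap(X_3\times W_2)$ inside $X_3\times X_2\times X_1$ (via \cite[Theorem~2.7.26]{CG}, \cite[Theorem 2.7.26]{CG}); the only new point is checking that imposing semistability and the framing constraint $\mathrm{Im}(i)\subset V_1$ is compatible with this, which follows from Lemma~\ref{lem:stable_phi} (i.e. that $\phi$ preserves stable loci). Given these, Lemma~\ref{Lem:base_change} yields $\overline{I_2}=\overline{I_1}\circ\overline{I_3}$ for the operators $\overline{I_k}=\overline{\psi_k}_*\circ\phi_k^\sharp$, exactly as in Theorem~\ref{prop:assoc_hall}.

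Finally I would assemble: unwinding the definition of $a_{v_1,v_2}$ via the Künneth morphism \eqref{equ: action iso} and the identification $\calM(v,w)\cong A_{G_v\times T\times G_w}(\mu_{v,w}^{-1}(0)^{ss})$, the composite $a_{v_1+v_2,\,v_3}\circ(a_{v_1,v_2}\otimes\id)$ is $\overline{I_1}\circ\overline{I_3}$ up to the bookkeeping isomorphisms identifying $T^*_GX_i^s$ with the relevant twisted products of quiver-variety loci (Lemma~\ref{lem:iso of G} and the framed analogue of Lemma~\ref{lem:descrip of Z_G}(c)), while $a_{v_1,\,v_2+v_3}\circ(\id\otimes m^{\calP}_{v_2,v_3})$ is $\overline{I_2}$; the equality $\overline{I_2}=\overline{I_1}\circ\overline{I_3}$ then gives the module axiom, and the unit acts trivially since $\calP_0\cong A_T(\pt)$ acts through scalars. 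The main obstacle I anticipate is not any single computation but the careful verification that the stability/framing constraints are preserved under all the base changes and that the relevant squares remain Cartesian in the stable locus — i.e. checking that restricting to $(-)^{ss}$ and to $\{\mathrm{Im}(i)\subset V_1\}$ does not destroy the transversality furnished by the dimension count; an argument "similar to \cite[Lemma~3.4]{L91}", as invoked in the earlier proofs, then closes the coherence of the whole family.
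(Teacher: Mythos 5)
Your proposal follows the paper's proof essentially step for step: the same three-step flag $V_1\subset V_2\subset V$ with framing anchored at $V_1$, the same framed correspondences $X_i$, $W_i$, conormals $Z_i$ and their semistable loci, the Cartesian square produced by \cite[Theorem~2.7.26]{CG}, the dimension count via Lemma~\ref{lem:dim_lag} giving transversality, the base-change Lemma~\ref{Lem:base_change} giving $\overline{I_2}=\overline{I_1}\circ\overline{I_3}$, and the appeal to Lemma~\ref{lem:stable_phi} to check that passing to semistable loci does not disturb the Cartesian structure. This is exactly the paper's route.

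One bookkeeping slip in your final paragraph: with the intermediate space you chose, $X_2=G\times_{P_1}\bigl(\Rep(Q,v_1,w)\times\Rep(Q,v_2+v_3)\bigr)$ with $P_1$ preserving the \emph{inner} $V_1$, the composite $\overline{I_1}\circ\overline{I_3}$ factors through $\calM(v_1,w)\otimes\calP_{v_2+v_3}$ and therefore equals $a_{v_1,v_2+v_3}\circ(\id\otimes m^{\calP}_{v_2,v_3})$, not $a_{v_1+v_2,v_3}\circ(a_{v_1,v_2}\otimes\id)$. The latter composite instead arises from the symmetric intermediate space $X_2'=G\times_{P_2}\bigl(\Rep(Q,v_1+v_2,w)\times\Rep(Q,v_3)\bigr)$ (with $P_2$ preserving $V_2$ and the same $W_2$, hence $I_2'=I_2$), and equating $\overline{I_1}\circ\overline{I_3}=\overline{I_2}=\overline{I_1'}\circ\overline{I_3'}$ gives the module axiom. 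That two-sided reading is precisely what the cited Lusztig argument (\cite[Lemma~3.4]{L91}) supplies, and the paper is equally terse about it, so this is a labeling correction rather than a gap in your argument.
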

\begin{proof}
The proof follows from the same idea as the proof of Theorem~\ref{prop:assoc_hall}.
More precisely, we fix a flag $V_1\subset V_2\subset V$, with $\dim V_i=v_1+\cdots+v_i$. Fix an $I$-tuple of vector spaces $W$ with dimension vector $w$.
We define the following varieties:
\begin{itemize}
\item Let $X_1$ be the set of quadruples $(F_1, F_2, a, j)$, where $F_1\subset F_2\subset V$ is a flag such that $F_1\cong V_1, F_2\cong V_2$, and $a\in \Rep(Q, v)$ is an endomorphism of the vector space 
$F_1\oplus (F_2/F_1)\oplus (V/F_2)$.
$j$ is an element of $\Hom(F_1, W)$.
\item Let $X_2$ be the set of triples $(F_1, a, j)$, where $F_1\subset V$, such that $F_1\cong V_1$ and $a \in \Rep(Q, v)$ is an endomorphism of the vector space $F_1\oplus (V/F_1)$. $j$ is an element of $\Hom(F_1, W)$.
\item $X_3=\Rep(Q, v)\oplus \Hom(V, W)$.
\end{itemize}
We then define the following correspondences. For $i=1, 2, 3$, let $W_i$ be the following. 
\begin{align*}
W_1=\{(F_1&, a, j)\mid F_1\subset V, \hbox{such that $F_1\cong V_1$, and } a(F_1) \subset F_1, \hbox{for $a\in \Rep(Q, v), j\in \Hom(V, W)$}\}.\\
W_2=\{(F_1&, F_2, a, j)\mid F_1\subset F_2 \subset V, a \in \Rep(Q, v), \hbox{such that $F_i\cong V_i$, and }
a(F_i) \subset F_i,  \\
&\hbox{for $i=1, 2$, and $j\in \Hom(V, W)$}\}.
\\
W_3=\{(F_1&, F_2, a, v)\mid F_1\subset F_2 \subset V, a\in \Rep(Q, v), \hbox{such that $F_i\cong V_i$, for $i=1, 2$, and }\\
& a\in \End(F_1\oplus (V/F_1)), \hbox{$a$ preserves the subspace $\{0\}\oplus (F_2/F_1)$, and $j\in \Hom(F_1, W)$}\}.
\end{align*}
We have the inclusions $W_1\subset X_3\times X_2$, $W_2\subset X_3\times X_1$, and $W_3\subset X_2\times X_1$.
It is clear that those inclusions give an isomorphism $W_2=W_1\times_{X_2} W_3$. 
We consider
\[
Z_1=T^*_{W_1}(X_3\times X_2), \,\
Z_2=T^*_{W_2}(X_3\times X_1), \,\
Z_3=T^*_{W_3}(X_2\times X_1).
\]
The intersection $(W_1\times X_1)\cap (X_3\times W_2)$ is transversal in $X_3\times X_2\times X_1$. Thus, by \cite[Theorem 2.7.26]{CG} we have an isomorphism $Z_1\times_{T^*X_2} Z_{3} \cong Z_2$. As in Theorem~\ref{prop:assoc_hall}, we have $\dim(Z_1)+\dim(Z_3)=\dim(Z_2)+\dim(T^* X_2)$ by Lemma \ref{lem:dim_lag}. 

Let $P_1:=\{g\in G=G_{v_1+v_2+v_3} \mid g(V_1)\subset V_1\}$, with Lie algebra $\fp_1$,
and $P:=\{g \in G \mid g(V_i)\subset V_i, i=1, 2\}$ with Lie algebra $\fp$.
By Lemma \ref{lem:descrip of Z_G}, we have
\begin{align*}
&T^*X_{2}\subset G\times_{P_1}\big(\fp_1\times \Rep(\overline {Q^{\heartsuit}}, v_1, w)\times \Rep(\overline{Q}, v_2+v_3)\big),
\\
&T^*X_{1}\subset G\times_{P}\big(\fp\times \Rep(\overline {Q^{\heartsuit}}, v_1, w)\times \Rep(\overline Q, v_2)\times\Rep(\overline{Q}, v_3)\big).
\end{align*}
Define
\begin{align*}
T^*X_{3}^s&:=\Rep(\overline{Q^{\heartsuit}}, v_1+v_2+v_3, w)^{ss},\\
T^*X_{2}^s&:=T^*X_{2}\cap G\times_{P_1}\big(\fp_1\times \Rep(\overline {Q^{\heartsuit}}, v_1, w)^{ss}\times \Rep(\overline{Q}, v_2+v_3)\big),\\
T^*X_{1}^s&:=T^*X_{1}\cap G\times_{P}\big(\fp\times \Rep(\overline {Q^{\heartsuit}}, v_1, w)^{ss}\times \Rep(\overline Q, v_2)\times\Rep(\overline{Q}, v_3)\big).
\end{align*}
Define\[
Z_{3}^{s}:=\psi_{3}^{-1}(T^* X_2^s), \,\
Z_{2}^{s}:=\psi_{2}^{-1}(T^* X_3^s), \,\
Z_{1}^{s}:=\psi_{1}^{-1}(T^* X_3^s).
\]
Then we have the following diagram with the square being Cartesian.
\[\xymatrix@R=1.5em{
T^*X_3^s&Z_{1}^s\ar[l]_{\psi_1}\ar[r]^{\phi_1}&T^*X_2^s,\\
&Z_{2}^s\ar[u]\ar[lu]^{\psi_2}\ar[dr]_{\phi_2}\ar[r]&Z_{3}^s\ar[d]^{\phi_3}\ar[u]_{\psi_3},\\
&&T^*X_1^s.
}\]
We define the maps $I_1, I_2, I_3$ as in Theorem~\ref{prop:assoc_hall}. The same argument shows $I_2=I_1\circ I_3$. This implies $a_{v_1, v_2}$ is an action map.
\end{proof}
By convention, we consider the right action of $\calP$ on $\calM(w)$. Therefore, the multiplication of $\End(\calM(w))$ is understood as composition of right action operators. 
\begin{remark}
If one uses the stability condition $\theta^{-}=(-1,\cdots,-1)$ in the definition of Nakajima quiver variety, the Lagrangian correspondence  $Z_{G}^{(\chi_{\theta^-})-ss}$ should be adjusted to
\[
Z_{G}^{(\chi_{\theta^-})-ss}=
G\times_P\{ (x, x^*, i, j )\in \mu^{-1}_{v, w}(0)^{(\chi_{\theta^-})-ss} \mid (x, x^*)(V_1)\subset V_1,   \ker(j)\supset V_1\}.
\]
The Lagrangian correspondence formalism 
will give us a left action of  $\calP^{op}$ on $A_{G_w\times T}(\mathfrak{M}_{\theta^-}( w))$.
This left module of $\calP^{op}$ coincides with the natural left action of $\calP^{op}$ on $\calM( w)$, under 
the identification of $\mathfrak{M}_{\theta^+}(v, w)$ and 
$\mathfrak{M}_{\theta^-}(v, w)$, sending any representation $V$ to its dual $V^\vee$.
\end{remark}

\subsection{Nakajima's raising operators}
In this section, we take $Q$ to be an arbitrary quiver. We interpret the action of the CoHA $^A\calP(Q)$ 
constructed in \S\ref{subsec:PreproRepn} in terms of  Nakajima's raising operators. This interpretation allows us to compare $^A\calP(Q)$ with the quantum groups.

We start by recalling the raising operators constructed by Nakajima in \cite{Nak98, Nak99}. Recall in \S\ref{subsec:moment_quiver}, we denote by $\fM(v,w)$ the 
Nakajima quiver variety with the fixed stability condition $\theta^+$. Let $\fM_{0}(v, w)$ be the affine quotient of $\mu_{v, w}^{-1}(0)$. That is, 
\[
\fM_{0}(v, w):=\Spec (k[\mu_{v, w}^{-1}(0)]^{G_v}),
\]
where $k[\mu_{v, w}^{-1}(0)]$ is the coordinate ring of $\mu_{v, w}^{-1}(0)$.
We have the resolution of singularities  $\pi: \fM(v,w)\to \fM_{0}(v, w)$.
For two dimension vectors $v_1$ and $v_2$,
the composition $\fM(v_i,w)\to \fM_{0}(v_i, w) \subset \fM_{0}(v_1+v_2, w)$ are denoted by $\pi_i$.
Let 
\[
Z(v_1, v_2, w):=\{ (x_1, x_2)\in \fM(v_1,w)\times \fM(v_2,w) \mid \pi_1(x_1)=\pi_2(x_2)\}
\] be the Steinberg variety. 
By the construction of $\fM(v, w)$, we have the tautological vector bundle 
\[
\mu_v^{-1}(0)^{ss}\times _{G_v} V \to \fM(v, w)
\]
associated to the principal $G_v$-bundle $\mu_v^{-1}(0)^{ss} \to \fM(v, w)$. Here 
$V$ is the $G_v$ representation with dimension vector $v$. We denote the vector bundle by $\calV(v, w)$.

We now consider the case when $v_1=v_2-e_k$, where $e_k$ is the dimension vector whose entry $k$ is $1$, and other entries are $0$. The Hecke correspondence $C^+_k(v_2,w)$ (see \cite{Nak98, Nak99}) is an irreducible component of  $Z(v_1,v_2,w)$, defined as the set of quintuples $\{ (x,x^*, i, j, S)\}$  up to $G_v$-conjugation, where $(x,x^*, i, j)\in \mu_{v, w}^{-1}(0)^{ss}$ and $S\subset V$ is a $x,x^*$-invariant subspace containing the image of $i$ with $\dim(S)=v_2-e_k$. We consider $C^+_k(v_2,w)$ as a closed subvariety of 
$\fM(v_2-e_k, w)\times \fM(v_2, w)$ by setting
\[
(x^1, x^{*1}, i^1, j^1):=\text{the restriction of $(x, x^*, i, j)$ to $S$},\,\ 
(x^2, x^{*2}, i^2, j^2):=(x, x^*, i, j).
\]
The component $C^+_k(v_2,w)$ is smooth, and it is a Lagrangian subvariety of $\fM(v_2-e_k, w)\times \fM(v_2, w)$ as shown by Nakajima. 
In particular, 
\begin{align*}
\dim C^+_k(v_2,w)=&\frac{\dim \fM(v_2-e_k, w)+\dim \fM(v_2, w)}{2}.
\end{align*}
The tautological line bundle $\calL_k$ of $C^+_k(v_2,w)$ is defined to be the quotient 
\[\calL_k:=\calV(v_2,w)/\calV(v_1,w).\]
 
Nakajima defined the following raising operators. 
Let $f(t)\in A_T(\pt)[\![t]\!]$ be a power series. 
Then $f(c_1(\calL_k))$ is a well-defined element in $A_{G_w\times T}(C^+_k(v_2,w))$.  
We have the following diagram:
\[
\xymatrix@R=1.5em @C=1em{
C^+_k(v_2,w) \ar@{^{(}->}[r]& \fM(v_2-e_k, w)\times \fM(v_2, w) \ar[dl]\ar[dr] &\\
\fM(v_2-e_k, w)&&\fM(v_2, w).
}
\]
Denote by $p_i: C^+_k(v_2,w)\to \fM(v_i, w)$ the composition of the inclusion with the $i$--th projections, for $i=1, 2$, and $v_1=v_2-e_k$.
Let $\Psi(f(c_1(\calL_k)))\in \End_{R[\![t_1,t_2]\!]}(A_{G_w\times T}\left(\fM(w))\right)$ be the raising operation given by convolution with $f(c_1(\calL_k))$. In other words, let $\alpha\in A_{G_w\times T}(\fM(v_1, w))$, 
\[
\Psi(f(c_1(\calL_k)))(\alpha):=p_{2*}\big(p_1^*(\alpha)\cap f(c_1(\calL_k))\big).
\]

For the dimension vector $e_k$, the condition $[x, x^*]=0$ in the moment map \[
\mu_{e_k}: \Rep(Q, e_k)\times \Rep(Q^{\op}, e_k) \to k, \,\ [x,x^*]=0
\] is automatic. Therefore, $\mu_{e_k}^{-1}(0)=T^*\Rep(Q, e_k)$. In particular, $\mu_{e_k}^{-1}(0)$ is a vector space with  $G_{e_k}=\Gm$-action, and we have the isomorphism
\[
\calP_{e_k}:=A_{G_v\times T}(\mu_{e_k}^{-1}(0))
\cong A_{\Gm\times T}(\pt)\cong A_{T}(\pt)[\![ z^{(k)}]\!]. \]

Let $\xi_k$ be the natural one dimensional representation of $G_{e_k}=\Gm$. Then,
$z^{(k)}$ can be viewed as $c_1(\xi_k) \in A_{G_v\times T}(\mu_{e_k}^{-1}(0))$.

In the case of $v_1+e_k=v_2$, we write $v=v_2$ for short, for the Lagrangian correspondence, we have
$Y=\Rep(Q, v-e_k, w) \times \Rep(Q, e_k),  \,\ X'=\Rep(Q, v, w).$ Therefore, the correspondence in this case becomes
\begin{align}
\xymatrix{
G_v\times_P(\mu_{v-e_k, w}^{-1}(0)^{ss}\times \mu_{e_k}^{-1}(0))  & Z^s_{G} \ar[l]_(0.2){\overline{\phi}}\ar[r]^(0.4){\overline{\psi}} & \mu_{v, w}^{-1}(0)^{ss}}. \label{eq:lag for e_k}
\end{align}
\Omit{
\begin{align}
&Y=\Rep(Q, v-e_k, w) \times \pt,  \,\ X'=\Rep(Q, v, w). \notag\\
&\calV:=\{(x, j)\in \Rep(Q, v, w) \mid x(V_1) \subset V_1\} \subset X'. \notag \\
&X:=G\times_{P} Y=G\times_{P} \Rep(Q, v-e_k, w) , \,\ W=G\times_{P} \calV. \notag\\
&\xymatrix{
T^*(G\times_P\Rep(Q, v-e_k, w)) & Z \ar[l]\ar[r] & T^*\Rep(Q, v, w)
} \notag\\
&\xymatrix{
G_v\times_P\mu_{v-e_k, w}^{-1}(0)^{ss} & Z^s_{G} \ar[l]_(0.3){\overline{\phi}}\ar[r]^(0.4){\overline{\psi}} & \mu_{v, w}^{-1}(0)^{ss}}. \label{eq:lag for e_k}
\end{align}}
\begin{theorem}
\label{thm: hecke corr}
For any $f(t)\in  A_T(\pt)[\![t]\!]$, view $f(z^{(k)})\in \calP_{e_k} \cong A_{T}(\pt)[\![ z^{(k)}]\!]$, 
we have the equality 
\[
\Psi(f(c_1(\calL_k)))=\Phi(f(z^{(k)}))\] in $\End_{R[\![t_1,t_2]\!]}(A_{G_w\times T}\left(\fM(w))\right)$, where $\Phi$ is the action of the preprojective CoHA $^A\calP$.
\end{theorem}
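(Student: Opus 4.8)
The plan is to identify the restriction of the action map $a_{v-e_k,e_k}$ of Theorem~\ref{thm:prep action} to classes of the form $\alpha\otimes f(z^{(k)})$ with Nakajima's convolution operator $\Psi(f(c_1(\calL_k)))$, by matching the Lagrangian correspondence \eqref{eq:lag for e_k} with the Hecke correspondence $C^+_k(v,w)$, where $v=v_1+e_k$. I work $G_v\times T\times G_w$-equivariantly and pass to $G_v$-quotients; on the stable loci the $G_v$-action is free, so these are honest smooth varieties, apart from a residual $B\Gm$ coming from the $G_{e_k}$-factor of the parabolic, which will turn out to record the line bundle $\calL_k$.

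First I would specialize Lemma~\ref{lem:descrip of Z_G} to $v_1=v-e_k$, $v_2=e_k$. Part (c) identifies $Z^s_G/G_v$ with the set of quintuples $(x,x^*,i,j,S)$, where $(x,x^*,i,j)\in\mu_{v,w}^{-1}(0)^{ss}$ and $S\subset V$ is an $(x,x^*)$-invariant subspace of dimension vector $v-e_k$ with $\Image(i)\subset S$, taken modulo $G_v$ --- that is, exactly $C^+_k(v,w)$. On the other hand $T^*_GX^s/G_v=\bigl(G_v\times_P(\mu_{v-e_k,w}^{-1}(0)^{ss}\times\mu_{e_k}^{-1}(0))\bigr)/G_v$; since $\mu_{e_k}^{-1}(0)$ is an affine space with trivial $\Gm$-action and $P$ acts on $\mu_{v-e_k,w}^{-1}(0)^{ss}$ through its Levi factor $G_{v-e_k}$, this is $\fM(v-e_k,w)\times B\Gm$, while the target of $\overline\psi$ becomes $\fM(v,w)$. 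By the formulas in Lemma~\ref{lem:descrip of Z_G}(b), $\overline\psi$ descends to the forgetful map $(x,x^*,i,j,S)\mapsto(x,x^*,i,j)$, i.e.\ Nakajima's proper morphism $p_2$; and $\overline\phi$ descends to the map whose $\fM(v-e_k,w)$-component is the restriction $(x,x^*,i,j,S)\mapsto(x,x^*,i,j)|_S$, i.e.\ Nakajima's $p_1$, and whose $B\Gm$-component is the classifying map of the line bundle associated to the $G_{e_k}$-torsor, namely $V/S=\calV(v,w)/\calV(v-e_k,w)=\calL_k$. Under $\calP_{e_k}\cong A_T(\pt)[\![z^{(k)}]\!]$ with $z^{(k)}=c_1(\xi_k)$, the pullback of $z^{(k)}$ along this $B\Gm$-component is then $c_1(\calL_k)$, so $f(z^{(k)})$ pulls back to $f(c_1(\calL_k))\in A_{G_w\times T}(C^+_k(v,w))$.

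Next I would verify that $\phi^\sharp$, transported through these identifications, is the honest Gysin pullback along $\overline\phi\colon C^+_k(v,w)\to\fM(v-e_k,w)\times B\Gm$, with no extra Euler-class factor. Because $Z^s_G=\phi^{-1}(T^*_GX^s)$ --- the stable-locus version of Lemma~\ref{lem:fiber_lag} --- and all the spaces involved are smooth, the square defining $\phi^\sharp$ by base change is Cartesian, and Lemma~\ref{Lem:Gysin}, together with Lemma~\ref{lem:gysin+pullback_comm}, identifies $\phi^\sharp$ with the refined l.c.i.\ pullback along $\overline\phi$; since $\fM(v-e_k,w)$ and $C^+_k(v,w)$ are smooth and $A$ is multiplicative on smooth schemes, this is $\alpha\otimes f(z^{(k)})\mapsto p_1^\sharp(\alpha)\cdot f(c_1(\calL_k))$. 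Composing with $\overline\psi_*=p_{2*}$ gives $p_{2*}\bigl(p_1^*(\alpha)\cap f(c_1(\calL_k))\bigr)=\Psi(f(c_1(\calL_k)))(\alpha)$, which is the desired equality $\Phi(f(z^{(k)}))=\Psi(f(c_1(\calL_k)))$.

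The main obstacle is precisely this last step: the CoHA action uses the abstract refined Gysin pullback of the Lagrangian-correspondence formalism, whereas Nakajima's operator is an explicit pullback, cap product, and pushforward, so one must rule out any spurious normal-bundle contribution when passing between the two. I expect the clean way to do this is to re-run the base-change argument of Theorem~\ref{thm:prep action} in the special case $v_2=e_k$, where the correspondence collapses onto the Hecke correspondence and $\overline\phi$ is transverse to the embedding $T^*_GX^s\hookrightarrow T^*X^s$, so that $\phi^\sharp$ is simply the pullback along $\overline\phi$; the remaining identifications of the varieties, the maps $p_1,p_2$, and the class $\calL_k$ are then routine bookkeeping with Lemma~\ref{lem:descrip of Z_G}.
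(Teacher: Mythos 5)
Your identifications of the spaces and maps under the $G_v$-quotient are exactly what the paper does: $Z^s_G/G_v\cong C^+_k(v,w)$ via Lemma~\ref{lem:descrip of Z_G}, $\overline\psi$ descends to $p_2$, $\overline\phi$ descends to $p_1$ together with the $\calL_k$-classifying map, and $z^{(k)}=c_1(\xi_k)$ pulls back to $c_1(\calL_k)$. So the strategy and the bookkeeping parts of your argument are correct and match the paper's proof.

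The genuine gap is precisely the one you flag as ``the main obstacle'' and then leave open. To conclude $\phi^\sharp=\overline\phi^*$ via Lemma~\ref{Lem:Gysin}(\ref{Lem:gysin_two}) you need the two maps $\phi\colon Z^s\to T^*X^s$ and the closed embedding $T^*_GX^s\hookrightarrow T^*X^s$ to be \emph{transversal}; Cartesian-ness of the square (i.e.\ $Z^s_G=\phi^{-1}(T^*_GX^s)$) is not enough, since these are intersections of non-transversal Lagrangians inside a singular ambient scheme in general. Asserting that ``$\overline\phi$ is transverse'' and that it should fall out of re-running the Theorem~\ref{thm:prep action} base-change is not a proof --- it is exactly what needs to be established. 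The paper closes this gap with an explicit dimension count: one computes $\dim Z^s$, $\dim Z_G^s$, $\dim T^*X^s$, and $\dim\bigl(G\times_P\mu_{v-e_k,w}^{-1}(0)^{ss}\bigr)$ directly (using $\dim Z_G^s=\dim G_v+\dim C^+_k(v,w)$ and the Nakajima dimension formula for $C^+_k$), checks the equality
\[
\dim(T^*X^s)+\dim(Z_G^s)=\dim\bigl(G\times_P\mu_{v-e_k,w}^{-1}(0)^{ss}\bigr)+\dim Z^s,
\]
and then invokes Lemma~\ref{lem:dim_trans} to obtain transversality and hence $\phi^\sharp\circ g_*=g'_*\circ\overline\phi^*$. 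Without this computation, the possibility of a spurious excess-intersection/Euler-class factor in $\phi^\sharp$ is not ruled out, and the proof is incomplete.
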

\begin{proof}
Taking the quotient by $G_v$ of the Lagrangian correspondence \eqref{eq:lag for e_k}, we get the following commutative diagram.
\[
\xymatrix@R=1.5em{
T^*X^{s} & Z^s \ar[l]_{\phi}\ar[r]^{\psi} & T^*X'^{s} \\
G_v\times_P(\mu_{v-e_k}^{-1}(0)^{ss}\times \mu_{e_k}^{-1}(0)) \ar@{^{(}->}[u]_{g}& Z^s_{G} \ar@{^{(}->}[u]_{g'}\ar[l]_(0.2){\overline{\phi}}\ar[r]^{\overline{\psi}} \ar[d]& \mu_{v}^{-1}(0)^{ss} \ar@{^{(}->}[u]\ar[d]\\
\mathfrak{M}(v-e_k, w) & C_k^+( v, w) \ar[l]_{p_1} \ar[r]^{p_2}& \mathfrak{M}(v, w)} \]
The vertical maps $g, g'$ are closed embeddings. 
In the above diagram, the map $\overline{\phi}$ is a smooth morphism of smooth varieties. The usual pullback $\overline{\phi}^*$ is well-defined. 
We first show the Gysin pullback $\phi^\sharp$ is the same as the usual pullback $\overline{\phi}^*$.  By Lemma \ref{lem:descrip of Z_G}, the variety $Z_{G}^s$ is a principal $G_{v}$--bundle of $C_k^+( v, w)$. 
 Thus, 
\begin{align*}
& \dim Z_G^s=
 \dim G_{v}+\dim  C_k^+( v, w)=\dim G_v+\frac{\dim \fM(v-e_k, w)+\dim \fM(v, w)}{2}\\
 &\phantom{1234567}=\dim G_v+\frac{2\dim \Rep(Q,v-e_k, w)-2\dim G_{v-e_k}+2\dim \Rep(Q, v, w)-2\dim G_v}{2}\\
  &\phantom{1234567}=\dim \Rep(Q,v-e_k, w)-\dim G_{v-e_k}+\dim \Rep(Q, v, w)\\
  &\phantom{1234567}=w\cdot (v-e_k)+A_Q(v-e_k)\cdot (v-e_k)-(v-e_k)\cdot (v-e_k)+w\cdot v+A_Q v\cdot v;\\
&\dim T^*X
=2\dim (G\times_{P} Y)=
2(\dim (G_v/P)+\dim\Rep(Q, v-e_k, w)+\dim \Rep(Q, e_k))\\
 &\phantom{1234567}=2(\dim (G_v/P)+\dim\Rep(Q, v-e_k, w))+2\dim \Rep(Q, e_k);
 \end{align*}
 \begin{align*}
&\dim (G\times_P(\mu_{v-e_k, w}^{-1}(0)^{ ss}\times \mu_{e_k}^{-1}(0))
= \dim (G_v /P) +\dim \mu_{v-e_k, w}^{-1}(0)^{ ss}+\dim \mu_{e_k}^{-1}(0)\\
 &\phantom{1234567}=  \dim (G_v/P) +2\dim \Rep(Q, v-e_k, w)-\dim G_{v-e_k}+2\dim \Rep(Q, e_k);\\
&\dim Z
=\dim G_v/P+2(\Rep(Q, v-e_k)+ A_Q e_k\cdot v)+ w\cdot (v-e_k)+w\cdot v\\
 &\phantom{1234567}=\dim G_v/P+2(A_Q(v-e_k)\cdot (v-e_k)+A_Q e_k \cdot v)+ w\cdot (v-e_k)+w\cdot v.
\end{align*}
Therefore,
\begin{align*}
\dim Z-\dim Z_{G}
=\dim (G_v/P)+\dim G_{v-e_k}=\dim T^*X-\dim \big(G\times_P(\mu_{v-e_k, w}^{-1}(0)^{ ss}\times \mu_{e_k}^{-1}(0))\big).
\end{align*}
Hence we have
\[
\dim(T^*X^s)+\dim (Z_G^s)=\dim\big(G\times_P(\mu_{v-e_k, w}^{-1}(0)^{ ss}\times \mu_{e_k}^{-1}(0))\big)+\dim Z^s.
\]
Thus, Lemma \ref{Lem:Gysin}(\ref{Lem:gysin_two}) yields $ \phi^*\circ g_*=g'_*\circ \overline{\phi}^*$. Therefore, for $\alpha\in A_{G_w\times T}(\mathfrak{M}(v-e_k, w))$, 
\[\Phi((z^{(k)})^l)(\alpha)=\overline{\psi}_*\overline{\phi}^*((z^{(k)})^l\otimes \alpha),\] here $\overline{\phi}^*$ is the usual pullback.
Here to distinguish the vertex $k\in I$ and the power $l\in \bbN$, we write $(k)$ for the label $k\in I$.

The isomorphism $Z_G^s/G_v \cong C_k^+( v, w)$ follows from Lemma \ref{lem:descrip of Z_G}.
It induces an isomorphism \[
A_{G\times T\times G_w}(Z^s\cap Z_G)\cong 
A_{T\times G_w}(C_k^+( v, w)).
\] 
The isomorphism maps $\overline{\phi}^*( (z^{(k)})^l \otimes \alpha)$ to $(c_1(\calL_k))^{l}\otimes p_1^*(\alpha)$, for any $l$. The pullback of the line bundle $\calL_k$ on $Z_{G}^s$ is the trivial bundle with fiber $V(v, w)/V(v-e_k, w)$. It carries a natural $G_{e_k}=\Gm$ action. The element $z^{(k)}$ can be interpreted as $z^{(k)}=c_{1}(V(v, w)/V(v-e_k, w)) \in A_{G_v\times T}(\mu_{e_k}^{-1}(0))$. Thus, $\overline{\phi}^*( z^{(k)})\mapsto c_1(\calL_k)$ under the isomorphism. 

The claim follows now from the definitions of the two actions $\Psi$ and $\Phi$.
\end{proof}

\subsection{Extended CoHA}
\label{subsec:Cartan}
For later use, we will consider some modifications of $\calP$.

\begin{assumption}\label{Assump:Weights}
Assume the action of $T=\Gm^2$ on $T^*\Rep(Q,v)$ satisfies one of the following:
\item[Case 1:] $m(h)=m(h^*)=1$, for any $h\in H$;
\item[Case 2:] The $T$-action is the one defined in Remark~\ref{rmk:weights}. 
\end{assumption}
Note that Assumption~\ref{Assu:WeghtsGeneral} is satisfied in either case above.

Let $(R,F)$ be the formal group law of $A$. Define 
$^A \calP^0:=\Sym_{R}(\oplus_{i\in I}A_{G_{e_i}}(\pt))$ be the symmetric algebra of $\oplus_{i\in I}A_{G_{e_i}}(\pt)$.
Let 
\[
\Phi_{k}(z)=\sum_{r \geq 0} (u^{(k)})^r z^{-r}\in\null ^A \calP^0[\![ z^{-1}]\!]\]
 be the generating series of generators $(u^{(k)})^r \in\null  ^A \calP^0$. 
 
Recall that $a_{ik}$ is the number of arrows in $H$ from $i$ to $k$. 
Let $c_{ik}=-a_{ik}-a_{ki}$ if $k\neq i$, $2$ if $k=i$. 
We define a $^A \calP^0$ action on  $^A\calP$ as follows.
For any $g \in ^A \calP_{v}$, 
 \[
 \Phi_{k}(z) g  \Phi_{k}(z)^{-1}:=g \widehat{\Phi_k}(z, v),
 \]
 where 
  \[
\widehat{\Phi_k}(z, v):= \prod_{i\in I\backslash \{k\}} \prod_{j=1}^{v^{i}}\frac{(z-_F \lambda_{j}^{(i)}-_Ft_1)^{a_{ik}} (z-_F \lambda_{j}^{(i)}-_Ft_2)^{a_{ki}}}{(z-_F \lambda_{j}^{(i)}+_Ft_2)^{a_{ik}} (z-_F \lambda_{j}^{(i)}+_Ft_1)^{a_{ki}}}\cdot 
 \prod_{j=1}^{v^{k}}\frac{(z-_F \lambda_{j}^{(k)}+_Ft_1+_F t_2) }{(z-_F \lambda_{j}^{(k)}-_Ft_1-_F t_2)}
 \]
when the  $T$-action satisfies Assumption~\ref{Assump:Weights}(1);
 and  
 \[
 \widehat{\Phi_k}(z, v):= \prod_{i\in I} \prod_{j=1}^{v^{i}}
\frac{(z-_F \lambda_{j}^{(i)}+_F(c_{ki})_F\frac{\hbar}{2})}{(z-_F \lambda_{j}^{(i)}-_F(c_{ki})_{F} \frac{\hbar}{2})}
 \]
 when the  $T$-action satisfies Assumption~\ref{Assump:Weights}(2).
Note that the element $\widehat{\Phi_k}(z, v)$ lies in $A_{\GL_v\times T}(\pt)[\![z^{-1}]\!]$ in both cases.  \begin{lemma}
\begin{enumerate}
\item The action of $^A \calP^0$  on $^A\calP$ is well-defined.
\item Furthermore, for any $k\in I$,  $\Phi_k$ acts on $\calP$ by an algebra homomorphism.
\item The subalgebra $\calP^{\sph}\subseteq \calP$ is a $\calP^0$-submodule.
\end{enumerate}
\end{lemma}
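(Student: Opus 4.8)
The plan is to dispatch the three claims in order, the second being the only one with real content.

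\emph{Part (1).} By the observation immediately preceding the lemma, $\widehat{\Phi_k}(z,v)$ is a well-defined element of $A_{G_v\times T}(\pt)[\![z^{-1}]\!]$: expanding each rational factor at $z=\infty$ (this uses $\QQ\subseteq R$) produces a series which is visibly symmetric in the Chern roots $\lambda^{(i)}_j$, hence lies in $A_{G_v\times T}(\pt)=A_T(\pt)[\![\lambda^{(i)}_j]\!]^{\fS_v}$, and whose leading ($z^{0}$) term is $1$, so it is invertible. One then declares $(u^{(k)})^r$ to act on $\calP_v=A_{G_v\times T}(\mu_v^{-1}(0))$ as multiplication by the coefficient of $z^{-r}$ in $\widehat{\Phi_k}(z,v)$, using the $A_{G_v\times T}(\pt)$-module structure on $\calP_v$. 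Since $\calP^0=\Sym_R\big(\bigoplus_{i\in I}A_{G_{e_i}}(\pt)\big)$ is the free commutative $R$-algebra on the elements $(u^{(k)})^r$, and all prescribed images lie in the commutative ring $A_{G_v\times T}(\pt)$, this prescription extends uniquely to an $R$-algebra homomorphism $\calP^0\to A_{G_v\times T}(\pt)$ for each $v$; composing with the multiplication action of $A_{G_v\times T}(\pt)$ on $\calP_v$ and taking the direct sum over $v\in\bbN^I$ gives the asserted $\calP^0$-module structure on $\calP$. So well-definedness is formal, once the target elements are known to make sense.

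\emph{Part (2).} Here "$\Phi_k$ acts by an algebra homomorphism" is the statement that the invertible operator $\Phi_k(z)$, sending $g\in\calP_v$ to $\widehat{\Phi_k}(z,v)\,g$, is an algebra endomorphism of $(\calP,m^{\calP})$ extended $[\![z^{-1}]\!]$-linearly; equivalently, for $g_i\in\calP_{v_i}$,
\[
m^{\calP}_{v_1,v_2}(g_1\otimes g_2)\,\widehat{\Phi_k}(z,v_1+v_2)=m^{\calP}_{v_1,v_2}\big((\widehat{\Phi_k}(z,v_1)\,g_1)\otimes(\widehat{\Phi_k}(z,v_2)\,g_2)\big),
\]
the coefficients of which encode the action of the individual generators $(u^{(k)})^r$. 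I would prove this from two inputs. First, a restriction identity: under the pullback $A_{G_v\times T}(\pt)\to A_{G_{v_1}\times G_{v_2}\times T}(\pt)$ that splits the Chern roots of $\calR(v^i)$ into those of $\calR(v_1^i)$ and $\calR(v_2^i)$, one has $\widehat{\Phi_k}(z,v_1+v_2)\mapsto\widehat{\Phi_k}(z,v_1)\otimes\widehat{\Phi_k}(z,v_2)$; this is immediate from the explicit product shape $\widehat{\Phi_k}(z,v)=\prod_{i\in I}\prod_{j=1}^{v^i}(\text{a single factor in }\lambda^{(i)}_j)$ and is uniform over the two cases of Assumption~\ref{Assump:Weights}. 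Second, the $A_{G_v\times T}(\pt)$-linearity of the Hall product: recalling from \S\ref{preproj CoHA} that $m^{\calP}_{v_1,v_2}=\overline\psi_*\circ\phi^\sharp\circ\kappa$, where $\kappa$ is the K\"unneth map followed by the homotopy identification with $A_{G\times T}(T^*_GX)$, $\phi^\sharp$ is the refined Gysin pullback, and $\overline\psi_*$ the equivariant proper pushforward of the Lagrangian correspondence, one checks that $\kappa$ intertwines the $A_{G_v\times T}(\pt)$-action on its target with the one on its source induced by the above restriction map, that $\phi^\sharp$ commutes with pullback from the base (Lemma~\ref{Lem:Gysin}), and that $\overline\psi_*$ commutes with such classes by the projection formula. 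Hence $m^{\calP}$ is linear over $A_{G_v\times T}(\pt)$ for that module structure on the source, and applying this to $c=\widehat{\Phi_k}(z,v_1+v_2)$ together with the restriction identity yields the displayed equality. (Alternatively, after pushing to $\calS\calH$ the identity is transparent, since $\widehat{\Phi_k}(z,v)$ is fully symmetric and multiplicative and hence commutes past the shuffle sum of Proposition~\ref{thm: shuffle formula}; but as $\calP\to\calS\calH$ is injective only after localization, I would run the argument directly in $\calP$.)

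\emph{Part (3).} Since $\calP^0$ is generated as an $R$-algebra by the $(u^{(k)})^r$, it suffices to show $\calP^{\sph}$ is stable under the operator $\Phi_k(z)$ for each $k\in I$. An element of $\calP^{\sph}$ is a finite $A_T(\pt)$-linear combination of Hall products $g_1\ast\cdots\ast g_m$ with $g_a\in\calP_{e_{l_a}}$, $l_a\in I$. By part (2) and associativity (Theorem~\ref{prop:assoc_hall}),
\[
\Phi_k(z)\cdot(g_1\ast\cdots\ast g_m)=(\widehat{\Phi_k}(z,e_{l_1})\,g_1)\ast\cdots\ast(\widehat{\Phi_k}(z,e_{l_m})\,g_m).
\]
Each factor $\widehat{\Phi_k}(z,e_{l_a})\,g_a$ lies in $\calP_{e_{l_a}}[\![z^{-1}]\!]$, because $\widehat{\Phi_k}(z,e_{l_a})\in A_{G_{e_{l_a}}\times T}(\pt)[\![z^{-1}]\!]$ and $\calP_{e_{l_a}}$ is a module over $A_{G_{e_{l_a}}\times T}(\pt)$; hence the right-hand side lies in $\calP^{\sph}[\![z^{-1}]\!]$, and extracting the coefficient of $z^{-r}$ gives $(u^{(k)})^r\cdot\calP^{\sph}\subseteq\calP^{\sph}$. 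So $\calP^{\sph}$ is a $\calP^0$-submodule. The main obstacle in this program is the $A_{G_v\times T}(\pt)$-linearity of the Hall multiplication invoked in part (2): it forces one to follow the several module structures through the identification $\kappa$ and to use the compatibility of the refined Gysin pullback with base pullback (Lemma~\ref{Lem:Gysin}) together with the projection formula; the restriction identity in (2) and parts (1) and (3) are then either explicit symmetric-function bookkeeping or formal consequences of $\calP^0$ being free.
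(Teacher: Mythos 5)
Your proposal is correct and follows essentially the same argument as the paper: part (1) is the observation that $\widehat{\Phi_k}(z,v)$ is a symmetric series and hence a bona fide element of $A_{G_v\times T}(\pt)[\![z^{-1}]\!]$; part (2) combines the multiplicativity $\widehat{\Phi_k}(z,v_1)\widehat{\Phi_k}(z,v_2)=\widehat{\Phi_k}(z,v_1+v_2)$ under Chern-root splitting with the $A_{G_v\times T}(\pt)$-linearity of the Hall product (projection formula, base-change compatibility of the refined Gysin pullback); and part (3) is immediate from part (2) and the degree-$e_k$ case. You merely unpack the terse "(1) is symmetric, (2) projection formula plus multiplicativity, (3) clear" of the paper, and correctly note that running the verification directly in $\calP$ rather than via $\calS\calH$ avoids a false appeal to injectivity of $\calP\to\calS\calH$ before localization.
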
 
\begin{proof}
(1) follows from the fact that $\widehat{\Phi_i}(z, v)$ is symmetric. (2) follows from the projection formula and the equality $\widehat{\Phi_i}(z, v_1)\cdot \widehat{\Phi_i}(z, v_2)=\widehat{\Phi_i}(z, v_1+v_2)$.  
(3) is clear.
\end{proof} 

\begin{definition}
Define the extended preprojective CoHA associated to $A$ and $Q$ to be the algebra $^A\calP^{\ext}=\null^A\calP^{0}\ltimes  \null^A\calP$. The spherical subalgebra in the extended CoHA is $^A\calP^{\sph,\ext}=\null^A\calP^{0}\ltimes  \null^A\calP^{\sph}$.
\end{definition}

Similarly, there is an action of $\null ^A \calP^0$ on $\null ^A \calS\calH$ by algebra homomorphism.
We define the extension of shuffle algebra
$\null ^A \calS\calH^{\ext}:=\null ^A \calP^0\ltimes \null ^A \calS\calH$.
As a direct consequence of Proposition~\ref{thm:prepro to shuffle}, we have the following.
\begin{prop}
There is an algebra homomorphism  $^{A}\calP^{\ext}\to \calS\calH^{\ext}$, which becomes an isomorphism after localization. 
\end{prop}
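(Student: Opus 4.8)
The plan is to promote the homomorphism $\Theta\colon{}^A\calP\to{}^A\calS\calH$ of Theorem~\ref{thm:prepro to shuffle} to the semidirect products by letting it act as the identity on the common Cartan factor ${}^A\calP^0$. Recall that ${}^A\calP^{\ext}={}^A\calP^0\ltimes{}^A\calP$ and ${}^A\calS\calH^{\ext}={}^A\calP^0\ltimes{}^A\calS\calH$, where in both cases ${}^A\calP^0$ acts by the algebra homomorphisms $g\mapsto\Phi_k(z)g\Phi_k(z)^{-1}=g\,\widehat{\Phi_k}(z,v)$ on the degree-$v$ piece. First I would invoke the universal property of a semidirect product: the pair $(\mathrm{id}_{{}^A\calP^0},\Theta)$ assembles into an algebra homomorphism $\Theta^{\ext}\colon{}^A\calP^{\ext}\to{}^A\calS\calH^{\ext}$ as soon as $\Theta$ intertwines the two ${}^A\calP^0$-actions; it visibly preserves the $\bbN^I$-grading and is $R[\![t_1,t_2]\!]$-linear. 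The same recipe restricted to the spherical parts would give ${}^A\calP^{\sph,\ext}\to{}^A\calS\calH^{\sph,\ext}$ as a byproduct.

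The only thing to verify is then the identity $\Theta(g\,\widehat{\Phi_k}(z,v))=\Theta(g)\,\widehat{\Phi_k}(z,v)$ for $g\in{}^A\calP_v$ and $k\in I$, and this I expect to be the heart of the matter. By construction $\Theta|_{{}^A\calP_v}=i_{v*}$, the pushforward along the closed embedding $i_v\colon\mu_v^{-1}(0)\hookrightarrow\Rep(\overline Q,v)$, and, as recorded in \S\ref{subsec:Cartan}, $\widehat{\Phi_k}(z,v)$ lies in $A_{G_v\times T}(\pt)[\![z^{-1}]\!]$; it acts on ${}^A\calP_v=A_{G_v\times T}(\mu_v^{-1}(0))$ and on ${}^A\calS\calH_v=A_{G_v\times T}(\Rep(\overline Q,v))$ by multiplication by the respective pullbacks, along the structure morphisms to $\pt$, of one and the same element. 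Since $i_v^*$ commutes with these pullbacks and $i_{v*}$ is $A_{G_v\times T}(\pt)$-linear by the projection formula, expanding $\widehat{\Phi_k}(z,v)$ as a series in $z^{-1}$ and applying $i_{v*}$ coefficient by coefficient yields the claim. Granted this, $\Theta^{\ext}$ is established.

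Finally, for the localization statement I would argue as follows. Let $\mathfrak{m}\subset A_T(\pt)$ be the augmentation ideal. Localization at $\mathfrak{m}$ is exact and leaves the Cartan factor ${}^A\calP^0$ (which lives over $R$) untouched, so it carries the two semidirect product constructions termwise: $({}^A\calP^{\ext})_{\mathfrak m}\cong{}^A\calP^0\ltimes({}^A\calP)_{\mathfrak m}$ and likewise for $\calS\calH^{\ext}$, under which $(\Theta^{\ext})_{\mathfrak m}$ becomes $\mathrm{id}\ltimes\Theta_{\mathfrak m}$. By the Remark following Theorem~\ref{thm:prepro to shuffle}, $\Theta_{\mathfrak m}$ is an isomorphism --- via Thomason localization and the fact that $\mu_v^{-1}(0)$ has a single $T$-fixed point --- hence so is $(\Theta^{\ext})_{\mathfrak m}$. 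Thus the only genuine computation is the projection-formula step in the second paragraph; everything else is formal bookkeeping with semidirect products and exactness of localization.
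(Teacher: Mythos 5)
Your proposal is correct and takes essentially the same route as the paper, which simply asserts the result as "a direct consequence" of Theorem~\ref{thm:prepro to shuffle}. You supply the routine but genuinely needed verification that $i_{v*}$ intertwines the two ${}^A\calP^0$-actions — the key point being that $\widehat{\Phi_k}(z,v)\in A_{G_v\times T}(\pt)[\![z^{-1}]\!]$ so the action on both ${}^A\calP_v$ and ${}^A\calS\calH_v$ is by scalar multiplication in the base ring, which commutes with pushforward by the projection formula — and the observation that localization at the augmentation ideal is exact and compatible with the smash-product construction.
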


\subsection{Action on quiver varieties}
\label{subsec:f(vw)}We are still under the condition that $Q$ has no edge-loops. 
For any $G$-variety $X$ and a $G$-equivariant rank-$n$ vector bundle $V$, let $\lambda_{-1/z}(V)\in A_G(X)[\![z]\!]$ be the equivariant Chern polynomial $\prod_{i=1}^n(z-_Fx_i)$ where $\{x_1,\cdots,x_n\}$ are the Chern roots of $V$. Note that for any element $\xi$ in $K_G(X)$, the Grothendieck ring of $G$-equivariant vector bundles on $X$, $\lambda_{-1/z}(\xi)$ is well-defined.

Recall that $
\fM(v, w):=\mu_{v, w}^{-1}(0)^{ss} /G_v$ is the quiver variety. Let 
$\calV_k:=\mu_{v, w}^{-1}(0)^{ss} \times_{G_v} V_k$ (resp. $\calW_k$) be the 
$k$-th tautological bundle (resp. the trivial $W_k$-bundle) on $\fM(v, w)$ respectively. Consider the following tautological element in $K^{G_w\times T}( \fM(v, w))$.
\begin{displaymath} 
\calF_{k}(v, w)= 
\left\{
     \begin{array}{lr}
     {\begin{matrix}
        q_1^{-1}\calW_k-(1+(q_1 q_2)^{-1}\calV_k)&+q_1^{-1}\sum_{\{h\in H\mid \inc(h)=k\}}V_{\out(h)}      \\
        &+q_2^{-1}\sum_{\{h\in H\mid \out(h)=k\}} V_{\inc(h)} \end{matrix}}
        & \hbox{ under Assumption~\ref{Assump:Weights}(1)}; \\
        
       q^{-1}\calW_k-(1+q^{-2}) \calV_k+q^{-1}\sum_{l: k\neq l}[-c_{kl}]_q\calV_{l} 
       &  \hbox{ under Assumption~\ref{Assump:Weights}(2)}.
     \end{array}
   \right.
   \end{displaymath}
   where $[n]_q=\frac{q^n-q^{-n}}{q-q^{-1}}$. 
This is a modification of \cite[(2.9.1)]{Nak99} according to $T$-action. When $t_1\neq t_2$, the $T$ action is given by $(t_1, t_2)(B_h, B_{h^*}, i, j)=(t_1B_h, t_2B_{h^*}, t_1i, t_2j ).$

The following lemma can be proved by straightforward calculation. In particular, (2) is proved in \cite[\S~10.1]{Nak99}.

\begin{lemma}\label{Lem:F_L}
\begin{enumerate}
\item  Under Assumption~\ref{Assump:Weights}(1), we have 
\begin{align*}
\calF_k(v_1+e_i, w)-q_1 q_2\calF_k(v_1+e_i, w)-
\calF_k(v_1, w)+q_1 q_2\calF_k(v_1, w)=\\
\left\{
     \begin{array}{lr}
      (q_1q_2-(q_1q_2)^{-1})\calL_k, & \hbox{ if } k=i, \\  (q_1^{-1}-q_2) a_{ik}\calL_i
  +(q_2^{-1}-q_1) a_{ki} \calL_i, & \hbox{ if } k\neq i 
  \end{array} \right.
\end{align*} 
\item   Under Assumption~\ref{Assump:Weights}(2), we have
\begin{align*}
\calF_k(v_1+e_i, w)-q^2\calF_k(v_1+e_i, w)-
\calF_k(v_1, w)+q^2\calF_k(v_1, w)=(q^{c_{ki}}-q^{-c_{ki}}) \calL_i.
\end{align*} 
\end{enumerate}
\end{lemma}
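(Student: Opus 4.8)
The plan is to prove Lemma~\ref{Lem:F_L} by a direct computation in the equivariant $K$-theory ring $K^{G_w\times T}(\fM(v,w))$, tracking how the tautological classes $\calV_k$, $\calW_k$ change when the dimension vector increases by $e_i$. The key observation is that over the Hecke correspondence $C^+_i(v_1+e_i,w)$ we have a short exact sequence of tautological bundles $0\to p_1^*\calV_i(v_1,w)\to p_2^*\calV_i(v_1+e_i,w)\to \calL_i\to 0$, so that $p_2^*\calV_i(v_1+e_i,w)-p_1^*\calV_i(v_1,w)=\calL_i$ in the Grothendieck group, while $p_2^*\calV_l(v_1+e_i,w)-p_1^*\calV_l(v_1,w)=0$ for $l\neq i$ and similarly $p_2^*\calW_k-p_1^*\calW_k=0$ for all $k$. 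Thus the difference $\calF_k(v_1+e_i,w)-\calF_k(v_1,w)$, pulled back to the Hecke correspondence, is an explicit $\ZZ[q^{\pm}]$-linear (resp. $\ZZ[q_1^{\pm},q_2^{\pm}]$-linear) combination of $\calL_i$, read off termwise from the definition of $\calF_k$.

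First I would fix the setup: work over $C^+_i(v_1+e_i,w)$, write $\calL_i=p_2^*\calV_i(v_1+e_i,w)/p_1^*\calV_i(v_1,w)$ for the tautological line bundle, and record the three elementary identities above. Then, for Case~(2) (Assumption~\ref{Assump:Weights}(2)), I would substitute into $\calF_k(v,w)=q^{-1}\calW_k-(1+q^{-2})\calV_k+q^{-1}\sum_{l\neq k}[-c_{kl}]_q\calV_l$. The $\calW_k$-term cancels in the difference; the $-(1+q^{-2})\calV_k$-term contributes $-(1+q^{-2})\calL_i$ precisely when $k=i$; the sum $q^{-1}\sum_{l\neq k}[-c_{kl}]_q\calV_l$ contributes $q^{-1}[-c_{ki}]_q\calL_i$ precisely when $i\neq k$. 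Then apply the operator $1-q^2$ (i.e.\ form $\calF_k(v_1+e_i,w)-q^2\calF_k(v_1+e_i,w)-\calF_k(v_1,w)+q^2\calF_k(v_1,w)$, which is $(1-q^2)$ applied to the difference) and simplify: when $k=i$ one gets $(1-q^2)(-(1+q^{-2}))\calL_i = -(1-q^2)(1+q^{-2})\calL_i = (q^{c_{ii}}-q^{-c_{ii}})\calL_i$ using $c_{ii}=2$ and $-(1-q^2)(1+q^{-2})=-(1+q^{-2}-q^2-1)=q^2-q^{-2}$; when $k\neq i$ one gets $(1-q^2)q^{-1}[-c_{ki}]_q\calL_i=(1-q^2)q^{-1}\frac{q^{-c_{ki}}-q^{c_{ki}}}{q-q^{-1}}\calL_i$, and since $(1-q^2)q^{-1}=q^{-1}-q=-(q-q^{-1})$, this collapses to $-(q^{-c_{ki}}-q^{c_{ki}})\calL_i=(q^{c_{ki}}-q^{-c_{ki}})\calL_i$, matching the stated formula uniformly.

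For Case~(1) (Assumption~\ref{Assump:Weights}(1)), I would run the same substitution into $\calF_k(v,w)=q_1^{-1}\calW_k-(1+(q_1q_2)^{-1}\calV_k)+q_1^{-1}\sum_{\inc(h)=k}V_{\out(h)}+q_2^{-1}\sum_{\out(h)=k}V_{\inc(h)}$, again over $C^+_i(v_1+e_i,w)$ where only $\calV_i$ changes by $\calL_i$. The $\calW_k$-term drops; the $-(1+(q_1q_2)^{-1})\calV_k$-term contributes $-(1+(q_1q_2)^{-1})\calL_i$ only if $k=i$; in the arrow sums, $V_{\out(h)}=\calV_i$ contributes $q_1^{-1}$ for each arrow $h$ with $\inc(h)=k,\out(h)=i$ (there are $a_{ik}$ of them) and $V_{\inc(h)}=\calV_i$ contributes $q_2^{-1}$ for each arrow with $\out(h)=k,\inc(h)=i$ (there are $a_{ki}$), so when $k\neq i$ the difference is $(q_1^{-1}a_{ik}+q_2^{-1}a_{ki})\calL_i$. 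Then apply $1-q_1q_2$: for $k=i$, $(1-q_1q_2)(-(1+(q_1q_2)^{-1}))\calL_i$; since $-(1-q_1q_2)(1+(q_1q_2)^{-1})=-(1+(q_1q_2)^{-1}-q_1q_2-1)=q_1q_2-(q_1q_2)^{-1}$, this is $(q_1q_2-(q_1q_2)^{-1})\calL_k$; for $k\neq i$, $(1-q_1q_2)(q_1^{-1}a_{ik}+q_2^{-1}a_{ki})\calL_i=(q_1^{-1}-q_2)a_{ik}\calL_i+(q_2^{-1}-q_1)a_{ki}\calL_i$, exactly the claimed expression.

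I do not expect a serious obstacle here; the lemma is, as the text says, a straightforward calculation, and the only thing to be careful about is the bookkeeping of which arrows contribute to which $q_i$-weighted term (this is already encoded in Nakajima's formula \cite[(2.9.1)]{Nak99} and the modified $\calF_k$ in \S~\ref{subsec:f(vw)}), together with the sign conventions in $c_{ki}=-a_{ki}-a_{ik}$ for $k\neq i$ and $c_{ii}=2$. The part requiring the most attention is verifying that the restriction to $C^+_i(v_1+e_i,w)$ genuinely only alters $\calV_i$ (and not $\calW$ or the other $\calV_l$), which follows directly from the definition of the Hecke correspondence recalled before Theorem~\ref{thm: hecke corr}, namely that the subspace $S\subset V$ has codimension one concentrated at vertex $i$, contains $\Image(i)$, and is $x,x^*$-stable, so $\calV_l$ and $\calW$ are unchanged for $l\neq i$. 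Case~(2) is then noted to be \cite[\S~10.1]{Nak99}, so for that part I would simply cite \emph{loc.\ cit.} after indicating the compatibility with the present normalization of the $T$-action.
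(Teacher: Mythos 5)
Your proof is correct and matches the argument the paper has in mind, which it labels as a "straightforward calculation" and for part~(2) attributes to \cite[\S~10.1]{Nak99}. You correctly identify that the left-hand side factors as $(1-q_1q_2)\bigl(\calF_k(v_1+e_i,w)-\calF_k(v_1,w)\bigr)$ (resp.\ $(1-q^2)$ times the difference), that over $C^+_i(v_1+e_i,w)$ the only tautological class that changes is $\calV_i$, which increments by $\calL_i$ because the subspace $S$ in the Hecke correspondence has dimension vector $v_1$ so $S_l=V_l$ for $l\neq i$, and you carry out the termwise bookkeeping of $a_{ik}$, $a_{ki}$ and $c_{ki}$ correctly in both cases, including the $c_{ii}=2$ case via the edge-loop-free assumption. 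No gap.
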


We define a $\calP^0$-action on $\calM(w)$ by  
\begin{equation*}
\Phi_{k}(z)\cdot m = \frac{\lambda_{-1/z}(\calF_{k}(v, w))}
{\lambda_{-1/z}(q_1q_2\calF_{k}(v, w))}m, 
\end{equation*}
for any $m\in\calM(v,w)$.

 \begin{prop}
 For any $w\in\bbN^I$, the action of $\calP$ and $\calP^0$ on $\calM(w)$ can be extended to the action of  $\calP^{\ext}=\calP^0\ltimes\calP$ on $\calM(w)$.
 \end{prop}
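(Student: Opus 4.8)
The plan is to read off from the definition $\calP^{\ext}=\calP^{0}\ltimes\calP$ precisely what extra datum is needed beyond the action of $\calP$ (Theorem~\ref{thm:prep action}) and the action of $\calP^{0}$. The $\calP^{0}$-action just defined is an algebra homomorphism $\calP^{0}\to\End(\calM(w))$: indeed $\calP^{0}$ is a commutative polynomial ring over $R$, and the operators $m\mapsto \lambda_{-1/z}(\calF_{k}(v,w))/\lambda_{-1/z}(q_1q_2\calF_{k}(v,w))\cdot m$ on $\calM(v,w)$ are mutually commuting multiplication operators (one checks routinely, using that $\calF_k(v,w)$ is a virtual bundle and that $\lambda_{-1/z}$ is multiplicative on $K$-theory classes, that the indicated ratio is a well-defined element of $\End(\calM(w))[\![z^{-1}]\!]$ after normalizing by the leading power of $z$). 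Hence the entire content of the Proposition is the verification, for every $k\in I$, every $v_1,v\in\bbN^{I}$, every $m\in\calM(v_1,w)$ and every $g\in\calP_{v}$, of the compatibility identity encoding the cross relation $\Phi_{k}(z)\,g\,\Phi_{k}(z)^{-1}=g\,\widehat{\Phi_k}(z,v)$ of $\calP^{\ext}$, namely
\[
\Phi_{k}(z)\bigl(a_{v_1,v}(m\otimes g)\bigr)\;=\;a_{v_1,v}\bigl((\Phi_{k}(z)\,m)\otimes(g\cdot\widehat{\Phi_k}(z,v))\bigr).
\]

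To prove this I would use the correspondence description of $a_{v_1,v}$ from \eqref{corr for action}: the operator is $\overline{\psi}_{*}\circ\phi^{\sharp}$, equivalently $p_{2*}\circ(\text{multiplication by the class representing }g)\circ p_{1}^{*}$ for the induced correspondence $\fM(v_1,w)\xleftarrow{p_1}Z_G^{s}/G_v\xrightarrow{p_2}\fM(v_1+v,w)$, the Gysin pullback coinciding with the ordinary pullback along the smooth map $\overline{\phi}$ exactly as in the proof of Theorem~\ref{thm: hecke corr}. By Lemma~\ref{lem:descrip of Z_G} there is on $Z_G^{s}/G_v$ a tautological subbundle realizing $p_{1}^{*}\calV(v_1,w)$ inside $p_{2}^{*}\calV(v_1+v,w)$, with quotient a rank-$v$ bundle $\calT=\bigoplus_{i\in I}\calT^{i}$ whose Chern roots are the variables $\lambda^{(i)}_{j}$ parametrizing $\calP_v$; moreover the class on $Z_G^{s}/G_v$ representing $g\cdot\widehat{\Phi_k}(z,v)$ is the product of the class representing $g$ with $\widehat{\Phi_k}(z,v)$ evaluated at these Chern roots (with $t_1,t_2$ the $T$-parameters).

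The key computation is the bundle identity
\[
\frac{\lambda_{-1/z}\bigl(p_2^{*}\calF_k(v_1+v,w)\bigr)}{\lambda_{-1/z}\bigl(q_1q_2\,p_2^{*}\calF_k(v_1+v,w)\bigr)}
\;=\;\widehat{\Phi_k}(z,v)\big|_{\calT}\cdot
\frac{\lambda_{-1/z}\bigl(p_1^{*}\calF_k(v_1,w)\bigr)}{\lambda_{-1/z}\bigl(q_1q_2\,p_1^{*}\calF_k(v_1,w)\bigr)}
\]
on $Z_G^{s}/G_v$. Granting it, the desired identity follows from the projection formula: pull the multiplication operator by $\lambda_{-1/z}(\calF_k(v_1+v,w))/\lambda_{-1/z}(q_1q_2\calF_k(v_1+v,w))$ through $p_{2*}$, rewrite $p_2^{*}\calF_k$ using the identity, and move the $p_1^{*}\calF_k$-factor back out through $p_1^{*}$, where it becomes the $\Phi_k(z)$-operator on $\fM(v_1,w)$, while $\widehat{\Phi_k}(z,v)|_{\calT}$ combines with the class of $g$ to give the class of $g\cdot\widehat{\Phi_k}(z,v)$. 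For the bundle identity itself, since $\calF_k(\,\cdot\,,w)$ is, by its defining formula, a $\bbZ[q_1^{\pm},q_2^{\pm}]$-linear combination of the tautological bundles $\calV_l$ and the constant framing bundles $\calW_l$, the difference $p_2^{*}\calF_k(v_1+v,w)-p_1^{*}\calF_k(v_1,w)$ is a $K$-theory class built only from the $\calT^{i}$ and the $T$-characters; expanding $\lambda_{-1/z}$ of this class via the splitting principle into the Chern roots $\lambda^{(i)}_{j}$ reproduces, factor by factor, the definition of $\widehat{\Phi_k}(z,v)$. The rank-one instances of this expansion are precisely Lemma~\ref{Lem:F_L}(1), and the analogous statement under Assumption~\ref{Assump:Weights}(2) uses Lemma~\ref{Lem:F_L}(2).

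The main obstacle is the bookkeeping in this last bundle identity for a general dimension vector $v$ (not just $v=e_i$): one must match the $q_1^{-1}$- and $q_2^{-1}$-twisted arrow summands of $\calF_k$, together with the diagonal $-(q_1q_2)^{-1}$-twisted term, against the four types of factors $(z-_F\lambda^{(i)}_j\mp_F t_m)$ appearing in $\widehat{\Phi_k}(z,v)$, checking that in the quotient $\lambda_{-1/z}(\calF_k)/\lambda_{-1/z}(q_1q_2\calF_k)$ the untwisted contributions and the framing cancel so that only the arrow-dependent and diagonal contributions survive. This is entirely a calculation with $\lambda$-operations and formal-group-law expressions; no geometric input is needed beyond Lemma~\ref{lem:descrip of Z_G}, Lemma~\ref{Lem:F_L}, the projection formula, and the correspondence description already used for Theorems~\ref{thm:prep action} and~\ref{thm: hecke corr}.
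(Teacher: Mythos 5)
Your plan is essentially the same as the paper's: reduce the cross relation $\Phi_k(z)\,g\,\Phi_k(z)^{-1}=g\,\widehat{\Phi_k}(z,v)$ in $\End(\calM(w))$ to a projection-formula calculation along the correspondence, with Lemma~\ref{Lem:F_L} supplying the needed $K$-theoretic identity. The one real difference in route is that the paper first invokes (via the argument in \cite[\S~10.4]{Nak99}) a reduction to $v=e_i$; after that reduction the correspondence is precisely the Hecke correspondence $C_k^+(v_2,w)$, and the verification is \emph{literally} Lemma~\ref{Lem:F_L} together with Theorem~\ref{thm: hecke corr} and the projection formula. You instead keep $v$ general and state a bundle identity on $Z_G^s/G_v$ that packages $\widehat{\Phi_k}(z,v)$ as $\lambda_{-1/z}(p_2^*\calF_k-p_1^*\calF_k)/\lambda_{-1/z}(q_1q_2(p_2^*\calF_k-p_1^*\calF_k))$ evaluated on the quotient bundle $\calT$. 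That identity is indeed a routine $\lambda$-ring computation from the defining formula of $\calF_k$ (its rank-one shadow is exactly Lemma~\ref{Lem:F_L}), so your version is self-contained in a way the paper's citation of Nakajima is not, at the cost of a slightly heavier bookkeeping that you acknowledge but do not carry out.

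One correction: you assert that the Gysin pullback $\phi^{\sharp}$ coincides with the ordinary pullback $\overline\phi^{*}$ along "the smooth map $\overline\phi$ exactly as in the proof of Theorem~\ref{thm: hecke corr}." That smoothness is special to $v=e_i$ (where $\mu_{e_i}^{-1}(0)$ is a vector space and the Hecke correspondence is smooth); for general $v$ the source and target of $\overline\phi$ are singular and $\overline\phi$ is not smooth, so there is no "ordinary pullback" to invoke. This does not break your argument --- everything you actually use (the projection formula and compatibility of $\phi^{\sharp}$ with multiplication by classes pulled back from the target, Lemma~\ref{Lem:Gysin}) holds for the refined Gysin pullback $\phi^{\sharp}$ without any smoothness hypothesis --- but the justification should be phrased in terms of $\phi^{\sharp}$ rather than $\overline\phi^{*}$.
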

 \begin{proof}
 By the same argument as in \cite[\S 10.4]{Nak99}, it suffices to show 
 \[\Phi_k(z)g\Phi_k(z)^{-1}(m)=g \widehat{\Phi_k}(z, v)(m),\] 
 for any $m\in \calM(v_1, w)$, and the simple roots $v=e_i$, $i\in I$. In this case, $g=g(z^{(i)})\in A_{T}(\pt)[\![ z^{(i)}]\!]$. By Theorem \ref{thm: hecke corr}, the action of $g(z^{(i)})$ on $\calM(w)$ is given by convolution with $g(c_{1}(\calL_i))$, where $\calL_i$ is the tautological line bundle on Hecke correspondence $C^+_i(v_1+e_i,w)$. 
The action of $\Phi_k(z) z^{(i)} \Phi_k(z)^{-1}(m)$ is then given by 
 \begin{align*}
&\Big(\Delta_* \frac{\lambda_{-1/z}(\calF_{k}(v_1+e_i, w))}
{\lambda_{-1/z}(q_1q_2\calF_{k}(v_1+e_i, w))}\Big) \star c_{1}(\calL_i) \Big(\Delta_* \frac{\lambda_{-1/z}(\calF_{k}(v_1, w))}
{\lambda_{-1/z}(q_1q_2\calF_{k}(v_1, w))}\Big)^{-1}\\
=&\star c_{1}(\calL_i)\cdot \Big(\Delta_* \frac{\lambda_{-1/z}(\calF_{k}(v_1+e_i, w))}
{\lambda_{-1/z}(q_1q_2\calF_{k}(v_1+e_i, w))} \frac{\lambda_{-1/z}(q_1q_2\calF_{k}(v_1, w))}{\lambda_{-1/z}(\calF_{k}(v_1, w))}
\Big)
 \end{align*}
 where the equality is obtained by projection formula. 

Under Assumption~\ref{Assump:Weights}(1), by Lemma~\ref{Lem:F_L}(1),   the action of $\Phi_k(z) z^{(i)} \Phi_k(z)^{-1}(m)$ coincides with the action of $z^{(i)} \widehat{\Phi_k}(z, e_i)$,  where 
 \[
 \widehat{\Phi_k}(z, v)= \prod_{i\in I\backslash \{k\}} \prod_{j=1}^{v^{i}}\frac{(z-_F \lambda_{j}^{(i)}-_Ft_1)^{a_{ik}} (z-_F \lambda_{j}^{(i)}-_Ft_2)^{a_{ki}}}{(z-_F \lambda_{j}^{(i)}+_Ft_2)^{a_{ik}} (z-_F \lambda_{j}^{(i)}+_Ft_1)^{a_{ki}}}\cdot 
 \prod_{j=1}^{v^{k}}\frac{(z-_F \lambda_{j}^{(k)}+_Ft_1+_F t_2) }{(z-_F \lambda_{j}^{(k)}-_Ft_1-_F t_2)}.\]
Similarly, the claim holds in the case under Assumption~\ref{Assump:Weights}(2), by Lemma~\ref{Lem:F_L}(1).
 \end{proof}

\subsection{Twisting the preprojective CoHA}\label{subsec:twisting CoHA}
The spherical subalgebra of the preprojective CoHA per se is not directly related to affine quantum groups, albeit a modified version is. The modification involves a twisting the multiplication of the preprojective CoHA by a sign which involves the Euler-Ringel form of $Q$. The same sign twist is also present in many places when constructing quantum groups via quiver representations, e. g., \cite{Ring,Va00,Nak99}.
Here again we assume $Q$ has no edge-loops.

As in \cite{Nak99}, we define the adjacency matrices $AD$ and $\overline{AD}$ as
\begin{align*}
&(AD)_{kl}:=\#\{h\in H \mid \inc(h)=k, \out(h)=l\},\\
&(\overline{AD})_{kl}:=\#\{h\in H^{\op} \mid \inc(h)=k, \out(h)=l\}.
\end{align*}
Thus, $(AD)^{t}=\overline{AD}$. 
We define the matrices $C, \overline C$ as
\begin{equation}\label{equ:C Omega}
C:=I-AD, \overline C:=I-\overline{AD}.
\end{equation}

Let $m_{v_1, v_2}: \calP_{v_1}\otimes \calP_{v_2}\to \calP_{v_1+v_2}$ be the multiplication defined in \S~\ref{preproj CoHA}.
\begin{definition}
The twisted preprojective CoHA, denoted by $\widetilde{\calP}$,  is $\widetilde{\calP}:=\bigoplus_{v}\calP_{v}$ as $\bbN^I$-graded $R[\![ t_1, t_2 ]\!]$-module, endowed with the multiplication $\widetilde m_{v_1, v_2}$
\[
\widetilde m_{v_1, v_2}:=(-1)^{(v_2, \overline{C} v_1)+1}m_{v_1, v_2},
\] where $(\cdot, \cdot)$ is the standard inner product on $k^I$. 
\end{definition}
\begin{lemma}
The multiplication $\widetilde m_{v_1, v_2}$ is associative.
\end{lemma}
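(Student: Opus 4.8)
The plan is to reduce associativity of the twisted product $\widetilde{m}$ to the already-established associativity of the untwisted product $m^{\calP}$ from Theorem~\ref{prop:assoc_hall}, by checking that the sign cocycle is consistent. Concretely, for three dimension vectors $v_1,v_2,v_3$ with $v=v_1+v_2+v_3$, I would expand both $\widetilde m_{v_1+v_2,v_3}\circ(\widetilde m_{v_1,v_2}\otimes\id)$ and $\widetilde m_{v_1,v_2+v_3}\circ(\id\otimes\widetilde m_{v_2,v_3})$ in terms of $m^{\calP}$, collect the accumulated signs, and verify they agree. The sign accumulated on the first bracketing is $(-1)^{(v_2,\overline C v_1)+1}\cdot(-1)^{(v_3,\overline C(v_1+v_2))+1}$ and on the second is $(-1)^{(v_3,\overline C v_2)+1}\cdot(-1)^{(v_2+v_3,\overline C v_1)+1}$; using bilinearity of $(\cdot,\overline C\cdot)$ both exponents equal $(v_2,\overline C v_1)+(v_3,\overline C v_1)+(v_3,\overline C v_2)$ modulo $2$, so the signs coincide and associativity of $m^{\calP}$ gives associativity of $\widetilde m$.

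In LaTeX, the proof would read roughly as follows.
\begin{proof}
Write $\epsilon(v_1,v_2):=(v_2,\overline C v_1)+1$, so that $\widetilde m_{v_1,v_2}=(-1)^{\epsilon(v_1,v_2)}m^{\calP}_{v_1,v_2}$. Fix $v_1,v_2,v_3\in\bbN^I$ and set $v=v_1+v_2+v_3$. Since $m^{\calP}$ is associative by Theorem~\ref{prop:assoc_hall}, it suffices to check
\[
\epsilon(v_1+v_2,v_3)+\epsilon(v_1,v_2)\equiv \epsilon(v_1,v_2+v_3)+\epsilon(v_2,v_3)\pmod 2.
\]
Using that $(\cdot,\overline C\cdot)$ is bilinear, the left-hand side equals
\[
(v_3,\overline C v_1)+(v_3,\overline C v_2)+(v_2,\overline C v_1)+2,
\]
while the right-hand side equals
\[
(v_2,\overline C v_1)+(v_3,\overline C v_1)+(v_3,\overline C v_2)+2.
\]
These two expressions are literally equal, hence congruent mod $2$, so the signs accumulated along the two bracketings of $\widetilde m$ agree. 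Combined with the associativity of $m^{\calP}$, this shows $\widetilde m_{v_1+v_2,v_3}\circ(\widetilde m_{v_1,v_2}\otimes\id)=\widetilde m_{v_1,v_2+v_3}\circ(\id\otimes\widetilde m_{v_2,v_3})$, i.e.\ $\widetilde{\calP}$ is associative.
\end{proof}

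The only subtlety — and it is a very mild one, not really an obstacle — is bookkeeping: making sure the exponent in the definition is read as $(v_2,\overline C v_1)+1$ and that one genuinely uses $\overline C$ (not $C$) consistently, and that bilinearity applies in the first slot as well as the second. Since $\overline C$ is a fixed integer matrix and $(\cdot,\cdot)$ is the standard bilinear form on $\ZZ^I$, both slots are linear, so the cocycle identity is immediate; there is no cohomological or geometric input needed beyond Theorem~\ref{prop:assoc_hall}. If one instead wanted a conceptual statement, one could note that $(-1)^{\epsilon}$ defines a $2$-cocycle on the monoid $(\bbN^I,+)$ valued in $\{\pm1\}$ precisely because $(v_2,\overline C v_1)$ is bilinear, and twisting an associative $\bbN^I$-graded algebra by such a cocycle always yields an associative algebra; but the direct computation above is the quickest route and is what I would actually write.
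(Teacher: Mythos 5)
Your proof is correct and follows the same approach as the paper: both expand each bracketing of $\widetilde m$ in terms of $m^{\calP}$, use bilinearity of $(\cdot,\overline C\cdot)$ to show the accumulated signs agree, and conclude by the associativity of $m^{\calP}$ (Theorem~\ref{prop:assoc_hall}). The paper presents the sign computation inline within the chain of equalities rather than isolating it as a cocycle identity, but the content is identical.
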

\begin{proof}
This follows from the associativity of $m^{\calP}$:
\begin{align*}
\widetilde m_{v_1+v_2, v_3}(\widetilde m_{v_1, v_2}(x_1, x_2), x_3)
=&(-1)^{(v_3, \overline{C}(v_1+v_2))+1}(-1)^{(v_2, \overline{C} v_1)+1} m^{\calP}_{v_1+v_2, v_3}(m^{\calP}_{v_1, v_2}(x_1, x_2), x_3)\\
=&(-1)^{(v_2+v_3, \overline{C} v_1)+1}(-1)^{(v_3, \overline{C} v_2)+1}  m^{\calP}_{v_1, v_2+v_3}(x_1, m^{\calP}_{v_2, v_3}(x_2, x_3))\\
=&\widetilde  m_{v_1, v_2+v_3}(x_1, \widetilde m_{v_2, v_3}(x_2, x_3)).
\end{align*}
\end{proof}

Similarly, we define $\widetilde{\calS\calH}$ to be $\calS\calH$ as $\bbN^I$-graded $R$-module, with multiplication  given by
\[
\widetilde m_{v_1, v_2}:=(-1)^{(v_2, \overline C v_1)+1}m_{v_1, v_2},
\] where $m_{v_1, v_2}$ is the multiplication of $\calS\calH$. 

For any $w\in\bbN^I$
we define the map, for each $v_1,v_2\in\bbN^I$,
\[
\widetilde a_{v_1, v_2}:=(-1)^{(v_2, \overline{C} v_1)+1}a_{v_1, v_2}:
\calM(v_1,w) \otimes \widetilde{\calP}_{v_2}\to \calM(v_1+v_2,w).
\]

\begin{lemma}\label{lem:twistedCoHA}
Notations are as above. 
\begin{enumerate}
\item There is a well-defined algebra homomorphism $\widetilde\calP\to \widetilde{\calS\calH}$.
\item The maps $\widetilde a_{v_1, v_2}$ define an action of $\widetilde{\calP}$ on $\calM(w)$.
\end{enumerate}
\end{lemma}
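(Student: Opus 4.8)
\textbf{Proof proposal for Lemma~\ref{lem:twistedCoHA}.}

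The plan is to deduce both statements from the untwisted versions (Theorem~\ref{thm:prepro to shuffle} and Theorem~\ref{thm:prep action}) together with the sign-cocycle bookkeeping that was already carried out in the associativity lemmas for $\widetilde\calP$ and $\widetilde{\calS\calH}$. The key point is that the function $\epsilon(v_1,v_2):=(-1)^{(v_2,\overline{C}v_1)+1}$ appearing in the definition of the twist is a $2$-cocycle on the monoid $\bbN^I$ up to the constant factor $-1$; this is exactly what was verified in the proof that $\widetilde m_{v_1,v_2}$ is associative. I will use this cocycle property to check that the twisted maps assemble coherently.

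For part (1), let $\Theta:\calP\to\calS\calH$ be the algebra homomorphism of Theorem~\ref{thm:prepro to shuffle}, which is graded and on the $v$-component is the pushforward $i_{v*}$ along $\mu_v^{-1}(0)\inj\Rep(\overline Q,v)$. I claim the same underlying $R[\![t_1,t_2]\!]$-module map $\widetilde\Theta:=\Theta$, now viewed as a map $\widetilde\calP\to\widetilde{\calS\calH}$, is an algebra homomorphism. Indeed, for $x_i\in\calP_{v_i}$,
\[
\widetilde\Theta\big(\widetilde m_{v_1,v_2}(x_1,x_2)\big)
=\epsilon(v_1,v_2)\,\Theta\big(m_{v_1,v_2}(x_1,x_2)\big)
=\epsilon(v_1,v_2)\, m_{v_1,v_2}\big(\Theta x_1,\Theta x_2\big)
=\widetilde m_{v_1,v_2}\big(\widetilde\Theta x_1,\widetilde\Theta x_2\big),
\]
where the middle equality is Theorem~\ref{thm:prepro to shuffle} and the sign factors on both sides are the identical scalar $\epsilon(v_1,v_2)$, since the twist of $\calS\calH$ uses the same sign rule. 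Compatibility with units and $R[\![t_1,t_2]\!]$-linearity is immediate as the twist only rescales the degree-$v$ components by scalars and is trivial in degree $0$.

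For part (2), the argument is the same formal manipulation applied to the action map. Writing $\Phi$ for the action of $\calP$ on $\calM(w)$ from Theorem~\ref{thm:prep action}, which on components is $a_{v_1,v_2}$, I must check that $\widetilde a_{v_1,v_2}=\epsilon(v_1,v_2)a_{v_1,v_2}$ satisfies the module associativity axiom $\widetilde a_{v_1+v_2,v_3}(\widetilde a_{v_1,v_2}(m,x_2),x_3)=\widetilde a_{v_1,v_2+v_3}(m,\widetilde m_{v_2,v_3}(x_2,x_3))$ for $m\in\calM(v_1,w)$, $x_2\in\widetilde\calP_{v_2}$, $x_3\in\widetilde\calP_{v_3}$. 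Expanding both sides in terms of the untwisted $a$'s and $m^{\calP}$'s, the left side acquires the scalar $\epsilon(v_1+v_2,v_3)\epsilon(v_1,v_2)$ and the right side acquires $\epsilon(v_1,v_2+v_3)\epsilon(v_2,v_3)$; these coincide by exactly the cocycle identity $(-1)^{(v_3,\overline C(v_1+v_2))}(-1)^{(v_2,\overline C v_1)}=(-1)^{(v_2+v_3,\overline C v_1)}(-1)^{(v_3,\overline C v_2)}$ used in the proof that $\widetilde m$ is associative (bilinearity of $(\cdot,\overline C\cdot)$), with the two extra $-1$'s on each side matching. The untwisted associativity $a_{v_1+v_2,v_3}(a_{v_1,v_2}(m,x_2),x_3)=a_{v_1,v_2+v_3}(m,m^{\calP}_{v_2,v_3}(x_2,x_3))$ is Theorem~\ref{thm:prep action}, and the normalization $a_{v,0}=\mathrm{id}$ survives the twist since $\epsilon$ is trivial when one argument is $0$.

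I do not expect a genuine obstacle here: the entire content is that the sign twist is governed by a monoid $2$-cocycle, and the compatibility of that cocycle with products and with the module structure is the same bilinear-form computation already spelled out for $\widetilde m$. The only thing to be careful about is the $+1$ in the exponent of $\epsilon$, i.e.\ that an \emph{odd} number of twist factors appear on each side of each identity so that the stray $-1$'s match; this is why the associativity check in part (2) works with the extra global sign, just as in the algebra case. If desired one could phrase the whole lemma as: twisting by a $2$-cocycle is functorial for algebra maps and for modules, and $\epsilon$ restricts compatibly to $\calP$, $\calS\calH$ and $\calM(w)$.
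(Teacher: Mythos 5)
Your proof is correct and is essentially the argument the paper has in mind: the paper proves the associativity of $\widetilde m_{v_1,v_2}$ by exactly the bilinearity/cocycle computation you cite, and Lemma~\ref{lem:twistedCoHA} is then stated without further proof precisely because parts (1) and (2) reduce to the untwisted Theorems~\ref{thm:prepro to shuffle} and \ref{thm:prep action} by the same bookkeeping. One small inaccuracy in your closing remark: in the module-associativity check there are \emph{two} twist factors $\epsilon$ on each side (not an odd number), each contributing a ``stray'' $-1$ from the $+1$ in the exponent, so the extra signs cancel because the counts on the two sides agree (both equal to $2$); the parity of that count is irrelevant. This does not affect the validity of the argument.
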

As in the untwisted case, we write $\widetilde{a}_v:\calP_v\to \bigoplus_{v_1\in\bbN^I}\Hom(\calM(v_1,w),\calM(v_1+v,w))$.

Recall that $
\calP_{e_k}:=A_{G_{e_k}\times T}(\mu_{e_k}^{-1}(0))
=A_{\Gm\times T}(\pt)\cong A_{T}(\pt)[\![ z^{(k)}]\!]. $
By Theorem \ref{thm: hecke corr}, the action of $(z^{(k)})^l \in \calP_{e_k}\subseteq \widetilde{\calP}$ on the Nakajima quiver varieties $\calM(w):=\bigoplus_{v}\calM(v, w)$ is by
\begin{equation}\label{eq:action a_ek}
\widetilde{a}_{e_k}((z^{(k)})^l)\mapsto \sum_{v} (-1)^{(e_k, \overline{C} (v))}\big(c_1(\calL_k)\big)^{l}\star,
\end{equation} 
where $\calL_k$ is the tautological line bundle on the Hecke correspondence $C_k^+(v, w)$, and $\star$ is convolution action.

We define the {\it spherical subalgebra} of $\widetilde{\calP}$, denoted by $\widetilde{\calP}^{\sph}$, to be the subalgebra 
generated by $\calP_{e_k}$, for $k\in I$.  
\begin{remark}
The image of $\widetilde{\calP^{\sph, \ext}}$ in $\widetilde{\calS\calH^{\ext}}$ is a deformation of $U(\fb_Q[\![u]\!])\subseteq U(\fg_Q[\![u]\!])$ associated to the formal group law of the OCT $A$.
In the case when $A=\CH$, Theorem~\ref{thm:Yangian} below shows that this deformation is the same as the Borel of the Yangian. When $A=K$, according to Grojnowski's work in progress \cite{Gr2}, this deformation is expected to be the Borel of the quantum loop algebra.
 \end{remark}
 
\section{Examples related to Lusztig's conjecture} 
\label{subsec:examples}
To convince the readers that affine quantum groups associated to formal group laws other than additive and multiplicative ones are equally interesting, we provide examples, one of which is related Lusztig's reformulated conjecture of modular representations of Lie algebras.
Although in most of the present paper we work with representations of $Q$ over an arbitrary field (see \S~\ref{subsec:pushforward} and \S~\ref{subsec:2.1}), in this section we assume the base field $k$ of quiver representations has characteristic zero, due the reference to many properties of the algebraic cobordism theory of \cite{LM} that require the existence of resolution of singularities.
\begin{example}
When $A$ comes from the Eilenberg-MacLane spectrum $H\bbZ/p$, this construction yields Yangians over $\bbF_p$, the representation theory of which is  richer than the modular representations of Lie algebras. 
\end{example}

\begin{example}
When $A$ is the connective $K$-theory, this gives a geometric interpretation of Drinfeld's degeneration of quantum loop algebra to the Yangian (see also, \cite[Example~3.9]{YZ2}).
\end{example}

\begin{example}
 Recall that the formal group law associated to the algebraic cobordism  of Levine-Morel \cite{LM} is the universal  formal group law over the Lazard ring $\Laz$. There are some properties of $^{\Laz}\calP$ which have no direct analogue in the Yangian or quantum loop algebra cases.
 
Recall that $\Laz_\bbQ$ is dual to the ring of symmetric polynomials $\Lambda_\bbQ$, where the duality pairing is given by the integral of the Chern class on manifolds. Let $\bbH$ be the Heisenberg algebra, which is a quantum double of $\Lambda_\bbQ$. As vector spaces, we have the isomorphism $\bbH \cong \Laz_\bbQ\otimes\Lambda_\bbQ$. Furthermore, there is an algebra embedding $\Laz_\bbQ\inj \bbH$. The duality between $\Laz_\bbQ$ and $\Lambda_\bbQ$ induces a $\bbQ$-linear map $^{\Laz}\calP^{\sph}\otimes_\bbQ\Lambda_\bbQ\to Y_\hbar^+(\fg_Q)$.  However, this map does not respect algebra structures on any of the three algebras $^{\Laz}\calP^{\sph}, \Lambda_\bbQ$, or $Y_\hbar^+(\fg_Q)$. 
Nevertheless, there is a non-standard quantum double of $^{\Laz}\calP$, without direct analogue for the previously known affine quantum groups. Namely, base changing the $^{\Laz}\calP^{\sph}$ to $\bbH$ via the embedding $\Laz_\bbQ\inj \bbH$, and extending by scalars the multiplication formulas in \S~\ref{subsec:HallMulti} (while $\bbH$ commutes with the equivariant parameters), we get an algebra $^\bbH\calP^{\sph}$. This algebra  has $\bbH$ embedded, and is isomorphic as $\Laz_\bbQ$-modules to $^{\Laz}\calP^{\sph}\otimes_\bbQ \null^{\Laz}\calP^{\sph}$.
\end{example}

Another example  of interest is the case when $A$ is the Morava $K$-theory.
\footnote{The possibility that Lusztig's reformulated conjecture is related to our construction in Morava $K$-theory is kindly suggested to us by David Ben-Zvi and Ivan Mirkovi\'c.}
Assume $Q$ is of finite type. 
In \cite{Lusz}, in a  reformulation of his conjecture from 1979 on modular representation of algebraic groups, for each prime $p$, Lusztig wrote down a family of character formulas, parameterized by an integer $n$. For each  integral dominant weight $\lambda$, Lusztig's formula is denoted by $E^n_\lambda$. It has the following properties:
\begin{enumerate}
\item $E^0_\lambda$ coincides with the Weyl character formula;
\item $E^1_\lambda$ is  the character of irreducible representations of $U_q(\fg)$ at a $p$-th root of unity; 
\item for each fixed $\lambda$, $E^n_\lambda$  stabilizes for large enough $n$;  in this range $E^n_\lambda$ is conjectured to be the character of irreducible representations 
the simply-connected algebraic group $G$ with root system $Q$ over $\overline{\bbF_p}$.
\end{enumerate}
When $p$ is larger than a number depending only on $Q$, Lusztig proved property (3), based on earlier work of Anderson-Jantzen-Soergel \cite{AJS} and Steinberg \cite{Stein}. 
However, the formulas for general $n\neq 0$ which are not in the stabilizing range open the question, proposed in \cite{Lusz}: find a family of quantum groups, parameterized by a prime $p$ and an integer $n\in\bbN$, whose character of irreducible module of highest weight $\lambda$ is given by $E_\lambda^n$.

Recall that Morava $K$-theory, denoted by $K_pn$, is an oriented cohomology theory associated to each prime $p$ and non-negative integer $n$.
The constructions in \S~\ref{subsec:HallMulti}
 and \ref{subsec:PreproRepn} give an algebra $^{K_pn}\calP$ as well as its action on quiver varieties. 
It should be possible to obtain a purely algebraic description of this algebra along the lines of shuffle formula in \S~\ref{sec:formalShuf}, although at the present the formula stated in \S~\ref{sec:formalShuf} requires the coefficient ring of the formal group law to be a $\bbQ$-algebra.
Let $U_p(n)^+$ be the subalgebra generated by the constant loops. 
That is, the subalgebra of $^{K_pn}\calP$ generated by $1_{ke_i} \in K_pn_{\GL_{ke_i}\times\Gm}(\mu^{-1}_{ke_i}(0))$ for all $k\in \bbN$ and $i\in I$.
We expect the comultiplication on the shuffle algebra constructed in \cite{YZ2} has a geometric description, hence is well-defined on  $U_p(n)^+$, the Drinfeld double of which is denoted by $U_p(n):=D(\null^{K_pn}U_p(n)^+)$.

Note that at present the construction of finite quantum group of Lusztig \cite{L91} using perverse sheaves has no analogue for Morava $K$-theory, hence the only available construction is taking the finite part inside the quantum affine algebra.

Now we provide some evidence that this family of algebras is related to the family proposed by Lusztig.  
\begin{enumerate}
\item For $n=0$, the Morava $K$-theory is the Chow group with rational coefficients. Theorem~\ref{thm:Yangian} below shows that $^{K_pn}\calP\cong Y_\hbar^+(\fg)$. In \cite{YZ2} we further prove that $D(\null^{K_pn}\calP)\cong Y_\hbar(\fg)$. It is known that the subalgebra of $Y_\hbar(\fg)$ generated by constant loops is the universal enveloping algebra $U(\fg_\bbQ)$, the characters of absolutely irreducible modules of which are given by $E^0_\lambda$.
\item In Proposition~\ref{prop:stable}, we prove that the characters of the absolutely irreducible representations of $U_p(n)$ with highest weight $\lambda$  stabilize, if $p^n$ is larger than a constant depending on $\lambda$. 
\item In the limit case $n=\infty$, $K_pn$ is given by the Eilenberg-MacLane spectrum $H\bbZ/p$, the corresponding cohomology theory is the Chow group with $\bbZ$-coefficients.   
Similar construction as in Theorem~\ref{thm:Yangian} using Chow group with $\bbZ$-coefficients in lieu of $\bbQ$-coefficients gives a $\bbZ$-form of $U(\fg_\bbQ)$. An easy calculation shows that this $\bbZ$-form is the Kostant $\bbZ$-form, which by definition is generated by the divided powers of the Chevalley generators. That is, let $1_{e_i}\in \CH_{\GL_{e_i}\times\Gm}(\mu_{e_i}^{-1}(0),\bbZ)$ be denoted by $E_i$ for $i\in I$, then $E_i^{* k}=k!1_{ke_i}$ where $1_{ke_i}\in \CH_{\GL_{ke_i}\times\Gm}(\mu^{-1}_{ke_i}(0),\bbZ)$. In other words, $1_{ke_i}$ is the divided power operator $E^{(k)}_i$ in the Kostant $\bbZ$-form.
For large enough $p$, it is reasonable to expect that $U_p(\infty)$ is isomorphic to
the reduction mod-$p$ of the Kostant $\bbZ$-form (referred to as the hyperalgebra in literature), whose characters are known to be the same as those of the algebraic group \cite{Hum}, which in turn are given by $E_\lambda^\infty$ according to  \cite{Lusz}. 
\item For $n=1$, the Morava $K$-theory has an alternative description in terms of the usual $K$-theory. 
According to Grojnowski's work in progress \cite{Gr2}, $^K\calP$ is expected to be the positive part of the quantum loop algebra, the subalgebra of which generated by constant loops is a $\bbZ$-form of the finite quantum group $U_q(\fg_Q)$.  
Similar to (3) above, for suitable choice of 1-dimensional subgroup of $GL_v\times\Gm$, this integral form has $q$-divided powers. Specializing $q$ of this $\bbZ$-form at a $p$th root of unity, the irreducible modules would have the same character as given by the formula $E^1_\lambda$. However, 
to obtain the first Morava $K$-theory from the usual $K$-theory, instead of specializing  $q$ at a $p$th root of unity, one needs to take the invariant of the $K$-theory spectrum under the action of the group of $(p-1)$-th roots of unity $\mu_{p-1}$  via Adams operations. The precise relation between these two manipulations is unknown to us at present.
\end{enumerate}

We expect the irreducible representation of $U_p(n)$ over $\overline{\bbF}_p$ with highest weight $\lambda$ factors through the convolution algebra of fixed points $K_pn(Z(w)^{G})$, where $Z(w)$ is the Hecke correspondence in quiver variety for $w\in \bbN^I$ depending only on $\lambda$, and $G\subseteq \GL_w\times\Gm$. 
\begin{prop}\label{prop:stable}\footnote{The proof presented here was suggested to us by Marc Levine.}
Let $X$ be a smooth quasi-projective variety with $Z\subseteq X\times X$ a convolution subvariety. If $p^n-1>4\dim X+1$, then the convolution algebra $K_pn(Z)$ is isomorphic to the convolution algebra $\CH(Z,\bbZ/p)\otimes_{\bbZ/p}\bbZ/p[v^\pm]$.
\end{prop}
To prove this proposition, we will use the slice spectral sequence \cite{VoevSSS}  which we now recall. Let $\calK_pn$ be the motivic $\bbP^1$-spectrum representing $K_pn$. That is, there is a bi-graded oriented cohomology theory $\bigoplus_{s,m\in\bbZ}K_pn^{s,m}(A)$ for any $\bbP^1$-spectrum $A$ with  $K_pn^{s,m}(A)=H^{s-m}(A,\calK_pn(m))$, where $(m)$ is smashing with $\Gm^{\wedge m}$. Restricting to the case when $A$ is a smooth variety and $s=2m$ part gives the oriented cohomology  $K_pn^*(A)=\bigoplus_{m\in\bbZ}K_pn^{2m,m}(A)$.  The oriented Borel-Moore homology theory $K_pn(A)$ for not necessarily smooth variety $A$ is obtained as in \cite[\S~4]{Le}. 
Recall that the coefficient ring of $K_pn$ is $\bbZ/p[v^\pm] $ with $v$ in degree $p^n-1$. 
Hence, the slices of  $\calK_pn$ are: 
\begin{equation} 
\label{slice}
s_q(\calK_pn)= 
\left\{
     \begin{array}{lr}
     \Sigma_{\bbP^1}^{l(p^n-1)}H\bbZ/p\otimes (v^l\cdot\bbZ/p ),
        & \hbox{ if $q=l(p^n-1)$ for some $l\in\bbZ$ }; \\
       \hbox{zero otherwise}.
     \end{array}
   \right.
   \end{equation}
   Below we denote $s_q(\calK_pn)$ simply by $s_q$. The slice spectral sequence for any $\bbP^1$-spectrum $A$ and weight $N\in\bbZ$ has 
   \[
   E_2^{h,q}= H^{h-q}(A,s_{-q}(N-q))\Rightarrow K_pn^{h+q,N}(A),\] with the $r$-th differential \begin{equation}\label{eqn:SSS_diff}
d_r^{h,q,N}:E_r^{h,q}\to E_r^{h+r,q-r+1}
\end{equation} for $r\geq 2$.
The source is a subquotient of $H^{h-q}(A,s_{-q}(N-q))$ and the target is a subquotient of $H^{h-q+2r-1}(A,s_{-q+r-1}(N-q+r-1))$.
Recall the following result on  higher Chow groups which can be found in \cite[Vanishing Theorem 3.6 and 19.3]{MVW}. 
\begin{theorem}
 \label{thm: higher Chow}
The higher Chow group $\CH^{m,i}(A,\bbZ/p)\cong H^{2m-i}(A,H\bbZ/p(m))$ vanishes, when $m>\dim A+i$ or $i<0$. 
\end{theorem}
Assuming for the moment $A$ is smooth, we are primarily interested in $K_pn^{s,m}(A)$ in the case when $s=2m,2m-1$. 
\begin{lemma}\label{lem:SSS_deg_sm}
Assume $A$ is a smooth quasi-projective variety with $\dim A< p^n-2$. Then,
\begin{enumerate}
\item  $d_r^{2m-q,q,m}=0$, and $d_r^{2m-q-1,q,m}=0$; In particular, 
\begin{equation*}
K_pn^{2m,m}(A)\cong v^l \CH^{m-l(p^n-1)}(A,\bbZ/p), \end{equation*}
where $l\in\bbZ$ is the unique integer so that $0\leq m-l(p^n-1)\leq \dim A<p^n-1$ if there is, and zero otherwise.
\item 
Similarly, $d_r^{2m-1-q,q,m}=0$, and $d_r^{2m-q-2,q,m}=0$; In particular, \[K_pn^{2m-1,m}(A)\cong v^l \CH^{m-l(p^n-1),1}(A,\bbZ/p), \]
where $l\in\bbZ$ is the unique integer so that $0\leq m-l(p^n-1)\leq \dim A+1<p^n-1$ if there is, and zero otherwise.
\end{enumerate}
\end{lemma}
\begin{proof}
In \eqref{eqn:SSS_diff}, taking $N=m$ and $h+q=2m$, 
the target of the differential $d_r^{h,q,N}$ is a subquotient of $H^{h-q+2r-1}(A,s_{-q+r-1}(N-q+r-1))$, which is in homological degree $2(m-q+r-1)+1$, which is one more than twice the weight $m-q+r-1$, hence vanishes by Theorem \ref{thm: higher Chow}.  Therefore, $d_r^{2m-q,q,m}=0$, i.e., there is no differential from the part converging to $K_pn^{2m,m}(Z)$. 

Now in \eqref{eqn:SSS_diff}, taking $N=m$ and $h+q+1=2m$, 
the slices \eqref{slice} are non-trivial only if $q-r+1=l(p^n-1)$ for some $l$ and $q=l'(p^n-1)$ for some $l'$, in which case \[H^{h-q+2r-1}(A,s_{-q+r-1}(N-q+r-1))=H^{2(m-l(p^n-1))}(A,H\bbZ/p(m-l(p^n-1)))\cong \CH^{m-l(p^n-1)}(A,\bbZ/p)\] and $H^{h-q}(A,s_{-q}(N-q))\cong \CH^{m-l'(p^n-1),1}(A,\bbZ/p)$, a higher Chow group. By Theorem \ref{thm: higher Chow}, these two Chow groups vanish unless  $0\leq m-l(p^n-1)\leq\dim A<p^n-1$ and $0\leq m-l'(p^n-1)\leq\dim A+1<p^n-1$,  which forces $l=l'$. This contradicts with the fact that $r\geq2$. Therefore, $d_r^{2m-q-1,q,m}=0$, i.e., there is no differential to the part converging to $K_pn^{2m,m}(A)$.
Now we  have shown (1).

The same argument with vanishing of the first and second higher Chow groups yields the vanishing of differentials to or from the part converging to $K_pn^{2m-1,m}(A)$.
\end{proof}

The following is a corollary to Lemma~\ref{lem:SSS_deg_sm}.
\begin{lemma}\label{lem:SSS_deg_sing}
Assume $A$ is a quasi-projective variety, not necessarily smooth, embedded into a smooth variety $M$. If $\dim M< p^n-2$, then  Lemma~\ref{lem:SSS_deg_sm}(1) holds for $A$.
\end{lemma}
\begin{proof}
As $A$ naturally embeds into a smooth variety $M$ with open complement $U$, we have the localization exact sequence
\[\cdots\to H^{2m-1}(U,H\bbZ/p(m))\to H^{2m}(M/U,H\bbZ/p(m))\to H^{2m}(M,H\bbZ/p(m))\to \cdots.\]
Lemma~\ref{lem:SSS_deg_sm} applied to $M$ and $U$ implies the vanishing of all possible differentials in the slice spectral sequence converging to $K_pn^{2m,m}(M/U)$, which, by definition, is the Borel-Moore homology $K_pn$ applied to $A$ (\cite[\S~4]{Le}). This proves Lemma~\ref{lem:SSS_deg_sm}(1) for $A$ in this case.
\end{proof}

\begin{proof}[Proof of Proposition~\ref{prop:stable}]
Note that $Z\subseteq X\times X$ and $\dim (X\times X)<p^n-2$. Hence Lemma~\ref{lem:SSS_deg_sing} implies $K_pn(Z)\cong \CH(Z,\bbZ/p)[v^\pm]$ as abelian groups. 
Let $W=p_{12}^{-1}Z\cap p_{23}^{-1}Z\subseteq X^3$, then $Z\times Z\subseteq X^4$ and $W\subset X^3$ both satisfy conditions in Lemma~\ref{lem:SSS_deg_sing}, hence $K_pn(Z\times Z)\cong \CH(Z\times Z,\bbZ/p)[v^\pm]$ and $K_pn(W)\cong \CH(W,\bbZ/p)[v^\pm]$, which is compatible with restriction with support.

It suffices to show that the push-forward $K_pn(W)\to K_pn(Z)$ is compatible with the push-forward $\CH(W,\bbZ/p)[v^\pm]\to \CH(Z,\bbZ/p)[v^\pm]$ under the isomorphisms above. This can be proven through the degree formula  \cite[Theorem~4.4.7]{LM}.  Any homogeneous element $\gamma\in K_pn(W)$ is a Chow cycle shifted by a power of $v$, according to Lemma~\ref{lem:SSS_deg_sing}. Without loss of generality, we may assume $\gamma$ is a Chow cycle of degree-$m$. Hence $\gamma$ lies in $\Omega(W)\otimes_{\Laz}\bbZ/p[v]\subseteq \Omega(W)\otimes_{\Laz}\bbZ/p[v^\pm]\cong K_pn(W)$. Choosing any pre-image  $\widetilde\gamma$ in $\Omega(W)$, \cite[Theorem~4.4.7]{LM} yields that $\widetilde\gamma=\sum_{i=0}^m a_i \gamma_i$ where $a_i\in\Laz_i$ and $\gamma_i$ are cobordism cycles on $W$ which are birational to their images in $W$. In particular, $\dim \gamma_i\leq \dim W<4\dim X<p^n-1$. In particular, the image of $a_i$ in $\bbZ/p[v]$ is zero unless $i=0$, and $a_0\gamma_0$ is the Chow cycle corresponding to $\gamma$ in the following diagram
\[\xymatrix @R=1em{\Omega(W)\ar[r]\ar[dr]&K_pn(W)\ar[d]\\
& \CH(W,\bbZ/p)}\] with the vertical map being an isomorphism in degree-$m$ piece by Lemma~\ref{lem:SSS_deg_sing}. As this diagram is compatible with pushing-forward to $Z$ in the three homology theories involved, we are done. 
\end{proof}

As the characters of the irreducible modules of $U_p(0)$ are given by $E^0_\lambda$ and those of $U_p(\infty)$ are given by $E^\infty_\lambda$, it is natural to expect that those of $U_p(n)$ would be be given by $E^n_\lambda$. However, the cases $n=0$ and $n=\infty$ studied in the present paper does not provide an indication of the right specialization of the loop and quantization parameters in order to get $E^n_\lambda$. 
Further investigations on  the irreducible representations of this family of algebras will be carried out in future publications.

\section{Yangians and shuffle algebras}\label{sec:Yangian}
From this point on we have several miscellaneous sections. 
For any quiver $Q$ without edge-loops, and $A$ is the Chow group with $\bbQ$-coefficients, in this section, we show there is a map from the Yangian  to the (twisted) shuffle algebra.  This map will be further studied in \S~\ref{sec:Yangian_action}.

Throughout this section, we assume the quiver $Q$ has no edge-loops. We assume the $T$-action has the same weights as in Remark \ref{rmk:weights}.

\subsection{The Yangian}
\label{symmetricYangian}
Let $\fg_Q$ be the symmetric Kac-Moody Lie algebra associated to the quiver $Q$.
The  Cartan matrix of $\fg_Q$ is
$C+\overline{C}=(c_{kl})_{k, l\in I}$, which is a symmetric matrix.
Recall that the Yangian of $\fg_Q$, denoted by  $Y_\hbar(\fg_Q)$, is an associative algebra over $\bbQ[\hbar]$, generated by the variables
\[
x_{k, r}^{\pm}, h_{k, r}, (k\in I, r\in \N),
\]
subject to certain relations. 
Let $Y_\hbar^+(\fg_Q)$ be the algebra generated by the elements $x_{k, r}^{+}$, for $k\in I, r\in \N$. 
Define the generating series $x_k^+(u)\in Y_{\hbar}^+(\fg_Q)[\![u^{-1}]\!]$ by
$x_k^+(u)=\hbar \sum_{r\geq 0} x_{k, r}^+ u^{-r-1}. $
The following is a complete set of relations defining $Y_\hbar^+(\fg_Q)$:
\begin{align}
&(u-v-\frac{\hbar c_{kl}}{2}) x_k^+(u)x_l^+(v)
=(u-v+\frac{\hbar c_{kl}}{2}) x_l^+(v)x_k^+(u)
 \tag{Y1}\label{calY1} \\ &\phantom{ABCDEFGABCDEFG}+\hbar\Big(
[x_{k, 0}^+, x_{l}^+(v)]-[x_{k}^+(u), x_{l, 0}^+]
\Big), \text{for any $k, l\in I$}. \notag\\
&
\sum_{\sigma\in\fS_{1-c_{kl}}}[x_k^+(u_{\sigma(1)}),[x_k^+(u_{\sigma(2)}),[\cdots,[x_k^+(u_{\sigma (1-c_{kl})}),x_l^+(v)]\cdots]]]=0, \text{for $k \neq l\in I$}.
\tag{Y2}\label{calY2}
\end{align}

\subsection{The map from Yangian to the shuffle algebra}\label{subsec:YtoSh}
In this subsection, we assume $(R,F)$ is the additive formal group law. We prove the following.
\begin{theorem}\label{thm:Yangian to sh}
Let $(R,F)$ be the additive formal group law. Let $Q$ be any quiver without edge loops.
The assignment
\[Y^+_\hbar(\fg_Q)\ni  x_{k, r}^+ \mapsto (\lambda^{(k)})^{r}\in \calS\calH_{e_k}=R[t_1,t_2][\lambda^{(k)}]\]
 extends to a well-defined algebra homomorphism $Y^+_{\hbar}(\fg_Q)\to\widetilde{\calS\calH}|_{t_1=t_2=\hbar}$.
\end{theorem}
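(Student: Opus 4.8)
The plan is to verify directly that the assignment $x_{k,r}^+ \mapsto (\lambda^{(k)})^r$ respects the defining relations \eqref{calY1} and \eqref{calY2} of $Y_\hbar^+(\fg_Q)$ in the twisted shuffle algebra $\widetilde{\calS\calH}|_{t_1=t_2=\hbar}$. Since $Y_\hbar^+(\fg_Q)$ is presented by generators $x_{k,r}^+$ and these two families of relations, it suffices to check that the images of both sides of \eqref{calY1} and \eqref{calY2} agree under the shuffle multiplication. First I would record the explicit shuffle product on $\widetilde{\calS\calH}$ in the additive case with $t_1=t_2=\hbar$: for $f\in\calS\calH_{e_k}$ and $g\in\calS\calH_{e_l}$, using $\fac_1$ from \eqref{equ:fac1}, $\fac_2$ from \eqref{equ:fac2} with the weights $m_{h_p}=a+2-2p$, $m_{h_p^*}=-a+2p$ of Remark~\ref{rmk:weights}, and the sign $(-1)^{(e_l,\overline C e_k)+1}$, the product $f\star g$ is a sum over the two shuffles in $\Sh(e_k,e_l)$ (when $k\neq l$ there is only the identity shuffle acting on distinct variables $\lambda^{(k)},\lambda^{(l)}$; when $k=l$ there are two terms, symmetrizing in $\lambda_1^{(k)},\lambda_2^{(k)}$).

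The key computation is the $k\neq l$ case of \eqref{calY1}. Here $f_1\star f_2$ (product in $\calS\calH_{e_k}\otimes\calS\calH_{e_l}$) becomes, after the twist, a single monomial times the factor coming from $\fac_2$, which in the additive specialization is $\prod_{p=1}^{a_{kl}}(\lambda^{(l)}-\lambda^{(k)}+m_{h_p}\hbar)\prod_{p=1}^{a_{lk}}(\lambda^{(k)}-\lambda^{(l)}+m_{h_p^*}\hbar)$ — after collecting, this telescopes to something proportional to a polynomial whose ratio with its $k\leftrightarrow l$ swap is exactly $\frac{u-v-\hbar c_{kl}/2}{u-v+\hbar c_{kl}/2}$ with $u\mapsto\lambda^{(k)}$, $v\mapsto\lambda^{(l)}$; the telescoping identity $\prod_{p=1}^{a}(x+(a+2-2p)\hbar) = \prod_{p=1}^{a}(x+(a-2p)\hbar)\cdot\frac{x+a\hbar}{x-a\hbar}$ (so to speak) is the crux. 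Matching degrees in $\lambda^{(k)},\lambda^{(l)}$ then translates the $u,v$-polynomial identity \eqref{calY1} into an equality of rational functions that I would verify by clearing denominators; the $\hbar$-linear correction terms on the right of \eqref{calY1} land in the part of the product involving $x_{k,0}^+$, i.e. the terms where one of the factors has degree zero, and need to be tracked carefully. For the diagonal case $k=l$, \eqref{calY1} with $c_{kk}=2$ reduces to $(u-v-\hbar)x_k^+(u)x_k^+(v)=(u-v+\hbar)x_k^+(v)x_k^+(u)+\hbar(\cdots)$, which follows from the symmetry of the shuffle sum over $\fS_2$ together with the shape of $\fac_1=\frac{\lambda_1^{(k)}-\lambda_2^{(k)}+2\hbar}{\lambda_2^{(k)}-\lambda_1^{(k)}}$ at $t_1=t_2=\hbar$.

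For the Serre relation \eqref{calY2}, I would invoke the symmetric polynomial identity proved in Appendix~\ref{app:sym_poly} (referenced in the introduction as the technical heart of the Serre-relation check): the iterated bracket $\sum_{\sigma\in\fS_{1-c_{kl}}}[x_k^+(u_{\sigma(1)}),[\cdots,[x_k^+(u_{\sigma(1-c_{kl})}),x_l^+(v)]\cdots]]$ maps to a symmetrization in the variables $\lambda_1^{(k)},\dots,\lambda_{1-c_{kl}}^{(k)}$ of a product of linear factors coming from $\fac_1$ (the $k$–$k$ interactions) and $\fac_2$ (the $k$–$l$ interactions), and the claim is that this symmetrization vanishes; this is precisely the content of the polynomial identity in the appendix applied at $t_1=t_2=\hbar$. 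The main obstacle I expect is the $k\neq l$ case of \eqref{calY1}: one must simultaneously handle (i) the correct combinatorics of the weights $m_{h_p}, m_{h_p^*}$ in $\fac_2$ so that the product over arrows collapses to the right rational function, (ii) the sign twist $(-1)^{(e_l,\overline C e_k)+1}$, and (iii) the degree-zero correction terms; getting all three to conspire into the clean form \eqref{calY1} is the delicate bookkeeping, whereas the Serre relation, once the appendix identity is granted, is comparatively formal.
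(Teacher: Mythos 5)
Your plan is essentially the paper's proof: you verify \eqref{calY1} and \eqref{calY2} directly from the explicit shuffle formula (with $\fac_1$, $\fac_2$, the Remark~\ref{rmk:weights} weights, and the Euler--Ringel sign twist), and you reduce the Serre relation to the symmetric-polynomial identity of Appendix~\ref{app:sym_poly}. The paper does the quadratic relation exactly as you outline --- writing $\fac_2$ as $\prod_{m\in S}(\lambda^{(l)}-\lambda^{(k)}+m\tfrac{\hbar}{2})$ with $S=\{a,a-2,\dots,-a+2\}$, cancelling the common subproduct over $S'=\{a-2,\dots,-a+2\}$ to leave precisely the $\frac{u-v\pm\hbar c_{kl}/2}{}$ asymmetry, and matching the $\hbar$-linear corrections --- and the Serre relation by first passing (as in \cite[\S 10.4]{Nak99}) to the coefficient form \eqref{calY2'}, then expanding iterated shuffle products and invoking Corollary~\ref{appendix cor}.

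One normalization caveat worth flagging (it is in the paper, not a defect of your plan): the factor $\fac_1$ contributes $t_1+t_2$, and the explicit verification in \S~\ref{subsec:YtoSh} uses $\frac{\lambda_1-\lambda_2+\hbar}{\lambda_2-\lambda_1}$, i.e.\ $t_1+t_2=\hbar$, which means the computation is actually done at $t_1=t_2=\hbar/2$ (matching the summary diagram in the introduction and Theorem~\ref{thm:Yangian}). If you literally set $t_1=t_2=\hbar$ as you did, $\fac_1$ becomes $\frac{\lambda_1-\lambda_2+2\hbar}{\lambda_2-\lambda_1}$, and the quadratic relation with the standard Yangian coefficients $\pm\tfrac{\hbar c_{kl}}{2}$ will not close. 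So when you carry out the bookkeeping you describe, you should use the specialization $t_1=t_2=\hbar/2$. Apart from that, your account is a compressed but faithful version of the same argument; the only places where you are looser than the paper are (i) the explicit form of the $k=l$ identity, which the paper computes rather than asserting ``by symmetry,'' and (ii) the intermediate reduction \eqref{calY2}~$\Rightarrow$~\eqref{calY2'} before the appendix identity can be applied.
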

Here recall that $\widetilde{\calS\calH}$ is the shuffle algebra twisted by a sign coming from the Euler-Ringel form (see \S~\ref{subsec:twisting CoHA}).
In order to prove Theorem~\ref{thm:Yangian to sh}, we need to verify  the  relations \eqref{calY1} and \eqref{calY2}  in the algebra $\widetilde{\calS\calH}$. It will take the rest of \S~\ref{subsec:YtoSh}.

For simplicity, we write the multiplication in $\widetilde{\calS\calH}$ as $*$. 
\subsubsection{The quadratic relation \eqref{calY1}}
We now check the relation \eqref{calY1} in the shuffle algebra.
We have
\[
x_k^+(u)\mapsto \hbar \sum_{r\geq 0} (\lambda^{(k)})^r u^{-r-1}=\frac{\hbar}{u-\lambda^{(k)}}, 
\]
where the equality is understood in the usual sense.
To check the  quadratic relation \eqref{calY1}, it suffices to show
\begin{align}
(u-v-\frac{\hbar c_{kl}}{2}) 
&\frac{\hbar}{u-\lambda^{(k)}}
*\frac{\hbar}{v-\lambda^{(l)}}
-(u-v+\frac{\hbar c_{kl}}{2}) 
\frac{\hbar}{v-\lambda^{(l)}}*
\frac{\hbar}{u-\lambda^{(k)}} \label{equ:quad}\\
=&\hbar
\Big(1^{(k)}*\frac{\hbar}{v-\lambda^{(l)}}
-\frac{\hbar}{v-\lambda^{(l)}}* 1^{(k)}
-\frac{\hbar}{u-\lambda^{(k)}}* 1^{(l)}
+ 1^{(l)}*\frac{\hbar}{u-\lambda^{(k)}}
\Big).\notag
\end{align}

We first consider the case when $k\neq l$. We first spell out the formula of the multiplication
$\widetilde{\calS\calH}_{e_k} \otimes \widetilde{\calS\calH}_{e_l}\to \widetilde{\calS\calH}_{e_k+e_l}$ as a map 
$R[\hbar][\lambda^{(k)}]\otimes  R[\hbar][\lambda^{(l)}] \to R[\hbar][\lambda^{(k)}, \lambda^{(l)}]$. Plugging-in $v_1=e_k$, and $v_2=e_l$ to \eqref{equ:fac1},  we have $\fac_1=1$. For simplicity, we write $a=-c_{kl}$, the number of arrows from the vertex $k$ to the vertex $l$. Let $S$ be the set of integers $\{a, a-2, a-4, \dots, -a+4, -a+2\}$. By the running assumption of this section, the 2-dimensional  torus $T$ acts on $T^*\Rep(Q,v)$ as in Remark \ref{rmk:weights}; in particular,  the set $S$ is the set of weights of $T$-action on arrows from the vertex $k$ to the vertex $l$. Plugging the weights into  \eqref{equ:fac2}, we have $\fac_2=\prod_{m\in S}(\lambda^{(l)}-\lambda^{(k)}+m\frac{\hbar}{2})$. Therefore, by the shuffle formula \eqref{shuffle formula}, the Hall multiplication, taken into account of the sign twist of \S~\ref{subsec:twisting CoHA}, is given by
\[
(\lambda^{(k)})^p *(\lambda^{(l)})^q=
-(\lambda^{(k)})^p (\lambda^{(l)})^q \prod_{m\in S}(\lambda^{(l)}-\lambda^{(k)}+m\frac{\hbar}{2}).
\] 
Similarly, the multiplication
$\widetilde{\calS\calH}_{e_l} \otimes \widetilde{\calS\calH}_{e_k}\to \widetilde{\calS\calH}_{e_k+e_l}$
is given by
\begin{align*}
(\lambda^{(l)})^p *(\lambda^{(k)})^q
=&(-1)^{a+1} (\lambda^{(l)})^p (\lambda^{(k)})^q \prod_{m\in S}(\lambda^{(k)}-\lambda^{(l)}+m\frac{\hbar}{2})
=-(\lambda^{(l)})^p (\lambda^{(k)})^q \prod_{m\in S}(\lambda^{(l)}-\lambda^{(k)}-m\frac{\hbar}{2}).
\end{align*}
Plugging  into equation \eqref{equ:quad}, \eqref{calY1} becomes the following identity
\begin{align}
(u-v-\frac{\hbar c_{kl}}{2}) &
\frac{\hbar}{u-\lambda^{(k)}}
\frac{\hbar}{v-\lambda^{(l)}}
\prod_{m\in S}(\lambda^{(l)}-\lambda^{(k)}+m\frac{\hbar}{2}) \notag\\
-(u-v+\frac{\hbar c_{kl}}{2}) &
\frac{\hbar}{v-\lambda^{(l)}}
\frac{\hbar}{u-\lambda^{(k)}}\prod_{m\in S}(\lambda^{(l)}-\lambda^{(k)}-m\frac{\hbar}{2})  \notag\\
=&\hbar
\Big(\frac{\hbar}{v-\lambda^{(l)}}
\prod_{m\in S}(\lambda^{(l)}-\lambda^{(k)}+m\frac{\hbar}{2})
-\frac{\hbar}{v-\lambda^{(l)}}
\prod_{m\in S}(\lambda^{(l)}-\lambda^{(k)}-m\frac{\hbar}{2}) \label{equ*}\\
&-\frac{\hbar}{u-\lambda^{(k)}}\prod_{m\in S}(\lambda^{(l)}-\lambda^{(k)}+m\frac{\hbar}{2})
+\frac{\hbar}{u-\lambda^{(k)}}
\prod_{m\in S}(\lambda^{(l)}-\lambda^{(k)}-m\frac{\hbar}{2})
\Big).  \notag
\end{align}
Canceling the common factor 
\[
\prod_{m\in S'}(\lambda^{(l)}-\lambda^{(k)}+m\frac{\hbar}{2}), 
\text{
where $S'=\{a-2, a-4, \dots, -a+2\}$},
\] the equality \eqref{equ*} becomes
\begin{align*}
&(u-v+\frac{\hbar a}{2}) 
\frac{\lambda^{(l)}-\lambda^{(k)}+a\frac{\hbar}{2}}{(u-\lambda^{(k)})(v-\lambda^{(l)})}-(u-v-\frac{\hbar a}{2}) 
\frac{\lambda^{(l)}-\lambda^{(k)}-a\frac{\hbar}{2}}{(v-\lambda^{(l)})(u-\lambda^{(k)})}
=
a\hbar\Big( \frac{1}{v-\lambda^{(l)}}
-\frac{1}{u-\lambda^{(k)}}
\Big). 
\end{align*}
Both sides of the above identity are equal to $a\hbar\frac{u-v+\lambda^{(l)}-\lambda^{(k)}}{(u-\lambda^{(k)})(v-\lambda^{(l)})}$. 
This shows  the relation \eqref{calY1} for the case when $k\neq l$. 

We now check the relation \eqref{calY1} when $k=l$.
A similar calculation using the shuffle formula \eqref{shuffle formula} shows that equation \eqref{equ:quad} becomes the following identity in $\widetilde{\calS\calH}_{2e_k}
=R[\hbar][\lambda_1, \lambda_2]$
\begin{align*}
&(u-v-\hbar) \sum_{\sigma\in\fS_2}\sigma \Big(
\frac{\hbar}{u-\lambda_1}
\frac{\hbar}{v-\lambda_2}
\frac{\lambda_1-\lambda_2+ \hbar}{\lambda_2-\lambda_1}\Big)
-(u-v+\hbar) \sum_{\sigma\in\fS_2}\sigma
\Big(\frac{\hbar}{v-\lambda_1}
\frac{\hbar}{u-\lambda_2}\frac{\lambda_1-\lambda_2+ \hbar}{\lambda_2-\lambda_1}\Big)\\
&=\hbar\sum_{\sigma\in\fS_2}\sigma
\Bigg(\Big(\frac{\hbar}{v-\lambda_2}
-\frac{\hbar}{v-\lambda_1}
-\frac{\hbar}{u-\lambda_1}
+\frac{\hbar}{u-\lambda_2}\Big)
\frac{\lambda_1-\lambda_2+ \hbar}{\lambda_2-\lambda_1}
\Bigg).
\end{align*} 
It is straightforward to show that both sides of the above identity can be simplified to 
\[\frac{2\hbar^3}{\lambda_2-\lambda_1} \Big(\frac{1}{v-\lambda_2}
-\frac{1}{v-\lambda_1}
-\frac{1}{u-\lambda_1}
+\frac{1}{u-\lambda_2}\Big).\]
This completes the proof of relation \eqref{calY1} for $k=l$.

\subsubsection{The Serre relation \eqref{calY2}}
An argument similar to \cite[\S~10.4]{Nak99} shows that to check the relation \eqref{calY2}, it suffices to check:
\[\sum_{p=0}^{1-c_{kl}} (-1)^p
{1-c_{kl} \choose p} x_{k, 0}^{*p} 
* x_{l, 0} * x_{k, 0}^{*(1-c_{kl}-p)}=0, 
\tag{Y2$'$}\label{calY2'}\]
where $x^{*n}=x*x* \cdots * x$, the shuffle product of $n$-copies of $x$.
We use the shuffle formula \eqref{shuffle formula} to check the Serre relation \eqref{calY2'}.

For any $i,j$, let $\lambda_{i,j}=\lambda_i-\lambda_j$. By the shuffle formula \eqref{shuffle formula}, we have the recurrence relation
\begin{align*}
&(x_{k, 0})^{* n+1}
=(-1)^{n+1}\sum_{\sigma \in\Sh_{(n,1)}} \sigma\Big( (x_{k, 0})^{* n}  \prod_{i=1}^n \frac{\lambda^{(k)}_{i, n+1}+\hbar}{\lambda^{(k)}_{n+1, i}}\Big).
\end{align*}
Therefore, inductively, we get a formula of $(x_{k, 0})^{* n}$:
\begin{align}
(x_{k, 0})^{* n}
=&(-1)^{\frac{n(n+1)}{2}-1} \sum_{\sigma\in \fS_n}
\sigma\left(
 \frac{\lambda^{(k)}_{12}+\hbar}{\lambda^{(k)}_{21}}\cdot
 \frac{\lambda^{(k)}_{13}+\hbar}{\lambda^{(k)}_{31}}
 \cdots
 \frac{\lambda^{(k)}_{n-1, n}+\hbar}{\lambda^{(k)}_{n, n-1}}\right). \label{1*}
\end{align}

Note that $k\neq l$. By the shuffle formula \eqref{shuffle formula}, the multiplication
$\widetilde{\calS\calH}_{n e_k} \otimes \widetilde{\calS\calH}_{e_l}\to\widetilde{\calS\calH}_{n e_k+e_l}
$ is given by
\begin{align}
&(x_{k, 0})^{* n}*x_{l, 0}
=-(x_{k, 0})^{* n} \prod_{i=1}^n \prod_{m\in S}
(\lambda^{(l)}-\lambda^{(k)}_i+m\frac{\hbar}{2}), \label{2*}
\end{align} where $S=\{a, a-2, a-4, \dots, -a+4, -a+2\}$.

For the multiplication $\widetilde{\calS\calH}_{p e_k+ e_l} \otimes \widetilde{\calS\calH}_{q e_k}\to \widetilde{\calS\calH}_{e_l+(p+q) e_k}$, considered as a map \[
R[t_1, t_2 ][\lambda^{(k)}_1, \cdots,\lambda^{(k)}_p, \lambda^{(l)}_{1} ] \otimes R[t_1, t_2 ][\lambda^{(k)}_{p+1}, \cdots,\lambda^{(k)}_{p+q}]  \to 
R[t_1, t_2 ][\lambda^{(k)}_1, \cdots,\lambda^{(k)}_{p+q}, \lambda^{(l)}_{1}],\] we have
\begin{align}
((x_{k, 0})^{* p}*x_{l, 0})*(x_{k, 0})&^{* q}
=(-1)^{pq+qa+1}\sum_{\sigma \in \Sh_{(p, q)}}\sigma \Big((x_{k, 0})^{* p}*x_{l, 0}\cdot  (x_{k, 0})^{* q} \cdot \notag \\
&\cdot  \prod_{s=1}^p \prod_{t=p+1}^{p+q}
\frac{\lambda^{(k)}_s-\lambda^{(k)}_t+\hbar }{\lambda^{(k)}_t- \lambda^{(k)}_s} 
\cdot \prod_{t=p+1}^{p+q} \prod_{m\in S}
(\lambda^{(k)}_t -\lambda^{(l)}+m\frac{\hbar}{2})\Big).
 \label{3*}
\end{align}

Plugging the formulas of \eqref{1*} \eqref{2*} into  \eqref{3*} with $q=a+1-p$, we get
\begin{align*}
x_{k, 0}^{*p} 
* x_{l, 0} * x_{k, 0}^{*(a+1-p)}&
=(-1)^{\frac{(a+1)(a+2)}{2}} \sum_{\pi\in \Sh_{(p, a+1-p)}}\pi \Bigg( 
\Big( \sum_{\sigma\in \fS_p} \sigma\cdot \prod_{1\leq i<j \leq p}
\frac{\lambda^{(k)}_{i, j}+\hbar}{\lambda^{(k)}_{j, i}}\Big)\\
&\cdot \Big( \sum_{\sigma\in \fS_{a+1-p}} \sigma \cdot \prod_{\{p+1\leq i<j \leq a+1\}}
\frac{\lambda^{(k)}_{i, j}+\hbar}{\lambda^{(k)}_{j, i}}\Big)
\Big(\prod_{s=1}^p \prod_{t=p+1}^{a+1}
\frac{\lambda^{(k)}_{s, t}+\hbar }{\lambda^{(k)}_{t, s}}\Big)\\
&\cdot 
\prod_{m\in S}\Big( \prod_{i=1}^p(\lambda^{(k)}_i-\lambda^{(l)}-m\frac{\hbar}{2}) 
\prod_{t=p+1}^{a+1} 
(\lambda^{(k)}_t -\lambda^{(l)}+m\frac{\hbar}{2})\Big)\Bigg).
\end{align*}
Re-arranging the above summation, we have:
\begin{align*}
\sum_{p=0}^{a+1} (-1)^p 
{a+1 \choose p} 
x_{k, 0}^{*p} 
* x_{l, 0} * & x_{k, 0}^{*(a+1-p)}
=\sum_{p=0}^{a+1} (-1)^p
{a+1 \choose p} \sum_{\sigma\in \fS_{a+1}}
\Big( \prod_{1\leq i<j \leq a+1}
\frac{\lambda^{(k)}_{\sigma(i), \sigma(j)}+\hbar}{\lambda^{(k)}_{\sigma(j), \sigma(i)}}\cdot 
\\
&
\cdot \prod_{m\in S}\Big( \prod_{i=1}^p(\lambda^{(k)}_{\sigma(i)}-\lambda^{(l)}-m\frac{\hbar}{2}) 
\prod_{t=p+1}^{a+1} 
(\lambda^{(k)}_{\sigma(t)} -\lambda^{(l)}+m\frac{\hbar}{2})\Big)\Big).
\end{align*}

Note that the factor
\[
\prod_{m\in S'}\Big( \prod_{i=1}^p(\lambda^{(k)}_{\sigma(i)}-\lambda^{(l)}-m\frac{\hbar}{2}) 
\prod_{t=p+1}^{a+1} 
(\lambda^{(k)}_{\sigma(t)} -\lambda^{(l)}+m\frac{\hbar}{2})\Big)
=\prod_{m\in S'}\prod_{i=1}^{a+1}
(\lambda^{(k)}_{i}-\lambda^{(l)}-m\frac{\hbar}{2}),
\]  is independent of $\sigma\in \mathfrak{S}_{a+1}$, hence a common factor.  Here again $S'=\{a-2, a-4, \dots, -a+2\}$.
Let $\lambda'^{(k)}_i=\lambda^{(k)}_i-\lambda^{(l)}$. 
After canceling the above common factor, to show the Serre relation \eqref{calY2'}, it suffices to show
\begin{align}
&\sum_{p=0}^{a+1} (-1)^p
{a+1 \choose p} \sum_{\sigma\in \fS_{a+1}}
 \sigma\left( 
 \prod_{s=1}^p(\lambda^{(k)}_{s}-\frac{a\hbar}{2}) 
\prod_{t=p+1}^{a+1} 
(\lambda^{(k)}_{t}+\frac{a\hbar}{2})\prod_{1\leq i<j \leq a+1}
\frac{\lambda^{(k)}_{i, j}+
\hbar}{\lambda^{(k)}_{j, i}} \right)=0.\label{simply:serre}
\end{align}
The identity \eqref{simply:serre} is proved in Appendix, Corollary~\ref{appendix cor}. As a consequence, the map  $Y^+_{\hbar}(\fg)\to\widetilde{\calS\calH}|_{t_1=t_2=\hbar}$ respects the Serre relation \eqref{calY2}.

In Theorem~\ref{thm:sym_poly_id}, we have a more general identity of symmetric functions, which deforms the identify \eqref{simply:serre}. It might be interesting on its own rights. Therefore, we prove it in Appendix~\ref{app:sym_poly} and deduce \eqref{simply:serre} from it.

\section{Comparison with Yangian actions on quiver varieties}\label{sec:Yangian_action}

In this section, we show that the (twisted) spherical subalgebra $\calP^{\sph, \ext}$ of preprojective CoHA defined in \S~\ref{preproj CoHA} specializes to the Borel of the Yangian  when $A$ is the intersection theory.
Again  we assume $Q$ is a quiver without edge-loops, and the $T$-action has the same weights as in Remark \ref{rmk:weights}.

Now we take the oriented Borel-Moore homology theory to be the intersection theory $\CH$.
Recall that Varagnolo in \cite{Va00} constructed representations of the Yangians using quiver variety. It is proved that, for each $w\in\bbN^I$, there is an algebra homomorphism $a^Y:Y_\hbar(\fg)\to \End(\CH_{G_w\times\Gm}(\fM(w)))$.  
The action of generator $x_{k, r}^{+}$ is given by
\[
x_{k, r}^{+}\mapsto \sum_{v_2}(-1)^{(e_k, \overline C v_2)} \Delta_{*}^{+}(c_1(\calL_k))^r \in \CH_{G_w\times T} (Z(w))\to \End(\calM(w)), 
\]
where
\[
\Delta^+: C_{k}^+(v_2, w) \inj Z(v_2-e_k, v_2, w)
\] is the natural embedding of the irreducible component. 

Observe that according to the projection formula and \eqref{eq:action a_ek}, we have $a^Y(x_{k, r}^{+})=\widetilde{a}\big((z^{(k)})^l\big)\in \End(\calM(w))$. 

Assume the quiver $Q$ is $ADE$ type. For $w\in\bbN^I$, we call the action map
$Y_\hbar(\fg) \to \End(\calM(w))$ $a_w^Y$ to emphasize the dependence on $w$.
\begin{lemma}\label{lem:cap}
\footnote{We thank Sachin Gautam for explaining to us the proof of \cite[Proposition A.8]{GTL}.}
With notations as above, we have $
\bigcap_{w} \ker(a_w^Y)=0.$
\end{lemma}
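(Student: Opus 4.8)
The plan is to show that $\bigcap_w \ker(a_w^Y) = 0$ by exhibiting, for any nonzero $y \in Y_\hbar(\fg_Q)$, some dimension vector $w$ such that $a_w^Y(y) \neq 0$. The key idea is that the Yangian $Y_\hbar(\fg_Q)$ of a simply-laced Lie algebra of finite type acts faithfully on the direct sum of all its finite-dimensional highest weight representations, and these are precisely realized geometrically via Nakajima quiver varieties (with varying framing $w$). More precisely, I would first recall that $Y_\hbar(\fg_Q)$, after extending scalars to the fraction field $\C(\hbar)$, is isomorphic to a subalgebra of $\prod_V \End(V)$ as $V$ ranges over all finite-dimensional representations, because $Y_\hbar(\fg_Q)$ over $\C(\hbar)$ embeds into (a suitable completion related to) $U(\fg_Q[u])$ and the latter acts faithfully on the sum of all finite-dimensional modules. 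Care must be taken with the $\C[\hbar]$-structure: since we work over $\C[\hbar]$ and $Y_\hbar$ has no $\hbar$-torsion (being free as a $\C[\hbar]$-module, with $Y_\hbar/\hbar \cong U(\fg_Q[u])$), any nonzero $y$ remains nonzero after inverting $\hbar$.

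The second step is to match the geometric and algebraic pictures. For the $ADE$ quiver $Q$, Nakajima's theorem (combined with Varagnolo's construction, which is what $a^Y$ is) shows that $\bigoplus_v \CH_{G_w \times \Gm}(\fM(v,w))$, or at least its localization/specialization to generic $\hbar$, carries the structure of an integrable highest weight $Y_\hbar(\fg_Q)$-module with highest weight determined by $w$; moreover, taking $w$ large enough, these modules include (or their composition factors include) all the fundamental representations and their tensor products. The crucial input is that every finite-dimensional irreducible $Y_\hbar(\fg_Q)$-module appears as a subquotient of some $a_w^Y$-module. Then, if $y \in \bigcap_w \ker(a_w^Y)$, $y$ acts by zero on every such subquotient, hence on every finite-dimensional irreducible, hence (by the faithfulness of the action of $Y_\hbar(\fg_Q)$ over $\C(\hbar)$ on the sum of finite-dimensional modules) $y = 0$ after inverting $\hbar$, and therefore $y = 0$ by the torsion-freeness noted above.

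I would structure the write-up as: (i) reduce to showing $a_w^Y(y) \neq 0$ for generic $\hbar$; (ii) cite or recall the faithfulness of $Y_\hbar(\fg_Q) \otimes \C(\hbar)$ acting on $\bigoplus V$ over all finite-dimensional $V$ — this is essentially \cite[Proposition A.8]{GTL} or its analogue, which is exactly the result the footnote attributes to Gautam; (iii) invoke Nakajima's realization \cite{Nak98} to see that for $\fg_Q$ of $ADE$ type every finite-dimensional irreducible $Y_\hbar(\fg_Q)$-module (or at least enough of them to separate points of $Y_\hbar$) arises as a subquotient of $\calM(w)$ for suitable $w$; (iv) conclude. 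The main obstacle I anticipate is step (iii): one needs to be careful about whether the geometric modules $\calM(w)$, which are the ``global'' modules over all $v$, literally decompose into the finite-dimensional highest-weight modules one wants, and about the precise relationship between Varagnolo's Yangian action and the quantum-affine / Yangian module structures in the literature — one may need to pass to a localization in equivariant parameters (using the Thomason-type localization already invoked in the remark after Theorem~\ref{thm:prepro to shuffle}) and argue that $a_w^Y$ is already injective enough on the relevant weight spaces. An alternative, perhaps cleaner route for step (iii) is to use only the fundamental representations: for each $i \in I$ there is a $w$ with $\calM(w)$ containing the fundamental module $V(\varpi_i)$, and the tensor products of these, realized via $\calM(w_1 + w_2 + \cdots)$ together with the coproduct, already suffice for faithfulness by the Gautam argument; this avoids needing a full classification.
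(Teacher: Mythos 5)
Your proposal has a genuine gap, and you have in fact flagged it yourself: step~(iii). You would need to show that every finite-dimensional irreducible $Y_\hbar(\fg_Q)$-module (or enough of them to separate points of the Yangian) arises as a subquotient of some $\calM(w)$. This is not established in your sketch, and it is not obvious: the spaces $\calM(w) = \bigoplus_v\CH_{G_w\times\Gm}(\fM(v,w))$ are large modules over the equivariant parameter ring, and extracting from them a description of exactly which finite-dimensional irreducibles appear (after suitable specialization) is a nontrivial representation-theoretic input that you would need to cite or prove. Your alternative suggestion at the end (use only fundamental representations and tensor products) is closer to a workable route, but it is precisely at this point that the paper's argument diverges from yours, and more cleanly.

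The paper does not attempt to realize all finite-dimensional irreducibles geometrically. Instead it proves a structural statement about $I := \bigcap_w\ker(a_w^Y)$: using Nakajima's result in \cite{Nak12} that the kernel of the $Y_\hbar$-action on $\calM(w_1)\otimes\calM(w_2)$ (via the coproduct) is contained in $\ker(a_{w'}^Y)$ for some $w'$, one deduces that $I$ is a $\C[\hbar]$-flat \emph{Hopf} ideal. Then \cite[Proposition A.8]{GTL} is invoked for what it actually says: for $\fg_Q$ of finite Dynkin type, $Y_\hbar(\fg_Q)$ has no nontrivial $\C[\hbar]$-flat Hopf ideals. You paraphrase GTL A.8 as faithfulness of $Y_\hbar\otimes\C(\hbar)$ on the direct sum of all finite-dimensional modules; those two statements are related but not the same, and the logical gap between ``kills all finite-dimensional reps'' and ``the kernel is a Hopf ideal'' is exactly the linear-algebraic issue that the paper handles by citing Nakajima. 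So even if your step~(iii) were repaired, step~(ii) as stated would still not be the right input: you would need the Hopf-ideal form of the GTL result, which is the form actually used. In short, the paper's route (Hopf ideal via tensor-product compatibility, then GTL A.8) avoids the classification of geometric modules that your proposal depends on and correctly matches the hypotheses of the cited proposition.
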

\begin{proof}
In \cite{Nak12}, Nakajima proved that 
for any $w_1,w_2\in\bbN^I$, the kernel of the map  $Y_\hbar(\fg)\to \End(\calM(w_1)\otimes\calM(w_2))$ is contained in the kernel of  $a^Y_{w'}$ for some $w'\in\bbN^I$. Therefore, the ideal $\bigcap_{w} \ker(a_w^Y)$ is a $\bbQ[\hbar]$-flat Hopf ideal in $Y_{\hbar}(\fg)$. 
By \cite[Proposition A.8]{GTL}, if $Q$ is of finite Dynkin type, there is no non-trivial such ideal in $Y_{\hbar}(\fg)$. Therefore, $\bigcap_{w} \ker(a_w^Y)=0$. 
\end{proof}

The following is a direct consequence of Lemma \ref{lem:cap}.
\begin{corollary}\label{cor:yang_compatible}
Assume $Q$ is a quiver of $ADE$ type. 
The assignment $(z^{(k)})^l\mapsto x_{k, l}^+$, $t_1,t_2\mapsto\hbar/2$ extends to a well-defined surjective algebra homomorphism
$\Upsilon: \null^{\CH}\widetilde{\calP}^{\sph}\surj Y^+_\hbar(\fg)$. 
Moreover, the following diagram commutes
\[\xymatrix@R=1.5em{
^{\CH}{\widetilde{\calP}^{\sph}}
\ar[r]^(.3){\widetilde{a}}\ar[d]_{\Upsilon}&\End(\CH_{G_w\times\Gm}(\fM(w)))\\
Y^+_\hbar(\fg)\ar@{^{(}->}[r]&Y_\hbar(\fg)\ar[u]_{a^Y}.
}\]
\end{corollary}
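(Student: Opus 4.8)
The plan is to derive both statements from the map $\Theta=\Upsilon_{\calSH}$ of Theorem \ref{thm:Yangian to sh} together with Lemma \ref{lem:cap}. First I would recall the setup: by Theorem \ref{thm:Yangian to sh} (applied with the additive formal group law, which is the one attached to $\CH$) there is an algebra homomorphism $Y^+_\hbar(\fg)\to \widetilde{\calSH}|_{t_1=t_2=\hbar}$ sending $x^+_{k,r}\mapsto (\lambda^{(k)})^r$. On the other hand, by Theorem \ref{thm:prepro to shuffle} and Lemma \ref{lem:twistedCoHA}(1), there is an algebra homomorphism $\Theta:\null^{\CH}\widetilde{\calP}\to\widetilde{\calSH}$, which restricts on the spherical subalgebra to a \emph{surjection} $\null^{\CH}\widetilde{\calP}^{\sph}\twoheadrightarrow\widetilde{\calSH}^{\sph}$ sending $(z^{(k)})^l\in\calP_{e_k}$ to $(\lambda^{(k)})^l\in\calSH_{e_k}$ (the two sides being canonically identified on the $e_k$-graded piece). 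So at the level of generators the assignment $(z^{(k)})^l\mapsto x^+_{k,l}$ is exactly the formula needed to make a map $\null^{\CH}\widetilde{\calP}^{\sph}\to Y^+_\hbar(\fg)$, provided we know that every relation in $\null^{\CH}\widetilde{\calP}^{\sph}$ among the $(z^{(k)})^l$ already holds among the $x^+_{k,l}$ in $Y^+_\hbar(\fg)$.

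The second step is therefore to show the relations match, i.e. that the kernel of $\Theta^{\sph}:\null^{\CH}\widetilde{\calP}^{\sph}\twoheadrightarrow\widetilde{\calSH}^{\sph}$ is contained in the kernel of the composite $\null^{\CH}\widetilde{\calP}^{\sph}\to Y^+_\hbar(\fg)$ that one \emph{wants} to build — which is circular unless we go through the action on quiver varieties. The clean route: consider the map $\widetilde a:\null^{\CH}\widetilde{\calP}^{\sph}\to\prod_w\End(\calM(w))$ (Theorem \ref{thm:prep action}, twisted as in Lemma \ref{lem:twistedCoHA}(2)). By the Remark after Theorem \ref{thm: hecke corr} and formula \eqref{eq:action a_ek}, $\widetilde a((z^{(k)})^l)$ acts on $\calM(w)$ by $\sum_v(-1)^{(e_k,\overline Cv)}(c_1(\calL_k))^l\star$, which is \emph{exactly} Varagnolo's operator $a^Y_w(x^+_{k,l})$. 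Since $Y^+_\hbar(\fg)$ is generated by the $x^+_{k,l}$ and $\null^{\CH}\widetilde{\calP}^{\sph}$ by the $(z^{(k)})^l$, this identity of generators propagates to all of $Y^+_\hbar(\fg)$: for every word $P$ in the generators, $\widetilde a_w\big(P((z^{(k)})^l)\big)=a^Y_w\big(P(x^+_{k,l})\big)$ in $\End(\calM(w))$, for all $w$. Now if $P((z^{(k)})^l)=0$ in $\null^{\CH}\widetilde{\calP}^{\sph}$, then $a^Y_w\big(P(x^+_{k,l})\big)=0$ for all $w$, hence $P(x^+_{k,l})\in\bigcap_w\ker(a^Y_w)=0$ by Lemma \ref{lem:cap}. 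This shows the assignment $(z^{(k)})^l\mapsto x^+_{k,l}$ is well defined as an algebra homomorphism $\Upsilon:\null^{\CH}\widetilde{\calP}^{\sph}\to Y^+_\hbar(\fg)$; it is surjective because its image contains all generators of $Y^+_\hbar(\fg)$.

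Finally, commutativity of the square. Chasing a generator $(z^{(k)})^l$: going down-then-across gives $a^Y\circ\iota\circ\Upsilon\big((z^{(k)})^l\big)=a^Y(x^+_{k,l})=\sum_v(-1)^{(e_k,\overline Cv)}\Delta^+_*(c_1(\calL_k))^l$; going across-then-up gives $\widetilde a\big((z^{(k)})^l\big)$, which by \eqref{eq:action a_ek} and the projection formula is the same operator on $\calM(w)=\bigoplus_v\CH_{G_w\times\Gm}(\fM(v,w))$. Since both composites are algebra homomorphisms out of $\null^{\CH}\widetilde{\calP}^{\sph}$ agreeing on a generating set, they coincide, proving the diagram commutes. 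The only genuinely hard input is Lemma \ref{lem:cap} — equivalently the fact (via Nakajima \cite{Nak12} and \cite[Prop.~A.8]{GTL}) that $Y_\hbar(\fg)$ has no nontrivial flat Hopf ideal in finite type — which is what forces the well-definedness of $\Upsilon$; everything else is bookkeeping with the already-established geometric identifications. One should also note the specialization $t_1,t_2\mapsto\hbar/2$: this is built into the definition of $\null^{\CH}\widetilde{\calP}^{\sph}$ used here (Assumption \ref{Assump:Weights}(2), Remark \ref{rmk:weights}), and is compatible with the $t_1=t_2=\hbar$ appearing in Theorem \ref{thm:Yangian to sh} after the rescaling $\hbar\mapsto\hbar/2$, so no separate argument is needed.
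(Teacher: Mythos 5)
Your proposal is correct and follows the same route the paper (implicitly) uses: the paper merely states that the corollary is "a direct consequence of Lemma \ref{lem:cap}," the real content being the observation recorded just before Lemma \ref{lem:cap} that $\widetilde a_w\big((z^{(k)})^l\big)=a^Y_w(x^+_{k,l})$ for each $w$, after which well-definedness of $\Upsilon$ follows exactly as you argue, by mapping a relation to $\prod_w\End(\calM(w))$ and invoking $\bigcap_w\ker(a^Y_w)=0$. The opening detour through the shuffle algebra is superfluous (you correctly identify it as circular before discarding it), and you should be aware that the $t_1=t_2=\hbar$ in the statement of Theorem \ref{thm:Yangian to sh} is an inconsistency in the paper with the $\hbar/2$ normalization used in Assumption \ref{Assump:Weights}(2) and in the present corollary, rather than a genuine rescaling you need to perform; otherwise the argument is exactly the intended one.
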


For an $ADE$ type quiver $Q$,  summarizing Lemma~\ref{lem:twistedCoHA}, Theorem~\ref{thm:Yangian to sh}, and Corollary~\ref{cor:yang_compatible}, we have a commutative diagram of algebras
\[
\xymatrix@R=1.5em{
^{\CH}\widetilde{\calP}^{\sph}\ar@{->>}[d]^{\Upsilon} \ar[r] &\widetilde{\calS\calH}|_{t_1=t_2=\hbar/2}\\
Y^+_\hbar(\fg)\ar[ur]&
}
\]

For any $x\in ^{\CH}{\widetilde{\calP}^{\sph}}|_{t_1=t_2=\hbar/2}$ such that $\Upsilon(x)=0$, then $x$ lies in the kernel of the map $^{\CH}{\widetilde{\calP}^{\sph}}|_{t_1=t_2=\hbar/2} \to \widetilde{\calS\calH}^{\sph}|_{t_1=t_2=\hbar/2}$. 
We know this map is an isomorphism after localization in the sense of Remark~\ref{rmk:torsion}. Therefore, $x$ is a torsion element in $^{\CH}{\widetilde{\calP}^{\sph}}|_{t_1=t_2=\hbar/2}$.

Define $^{\CH}\underline{\widetilde{\calP}^{\sph}}$ to be the quotient of $^{\CH}\widetilde{\calP}^{\sph}|_{t_1=t_2=\hbar/2}$ by the torsion part in the same sense as in Remark~\ref{rmk:torsion}. 
\begin{theorem}\label{thm:Yangian}
Assume $Q$ is a quiver of $ADE$ type. 
We have the following isomorphism
\[
\Upsilon^{-1}: Y^+_\hbar(\fg)\cong \null^{\CH}\underline{\widetilde{\calP}^{\sph}},
\]
such that the diagram
\[
\xymatrix@R=1.5em{Y^+_\hbar(\fg)\ar[d]_{\Upsilon^{-1}}\ar@{^{(}->}[r]&Y_\hbar(\fg)\ar[d]_{a^Y}\\
^{\CH}\underline{\widetilde{\calP}^{\sph}}
\ar[r]^(.3){\widetilde{a}}&\End(\CH_{G_w\times\Gm}(\fM(w)))
} \]
commutes. Here the action $a^Y$ is given by Varagnolo in \cite{Va00}. 
\end{theorem}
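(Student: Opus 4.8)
The plan is to deduce Theorem~\ref{thm:Yangian} by chasing the commutative diagram of algebras assembled just before the statement, together with the surjectivity already established in Corollary~\ref{cor:yang_compatible}. Recall we have the surjection $\Upsilon:\null^{\CH}\widetilde{\calP}^{\sph}\surj Y^+_\hbar(\fg)$ and the algebra map $\Theta:\null^{\CH}\widetilde{\calP}^{\sph}\to\widetilde{\calS\calH}^{\sph}$ (a restriction of Theorem~\ref{thm:prepro to shuffle}), both specialized at $t_1=t_2=\hbar/2$, and a map $Y^+_\hbar(\fg)\to\widetilde{\calS\calH}|_{t_1=t_2=\hbar/2}$ from Theorem~\ref{thm:Yangian to sh}, with the triangle commuting. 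The first step is to check that $\Upsilon$ descends to the quotient $\null^{\CH}\underline{\widetilde{\calP}^{\sph}}$: since $Y^+_\hbar(\fg)$ is $\bbC[\hbar]$-free (in particular $\hbar$-torsion-free), $\Upsilon$ annihilates the $\hbar$-torsion ideal, so we get an induced surjection $\underline\Upsilon:\null^{\CH}\underline{\widetilde{\calP}^{\sph}}\surj Y^+_\hbar(\fg)$.

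The core of the argument is injectivity of $\underline\Upsilon$. As already noted in the paragraph preceding the theorem, $\Theta$ becomes an isomorphism after inverting $\hbar$ (Thomason localization plus the single $T$-fixed point of $\mu_v^{-1}(0)$), so its kernel on $\null^{\CH}\widetilde{\calP}^{\sph}|_{t_1=t_2=\hbar/2}$ is exactly the $\hbar$-torsion. Hence $\Theta$ induces an \emph{injection} $\underline\Theta:\null^{\CH}\underline{\widetilde{\calP}^{\sph}}\inj\widetilde{\calS\calH}^{\sph}|_{t_1=t_2=\hbar/2}$. Now suppose $\underline\Upsilon(x)=0$. By commutativity of the triangle, the image of $x$ under $\underline\Theta$ equals the image of $\underline\Upsilon(x)=0$ under $Y^+_\hbar(\fg)\to\widetilde{\calS\calH}|_{t_1=t_2=\hbar/2}$, hence is $0$; since $\underline\Theta$ is injective, $x=0$. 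Therefore $\underline\Upsilon$ is an isomorphism, and we set $\Upsilon^{-1}:=\underline\Upsilon^{-1}$.

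Finally, the commutativity of the square relating $\Upsilon^{-1}$, the action $\widetilde a$ of the preprojective CoHA, the inclusion $Y^+_\hbar(\fg)\inj Y_\hbar(\fg)$, and the Varagnolo action $a^Y$ is precisely the content of the commutative square in Corollary~\ref{cor:yang_compatible}, transported through the isomorphism $\Upsilon^{-1}$ and through the quotient map $\null^{\CH}\widetilde{\calP}^{\sph}|_{t_1=t_2=\hbar/2}\surj\null^{\CH}\underline{\widetilde{\calP}^{\sph}}$ (legitimate because the $\hbar$-torsion in $\null^{\CH}\widetilde{\calP}^{\sph}|_{t_1=t_2=\hbar/2}$ acts by zero on $\CH_{G_w\times\Gm}(\fM(w))$, as that module is $\hbar$-torsion-free by the localization theorem, or alternatively because $a^Y$ factors through $Y^+_\hbar(\fg)$ which is $\hbar$-torsion-free). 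The observation that $a^Y(x^+_{k,r})=\widetilde a((z^{(k)})^r)$ recorded after Varagnolo's formula is what makes the square commute on generators, and both composites are algebra homomorphisms, so they agree everywhere.

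The main obstacle is really packaged into two earlier results on which everything rests: first, that $\Theta$ localizes to an isomorphism, whose kernel is therefore the full $\hbar$-torsion subgroup --- one must make sure no non-torsion element is killed, which is exactly why the localization statement (not merely "$\Theta$ is surjective after localization") is needed; and second, the faithfulness input behind Corollary~\ref{cor:yang_compatible}, namely $\bigcap_w\ker(a^Y_w)=0$ (Lemma~\ref{lem:cap}), which is what allowed $\Upsilon$ to be well-defined in the Dynkin case in the first place. Given those, the present theorem is a formal diagram chase; the only genuinely new thing to verify is the elementary descent of $\Upsilon$ and $\Theta$ to the $\hbar$-torsion-free quotient and the consequent injectivity.
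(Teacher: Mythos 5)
Your proof is correct and follows essentially the same route the paper takes: the paper's argument is contained in the paragraphs immediately preceding the theorem statement, which use the commutative triangle $\null^{\CH}\widetilde{\calP}^{\sph}\surj Y^+_\hbar(\fg)\to\widetilde{\calS\calH}|_{t_1=t_2=\hbar/2}$ together with the fact that the localization of $\Theta$ is an isomorphism to conclude that $\ker\Upsilon$ consists of $\hbar$-torsion, and then passes to the torsion-free quotient. Your write-up is slightly more explicit about the two inclusions $\ker\Upsilon\subseteq\hbar\text{-torsion}$ (via the triangle and injectivity of $\underline\Theta$) and $\hbar\text{-torsion}\subseteq\ker\Upsilon$ (via torsion-freeness of $Y^+_\hbar(\fg)$), and about why the action $\widetilde a$ descends to the quotient, but these are the same ideas the paper relies on.
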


\appendix
\section{A symmetric polynomial identity}\label{app:sym_poly}
Let $\mathbb{S}(n, b, \hbar)\in\bbQ[b,\hbar](\lambda_1,\dots,\lambda_n)^{\fS_n}$ be 
\[
\mathbb{S}(n, b, \hbar):=
\sum_{\sigma\in \fS_{n}}
\sigma\cdot
\Bigg(
\sum_{p=0}^{n}(-1)^p{n \choose p}
\prod_{i=1}^p(\lambda_i-b\hbar)
\prod_{j=p+1}^{n}(\lambda_j+b\hbar)
\prod_{1\leq i<j \leq n}\frac{\lambda_{ij}+\hbar}{\lambda_{ji}}\Bigg).
\]
Here $\lambda_{i,j}=\lambda_i-\lambda_j$ for any $i,j$.

\begin{theorem}\label{thm:sym_poly_id}
\begin{enumerate}
\item The element $\mathbb{S}(n, b, \hbar)$ lies in $\Q[b, \hbar]$. In other words, 
  $\mathbb{S}(n, b, \hbar)$ does not depend on
  the variables $\{ \lambda_i, i=1, \dots, n\}$. 
  \item We have $\mathbb{S}(1, b, \hbar)=2\hbar b$, and the recursive formula of $\mathbb{S}(n, b, \hbar)$
  \[
  \mathbb{S}(n, b, \hbar)=
\mathbb{S}(n-1, b, \hbar)\cdot\big((-1)^{n+1} 2\hbar n(b -\frac{n-1}{2})\big).
  \]
  \end{enumerate}
\end{theorem}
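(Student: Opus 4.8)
The plan is to prove both parts simultaneously by induction on $n$, extracting the recursion along the way. First I would treat part (1) for fixed $n$: the expression $\mathbb{S}(n,b,\hbar)$ is a priori a rational function in $\lambda_1,\dots,\lambda_n$ with possible poles along the hyperplanes $\lambda_i=\lambda_j$ coming from the denominators $\lambda_{ji}$ in $\prod_{i<j}\frac{\lambda_{ij}+\hbar}{\lambda_{ji}}$. To see there are no poles, fix a pair $i<j$ and group the terms of the symmetrization into pairs related by the transposition $(i\,j)$; one checks that the residue along $\lambda_i=\lambda_j$ of the sum of each such pair vanishes, because the factor $\lambda_{ij}+\hbar$ in one summand cancels against $\lambda_{ji}+\hbar$ in the partner after the swap, and the combinatorial prefactors $\prod_i(\lambda_i-b\hbar)\prod_j(\lambda_j+b\hbar)$ with the binomial coefficients match up. Hence $\mathbb{S}(n,b,\hbar)$ is a genuine symmetric \emph{polynomial} in the $\lambda_i$. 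Then a degree count: each summand has degree $n$ in the numerator factors and degree $\binom{n}{2}$ in both numerator and denominator from the shuffle factor, so the total degree in the $\lambda$'s is $n$; but an $\fS_n$-antisymmetrization-type cancellation (the sum over $p$ with alternating binomials kills the top-degree part) forces the degree to drop to $0$. Making this degree-drop precise is the crux of part (1), and I expect it to be the main obstacle: one wants to argue that the coefficient of every monomial of positive degree in $\lambda_1,\dots,\lambda_n$ vanishes, which I would do by specializing/ differentiating and reducing to a lower $n$, or by a generating-function manipulation writing $\prod_{i<j}\frac{\lambda_{ij}+\hbar}{\lambda_{ji}}$ via a Cauchy-type determinant identity.

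For part (2), the base case $\mathbb{S}(1,b,\hbar)$ is immediate: the sum over $\fS_1$ is trivial and $p$ ranges over $\{0,1\}$, giving $(\lambda_1+b\hbar)-(\lambda_1-b\hbar)=2\hbar b$. For the recursion, since by part (1) the answer is independent of the $\lambda_i$, I am free to evaluate $\mathbb{S}(n,b,\hbar)$ at any convenient point, or better, to extract the constant term by a residue. The cleanest route: use the independence of $\lambda_n$ and send $\lambda_n\to\infty$ (or take a suitable iterated residue at $\lambda_n=\lambda_i+\hbar$), which collapses the sum over $\fS_n$ to a sum over the coset $\fS_{n-1}$ times the position of $n$, and collapses the sum over $p$ accordingly. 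After bookkeeping the factors $\prod_{i<n}\frac{\lambda_{in}+\hbar}{\lambda_{ni}}$ and the $\pm(\lambda_n\mp b\hbar)$ contributions, one obtains $\mathbb{S}(n-1,b',\hbar)$ for a shifted parameter $b'$ times an explicit scalar; matching the shift and scalar against the claimed $(-1)^{n+1}2\hbar n\bigl(b-\tfrac{n-1}{2}\bigr)$ is then a finite computation in $\bbQ[b,\hbar]$.

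Finally, Corollary~\ref{appendix cor} (the identity \eqref{simply:serre}) should follow by unwinding the recursion: iterating part (2) gives $\mathbb{S}(n,b,\hbar)=\prod_{m=1}^{n}(-1)^{m+1}2\hbar m(b-\tfrac{m-1}{2})$, and specializing $b=\tfrac{a}{2}$, $n=a+1$ makes the factor with $m=a+1$ equal to $2\hbar(a+1)\bigl(\tfrac{a}{2}-\tfrac{a}{2}\bigr)=0$, so $\mathbb{S}(a+1,\tfrac a2,\hbar)=0$, which is exactly the Serre identity \eqref{simply:serre} after the substitution $\lambda_i'^{(k)}=\lambda_i^{(k)}-\lambda^{(l)}$. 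I would write up part (1) first, then the base case and recursion of part (2), then deduce the closed product formula and the corollary. The expected difficulty is concentrated entirely in the vanishing-of-positive-degree-part argument in (1); everything downstream is bookkeeping.
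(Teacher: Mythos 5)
Your plan differs structurally from the paper's, and it has a real gap exactly where you flag it.

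The paper does not prove (1) in isolation and then deduce (2); it proves both by a single induction on $n$ in one stroke. Applying Pascal's rule $\binom{n}{p}=\binom{n-1}{p}+\binom{n-1}{p-1}$ to the binomial coefficient and regrouping the $\fS_n$-sum as $\Sh_{([1,n-1],n)}\cdot\fS_{n-1}$ (resp.\ $\Sh_{(1,[2,n])}\cdot\fS_{n-1}$), the whole expression becomes
$\mathbb{S}(n-1,b,\hbar)$ times a sum of $2n$ rational terms; the inductive hypothesis that $\mathbb{S}(n-1,b,\hbar)$ is $\lambda$-free is what lets it be pulled out of the shuffle sum. That remaining factor, $\sum_{j}(\lambda_j+b\hbar)\prod_{i\ne j}\frac{\lambda_{ij}+\hbar}{\lambda_{ji}} - \sum_{j}(\lambda_j-b\hbar)\prod_{i\ne j}\frac{\lambda_{ji}+\hbar}{\lambda_{ij}}$, is then evaluated by the residue theorem: each sum is $-\Res_{t=\infty}$ of an auxiliary function $F(t,b,\hbar)$ with simple poles at $t=\lambda_j$. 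Each residue at infinity does contain a $\sum_i\lambda_i$ term, but those cancel between the two sums, so the $\lambda$-independence of $\mathbb{S}(n,b,\hbar)$ \emph{falls out} of the computation of the factor, establishing (1) and (2) for $n$ simultaneously. There is no separate degree argument anywhere.

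Your route decouples (1) and (2), and you correctly identify the bottleneck yourself: you can get polynomiality from $\fS_n$-symmetry and the fact that all poles $\lambda_{ji}$ are simple, but you then need the degree-$n$ symmetric polynomial to collapse to degree $0$, and you offer no proof, only a vague menu ("specializing/differentiating and reducing to a lower $n$, or a Cauchy-type determinant identity"). This is a genuine gap, not just a missing bookkeeping step: the alternating-binomial cancellation kills the top-degree term (indeed $\sum_p(-1)^p\binom{n}{p}=0$), but killing \emph{every} positive-degree coefficient is a much stronger statement, and the interaction with the non-symmetric shuffle factor $\prod\frac{\lambda_{ij}+\hbar}{\lambda_{ji}}$ is exactly what makes it nontrivial. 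You should also note that your proposed $\lambda_n\to\infty$ limit in part (2) is delicate: each individual summand diverges like $\lambda_n$, and the finiteness of the limit again relies on the same alternating cancellation — so you would end up re-proving the hard part of (1) inside (2). The paper's Pascal-plus-residue induction avoids this circularity; if you want to pursue your route, the cleanest fix is probably to abandon the standalone degree argument and adopt the paper's regrouping, which converts the degree claim into an explicit residue-at-infinity computation.
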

As a direct consequence, we get the following.
\begin{corollary}\label{appendix cor}
When $b=\frac{n-1}{2}$, we have $\mathbb{S}(n, \frac{n-1}{2}, \hbar)=0$.
In particular, the identity \eqref{simply:serre} holds.
\end{corollary}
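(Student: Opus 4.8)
The plan is to prove the closed formula
$\mathbb{S}(n,b,\hbar)=(-1)^{\binom{n}{2}}\,n!\,\hbar^{n}\prod_{k=0}^{n-1}(2b-k)$,
from which part (1) is immediate (the right‑hand side lies in $\mathbb{Q}[b,\hbar]$), part (2) follows by setting $n=1$ and forming ratios of consecutive values, and Corollary~\ref{appendix cor} follows because the factor $k=n-1$ vanishes when $b=\frac{n-1}{2}$.

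First I would reorganize $\mathbb{S}$. Write $g_b(\lambda_1,\dots,\lambda_n):=\sum_{p=0}^{n}(-1)^p\binom{n}{p}\prod_{i=1}^{p}(\lambda_i-b\hbar)\prod_{j=p+1}^{n}(\lambda_j+b\hbar)$, a multilinear polynomial, and note $\prod_{i<j}\lambda_{ji}=\prod_{i<j}(\lambda_j-\lambda_i)=:\Delta(\lambda)$ is the Vandermonde determinant, with $\sigma(\Delta)=\mathrm{sgn}(\sigma)\,\Delta$. Then $\mathbb{S}(n,b,\hbar)=\Delta(\lambda)^{-1}\sum_{\sigma\in\fS_n}\mathrm{sgn}(\sigma)\,\sigma\!\big(g_b(\lambda)\prod_{i<j}(\lambda_i-\lambda_j+\hbar)\big)$. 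The numerator is an alternating polynomial, hence divisible by $\Delta(\lambda)$, so $\mathbb{S}$ is a polynomial in $\lambda,b,\hbar$; counting degrees (since $g_b$ is multilinear, $g_b\prod_{i<j}(\lambda_i-\lambda_j+\hbar)$ has degree $\le n$ in each $\lambda_i$, and $\Delta$ has degree $n-1$ in each $\lambda_i$) shows $\mathbb{S}$ has degree $\le 1$ in each $\lambda_i$. Being symmetric as well, $\mathbb{S}=\sum_{k=0}^{n}c_k(b,\hbar)\,e_k(\lambda_1,\dots,\lambda_n)$ with $c_k\in\mathbb{Q}[b,\hbar]$ and $e_k$ the elementary symmetric polynomials.

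Next I would evaluate $\mathbb{S}$ along the affine line $\lambda_i=c+(i-1)\hbar$, $i=1,\dots,n$. For $\sigma\in\fS_n$ the factor $\prod_{i<j}(\lambda_{\sigma(i)}-\lambda_{\sigma(j)}+\hbar)$ equals $\prod_{i<j}(\sigma(i)-\sigma(j)+1)\hbar$, which has a zero factor unless $\sigma$ is the longest element $w_0$ (the unique permutation with no pair $i<j$ satisfying $\sigma(j)=\sigma(i)+1$); using that $\mathbb{S}$ is a genuine polynomial, the specialization is unambiguous and only the $\sigma=w_0$ term survives, so $\mathbb{S}=g_b(\lambda_n,\dots,\lambda_1)\,K(\lambda_n,\dots,\lambda_1)$ evaluated at $\lambda_i=c+(i-1)\hbar$, where $K=\prod_{i<j}\frac{\lambda_i-\lambda_j+\hbar}{\lambda_j-\lambda_i}$. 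Here the arguments of $g_b$ form an arithmetic progression $e,e-\hbar,\dots,e-(n-1)\hbar$ with $e=c+(n-1)\hbar$, and the $p$‑sum defining $g_b$ becomes a Chu--Vandermonde sum of rising/falling factorials, collapsing to $g_b=\hbar^{n}\prod_{k=0}^{n-1}(2b-k)$, independently of $e$; a direct product computation using $\prod_{i<j}\frac{j-i+1}{j-i}=n!$ gives $K=(-1)^{\binom{n}{2}}n!$. Hence $\mathbb{S}(c,c+\hbar,\dots,c+(n-1)\hbar)=(-1)^{\binom{n}{2}}n!\,\hbar^{n}\prod_{k=0}^{n-1}(2b-k)$ for every $c$.

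Finally, since $e_k(c,c+\hbar,\dots,c+(n-1)\hbar)$ is a polynomial in $c$ of degree exactly $k$ with leading coefficient $\binom{n}{k}$, the fact that $\sum_{k=0}^{n}c_k\,e_k(c,c+\hbar,\dots,c+(n-1)\hbar)$ is independent of $c$ yields a triangular system on the coefficients of $c^n,c^{n-1},\dots,c^1$ that forces $c_n=c_{n-1}=\dots=c_1=0$; thus $\mathbb{S}=c_0$ equals the claimed constant, which proves (1), and (2) together with Corollary~\ref{appendix cor} follow by the elementary manipulations noted above. The main obstacle is the Chu--Vandermonde evaluation of $g_b$ on an arithmetic progression, together with the sign and factorial bookkeeping in the $w_0$‑term of $K$; the other point requiring care is the rigorous claim that only $w_0$ survives the specialization.
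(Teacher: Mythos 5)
Your proposal is correct, and it follows a genuinely different route from the paper's proof of Theorem~\ref{thm:sym_poly_id} (from which Corollary~\ref{appendix cor} is deduced in both cases by the same one-line specialization $b=\tfrac{n-1}{2}$).

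The paper proceeds by \emph{recursion in $n$}: Pascal's rule $\binom{n}{k}=\binom{n-1}{k}+\binom{n-1}{k-1}$ splits the binomial sum into two shuffle sums, the inductive hypothesis that $\mathbb{S}(n-1,b,\hbar)$ is a constant lets one factor it out, and the remaining symmetric factor $\sum_j(\lambda_j\pm b\hbar)\prod_{i\neq j}\frac{\lambda_{ij}+\hbar}{\lambda_{ji}}$ is evaluated via the residue theorem (sum of residues of $F(t)=\frac{1}{\hbar}(t+b\hbar)\prod_i\frac{\lambda_i-t+\hbar}{t-\lambda_i}$ equals $-\Res_{t=\infty}$). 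You instead prove the \emph{closed formula} $\mathbb{S}(n,b,\hbar)=(-1)^{\binom{n}{2}}n!\,\hbar^n\prod_{k=0}^{n-1}(2b-k)$ directly: (i) writing $\mathbb{S}\cdot\Delta$ as an alternating polynomial of degree $\le n$ in each $\lambda_i$ shows $\mathbb{S}$ is a polynomial of degree $\le 1$ in each variable, hence $\mathbb{S}=\sum_k c_k(b,\hbar)e_k(\lambda)$; (ii) specializing to the arithmetic progression $\lambda_i=c+(i-1)\hbar$ kills every $\sigma$-term except $\sigma=w_0$ (the unique permutation with no pair $i<j$, $\sigma(j)=\sigma(i)+1$), and Chu--Vandermonde collapses $g_b$ on that progression to $\hbar^n\prod_k(2b-k)$, independent of $c$, while the Vandermonde-type factor gives $(-1)^{\binom{n}{2}}n!$; (iii) independence in $c$ together with $\deg_c e_k(c,c+\hbar,\dots)=k$ forces $c_k=0$ for $k\ge1$. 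Your route gives the constant explicitly in one pass rather than as a product over the recursion, trades the residue computation for a binomial-coefficient identity, and is arguably more elementary; the paper's recursion is shorter to state and makes part (2) of Theorem~\ref{thm:sym_poly_id} immediate, but leaves the closed form implicit. Both are rigorous proofs of the same identity, and I verified your closed formula is consistent with the paper's base case $\mathbb{S}(1,b,\hbar)=2\hbar b$ and recursion $\mathbb{S}(n)=\mathbb{S}(n-1)\cdot(-1)^{n+1}2\hbar n(b-\tfrac{n-1}{2})$.
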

\begin{proof}[Proof of Theorem \ref{thm:sym_poly_id}]
Using the equality: 
\[
 {n \choose k}={ n-1 \choose k}+ {n-1 \choose k-1},\,\  \text{for $1\leq k\leq n-1.$ }
 \]
 We have
 \begin{align}
 \mathbb{S}(n, b, \hbar)
 =&\sum_{\sigma\in \fS_{n}}
\sigma\cdot \Bigg(\sum_{p=0}^{n-1} (-1)^p {n-1 \choose p}\prod_{i=1}^{p}
 (\lambda_i-b\hbar)
\prod_{j=p+1}^{n-1}(\lambda_j+b\hbar)(\lambda_n+b\hbar)
\prod_{1\leq i<j \leq n}\frac{\lambda_{ij}+\hbar}{\lambda_{ji}}\Bigg)
\notag \\
&- \sum_{\sigma\in \fS_{n}}
\sigma\cdot \Bigg((\lambda_1-b\hbar)\sum_{p=0}^{n-1} (-1)^p {n-1 \choose p}\prod_{i=2}^{p+1}
 (\lambda_i-b\hbar)
\prod_{j=p+2}^{n}(\lambda_j+b\hbar)
\prod_{1\leq i<j \leq n}\frac{\lambda_{ij}+\hbar}{\lambda_{ji}}\Bigg) \notag \\
=&\sum_{\sigma\in \Sh_{([1, n-1], n)}}
\sigma\cdot \Big( \mathbb{S}(n-1, b, \hbar)
(\lambda_n+b\hbar)
\prod_{1\leq i\leq  n-1}\frac{\lambda_{in}+\hbar}{\lambda_{ni}}\Big)
\notag \\
&-\sum_{\sigma\in \Sh_{(1, [2, n])}}
\sigma\cdot \Big( \mathbb{S}(n-1, b, \hbar)
(\lambda_1-b\hbar)
\prod_{2\leq j\leq  n}\frac{\lambda_{1j}+\hbar}{\lambda_{j1}}\Big)
\notag \\
=&\mathbb{S}(n-1, b, \hbar)
\left(
\sum_{j=1}^n
\Big(
(\lambda_j+b\hbar)
\prod_{\{i: 1\leq i\leq  n, i\neq j\}}\frac{\lambda_{ij}+\hbar}{\lambda_{ji}}\Big)
-\sum_{j=1}^n\Big(
(\lambda_j-b\hbar)
\prod_{\{i: 1\leq i\leq  n, i\neq j\}}\frac{\lambda_{ji}+\hbar}{\lambda_{ij}}\Big)\right).\label{line for S}
 \end{align}
Here the last equality follows from the induction hypothesis that 
 $\mathbb{S}(n-1, b, \hbar)$ only depends on $n-1, b$ and $\hbar$.
 
It remains to compute the right hand side of \eqref{line for S}.  
Define $
 F(t, b, \hbar):=\frac{1}{\hbar}(t+b\hbar) \prod_{i=1}^n
 \frac{\lambda_i-t+\hbar}{t-\lambda_i}$. 
 The function $ F(t, b, \hbar)$ has only simple poles at $t=\lambda_j$, for $j=1, \dots, n$, and
 \[
 \sum_{j=1}^n \Res_{t=\lambda_j} F(t, b, \hbar)
 =\sum_{j=1}^n
\Big(
(\lambda_j+b\hbar)
\prod_{\{i: 1\leq i\leq  n, i\neq j\}}\frac{\lambda_{ij}+\hbar}{\lambda_{ji}}\Big).
 \]
By the residue theorem, $\sum_{j=1}^n \Res_{t=\lambda_j} F(t, b, \hbar)$ is equal to
\begin{align*}
-\Res_{t=\infty} F(t, b, \hbar)
=&-\Res_{t=\infty}\frac{1}{\hbar}(t+b\hbar)\prod_{i=1}^n(-1+\frac{\hbar}{t-\lambda_i})
\Omit{=&-\Res_{t=\infty}
\frac{1}{\hbar}(t+b\hbar)
\prod_{i=1}^n
(-1+ \hbar\sum_{m\geq 1} \frac{\lambda_i^{m-1}}{t^m})\\}
=-(-1)^n \Big(nb\hbar +
\sum_{i=1}^n \lambda_i -\hbar {n\choose 2}\Big).
\end{align*}
Therefore
\begin{equation}\label{equ:sum1}
\sum_{j=1}^n
\Big(
(\lambda_j+b\hbar)
\prod_{\{i: 1\leq i\leq  n, i\neq j\}}\frac{\lambda_{ij}+\hbar}{\lambda_{ji}}\Big)
=-(-1)^n \Big(nb\hbar +
\sum_{i=1}^n \lambda_i -\hbar {n\choose 2}\Big).
\end{equation}
Similarly, we also have
\Omit{set:
 \[
 G(t, b, \hbar):=
 \frac{-1}{\hbar}(t+b\hbar) 
 \prod_{i=1}^n
 \frac{t-\lambda_i+\hbar}{\lambda_i-t}.
 \]
 The function $ G(t, b, \hbar)$ has only simple poles at 
 $t=\lambda_j$, for $j=1, \dots, n$. 
 And the sum of the residues at those simple poles is:
 \[
 \sum_{j=1}^n \Res_{t=\lambda_j} G(t, b, \hbar)
 =\Bigg(
(\lambda_j-b\hbar)
\prod_{\{i: 1\leq i\leq  n, i\neq j\}}
\frac{\lambda_{ji}+\hbar}{\lambda_{ij}}\Bigg)
 \]
By the residue theorem, it is the same as
\begin{align*}
&-\Res_{t=\infty} G(t, b, \hbar)\\
=&-\Res_{t=\infty}
\frac{-1}{\hbar}
(t-b\hbar)\prod_{i=1}^n
(-1-\frac{\hbar}{t-\lambda_i})\\
=&-\Res_{t=\infty}
\frac{-1}{\hbar}(t-b\hbar)
\prod_{i=1}^n
(-1- \hbar\sum_{m\geq 1} \frac{\lambda_i^{m-1}}{t^m})\\
=&-(-1)^n \Big(-nb\hbar +\sum_{i=1}^n \lambda_i +\hbar {n\choose 2}\Big).
\end{align*}
Therefore,}
\begin{equation}\label{equ:sum2}
\sum_{j=1}^n\Big(
(\lambda_j-b\hbar)
\prod_{\{i: 1\leq i\leq  n, i\neq j\}}
\frac{\lambda_{ji}+\hbar}{\lambda_{ij}}\Big)
=-(-1)^n \Big(-nb\hbar +\sum_{i=1}^n \lambda_i +\hbar {n\choose 2}\Big).
\end{equation}
Plugging the equalities \eqref{equ:sum1} and \eqref{equ:sum2} into \eqref{line for S}, we get:
\begin{align*}
\mathbb{S}(n, b, \hbar)
=&
\mathbb{S}(n-1, b, \hbar)
\Big(
-(-1)^n \Big(nb\hbar +\sum_{i=1}^n \lambda_i -\hbar {n\choose 2}\Big)+
(-1)^n \Big(-nb\hbar +\sum_{i=1}^n \lambda_i +\hbar {n\choose 2}\Big) \Big)\\
\Omit{=&\mathbb{S}(n-1, b, \hbar)(-1)^{n+1} 2\hbar (nb - {n\choose 2})\\}
=&\mathbb{S}(n-1, b, \hbar)(-1)^{n+1} 2\hbar n(b -\frac{n-1}{2}).
 \end{align*}
This completes the proof.   
\end{proof}

\newcommand{\arxiv}[1]
{\texttt{\href{http://arxiv.org/abs/#1}{arXiv:#1}}}
\newcommand{\doi}[1]
{\texttt{\href{http://dx.doi.org/#1}{doi:#1}}}
\renewcommand{\MR}[1]
{\href{http://www.ams.org/mathscinet-getitem?mr=#1}{MR#1}}

\end{document}